\title{ Markov chains on hyperbolic-like groups and quasi-isometries}
\tikzset{snake it/.style={decorate, decoration=snake}}
\DeclareMathOperator{\diam}{diam}
\DeclareMathOperator{\Cay}{Cay}
\DeclareMathOperator{\Stab}{Stab}
\theoremstyle{plain}
\newtheorem{lemma}{Lemma}[section]
\newtheorem{proposition}[lemma]{Proposition}
\newtheorem{corollary}[lemma]{Corollary}
\newtheorem{theorem}[lemma]{Theorem}
\newtheorem{example}[lemma]{Example}
\newtheorem{problem}[lemma]{Question}
\newtheorem{notation}[lemma]{Notation}
\newtheorem{assumptions}[lemma]{Assumption}
\newtheorem{claim}{Claim}
\newtheorem{remark}[lemma]{Remark}
\theoremstyle{definition}
\newtheorem{definition}[lemma]{Definition}
	\author[Antoine Goldsborough]{Antoine Goldsborough}
	\address{Department of Mathematics, Heriot-Watt University, Edinburgh, UK}
	\email{ag2017@hw.ac.uk}
\author[Alessandro Sisto]{Alessandro Sisto}
	\address{Department of Mathematics, Heriot-Watt University, Edinburgh, UK}
	\email{a.sisto@hw.ac.uk}
\begin{document}

\maketitle
\begin{abstract}
 We propose the study of Markov chains on groups as a ``quasi-isometry invariant" theory that encompasses random walks. In particular, we focus on certain classes of groups acting on hyperbolic spaces including (non-elementary) hyperbolic and relatively hyperbolic groups, acylindrically hyperbolic 3-manifold groups, as well as fundamental groups of certain graphs of groups with edge groups of subexponential growth. For those, we prove a linear progress result and various applications, and these lead to a Central Limit Theorem for random walks on groups quasi-isometric to the ones we consider.
\end{abstract}


\section{Introduction}
Random walks on groups have been studied extensively since, at least, work of Kesten \cite{Kesten} in the fifties. In particular, there are various ways in which random walks have been examined in the context of geometric group theory, and most relevantly for us a substantial amount of work has been devoted to understanding random walks on hyperbolic groups and various generalisations (see below for some relevant references).

Given the strength of the connections between random walks and geometric group theory, it is natural to wonder how random walks interact with a crucial notion in geometric group theory, namely that of quasi-isometry. Unfortunately, random walks and quasi-isometries are not compatible, as given a random walk on a group $G$ and another group $H$ quasi-isometric to $G$, there is no "corresponding" random walk on $H$ in any meaningful sense.

The main goal of this paper is to suggest that this conflict can be resolved if one studies more general Markov chains rather than random walks. Indeed, more general Markov chains can be ``pushed forward" via quasi-isometries in a natural way. (We note that we cannot restrict to Markov chains that are finite-state in any meaningful way if we want this push-forward property; we explain this in Remark \ref{rmk:no_finite_state}.)

To illustrate the usefulness of this approach, we now state a theorem about random walks (not Markov chains) that can be deduced from our initial results on Markov chains on groups together with known results about random walks. The groups that we can consider are those satisfying either of two assumptions (Assumptions \ref{assump:superdiv} and \ref{assump:graph} below) which we do not state in this introduction, but rather we list some of the groups to which they apply (we note that we only consider finitely generated groups in this paper):

\begin{itemize}
    \item non-elementary hyperbolic and relatively hyperbolic groups,
    \item acylindrically hyperbolic 3-manifold groups,
    \item right-angled Artin groups whose defining graph is a tree of diameter at least 3,
    \item groups acting acylindrically and non-elementarily on a tree with nilpotent and undistorted edge stabilisers.
\end{itemize}

The theorem is a central limit theorem for the distance from the identity. More general random walks can be included, but we state it only for simple random walks only for now.

\begin{theorem}
\label{thm:clt_intro}
Let $G$ be a group quasi-isometric to some group satisfying either of Assumption \ref{assump:superdiv} or Assumption \ref{assump:graph} below (e.g. a group from the list above), fix a word metric $d_G$ on $G$, and let $(Z_n)$ be a simple-random walk on $G$.  Then there exist constants $L, \sigma >0$ such that we have the following convergence in distribution:  $$\frac{d_{G}(1,Z_n)-Ln}{\sigma^2\sqrt{n}} \to \mathcal{N}(0,1),$$
where $\mathcal N(0,1)$ denotes the normal distribution with mean $0$ and variance $1$.
\end{theorem}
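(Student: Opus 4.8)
The proof strategy is a transfer argument: we want to reduce the central limit theorem for a simple random walk on $G$ to a statement about Markov chains on a group $G'$ satisfying one of the Assumptions, using the quasi-isometry $q \colon G \to G'$. The key conceptual point, as advertised in the introduction, is that a random walk on $G$ pushes forward under $q$ to a Markov chain on $G'$ (not a random walk, since $q$ need not be equivariant — this is exactly the incompatibility the paper sets out to fix). So the first step is to make this push-forward precise: given the increment distribution $\mu$ of $(Z_n)$ on $G$, and given $q$, one obtains a Markov chain $(Z'_n)$ on $G'$ whose transition from $g' \in G'$ is: pick a preimage (or a point close to a preimage) $g \in G$ of $g'$, sample $h$ from $\mu$, and move to $q(gh)$ — with a fixed bounded-range rule for the choices so that transition probabilities depend only on $g'$. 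Because $q$ is a quasi-isometry, $d_{G'}(1, Z'_n)$ and $d_G(1, Z_n)$ differ by a bounded multiplicative-plus-additive amount; this comparison alone is not enough for a CLT (it controls $Ln$ but could swamp the $\sqrt n$ fluctuations), so one needs the transfer at the level of the geometry of sample paths, not just endpoints.

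**Carrying out the transfer.** Concretely, I would proceed as follows. First, invoke the paper's linear progress result (the ``various applications'' alluded to in the abstract) for the Markov chain $(Z'_n)$ on $G'$: this gives that $(Z'_n)$ makes linear progress in the hyperbolic-like space on which $G'$ acts, with exponential deviation bounds. Second, from linear progress plus deviation inequalities, derive a CLT for $d_{G'}(1, Z'_n)$ — this is the Markov-chain analogue of the Benoist–Quint / Mathieu–Sisto style CLT for random walks on (relatively, acylindrically) hyperbolic groups, and I expect it is established in the body of the paper as one of the applications. Third, and this is the crux, transport the CLT back to $G$. The subtle point is that $d_{G}(1, Z_n) - (\text{const})\cdot d_{G'}(1, Z'_n)$ must have fluctuations of order $o(\sqrt n)$, which is \emph{not} automatic from quasi-isometry. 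The way around this is to not compare endpoints via the QI constant, but instead to run the argument symmetrically: the quasi-isometry also lets us push a simple random walk forward to a genuine Markov chain structure, and the distance $d_G(1,Z_n)$ itself can be shown to satisfy a CLT directly by the same linear-progress machinery applied on $G$ — provided $G$ inherits enough hyperbolic-like structure from $G'$. Since being ``quasi-isometric to a group satisfying Assumption \ref{assump:superdiv} or \ref{assump:graph}'' is itself (one hopes) robust enough, or can be made so by working with the pushed-forward Markov chain on $G'$ and pulling back only the \emph{centered, rescaled} quantity, one gets the convergence in distribution as stated.

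**The main obstacle.** The hard part is precisely the last step: quasi-isometries do not respect random walks, so the honest statement must be phrased as ``the push-forward of the simple random walk under $q$ is a Markov chain satisfying the hypotheses of our linear-progress theorem,'' and then one needs that the resulting CLT for $d_{G'}(1,Z'_n)$ implies a CLT for $d_G(1,Z_n)$. The cleanest route is: (i) show the push-forward Markov chain $(Z'_n)$ satisfies the standing assumptions for linear progress (bounded range of the transition kernel, non-elementarity, drift away from the relevant subspaces — all of which follow because $q$ is coarsely surjective and the simple random walk on $G$ has finite support hitting a generating set); (ii) apply the in-paper CLT to get $\frac{d_{G'}(1,Z'_n) - L'n}{\sigma'\sqrt n} \to \mathcal N(0,1)$; (iii) observe that $d_G(1,Z_n)$ and $d_{G'}(1,Z'_n)$ are within an additive $O(1)$ of a fixed \emph{linear} reparametrization of each other \emph{along the sample path}, using that $q$ restricted to the (bounded-jump) trajectory is a genuine quasi-isometric embedding with uniform constants, and that the CLT for the Markov chain actually gives control of the whole rescaled process (via a sublinear tracking / Azuma-type concentration statement), so the additive error is $o(\sqrt n)$ in probability. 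Then Slutsky's lemma finishes it, absorbing the linear change of scale into redefinitions of $L$ and $\sigma$. The technical heart is therefore the sublinear-tracking estimate that upgrades the QI comparison from ``bounded at each time'' to ``bounded relative to $\sqrt n$,'' which is where the hyperbolicity (contraction/Morse property of quasigeodesics, as encoded in the Assumptions) does the real work.
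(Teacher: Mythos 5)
Your first two steps (push the simple random walk forward through a bijective quasi-isometry to a tame Markov chain on the model group $G'$, then invoke the paper's linear progress and deviation estimates for that chain) are exactly how the paper starts. But the core of your plan — step (ii), ``apply the in-paper CLT to get $\frac{d_{G'}(1,Z'_n)-L'n}{\sigma'\sqrt n}\to\mathcal N(0,1)$,'' followed by step (iii), transporting that CLT back through the quasi-isometry — is a genuine gap. There is no CLT for Markov chains in the paper, and there cannot be one at this level of generality: the introduction explicitly warns that tame Markov chains need not even have a well-defined drift, and the pushed-forward chain $(Z'_n)$ is not a random walk on $G'$, so the equivariance-based machinery (Mathieu--Sisto, Benoist--Quint) that produces CLTs is unavailable for it. Moreover, even granting such a CLT, step (iii) fails: a quasi-isometry only gives $d_G(1,Z_n)/A - B \leq d_{G'}(1,Z'_n)\leq A\, d_G(1,Z_n)+B$, a \emph{multiplicative} distortion, so $d_G(1,Z_n)$ need not lie within $o(\sqrt n)$ of any fixed linear rescaling of $d_{G'}(1,Z'_n)$; the discrepancy can be of order $n$, and no hyperbolicity-based sublinear tracking removes a multiplicative error. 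Fluctuation-level information simply does not pass through a quasi-isometry applied to the distance functional, which is precisely why the paper never attempts this.

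What the paper does instead is transfer a cruder, QI-robust statement and then prove the CLT directly for the random walk on $G$ in its own word metric. Concretely: the pushed-forward tame Markov chain on the model group makes linear progress (Theorem \ref{linear progress}), hence satisfies the exponential-tail deviation-from-quasi-geodesics inequality (Theorem \ref{thmdeviationquasigeo}); since quasi-geodesics pull back to quasi-geodesics and exponential decay in $l$ survives the affine reparametrisation $l\mapsto l/A-B$, this yields $\mathbb P\big[d_G(Z_k,[1,Z_n])\geq l\big]\leq Re^{-l/R}$ for the original walk (Corollary \ref{devgeod}). This gives a uniform bound on $\mathbb E\big[(1,Z_n)_{Z_k}^2\big]$ in $d_G$ (Lemma \ref{lem:second_moment_dev}), i.e.\ the second-moment deviation inequality of \cite{MathieuSisto}, and then the CLT follows from \cite[Theorem 4.2]{MathieuSisto} applied to the genuine random walk $(Z_n)$ on $G$ — no CLT for the Markov chain, and no transport of distributional limits through the quasi-isometry, is ever needed. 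If you reroute your argument so that only the deviation inequality (not the CLT) crosses the quasi-isometry, and then cite the random-walk CLT criterion of Mathieu--Sisto on $G$ itself, your outline becomes the paper's proof.
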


The interesting feature of the theorem is that it is a theorem about random walks on a group for which the only assumption is that it is quasi-isometric to a group of a certain kind. As we explain below, the proof uses more general Markov chains than random walks, and this should give an idea of the potential of the Markov chain approach.

We should note that the groups satisfying Assumptions \ref{assump:superdiv} or Assumption \ref{assump:graph} are acylindrically hyperbolic, and there is a central limit theorem for random walks on acylindrically hyperbolic groups \cite{MathieuSisto}. However, it is not known whether being acylindrically hyperbolic is a quasi-isometry invariant, and in fact it is not even known whether it is a commensurability invariant, see \cite[Question 2]{MinasyanOsinErratum} for a discussion of this.

We also note that the theorem does not hold for Markov chains, as those might not even have well-defined drift; an example of this is described in work in progress of the first author. This should also be taken as a warning that Markov chains can have more exotic behaviour than one might expect based on random walks.

\subsection*{Literature on Markov chains and quasi-isometries} Before discussing our results, we should mention that Markov chains on quasi-isometric graphs have been considered in the literature before, see e.g. \cite{benjaminiLiouville,soardiyamasakiParabolicindex,hermonperes,DingPeres,BerryMixingtimeRobust} (the term ``rough isometry" is used in this context, rather than ``quasi-isometry"). It is shown in \cite{kanai, SaloffCosteCoulhon} that transience (resp. recurrence) of a graph is a quasi-isometric invariant. We are however not aware of any paper that aims to study quasi-isometric \emph{groups} from this perspective.

While, for example, \cite{soardiyamasakiParabolicindex} studies a property of Markov chains which turns out to be quasi-isometry invariant, most of the papers mentioned above actually show that certain properties of Markov chains are not quasi-isometry invariants in a suitable sense. For example \cite{hermonperes, DingPeres} shows that the mixing time can vary drastically for (finite) quasi-isometric graphs (with bounded quasi-isometry constants).

In a different direction, we should also mention that there are results in the literature on random walks on quasi-isometric groups. For example, the fact that (non-)amenability is a quasi-isometry invariant implies that having spectral radius for random walks less than 1 or equal to 1 are quasi-isometry invariants; a stronger result for amenable groups is proved in \cite{SaloffCostePittet}.

\subsection*{Linear progress}
The groups we will be dealing with all come with an action on a hyperbolic space. In the case of random walks, it turns out that there is one result in particular that is a very useful starting point for other applications, namely linear progress. This is a result of Maher-Tiozzo \cite{maher2015random} who proved that a (non-elementary) random walk on a group acting on a hyperbolic space $X$ makes linear progress in $X$. This result feeds into the proof of several others, for example on the translation length \cite{maher2015random}, random subgroups \cite{MS-rndsubgp,Abbott2021RandomWA}, various kinds of projections \cite{SistoTaylor}, and deviation from quasi-geodesics \cite{MathieuSisto}, as well as the already mentioned central limit theorem for acylindrically hyperbolic groups, and results on counting measures, see e.g. \cite{GTT:counting_lox}.

Therefore, since we want to initiate the study of more general Markov chains on groups acting on hyperbolic spaces, it is natural to start by generalising the linear progress result. Unfortunately, one cannot just use the proof from \cite{maher2015random}, and not even the proof from \cite{MathieuSisto} for the case where the action on the hyperbolic space is acylindrical, or the related arguments from, e.g. \cite{sunderlandlinearprogress,  sisto2013contracting, gouezel2021exponential, boulanger2020large}. The reasons for this are, for the arguments that use boundary theory, the fact that it is not even clear what should play the role of stationary measures, and for the arguments that do not, the fact that, when decomposing a Markov path in 2 or more parts, subsequent parts are not independent from the previous ones. We note moreover that since the Markov chains we consider are not finite-state in any meaningful way, they are not the same kind of Markov chains considered in e.g. \cite{CalegariFujiwara,GTT:counting_lox}, and the arguments from those papers do not apply here either.

Still, the theorem below gives linear progress in $X$ in certain cases. The notion of tameness is defined and discussed in Subsection \ref{tameMarkov}, but it is not so important for this general discussion so we omit it here.

\begin{theorem}
\label{linear progress}
Let $G$ be a group acting on a corresponding hyperbolic space $X$ as in either of Assumption \ref{assump:superdiv} or Assumption \ref{assump:graph}.

Let $(w^o_n)_{n\geq 0}$ be a tame Markov chain in $G$, starting at an element $o\in G$ and consider any basepoint $x_0 \in X$. Then there exist constants $L, C>0$ such that

$$\mathbb{P}\Big[ od_{X}(x_0,w^{o}_nx_0) \geq L n\Big] \geq  1 - C e^{-n/C}.$$
\end{theorem}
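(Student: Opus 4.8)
The plan is to reduce the statement to a ``local progress with exponential tails'' estimate that can be iterated along the Markov chain, using the hyperbolicity of $X$ to turn such local estimates into genuine displacement.

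\medskip

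First I would fix a suitable length scale. For the group $G$ acting on $X$ as in Assumption \ref{assump:superdiv} or Assumption \ref{assump:graph}, there is a ``large enough'' loxodromic-like behaviour available: either superlinear divergence (in the first case) or a tree/graph-of-groups structure (in the second case) that guarantees that, with uniformly positive probability, a single step (or a bounded block of steps) of a tame Markov chain moves the configuration in $X$ in a way that is ``aligned'' with previous progress. Concretely, I would cut the time interval $[0,n]$ into blocks of some fixed length $\ell$, and I would want to show: for each block, conditionally on the past, with probability at least $p_0 > 0$ the image path over that block makes a definite jump in $X$ that projects correctly (e.g. has large projection to a geodesic or to a suitable axis, in the sense of the ``various kinds of projections'' machinery alluded to in the introduction). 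The crucial point, and where tameness enters, is that even though successive blocks are \emph{not} independent (this is explicitly flagged in the excerpt as the main difference from the random-walk case), tameness should give a uniform lower bound on these conditional probabilities, so that the process of ``successful blocks'' stochastically dominates a sequence of i.i.d.\ Bernoulli$(p_0)$ trials.

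\medskip

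Second, I would translate ``many successful aligned blocks'' into ``large $d_X(x_0, w^o_n x_0)$''. This is the place where hyperbolicity of $X$ does the real work: one wants a concatenation/local-to-global statement saying that a path in $X$ built from segments each of which makes large, consistently-oriented progress (as measured by projections, with controlled backtracking) is a quasi-geodesic, hence its endpoints are far apart, linearly in the number of good segments. This is the analogue of the standard argument that a path with long jumps and small overlaps in a hyperbolic space fellow-travels a geodesic; I would phrase it so that a positive fraction of good blocks among the first $n/\ell$ blocks already forces $d_X(x_0, w^o_n x_0) \geq L n$ for a suitable $L$. Combining this deterministic geometric statement with the stochastic domination from the first step, a Chernoff/Hoeffding bound for the number of successes in $n/\ell$ Bernoulli trials yields
\[
\mathbb{P}\big[ d_X(x_0, w^o_n x_0) \geq L n \big] \geq 1 - C e^{-n/C},
\]
as claimed, after adjusting constants.

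\medskip

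The main obstacle I expect is exactly the dependence issue: making precise the sense in which a single good block has uniformly positive conditional probability \emph{given an arbitrary past}, and ensuring that ``good'' is defined robustly enough (in terms of projections rather than raw distance) that goodness of a block cannot be sabotaged by later blocks backtracking. In the superlinear-divergence case this should follow from the divergence hypothesis forcing geodesics to separate, so that a step that leaves a neighbourhood of the previous geodesic cannot cheaply return; in the graph-of-groups case one exploits the tree structure and undistortedness of edge groups so that crossing an edge is ``recorded'' permanently. Managing these two cases uniformly — ideally by abstracting the common feature into a single geometric input that tameness can feed into — is where most of the technical effort will go; the probabilistic part (stochastic domination plus a large-deviation estimate) is then routine.
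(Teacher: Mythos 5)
The central gap is the backtracking control. Your scheme declares a block ``good'' if it makes an aligned jump with a large projection, asserts that the conditional probability of a good block is at least $p_0$ uniformly over the past, and then wants stochastic domination by i.i.d.\ Bernoulli trials, a Chernoff bound, and a deterministic local-to-global statement. But goodness in the sense needed for the geometric step cannot be certified at the end of the block: whether the projection created in block $j$ actually contributes to $d_X(x_0,w^o_nx_0)$ depends on whether \emph{later} blocks undo it, so the events you would need to dominate are not measurable with respect to the past, and the domination-plus-Chernoff step does not go through as stated. What is really needed, and what you only gesture at heuristically (``cannot cheaply return'', ``recorded permanently''), is a quantitative estimate, uniform over the starting pair, that the chain started anywhere ever undoes $t$-worth of already established projections with probability at most $Ce^{-t/C}$. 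This is exactly Proposition \ref{axiom}, and it is where the two geometric assumptions are genuinely used: in the super-divergent case by combining non-amenability (the chain escapes the relevant cosets at linear speed, Lemma \ref{20francs}) with the fact that a path staying far from a coset moves the projection only at rate $o(\log)$ (Lemma \ref{o(log)}), assembled through a recursion over doubling thresholds; in the graph-of-groups case not via undistortedness (the chain can re-cross edges) but via subexponential growth of the edge groups in the ambient metric, so that undoing projections forces the chain to visit a subexponentially large set, which tameness (Definition \ref{defn:tame}) makes exponentially unlikely.

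Even granting such a backtracking estimate, the Bernoulli count you propose is not how the pieces fit together: the per-block gain that irreducibility provides is only a new projection of size about $\eta\log m$ (Lemma \ref{etalogproj}), while the per-block loss is unbounded with an exponential tail, so a fixed ``success threshold'' per block is not available. The paper instead converts the two inputs into linear progress by a drift argument: for suitable $\lambda$ and block length $m$, the expectation of $\exp(-\lambda\,\Sigma)$, where $\Sigma$ is the distance-formula sum over cosets with $T$-large projections, contracts by a factor $1-\epsilon_1$ per block uniformly in the basepoints (Proposition \ref{prop:exp_lambda}); iterating via the Markov property, applying Markov's inequality, and using the lower bound of $d_X$ by the projection sum (Lemma \ref{distance}) yields the theorem. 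In other words, your local-to-global step is replaced by the projection distance formula and your Chernoff step by an exponential-moment iteration; these substitutions are not cosmetic, they are precisely what makes the dependence on the past tractable, and without them (or a worked-out equivalent) your outline does not yet constitute a proof.
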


Our proof, while inspired by that in \cite{MathieuSisto}, is different from either of the aforementioned proofs, and in fact we even need additional assumptions of a geometric nature. It is possible that our techniques can be pushed to deal with more cases, and we elaborate on this in the outline of the paper below.

In particular, it would be very interesting to deal with general acylindrically hyperbolic groups, or at least with some other subfamilies. We record this in the following question.

\begin{problem}
    Does the conclusion of Theorem \ref{linear progress} hold when $G$ is only assumed to act acylindrically and non-elementarily on $X$? What if in addition $G$ is a hierarchically hyperbolic group?
\end{problem}

Just as in the case of random walks, we can deduce various other results from Theorem \ref{linear progress}. While, as mentioned above, we had to use different techniques than for random walks to prove Theorem \ref{linear progress}, for the following applications we can use known arguments. The first item says in particular that (under the assumptions of the theorem) loxodromic elements are generic with respect to Markov chains.

\begin{theorem}
\label{thm:qgeod_intro}
Let $G$ be a group acting on a hyperbolic space $X$, with a fixed word metric $d_G$.
Let $(w^o_n)_{n\geq 0}$ be a tame Markov chain satisfying the conclusion of Theorem \ref{linear progress}. 
Then:
\begin{itemize}
\item (Translation length) There exist constants $L_1$, $C_1>0$ such that $$\mathbb{P}\big[ \tau(w^{o}_n) >L_1 n \big] \geq 1-C_1e^{-n / C_1} $$ where $\tau(h)$ is the translation length in $X$ of the element $h \in G$. 

 \item (Deviation from quasi-geodesics) For every $D>0$, there exists $C_2>0$ such that for all $l>0$ and $0 \leq k \leq n$ we have :
$$
\mathbb{P}\left[ \sup_{\alpha \in QG_{D}(o,w^{o}_n)} d_{G}(w^{o}_k, \alpha) \geq l \right] \leq C_2 e^{-l/C_1}
$$ where $QG_{D}(a,b)$ is the set of all $(D,D)$- quasi-geodesics from $a$ to $b$ (with respect to $d_{G}$).

 \end{itemize}
\end{theorem}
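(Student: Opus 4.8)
The plan is to import the random-walk arguments of Maher--Tiozzo \cite{maher2015random} and Mathieu--Sisto \cite{MathieuSisto}, feeding in Theorem \ref{linear progress} wherever linear progress of random walks is used, and using tameness of $(w^o_n)$---in particular the fact that conditioning on an initial segment yields, up to uniform constants, a tame chain that again satisfies the conclusion of Theorem \ref{linear progress}---in place of independence of increments. Fix a hyperbolicity constant $\delta$ for $X$, a basepoint $x_0$, a Lipschitz constant $K$ for the orbit map $g\mapsto gx_0$ (with respect to $d_G$), and let $L,C$ be the constants from Theorem \ref{linear progress}. The first step is to upgrade linear progress to a deviation estimate in $X$: for each $0\le k\le n$,
$$\mathbb{P}\big[\,d_X(w^o_kx_0,[x_0,w^o_nx_0]_X)\ge l\,\big]\le C'e^{-l/C'}$$
for a uniform $C'$, where $[x_0,w^o_nx_0]_X$ denotes a geodesic in $X$. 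This follows in the usual way from Theorem \ref{linear progress} applied to sub-segments (re-rooting via tameness), together with the crude bound $d_X(w^o_ix_0,w^o_jx_0)\le Kd_G(w^o_i,w^o_j)$ and the exponential control on increments built into tameness, so that the sampled orbit $(w^o_jx_0)_j$ is, with exponentially high probability, a quasi-geodesic in $X$ at all sufficiently large scales, hence Morse-tracks a geodesic.

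The second step is a Gromov product estimate: there is $\epsilon>0$ with
$$\mathbb{P}\big[\,((w^o_n)^{-1}x_0\mid w^o_nx_0)_{x_0}\ge\epsilon n\,\big]\le C''e^{-n/C''}.$$
This is the analogue for tame chains of the fact that a random walk and its reversal separate quickly: split the trajectory into an initial and a terminal piece, observe that after conditioning on the midpoint the geodesics $[x_0,w^o_nx_0]_X$ and $[x_0,(w^o_n)^{-1}x_0]_X$ are tracked by conditionally independent pieces of the trajectory, each making linear progress in a coarsely unconstrained direction, and conclude that the two geodesics fellow-travel only for a time with exponential tails; tameness is what lets one treat the time-reversed piece as again being of the required type. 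Granting these two estimates, the translation length bound is the hyperbolic-geometry computation from \cite{maher2015random}: in a $\delta$-hyperbolic space one has $d_X(x_0,g^2x_0)=2d_X(x_0,gx_0)-2(g^{-1}x_0\mid gx_0)_{x_0}$, and whenever the right-hand side exceeds $d_X(x_0,gx_0)+O(\delta)$ the isometry $g$ is loxodromic with $\tau(g)\ge d_X(x_0,g^2x_0)-d_X(x_0,gx_0)-O(\delta)$. Applying this to $g=w^o_n$, using $d_X(x_0,w^o_nx_0)\ge Ln$ and the Gromov product estimate with $\epsilon<L/2$ (each failing with probability at most $Ce^{-n/C}$), gives $\tau(w^o_n)\ge(L-2\epsilon)n-O(\delta)$, so $L_1=(L-2\epsilon)/2$ works after absorbing small $n$ into $C_1$.

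For the deviation from quasi-geodesics in $G$, I would transfer the $X$-deviation estimate above to the word metric following \cite{MathieuSisto}: a $(D,D)$-quasi-geodesic $\alpha$ in $G$ from $o$ to $w^o_n$ maps to a coarse path in $X$ from $x_0$ to $w^o_nx_0$, and comparing it with the sampled orbit of the chain---via nearest-point projection to $[x_0,w^o_nx_0]_X$ and the geometry of the action on $X$---forces $\alpha$ to pass $G$-close to $w^o_k$ unless a deviation event of size of order $l$ occurs; since the resulting bound depends on $\alpha$ only through $D$, it survives the supremum over $QG_D(o,w^o_n)$.

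The only two points that are not routine bookkeeping on top of the cited arguments are the Gromov product estimate, which must be run without independence of increments---this is where tameness does the real work, and where one has to be careful with the time-reversed chain---and the transfer in the last step from the geometry of $X$ to the word metric on $G$, which uses specific features of the action rather than hyperbolicity of $X$ alone. I expect the Gromov product estimate to be the main obstacle; once it is in place, everything else follows the templates of \cite{maher2015random,MathieuSisto} with Theorem \ref{linear progress} supplying the input that replaces linear progress of random walks.
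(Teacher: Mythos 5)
Your overall template (a deviation estimate in $X$ from linear progress at all scales, a Gromov product estimate feeding into the Maher--Tiozzo translation length bound from \cite{maher2015random}, and a transfer to $d_G$ following \cite{MathieuSisto}) matches the paper's, but the step you yourself single out as the main obstacle --- the Gromov product estimate --- is a genuine gap, and the route you sketch for it would not work. Time-reversal is not available for tame Markov chains: the reversed process is not a Markov chain on $G$ with controlled transition probabilities (its transition behaviour depends on the unknown, time-dependent marginals, and none of the tameness conditions transfer to it), so ``treating the time-reversed piece as again being of the required type'' has no justification; moreover, even granting linear progress of both halves, long fellow-travelling of $[x_0,w^o_nx_0]$ and $[x_0,(w^o_n)^{-1}x_0]$ cannot be excluded without some decorrelation or boundary-type input, which is precisely what the authors note is unavailable for Markov chains. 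The paper avoids reversal entirely: since the relevant Gromov product $(x_0,(w^o_n)^2x_0)_{w^o_nx_0}$ is a function of the single group element $w^o_n$, one conditions on $w^o_{n-k}=h$ with $k\approx \epsilon n/M$ and observes, via acylindricity in the form of \cite[Lemma 9.5]{MathieuSisto} (Lemma \ref{diam}), that a large Gromov product forces the increment $h^{-1}w^h_k$ into a set of at most $C'(Kk)^2$ elements; the non-amenability clause of tameness, $\mathbb{P}[w^x_k=y]\le A\rho^k$, then bounds the probability by $C'(Kk)^2A\rho^k$ (Lemmas \ref{small_gromov_2} and \ref{smallgromov}). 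Note in particular that acylindricity of the action, not just hyperbolicity of $X$, enters the translation-length part through this counting lemma.

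On the deviation statement, your transfer from $X$ to $d_G$ is also left at the level of a hope: closeness of $w^o_k$ to a single geodesic in $X$ does not by itself force a $(D,D)$-quasi-geodesic in $G$ to pass $d_G$-close to $w^o_k$. The paper proves the stronger probabilistic input of tightness (linear progress in $X$ between all pairs of times $k_1\le k\le k_2$ with $k_2-k\ge l$; Lemma \ref{probatightscale}, proved by a union bound plus the Markov property of Lemma \ref{lem:Markov_property_set}), and then feeds it into the acylindrically-intermediate-space machinery of \cite[Section 10]{MathieuSisto}, taking $Y=\Cay(G,S)$ when the action is acylindrical; this is where the $X$-geometry is converted into $d_G$-closeness (Theorem \ref{thmdeviationquasigeo} and Corollary \ref{devgeod}). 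This part of your plan is repairable along exactly those lines, but the Gromov product step needs a different idea from the one you propose.
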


Note that the second item makes no reference to $X$, and actually this will be the crucial statement about Markov chains that we need to prove the central limit theorem, Theorem \ref{thm:clt_intro}. The sketch of proof is as follows. Consider a group $G$ and a random walk on it as in the statement. Then the random walk can be pushed-forward to a Markov chain (probably not a random walk) on another group $H$ quasi-isometric to $G$. In $H$ we know that, in the sense of the second item above, Markov paths stay close to quasi-geodesics. We deduce that sample paths of the random walk in $G$ stay close (in the same sense) to geodesics, and at this point we can simply apply results from  \cite{MathieuSisto} to conclude.

\subsection{Outline of the paper}

In Section \ref{preliminaries}, we look at some basic definitions as well as discussing the setup for Markov chains and random walks. This allows us to consider a Markov chain under some assumptions, which we shall call a tame Markov chain. We show that a random walk (satisfying standard conditions) 'pushed-forward' via a quasi-isometry gives rise to a tame Markov chain, see Lemma \ref{pushforwardistame}. This, as explained above, is the key fact about Markov chains that we are interested in.

Section \ref{sec:WPD} has two purposes. First, we introduce the notion of super-divergent element, which is the key notion for one of the assumptions under which we can prove linear progress. Second, we review and set up notation regarding loxodromic WPD element, as well as showing that super-divergent elements are WPD.

In Section \ref{sec:assump} we state the two sets of assumptions under which we can prove linear progress, and discuss various examples.

In Section \ref{geometricarguments} we state Proposition \ref{axiom}, which we then prove under either of the assumptions for linear progress. This proposition is a key intermediate step in the proof of linear progress, and we highlight it because of the following two reasons. First, its conclusion can be stated whenever one has a group acting on a hyperbolic space with a loxodromic WPD element. So, potentially the proposition could be true for all acylindrically hyperbolic groups. Second, in Section \ref{probabilisticarguments} we show that whenever the conclusion of the proposition holds, then linear progress also holds. Hopefully Proposition \ref{axiom} will be used in the future to cover more groups and group actions.

We note that the proof of Proposition \ref{axiom} in both cases uses a key idea  of the form "if you have a path that, in the Cayley graph, moves away from certain specified parts, then a certain projection moves slowly". See for example Lemma \ref{20francs} and Lemma \ref{o(log)}.

Finally, in Section \ref{applications} we prove all the applications of linear progress mentioned above.

\subsection*{Acknowledgements}

Part of the content of this paper originated from discussions with Dominik Gruber, without whom this paper would not exist, and the authors are very thankful for his contributions.

The authors would also like to thank Adrien Boulanger for pointing out useful references and Cagri Sert for useful remarks that led to clarifications in the introduction. The authors would also like to thank an anonymous referee for useful comments.

The work of the first author was supported by the EPSRC-UKRI studentship EP/V520044/1.

\section{Tame Markov chains}
\label{preliminaries}

 \subsection{Basic geometric group theory definitions}

  We recall some of the main notions from geometric group theory that will be used throughout the paper.

 	A geodesic metric space $(X,d_X)$ is \textit{$\delta$-hyperbolic} for some $\delta>0$ (or simply \textit{hyperbolic}) if for any geodesic triangle in $X$ the following holds: any side of the triangle is contained in the closed $\delta$-neighborhood of the union of the other two sides. (We recall that the $M$-neighborhood of a subset $A$ of the metric space $(X,d)$ is $\mathcal N_M(A) = \{ x \in X : d(x,A) \leq M \}$, where $ d(x,A)=\inf \{ d(x,a) : a \in A  \}$.)

 A finitely generated group is called \textit{hyperbolic} if its Cayley graph with respect to some (equivalently, any) finite generating set is a hyperbolic metric space, when endowed with the graph metric.
 

For two metric spaces $(X, d_X)$ and $(Y, d_Y)$, we say that a function $f:X \to Y$ is a \textit{quasi-isometry} if there exist constants $A\geq 1, B \geq 0$ and $C \geq 0$ such that:
\begin{itemize}
    \item For all $x,y \in X$ : $d_{X}(x,y)/A -B \leq d_Y(f(x),f(y)) \leq A d_{X}(x,y)+B$,
   \item $\forall z \in Y$  $ \exists x \in X : d_{Y}(z,f(x))\leq C$.
 \end{itemize}

Relatedly, a function $f$ as above is $L$-coarsely Lipschitz if $d_Y(f(x),f(y))\leq Ld_X(x,y)$ for all $x,y\in X$.



The following is a well-known exercise in hyperbolic-geometry; we omit the proof. First, we recall the relevant notions. A subset $A$ of a geodesic metric space $X$ is \emph{quasi-convex} if there exists $C\geq 0$ such that all geodesics with endpoints in $A$ are contained in the $C$-neighborhood of $A$. Given a subset $A$ of a metric space $X$, we call a map $\pi:X\to A$ a closest-point projection if $d_X(x,\pi(x))=d_X(x,A)$ for all $x\in X$.

\begin{lemma}
\label{lem:exo_hyperbolicspace}

Let $X$ be a $\delta$-hyperbolic space. Let $Q$ be a quasi-convex set and $\pi_Q:X \rightarrow Q$ a closest-point projection.
Then there exists a constant $R>0$ only depending on $\delta$ and the quasi-convexity constant such that the following hold.
\begin{enumerate}
    \item $\pi_Q$ is $R$-coarsely Lipschitz.
    \item For all $x,y \in X$, if $d_{X}(\pi_Q(x), \pi_Q(y)) \geq R$ then there are points $m_1, m_2 \in [x,y]$ such that $d_X(m_1,\pi_Q(x)) \leq R$ and $d_X(m_2,\pi_Q(y)) \leq R$   where $[x,y]$ is a geodesic between $x$ and $y$. Further, the subgeodesic of $[x,y]$ between $m_1$ and $m_2$ lies in the $R$-neighborhood of $Q$.
\end{enumerate}

\end{lemma}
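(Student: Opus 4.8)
The plan is to isolate one \emph{projection estimate} and deduce both parts from it. Write $C$ for the quasi-convexity constant of $Q$; note that quasi-convexity immediately gives $[p,q]\subseteq\mathcal N_C(Q)$ for all $p,q\in Q$, and that $d_X(x,\pi_Q(x))=d_X(x,Q)$ by definition of a closest-point projection. The estimate I would prove is $(\star)$: there is a constant $R_0=R_0(\delta,C)$ such that for every $x\in X$ and every $q\in Q$, any geodesic $[x,q]$ passes within $R_0$ of $p:=\pi_Q(x)$. To prove $(\star)$, consider the geodesic triangle $x,p,q$; since $X$ is $\delta$-hyperbolic this triangle is $4\delta$-thin (a standard consequence of slimness), so there are internal points $a\in[x,p]$, $b\in[x,q]$, $c\in[p,q]$ with $d_X(p,a)=d_X(p,c)=(x\cdot q)_p:=\tfrac12(d_X(x,p)+d_X(x,q)-d_X(p,q))$, $d_X(x,a)=d_X(x,b)$, and pairwise distances among $a,b,c$ at most $4\delta$. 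Since $c\in[p,q]\subseteq\mathcal N_C(Q)$, choose $c'\in Q$ with $d_X(c,c')\le C$; then $d_X(x,Q)\le d_X(x,c')\le d_X(x,a)+4\delta+C$, while $d_X(x,Q)=d_X(x,p)=d_X(x,a)+d_X(a,p)$. Hence $(x\cdot q)_p=d_X(a,p)\le 4\delta+C$, and therefore $d_X(b,p)\le d_X(b,a)+d_X(a,p)\le 8\delta+C=:R_0$ with $b\in[x,q]$. This step is exactly where one uses that $p$ is the \emph{closest} point, not an arbitrary point of $Q$.

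For part (2), set $p=\pi_Q(x)$, $q=\pi_Q(y)$ and assume $d_X(p,q)\ge R$ for a constant $R=R(\delta,C)$ to be fixed at the end. By $(\star)$ there is $b\in[x,q]$ with $d_X(b,p)\le R_0$; by slimness of the triangle $x,y,q$, the point $b$ lies within $\delta$ of $[x,y]\cup[y,q]$. If $b$ were within $\delta$ of some $u\in[y,q]$, then from $d_X(y,p)\ge d_X(y,Q)=d_X(y,q)=d_X(y,u)+d_X(u,q)$ together with $d_X(y,p)\le d_X(y,u)+d_X(u,p)\le d_X(y,u)+R_0+\delta$ one gets $d_X(u,q)\le R_0+\delta$, hence $d_X(p,q)\le 2(R_0+\delta)$ — impossible once $R>2(R_0+\delta)$. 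So $b$ is within $\delta$ of $[x,y]$, producing $m_1\in[x,y]$ with $d_X(m_1,p)\le R_0+\delta$; swapping the roles of $x$ and $y$ (applying $(\star)$ to $[y,p]$, with the analogous exclusion of the wrong side) gives $m_2\in[x,y]$ with $d_X(m_2,q)\le R_0+\delta$.

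For the ``further'' clause: the subgeodesic $[m_1,m_2]\subseteq[x,y]$ is, by $2\delta$-slimness of the geodesic quadrilateral $m_1,p,q,m_2$, contained in the $2\delta$-neighbourhood of $[m_1,p]\cup[p,q]\cup[q,m_2]$; each of these three segments lies in $\mathcal N_{\max\{R_0+\delta,\,C\}}(Q)$ (the first two because they have an endpoint in $Q$ and length $\le R_0+\delta$, the middle one by quasi-convexity), so $[m_1,m_2]\subseteq\mathcal N_{2\delta+R_0+\delta+C}(Q)$. Taking $R$ to be the maximum of $2(R_0+\delta)+1$, $R_0+\delta$ and $2\delta+R_0+\delta+C$ then makes the conclusion of (2) hold with this single $R$.

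Part (1) then follows quickly: if $d_X(\pi_Q(x),\pi_Q(y))<R$ there is nothing to prove, and otherwise part (2) yields $m_1,m_2\in[x,y]$ with $d_X(m_1,\pi_Q(x))\le R$ and $d_X(m_2,\pi_Q(y))\le R$, whence $d_X(x,y)\ge d_X(m_1,m_2)\ge d_X(\pi_Q(x),\pi_Q(y))-2R$; that is, $\pi_Q$ is coarsely Lipschitz with an additive error (hence $R$-coarsely Lipschitz after enlarging $R$, using $d_X(x,y)\ge 1$ for $x\ne y$ in the spaces of interest). The only genuine obstacle is proving $(\star)$ and carrying out the case analysis that rules out the wrong side in (2); the rest is routine bookkeeping with thin triangles and quadrilaterals, which is why the statement can reasonably be left as an exercise.
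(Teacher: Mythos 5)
The paper gives no proof of this lemma (it is explicitly omitted as a well-known exercise), so there is nothing to compare against; judged on its own, your argument is correct. The projection estimate $(\star)$ via the internal points of the thin triangle $x,p,q$ (using that $p$ is a closest point to force $(x\cdot q)_p\leq 4\delta+C$), the case analysis ruling out the possibility that $b$ is close to $[y,q]$ when $d_X(p,q)>2(R_0+\delta)$, and the quadrilateral argument for the ``further'' clause all check out with the constants you state, and part (1) follows as you say. The one caveat, which you already flag honestly, is that the paper's definition of coarsely Lipschitz carries no additive constant, so your final step genuinely needs either a lower bound on distances between distinct points (true in the graphs the paper applies this to) or the tacit additive-error version, which is what the later applications actually use.
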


\subsection{General Markov chains}

We explain the formal setup of general Markov chains and then the specific case of random walks. This setup is based on Chapter 1 from \cite{woess_2000}. \\
 
Given a finitely generated group $G$, we define a \textit{Markov chain on } $G$.  This Markov chain has \textit{state space} $G$ and \textit{transition operator} $P=(p_G(g,h))_{g,h \in G}$, with the requirement that each $p_G(g,h)$ is non-negative and for each $g\in G$ we have $\sum_{h\in G} p_G(g,h)=1$. Each $p_G(g,h)$ represents the probability of moving from $g$ to $h$ in one step. When the group being considered is clear, we will drop the subscript. \\

When a basepoint $o\in G$ is fixed, a Markov chain gives a sequence of random variables $(w^{o}_n)_{n \geq 0}$ which describe the position of the Markov chain starting at $o$ and after $n$ steps. To model this we can choose as probability space the \textit{path space} $\Omega = G^{\mathbb{N}}$ equipped with the product $\sigma$-algebra arising from the discrete $\sigma$-algebra on $G$. We equip  $\Omega$ with the probability measure $\mathbb P_{o}$ defined on cylinder sets (and extended via the Kolmogorov extension theorem) by$$ \mathbb P_o \Big[\ \{(h_i)\in G^{\mathbb{N}} : h_j = g_j\ \forall j=0,\dots n\}\ \Big]  =\delta_{o}(g_0)p(g_0,g_1)\cdots p(g_{n-1}, g_n)$$
for all given $g_0,\dots, g_n\in G$.

Then $w^o_n$ is just the $n$-th projection $\Omega \rightarrow G$.

Note that a reformulation of the above is that for all given $o,g_i \in G$ we have
$$\mathbb P \Big[ w^{o}_0=g_0,w^{o}_1=g_1 \cdots ,w^{o}_n=g_n \Big]=\delta_{o}(g_0)p(g_0,g_1)\cdots p(g_{n-1}, g_n).$$

 We will often consider $\mathbb{P}[w^g_n=h]$ which is the probability that a Markov chain starting at the element $g$ ends up on the element $h$ after $n$ steps. Note that this can be written as a certain sum over all sequences of points in $G$ of length $n$ that start at $o$ and end in $g$, but we will never use this sum explicitly. Instead, similarly to the above, we note that the following holds for all $o,g,g_i\in G$ and $k\leq n$ (using a similar argument summing over suitable sequences):
 
 \begin{equation}
 \label{eqn:change_of_basepoint}
      \mathbb P_o \Big[ w^{o}_k=g_0,w^{o}_{k+1}=g_1 \cdots ,w^{o}_n=g_{n-k} | w^o_k=h\Big]= \mathbb P_o \Big[ w^{h}_0=g_0,w^{h}_{1}=g_1 \cdots ,w^{h}_{n-k}=g_{n-k} \Big].
 \end{equation}


\subsubsection{A note on a Markov property}
\label{note_Markov_property}

Usually, Markov chains are defined as stochastic processes satisfying the \textit{Markov property}: For all $h,g_i \in G$ and $n$: 
$$\mathbb{P}\big[ w^h_n=g_n \quad \vert w^h_{n-1}=g_{n-1}, \dots, w^h_0=g_0 \big]=\mathbb{P}\big[ w^h_n=g_n \quad \vert w^h_{n-1}=g_{n-1}\big].$$

This is well-known to be equivalent to the above set up of Markov chains, see e.g. \cite[Theorem 1.1.1]{markovnorris}. We will also need a similar property, which we record in the following lemma.







\begin{lemma}
\label{lem:Markov_property_set}
	For all $k \leq n$, all $A \subseteq G^{n-k+1}$,  and $o,h \in G$ we have 
	
	$$\mathbb P \Big[ (w^{o}_k,w^{o}_{k+1},\cdots, w^{o}_n) \in A \quad \Big\vert \quad w^{o}_{k}=h\Big]= \mathbb P \Big[ (w^{h}_0, w^{h}_2, \cdots, w^{h}_{n-k}) \in A \Big].$$

Moreover, for a subset $A \subseteq G$ and for all $o,h \in G$ and $k \leq n$: $$ \mathbb P \Big[  w^{o}_n \in A  \Big\vert w^{o}_{n-k} =h\Big] =\mathbb P \Big[ w^{h}_k \in A\Big] .$$
\end{lemma}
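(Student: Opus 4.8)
The plan is to derive this lemma directly from the cylinder-set definition of $\mathbb{P}_o$ together with the reformulation already recorded in equation \eqref{eqn:change_of_basepoint}. The first statement is really just \eqref{eqn:change_of_basepoint} repackaged: for a fixed tuple $(g_0,\dots,g_{n-k})\in G^{n-k+1}$, the conditional probability $\mathbb{P}[(w^o_k,\dots,w^o_n)=(g_0,\dots,g_{n-k})\mid w^o_k=h]$ equals $\mathbb{P}[(w^h_0,\dots,w^h_{n-k})=(g_0,\dots,g_{n-k})]$ by \eqref{eqn:change_of_basepoint} (noting both sides vanish unless $g_0=h$, since on the left we condition on $w^o_k=h$ and on the right $w^h_0=h$ deterministically; this handles the $\delta_o(g_0)$-versus-$\delta_h(g_0)$ discrepancy). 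Then I would sum this identity over all tuples in the given set $A\subseteq G^{n-k+1}$: since $A$ is a countable (at most countable) union of such single-point cylinders and both sides are countably additive in $A$, the identity passes to $A$.

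Next I would deduce the ``moreover'' statement from the first part. Given $A\subseteq G$ and $k\le n$, apply the first statement with the roles of the indices shifted: take the interval of times from $n-k$ to $n$ (so the block has length $k+1$) and condition on $w^o_{n-k}=h$. The first statement then gives
$$\mathbb{P}\big[(w^o_{n-k},\dots,w^o_n)\in B \mid w^o_{n-k}=h\big]=\mathbb{P}\big[(w^h_0,\dots,w^h_k)\in B\big]$$
for any $B\subseteq G^{k+1}$. Now choose $B=\{(b_0,\dots,b_k): b_k\in A\}$, i.e. the event that only the last coordinate lies in $A$. On the left this is exactly the event $\{w^o_n\in A\}$ (intersected with the conditioning event), and on the right it is exactly $\{w^h_k\in A\}$. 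This yields $\mathbb{P}[w^o_n\in A\mid w^o_{n-k}=h]=\mathbb{P}[w^h_k\in A]$, as desired.

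The only mild care needed is bookkeeping with the conditioning events of probability zero (if $\mathbb{P}[w^o_k=h]=0$ the conditional probability is conventionally undefined, or one interprets the identity as holding on the event of positive probability), and making sure the index ranges line up — the shift from ``times $k$ through $n$'' in the general statement to ``times $n-k$ through $n$'' in the application. Neither is a genuine obstacle; the content is entirely contained in the Markov property as already encoded by \eqref{eqn:change_of_basepoint} and the Kolmogorov-consistent definition of $\mathbb{P}_o$ on cylinders. I would expect the ``hard part,'' such as it is, to be merely stating the reduction to single-point cylinders cleanly enough that the countable additivity step is unambiguous.
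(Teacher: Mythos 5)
Your proposal is correct and is essentially identical to the paper's own argument: the first identity is obtained by summing Equation \eqref{eqn:change_of_basepoint} over the tuples in $A$, and the ``moreover'' part follows by applying the first statement to the set of sequences whose last entry lies in $A$. No gaps.
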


\begin{proof}
This just follows from summing Equation \eqref{eqn:change_of_basepoint} over all elements of $A$.

For the ``moreover" part, we can deduce it from the first statement applied to the set of all sequences of appropriate length where the last entry is in $A$.
\end{proof}

With a slight abuse of notation, we will often write ``Let $(w^o_n)$ be a Markov chain", and similar, to mean that we fix transition probabilities and denote $w^o_n$ the corresponding random variables.\\





\subsubsection{Random walks}

A random walk is an equivariant Markov chain, meaning that for all $x,g,y \in G$ we have
$$p_G(x,y)=p_G(gx,gy).$$

Equivalently, for all $x,y\in G$ we have

$$
p_G(x,y) = \mu(x^{-1}y)
$$ 

for some fixed probability measure $\mu$ on $G$, which is called the \textit{driving measure of the random walk}. In the case of a random walk, the $n$-step transition probabilities are obtained by $$
p^{(n)}(x,y)=\mu^{(n)}(x^{-1}y)
$$
where $\mu^{(n)}$ is the $n$-fold convolution of $\mu$ with itself. 

Due to equivariance, the starting point of a random walk is often not as important, while for Markov chains we will have to ensure that all our statements are ``uniform" over all choices of starting point. In proofs, we will sometimes write $w_n$ instead of $w^{o}_n$ Markov path starting at a fixed basepoint $o$, but only when it is safe to do so.  \\


\subsubsection{Push-forwards}

Random walks can be pushed-forward via group homomorphisms, while we are interested in the fact that Markov chains can be pushed forward via bijections (or more general maps, but we will not need it in this paper). We now make this precise in the following definition.

\begin{definition}
\label{defn:push_forward}
Consider now two finitely generated groups $G,H$ and $ f: H \rightarrow G$ a bijective map (which later on will be a quasi-isometry). Given a Markov chain on $H$, we define a Markov chain on $G$ given by  $$p_{G}(g,h) = p_{H}(f^{-1}(g), f^{-1}(h)) $$ for all $g,h \in G$.

We will call this new Markov chain the \textit{push-forward} via $f$. We note that in general this is not a random walk even when the Markov chain on $H$ is a random walk.
\end{definition}

This can be rephrased in terms of the corresponding random variables as follows.

\begin{lemma}
\label{lem:push_variable}
Let $(w^o_n)$ be a Markov chain on a group $H$ and let $(z^p_n)$ be its push-forward via some bijection $f:H \to G$. Then for all $g,h \in H$ we have
$$ \mathbb{P} \big[ w_{n}^{g} =h\big] =\mathbb{P} \big[ z_{n}^{f(g)} =f(h)\big].$$

Similarly, for all $A \subseteq H^{n}$ and $g \in H$ we have
$$ \mathbb P \big[ (w^g_0,\dots,w^g_{n-1}) \in A \big] =\mathbb P \big[ (z^{f(g)}_0,\dots,z^{f(g)}_{n-1})\in f^{(n)}(A)\big],$$
where $f^{(n)}:H^n\to G^n$ is the bijection obtained applying $f$ to each coordinate.
\end{lemma}

\begin{proof}
We proceed by induction on $n$, and in fact we show by induction that, given $\bar h=(h_0,\dots,h_{n-1})\in H^n$, $h\in H$, and $n$ we have
$$ \mathbb P \big[ (w^h_0,\dots,w^h_{n-1}) =\bar h \big] =\mathbb P \big[ (z^{f(h)}_0,\dots,z^{f(h)}_{n-1}) = f^{(n)}(\bar h)\big].$$

Given this, the final statement can be obtained simply by summing over all elements of $A$, and the first statement is a consequence of the final one.

For $n=0$, the equality is clear since the left-hand side is $1$ if $h=h_0$ and $0$ otherwise, and similarly for the right-hand side.

For $n=1$, the equality follows directly from the definition of the push-forward Markov chain, since the probability on both sides boils down to a transition probability.

Assume that the equality is true for $n\geq 1$, we will show it for $n+1$. Fix $\bar h=(h_0,\dots,h_{n})\in H^{n+1}$ and let $\bar h'=(h_0,\dots,h_{n-1})\in H^n$.

We compute

\begin{align*}
    \begin{split}
       \mathbb{P} \big[ (w^h_0,\dots,w_{n}^{h}) =\bar h\big] &= \mathbb P \big[w^{h}_{n} = h_{n} \vert (w^h_0,\dots,w_{n-1}^{h}) =\bar h' \big]\ \mathbb P \big[(w^h_0,\dots,w_{n-1}^{h}) =\bar h'\big] \\
       & = \mathbb P \big[ w^{h_{n-1}}_1=h_{n}\big]\ \mathbb P \big[ (w^h_0,\dots,w_{n-1}^{h}) =\bar h' \big] \\
       &=  \mathbb{P} \big[ z_{1}^{f(h_{n-1})} =f(h_{n})\big] \ \mathbb P \big[ (z^{f(h)}_0,\dots,z_{n-1}^{f(h)}) =f^{(n)}(\bar h') \big] \\
       & = \mathbb P \big[z^{f(h)}_{n} = f(h_{n}) \vert (z^{f(h)}_0,\dots,z_{n-1}^{f(h)}) =f^{(n)}(\bar h') \big]\ \mathbb P \big[ (z^{f(h)}_0,\dots,z_{n-1}^{f(h)}) =f^{(n)}(\bar h') \big]\\
       & =\mathbb P \big[ (z^{f(h)}_0,\dots,z_{n}^{f(h)}) =f^{(n+1)}(\bar h)],
    \end{split}
\end{align*}



where we use the Markov property in the second and fourth equality, as well as the inductive hypothesis in the third equality.

\end{proof}


\subsection{Tame Markov chains}
\label{tameMarkov}






\begin{definition}[Tame Markov chain]
\label{defn:tame}

We shall say that a Markov chain on $G$ is \textit{tame} if it satisfies the following conditions:\\
	\begin{enumerate}
	
    \item\label{item:bounded_jumps} {\bf (Bounded jumps)} There exists a finite set $S\subseteq G$ such that $p(g,h)=\mathbb P[w^g_1=h]=0$ if $h\notin gS$.
    \item\label{item:non-amen} {\bf (Non-amenability)} There exist $A>0$ and $\rho<1$ such that for all $x,y\in G$ and $n\geq 0$ we have
    $$\mathbb P[w^x_n=y]\leq A\rho^n.$$
  \item\label{item:irred} {\bf (Irreducibility)} For all $s \in G$ there exist constants $\epsilon_s, K_s>0$ such that for all $g \in G$ we have
  $$\mathbb P[w^g_k=gs] \geq \epsilon_s$$
  for some $k \leq K_s$.
	\end{enumerate}
	
\end{definition}

 
 \begin{remark}
 \label{rmk:bounded_jumps}
 	Once we have fixed a word metric $d_G$ on $G$ then the assumption of Bounded jumps \ref{defn:tame}-\eqref{item:bounded_jumps} is equivalent to the following: There exists a constant $K>0$ such that for all $n \in \mathbb N$ and starting point $o \in G$ we have $d_G(w^o_n, w^o_{n+1}) \leq K$.
\end{remark}

 \begin{remark}\label{rmk:no_finite_state}
 (We cannot reduce to the finite-state case.) One can show that there are uncountably many tame Markov chains on a free group with all transition probabilities of the form $i/10$. This boils down to constructing uncountably many labellings on the edges of a standard Cayley graph with the property that for each vertex the sum of the labels emanating from it is 1, and all edges have weight at least 1/10. Note that, instead, there are only countably many random walks satisfying the same requirement on transition probabilities. What is more, the same is true for finite-state Markov chains, again with the same requirement, and this is one way to see that the study of tame Markov chains cannot be reduced to that of finite-state Markov chains.
 
 In fact, one can even construct uncountably many tame Markov chains on the free group that are all push-forwards of the same random walk via \emph{isometries} of the standard Cayley graph, using a similar strategy. Therefore, for our purposes of establishing a quasi-isometry invariant theory, we cannot reduce to finite-state Markov chains, even if we further restricted the notion of tameness in some way.
 \end{remark}






In the following lemma we consider a bijective quasi-isometry. Due to a result of Whyte \cite[Theorem 2]{whyte} every quasi-isometry between two non-amenable groups lies at finite distance from a bijection, so bijectivity will not be a real constraint for us.

Note also that the lemma applies with $G=H$ and $f$ the identity map, and in this case it says that random walks with suitable driving measures on non-amenable groups are tame Markov chains.

\begin{lemma}
\label{pushforwardistame}
Let $G,H$ be finitely generated non-amenable groups and let $f:H \to G$ be a bijective quasi-isometry. Let $\mu$ be the driving measure for a random walk on $H$ such that $\mu$ has finite support and generates $H$ as a semigroup. Then the push-forward Markov chain on $G$ (see Definition \ref{defn:push_forward})
is a tame Markov chain. 
\end{lemma}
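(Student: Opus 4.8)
The plan is to verify the three defining conditions of tameness for the push-forward Markov chain on $G$, translating each requirement across the bijective quasi-isometry $f$ into a corresponding (known or easy) statement about the random walk on $H$ driven by $\mu$. Throughout, let $A\geq 1$, $B, C\geq 0$ be the quasi-isometry constants for $f$, and recall that by Remark \ref{rmk:bounded_jumps} the Bounded jumps condition is, once word metrics are fixed, equivalent to a uniform bound on $d_G(w^o_n, w^o_{n+1})$.

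\textbf{Bounded jumps.} A one-step transition of the push-forward chain from $g$ to $h$ has positive probability exactly when $p_H(f^{-1}(g), f^{-1}(h)) > 0$, i.e.\ when $f^{-1}(h) \in f^{-1}(g)\cdot\supp(\mu)$. Since $\mu$ has finite support, there is a constant $D$ with $d_H(f^{-1}(g), f^{-1}(h)) \leq D$ whenever this happens. Applying the upper quasi-isometry inequality, $d_G(g,h) \leq A D + B =: K$, so all jumps of the push-forward chain are bounded by $K$; this gives condition \eqref{item:bounded_jumps}.

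\textbf{Non-amenability.} Here I would observe that $\mathbb{P}[w^g_n = h]$ for the push-forward chain equals $\mathbb{P}[v^{f^{-1}(g)}_n = f^{-1}(h)]$ for the random walk $(v_n)$ on $H$, which in turn is $\mu^{(n)}(x^{-1}y)$ for the appropriate $x,y\in H$. So it suffices to produce $A'>0$, $\rho'<1$ with $\mu^{(n)}(w) \leq A'(\rho')^n$ for all $w\in H$ and all $n$. This is the standard Kesten-type estimate: since $H$ is non-amenable and $\mu$ is finitely supported (and generates $H$ as a semigroup, hence after symmetrizing/convolving one gets a genuinely supported symmetric step), the spectral radius of the walk is strictly less than $1$, and $\sup_w \mu^{(n)}(w) \leq \mu^{(2n)}(e)^{1/2}$ decays exponentially. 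I would cite the classical fact (Kesten; see also the discussion of non-amenability and spectral radius in the introduction) rather than reprove it, being mildly careful about the non-symmetric case by passing to $\mu * \check\mu$.

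\textbf{Irreducibility.} This is the step I expect to be the main obstacle, because it is the one where both the group structure and the geometry genuinely interact. Given $s\in G$, I want $\epsilon_s, K_s > 0$ so that from every $g\in G$ the push-forward chain hits $gs$ with probability at least $\epsilon_s$ within $K_s$ steps. Transporting via $f$: writing $x = f^{-1}(g)$ and $y = f^{-1}(gs)$, I need $\mu^{(k)}(x^{-1}y) \geq \epsilon_s$ for some $k\leq K_s$, \emph{uniformly in $g$}. The subtlety is that $x^{-1}y$ is \emph{not} a fixed element of $H$ as $g$ varies — $f$ is only a quasi-isometry, not a homomorphism — but it does range over a \emph{bounded} subset of $H$: indeed $d_H(x,y) \leq A\, d_G(g,gs) + B = A\,|s|_G + B$, so $x^{-1}y$ lies in a finite set $F_s \subseteq H$ depending only on $s$. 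Since $\mu$ generates $H$ as a semigroup, for each $t\in F_s$ there is some $k(t)$ with $\mu^{(k(t))}(t) > 0$; taking $K_s = \max_{t\in F_s} k(t)$ and $\epsilon_s = \min_{t\in F_s} \mu^{(k(t))}(t) > 0$ — a minimum over the \emph{finite} set $F_s$ — does not quite work directly because we need a \emph{single} $k\leq K_s$ for each $t$, but that is exactly what $k(t)$ provides, and finiteness of $F_s$ makes $\epsilon_s$ positive. (One should also note $\supp(\mu) \ni e$ is not assumed, but if needed one can always wait: $\mu^{(k(t))}(t) > 0$ suffices with $k = k(t) \leq K_s$ as the ``some $k\leq K_s$'' in condition \eqref{item:irred}.) Assembling the three verified conditions completes the proof.
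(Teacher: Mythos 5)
Your proposal is correct and follows essentially the same route as the paper's proof: bounded jumps from the finite support of $\mu$ plus the quasi-isometry inequality, non-amenability by transporting the Kesten--Day exponential decay for the random walk on $H$ through the bijection, and irreducibility by noting that $(f^{-1}(g))^{-1}f^{-1}(gs)$ ranges over a finite subset of $H$ depending only on $s$ and then using semigroup generation, taking a min/max over that finite set. The only cosmetic slip is the direction of the quasi-isometry inequality used to bound $d_H(f^{-1}(g),f^{-1}(gs))$ (it should be $A(|s|_G+B)$ rather than $A|s|_G+B$), which does not affect the argument.
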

	
\begin{proof}
For convenience, we recall that the transition probabilities  defining the Markov chains under consideration are
$$p_G(g,h)=p_H(f^{-1}(g),f^{-1}(h))= \mu \Big( (f^{-1}(g))^{-1} f^{-1}(h) \Big).$$

\eqref{item:bounded_jumps} We first show that the Markov chain on $G$ has bounded jumps. Let $S'$ be the support of $\mu$, and note that there exists a finite subset $S$ of $G$ such that for all $h\in H$ we have $f(hS')\subseteq f(h)S$. Indeed, since $f$ is a quasi-isometry and $S'$ is finite, $f(hS')$ lies in a ball of uniformly bounded radius around $f(h)$ in any fixed word metric on $G$.

Thus, for any $g,h\in G$ we have $p_G(g,h)=0$ unless $h\in gS$, as required.


\par\medskip

\eqref{item:non-amen} The analogous statement for the random walks on $H$, which is a well-known consequence of non-amenability \cite{Kesten,Day}, readily implies this condition \ref{item:non-amen} for the Markov chain on $G$.

\par\medskip

\eqref{item:irred} We now verify the third point in the definition of tameness. First note that for any $s\in G$ there exists a finite set $S\subseteq H$ such that for any $g\in G$ we have $f^{-1}(gs)\in f^{-1}(g) S$; again this is a consequence of $f$ being a quasi-isometry.

Fix now $s\in G$. Given $g\in G$ there exists $s'\in S$ such that for all $k$ we have
$$p^{(k)}(g,gs)= p^{(k)}(f^{-1}(g), f^{-1}(g)s')=\mu^{(k)}(s').$$
Since the support of $\mu$ generates $G$ as a semigroup, for any $s'\in S$ there exists $K_{s'}$ such that $\mu^{(K_{s'})}(s')>\epsilon_{s'}$ for some $\epsilon_{s'}>0$. We can then take $\epsilon_s=\min_{s'\in S}(\epsilon_{s'})$ and $K_s=\max_{s'\in S}K_s$.




Hence, the Markov chain defined on $G$ is tame. 
\end{proof}
Throughout this paper, we will be working with tame Markov chains.
	
\subsection{Logarithmic subpaths }

In this subsection we prove two lemmas regarding what kinds of paths Markov chains could or are likely to follow.

The first lemma roughly speaking gives a lower bound on the probability that the Markov chain ends up at a specified element. 

\begin{lemma}
\label{epsilonlength}
	Let $(w^o_n)_n$ be a tame Markov chain on a finitely generated group $G=\langle S \rangle$. Then there exist constants $\epsilon_0,U>0$ such that the following holds. For all $y \in G$ there exists $k \leq \ell_S(y) U$ such that for all $h \in G$ we have
	$$\mathbb{P}\big[ w_k^h=hy\big] \geq \epsilon_0^{\ell_S(y)}$$ where $\ell_S(y)$ is the word length of the element $y$ with respect to the finite generating set $S$.
\end{lemma}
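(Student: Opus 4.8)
The plan is to build a path from $h$ to $hy$ by concatenating the short ``local moves'' guaranteed by irreducibility, one per letter of a geodesic word for $y$. Write $y=s_1s_2\cdots s_m$ with each $s_i\in S$ and $m=\ell_S(y)$, and let $K_s,\epsilon_s$ be the irreducibility constants of Definition~\ref{defn:tame}-\eqref{item:irred}. Since $S$ is finite I would set $U:=\max_{s\in S}K_s<\infty$ and $\epsilon:=\min_{s\in S}\epsilon_s\in(0,1]$; these will be the constants $U$ and $\epsilon_0$ in the statement.

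Fix a basepoint $h\in G$ and put $g_0:=h$ and $g_i:=hs_1\cdots s_i$, so that $g_m=hy$. Applying irreducibility with the element $s_i$ at the point $g_{i-1}$ produces an integer $k_i\le K_{s_i}\le U$ with $\mathbb P\big[w^{g_{i-1}}_{k_i}=g_i\big]\ge\epsilon_{s_i}\ge\epsilon$. Setting $k:=k_1+\cdots+k_m$, the Markov property (Lemma~\ref{lem:Markov_property_set}, applied iteratively together with Equation~\eqref{eqn:change_of_basepoint}) gives
$$\mathbb P\big[w^{h}_{k}=hy\big]\ \ge\ \mathbb P\Big[w^{h}_{k_1+\cdots+k_i}=g_i\ \text{for } 1\le i\le m\Big]\ =\ \prod_{i=1}^{m}\mathbb P\big[w^{g_{i-1}}_{k_i}=g_i\big]\ \ge\ \epsilon^{\,m},$$
while $m\le k\le mU$. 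Thus with $\epsilon_0:=\epsilon$ one gets $\mathbb P[w^h_k=hy]\ge\epsilon_0^{\ell_S(y)}$ and $k\le\ell_S(y)\,U$, which is the desired conclusion (the case $y=e$ being trivial with $k=0$).

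The subtle point — and the step I expect to require real care — is that the length $k=k_1+\cdots+k_m$ produced this way depends a priori on the basepoint $h$, via the choices $k_i=k_i(g_{i-1})$, whereas the statement asks for a single $k$ valid for every $h$. To address this I would pad the local moves to a common length: using irreducibility with the identity $e\in G$, from any point one returns to it in at most $K_e$ steps with probability $\ge\epsilon_e$, hence (by repeating such a loop) in exactly $L:=\mathrm{lcm}(1,\dots,K_e)$ steps with probability $\ge\epsilon_e^{\,L}$, and therefore in any number of steps in $k_i+L\mathbb Z_{\ge 0}$ from $g_{i-1}$ to $g_i$, with the probability degrading only by a controlled factor. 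Inserting the appropriate number of such length-$L$ loops lets one raise the total to a value depending only on $\ell_S(y)$, at the cost of replacing $\epsilon_0$ by a smaller constant; matching the residual lengths across all basepoints is the part of the argument that needs genuine attention, and everything else reduces to the routine concatenation above.
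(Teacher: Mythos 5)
Your first two paragraphs are exactly the paper's argument: the paper proves the lemma by induction on $\ell_S(y)$, concatenating the irreducibility moves via the Markov property (Lemma \ref{lem:Markov_property_set}) and taking $\epsilon_0=\min_{s\in S}\epsilon_s$ and $U=\max_{s\in S}K_s$, just as you do. The difference lies in your final paragraph. The uniformity issue you flag is simply not addressed in the paper: its proof writes $k_s$, i.e.\ it implicitly reads Definition \ref{defn:tame}-\eqref{item:irred} as supplying a time depending only on $s$ and not on the basepoint, and under that reading your concatenation (equivalently, the paper's induction) already yields the statement with no padding step.

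Your proposed repair, however, cannot be completed in general, and in fact under the literal reading of irreducibility (where the time $k\le K_s$ may depend on $g$) the uniform-in-$h$ statement is false: padding with loops of a fixed length $L$ only reaches times in one residue class modulo $L$, and nothing in the definition of tameness gives the aperiodicity needed to align these classes over all basepoints. Concretely, push forward the simple random walk on $F_2=\langle a,b\rangle$ (Definition \ref{defn:push_forward}) by the bijection of $F_2$ swapping $1$ and $a$ and fixing all other elements; this is a tame Markov chain by Lemma \ref{pushforwardistame}, yet for $y=a$ one has $\mathbb{P}\big[w^b_k=ba\big]=\mu^{(k)}(a)>0$ only for $k$ odd, while $\mathbb{P}\big[w^a_k=a^2\big]=\mu^{(k)}(a^2)>0$ only for $k$ even, so no single $k$ serves all basepoints. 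The realistic conclusion is that the quantifier ``there exists $k$ \ldots for all $h$'' should be weakened so that $k=k(h,y)\le \ell_S(y)U$ may depend on the basepoint; that weaker form is precisely what your concatenation (and the paper's induction) proves, and it is all that is needed downstream, e.g.\ in Lemma \ref{markovword} one can work with the event that a copy of $y$ appears within a window of length $\ell_S(y)U$ rather than at exact multiples of one fixed $k$. So the gap you worried about is not one you could have closed as stated, and it is not closed in the paper either; your core argument is the paper's proof.
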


\begin{proof}
We can take $\epsilon_0=\min \{\epsilon_s : s\in S\}$ and $U=\max \{K_s : s\in S\}$, where $\epsilon_s,K_s$ are as in Definition \ref{defn:tame}-\eqref{item:irred}.

We will work by induction on the length $l=\ell_S(y)$ of the element $y$. For $l=1$, this is clear as we can choose $k =k_s \leq U$ where $y=s\in S$ and $k_s$ is from Definition \ref{defn:tame}-\eqref{item:irred}.

We assume the lemma holds for all elements of length $l$, and we shall show it for elements of length $l+1$. Let $y=s_1\cdots s_ls_{l+1}$. By the inductive hypothesis, there exists some $k \leq l U$ such that for all $h \in G$ we have $\mathbb{P}\big[ w_k^h=hs_1\cdots s_l\big] \geq \epsilon_0^{l}$. \\

We let $k_{s_{l+1}} \leq U$, from the Definition \ref{defn:tame}-\eqref{item:irred}, be such that $ \mathbb{P}\big[ w_{k_{s_{l+1}}}^{hs_1\cdots s_l}=hs_1\cdots s_ls_{l+1}\big] \geq \epsilon_0 $. Let $k'=k+k_{s_{l+1}}$. 

Hence, for all $h \in G$: 
 \begin{align*}
\begin{split}
	\mathbb P \big[ w_{k'}^h=hy\big]& \geq \mathbb P \big[ (w_{k'}^h=hy) \cap (w^h_k=hs_1\cdots s_l)\big] \\
	& = \mathbb P \big[ w_{k'}^h=hy \quad \vert \quad w^h_k=hs_1\cdots s_l\big] \mathbb P \big[w^h_k=hs_1\cdots s_l\big] \\
	& = \mathbb P \big[  w^{hs_1\cdots s_l}_{k_s}=hy\big] \mathbb P \big[w^h_k=hs_1\cdots s_l\big] \\
	& \geq \epsilon_0 \epsilon^l_0
\end{split}
 \end{align*}

where we used Lemma \ref{lem:Markov_property_set} for the ``change of basepoint'' from the second to the third line. Further $k' \leq (l+1)U$ as desired. This proves the lemma.
\end{proof}

The following lemma will be essential when proving Lemma \ref{etalogproj} later, and roughly speaking it says that it is very likely that a Markov path of length $n$ contains ``a copy'' of any given path of length about $\log(n)$ that it can possibly contain.

\begin{lemma} 
\label{markovword}
Let $G$ be a finitely generated group with a fixed generating set $S$. Let $(w^{o}_{n})$ be a tame Markov chain.  Then there exist constants $\eta, U>0$ such that the following holds for all basepoints $o \in G$. For all $n\geq 1$ and all elements $y \in G$ with $\ell_S(y) \leq \eta \log(n)$ we have: $$\mathbb{P}\Big[\exists i,j \leq n \hspace{2mm} : \hspace{2mm} (w^{o}_{i})^{-1}w^{o}_{j}=y\Big] \geq 1-e^{-\sqrt{n}/U}.
$$

\end{lemma}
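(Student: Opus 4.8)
\textbf{Proof plan for Lemma \ref{markovword}.}

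The plan is to chop the Markov path $(w^o_0,\dots,w^o_n)$ into a collection of roughly $\sqrt{n}$ disjoint blocks, each of length about $\sqrt{n}$, and show that in each block there is a uniformly-bounded-below probability $p>0$ of seeing the prescribed pattern $y$, i.e.\ of having $(w^o_i)^{-1}w^o_j=y$ for some $i,j$ within that block. Concretely, fix $\eta>0$ to be chosen later and a word $y$ with $\ell_S(y)\le \eta\log n$. Partition $\{0,1,\dots,n\}$ into consecutive blocks $I_1,\dots,I_m$ with $m\approx\sqrt n$ and $|I_t|\approx\sqrt n$. Inside a single block starting at time $k$, I apply Lemma \ref{epsilonlength}: there is $\kappa\le \ell_S(y)\,U\le \eta U\log n$ such that, conditionally on $w^o_k=h$ for any $h$, we have $\mathbb P[w^o_{k+\kappa}=hy\mid w^o_k=h]\ge \epsilon_0^{\ell_S(y)}\ge \epsilon_0^{\eta\log n}=n^{-\eta\log(1/\epsilon_0)}$. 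Provided $\eta$ is chosen small enough that $\eta U\log n$ is comfortably smaller than the block length $\sqrt n$ (for $n$ large; small $n$ can be absorbed by enlarging $U$), this event ``the pattern occurs between times $k$ and $k+\kappa$'' fits inside the block, and on it we indeed get $(w^o_k)^{-1}w^o_{k+\kappa}=y$.

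The key point is to get independence (or at least a product-type bound) across blocks so that the probability of failing in \emph{all} $m$ blocks is at most $(1-p)^m$. This is where Lemma \ref{lem:Markov_property_set} enters: conditioning on the value $w^o_{k_t}=h_t$ at the start of the $t$-th block, the future of the chain restarts as a fresh Markov chain from $h_t$, and the ``pattern occurs in block $t$'' event depends only on that restarted segment. Hence, by iterating the conditioning block by block (conditioning successively on the endpoints $w^o_{k_1},w^o_{k_2},\dots$), one obtains
$$
\mathbb P\big[\text{no block contains the pattern}\big]\ \le\ (1-p)^m,
$$
where $p=\inf_{h\in G}\mathbb P[\exists\, i,j\le |I|\,:\,(w^h_i)^{-1}w^h_j=y]\ \ge\ n^{-\eta\log(1/\epsilon_0)}$ uniformly in the starting point $h$ (this uniformity is exactly what the ``for all $h\in G$'' in Lemma \ref{epsilonlength} provides). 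Since the occurrence of the pattern in any block implies $\exists i,j\le n$ with $(w^o_i)^{-1}w^o_j=y$, we conclude
$$
\mathbb P\big[\exists\, i,j\le n:\,(w^o_i)^{-1}w^o_j=y\big]\ \ge\ 1-(1-p)^m\ \ge\ 1-\exp(-pm).
$$
Finally I balance the constants: $m\ge \tfrac12\sqrt n$ and $p\ge n^{-\eta\log(1/\epsilon_0)}$, so $pm\ge \tfrac12 n^{\,1/2-\eta\log(1/\epsilon_0)}$. Choosing $\eta$ small enough that $1/2-\eta\log(1/\epsilon_0)$ stays close to (and in particular at least, say) $1/2$ would already do, but even choosing $\eta$ so that this exponent is $\ge 1/2$ isn't literally possible; instead one picks $\eta$ so the exponent is any fixed positive number, which gives a bound of the form $1-\exp(-n^{c})$ for some $c>0$, and then weakens it to $1-e^{-\sqrt n/U}$ after possibly shrinking $\eta$ further so that $c\ge 1/2$ — here one should instead take block length $n^{c}$ rather than $n^{1/2}$, but since the statement only asks for $e^{-\sqrt n/U}$ it is cleanest to fix $\eta$ small, note $1/2-\eta\log(1/\epsilon_0)\ge 1/2-\varepsilon$, and absorb the discrepancy by enlarging $U$. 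I would present it by choosing block length $\lceil\sqrt n\rceil$, fixing $\eta<\tfrac{1}{4\log(1/\epsilon_0)}$ (and $\eta U<1$, adjusting $U$), so that $pm\ge \sqrt n/U$ for all $n$ large, and handling finitely many small $n$ trivially by further enlarging $U$.

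The main obstacle is the bookkeeping that makes the block decomposition genuinely valid: one must ensure (i) the pattern-insertion window of length $\kappa\le \eta U\log n$ really fits strictly inside a block of length $\approx\sqrt n$, which forces the constraint relating $\eta$, $U$, and forces us to discard $o(\log n)$ time-steps at the end of each block so the windows in different blocks do not overlap; (ii) the conditioning is applied in the correct order — one conditions on the \emph{start} of each block and uses the Markov restart property of Lemma \ref{lem:Markov_property_set}, rather than trying to condition on disjoint events directly, since the block events are not independent on the nose but become so after conditioning on the separating values; and (iii) the uniform-in-$h$ lower bound $p$ is used, which is precisely why Lemma \ref{epsilonlength} was stated with the ``for all $h\in G$'' quantifier. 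None of these steps is deep, but getting the quantifiers and the order of conditioning right is the crux; the probabilistic content is just the elementary estimate $(1-p)^m\le e^{-pm}$ together with $pm\ge\sqrt n/U$.
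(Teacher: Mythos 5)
Your overall strategy is exactly the paper's: decompose the path into consecutive blocks, use Lemma \ref{epsilonlength} to get a uniform-in-basepoint success probability $\epsilon_0^{\ell_S(y)}\geq n^{-\eta\log(1/\epsilon_0)}$ per block, iterate the Markov restart property (Lemma \ref{lem:Markov_property_set}) to get a product bound across blocks, and finish with $(1-p)^m\leq e^{-pm}$. The only difference is the block length, and that is where your argument as written breaks. With blocks of length $\lceil\sqrt n\rceil$ you get $m\approx\sqrt n$ trials, so $pm\geq\tfrac12 n^{1/2-\eta\log(1/\epsilon_0)}$, and the exponent $1/2-\eta\log(1/\epsilon_0)$ is strictly less than $1/2$ for every $\eta>0$. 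Consequently $pm=o(\sqrt n)$, and your claimed inequality ``$pm\geq\sqrt n/U$ for all $n$ large'' is false for every fixed $U$: you would need $U\geq 2n^{\eta\log(1/\epsilon_0)}$, which grows with $n$. Neither enlarging $U$ nor shrinking $\eta$ can absorb a polynomially growing factor, so the final bound you actually obtain is $1-e^{-n^{1/2-\epsilon}}$, which is genuinely weaker than the statement $1-e^{-\sqrt n/U}$.

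The fix is the one you mention in passing and then discard: do not waste most of each block. Take the blocks to be the insertion windows themselves, of length $k\leq U\ell_S(y)=O(\log n)$ (this is what the paper does, setting $u^o_i=(w^o_{(i-1)k})^{-1}w^o_{ik}$), so that the number of conditionally independent trials is about $n/(U\eta\log n)$ rather than $\sqrt n$. With $\eta=\tfrac{1}{3\log(1/\epsilon_0)}$ each trial succeeds with probability at least $n^{-1/3}$, giving a failure probability at most $(1-n^{-1/3})^{n/(\eta U\log n)}\leq e^{-n^{2/3-o(1)}}$, which comfortably beats $e^{-\sqrt n/U}$ for large $n$ (small $n$ are absorbed into $U$). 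Any block length at most roughly $n^{1/2-\eta\log(1/\epsilon_0)}$ would also work; $\sqrt n$ is precisely the threshold at which the bookkeeping fails. The rest of your proposal (the order of conditioning on block starting points, the uniformity in $h$ from Lemma \ref{epsilonlength}, and fitting the window inside the block) is correct and matches the paper's proof.
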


\begin{proof}

We fix $\epsilon_0, U,k>0$ as in Lemma \ref{epsilonlength} (note that we will increase $U$ in the last step of the proof). For any basepoint $o$ and $i\geq 1$, we set $u^o_i=(w^o_{(i-1)k})^{-1} w^o_{ik}$. Our goal, roughly, is to show that the probability that no $u^o_i$ equals $y$ is small (uniformly over the choice of basepoint $o$).

By induction on $r$, we show:

\begin{claim}
	 For all $r \geq 1$, and $o\in G$ we have
	 $\mathbb{P}\Big[\forall  1\leq i \leq r \hspace{2mm}  : u^o_{i} \neq y\Big]\leq (1-\epsilon_0^l)^{r}.$
\end{claim} 

\begin{proof}[Proof of Claim]
	
For the base case $r=1$, since $w_0^o=o$ (with probability 1) we have $$ \mathbb P \big[u^{o}_1 \neq y\big]=\mathbb P \big[ o^{-1}w_k^{o}\neq y \big] \leq 1-\epsilon_0^{l}$$
by Lemma \ref{epsilonlength}.

Now, assume that the claim holds for $r$, we will show that it holds for $r+1$.

\begin{align*}
	\begin{split}
		\mathbb{P}\Big[\forall  1\leq i \leq r+1 \hspace{2mm}  : u^o_{i} \neq y\Big] &= \mathbb P \Big[(\forall  1\leq i \leq r \hspace{2mm}  : u^o_{i} \neq y) \cap (u^{o}_{r+1}\neq y)  \Big] \\
		&=\mathbb P \Big[ u_{r+1}^{o} \neq y \Big\vert \quad \forall  1\leq i \leq r\hspace{2mm}  : u^o_{i} \neq y\Big] \mathbb P \Big[\forall  1\leq i \leq r \hspace{2mm}  : u^o_{i} \neq y \Big] \\
		& \leq  \mathbb P \Big[ u_{r+1}^{o} \neq y \Big\vert \quad \forall  1\leq i \leq r\hspace{2mm}  : u^o_{i} \neq y\Big](1-\epsilon_0^l)^{r}
	\end{split}
\end{align*}

by the inductive hypothesis. Now, 

\begin{align*}
	\begin{split}
		\mathbb P \Big[ u_{r+1}^{o} \neq y \Big\vert \quad \forall  1\leq i \leq r\hspace{2mm}  : u^o_{i} \neq y\Big] &= \sum_{g \in G} \mathbb P \Big[ u_{r+1}^{o} \neq y \Big\vert \quad (\forall  1\leq i \leq r\hspace{2mm}  : u^o_{i} \neq y ) \cap (w_{rk}=g)\Big]\mathbb P \Big[w^{o}_{rk}=g \Big\vert \forall  1\leq i \leq r\hspace{2mm}  : u^o_{i} \neq y  \Big] \\
		& = \sum_{g \in G} \mathbb P \Big[ (w_{rk}^{o})^{-1}w_{(r+1)k}^{o}  \neq y \Big\vert w_{rk}=g\Big]\mathbb P \Big[w^{o}_{rk}=g \Big\vert \forall  1\leq i \leq r\hspace{2mm}  : u^o_{i} \neq y  \Big] \\
		&= \sum_{g \in G} \mathbb P \Big[ w_k^{g} \neq gy \Big]\mathbb P \Big[w^{o}_{rk}=g \Big\vert \forall  1\leq i \leq r\hspace{2mm}  : u^o_{i} \neq y  \Big] \\
		& \leq (1-\epsilon_0^l),
	\end{split}
\end{align*}
where the first line is just the law of total probabilities on a conditional probability, to go to the second we use the (usual) Markov property, and to go to the third we use the strong Markov property and then Lemma \ref{epsilonlength} (or the base case of the induction). 

This leads to $$\mathbb{P}\Big[\forall  1\leq i \leq r+1 \hspace{2mm}  : u^o_{i} \neq y\Big] \leq \mathbb P \Big[ u_{r+1}^{o} \neq y \Big\vert \quad \forall  1\leq i \leq r\hspace{2mm}  : u^o_{i} \neq y\Big](1-\epsilon_0^l)^{r-1}\leq (1-\epsilon_0^l)^{r+1}
 $$ and this proves the inductive step, the claim follows. 
\end{proof}

Using the claim we get (for any $y$, and the associated $k$ and $l=\ell_S(y)$)

\begin{equation*}
 \begin{split}
  \mathbb{P}\Big[\forall i,j \leq n \hspace{2mm} \vert \hspace{2mm} (w_{i})^{-1}w_{j}\neq y\Big] & \leq  \mathbb{P}\Big[\forall 1\leq i \leq \lfloor n/k \rfloor \quad u_i \neq y \Big] \\
  &\leq (1-\epsilon_0^l)^{\lfloor n/k \rfloor-1}\\
  &\leq (1-\epsilon_0^l)^{n/(lU)-1}, 
 \end{split}
\end{equation*}
where for the last line recall that $k\leq U l$.

We now assume that $y$ satisfies $l=\ell_S(y)\leq \eta\log(n)$ for $\eta = \frac{-1}{3\log(\epsilon_0)}$ and we show that for all but finitely many $n$ we have $(1-\epsilon_0^l)^{n/U-1}\leq e^{-\sqrt{n}}$. This suffices to prove the lemma for a large enough $U$, and specifically we will now consider $n\geq 1$ large enough that $n/(lU)-1\geq n^{5/6}$. In this setup we have $\epsilon_0^l \geq \epsilon_0^{\eta\log(n)}= n^{-1/3}$ and

$$(1-\epsilon_0^l)^{n/(lU)-1}\leq ((1-n^{-1/3})^{n^{1/3}})^{\sqrt{n}}\leq e^{-\sqrt{n}},$$

where we used that $(1-a^{-1})^a\leq e^{-1}$ for all $a\geq 1$. This concludes the proof.
\end{proof}

\section{WPD and super-divergent elements}
\label{sec:WPD}

\subsection{Super-divergent elements}
We define super-divergent elements, study some  basic properties of these elements, and give some examples.

For a subset $A \subseteq Z$ of a metric space $Z$ and a fixed map $\pi:Z \rightarrow A$ we write $d_{A}(x,y):= d(\pi(x), \pi(y)).$

\begin{definition}[Super-divergence]
Let $A$ be a subset of a metric space $Z$ and let $\pi:Z \rightarrow A$ be a projection, that is, a map such that $\pi|_{A}$ is the identity. We say that the projection $\pi$ is \textit{super-divergent} if there exists a constant $\theta > 0$ and a super-linear function $f:\mathbb{R}^{+} \to \mathbb{R}^{+}$ such that the following holds. For all $d >0$ and paths $p$  remaining outside of the $d$-neighbourhood of $A$, if $d_{A}(\pi(p_{-}),\pi(p_{+})) > \theta$ then $\ell(p) > f(d)$, where $p_{-}$ and $p_{+}$ are the endpoints of $p$ and $\ell(p)$ is its length. 
\end{definition}

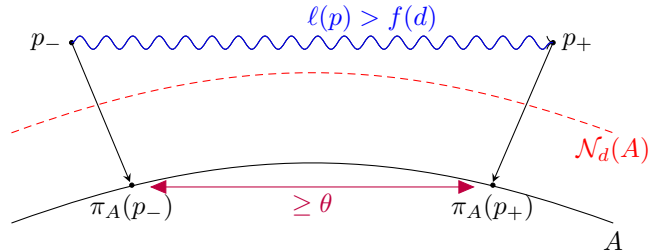
\begin{figure*}[h]
	\centering
	\begin{tikzpicture}[scale=0.8]
\draw[black] (0,0) to[out=20,in=160] node [pos=0.75] (mid1) {} 
        node [pos=0.55] (mid2) {} 
         node [pos=0.85] (mid3) {} (10,0);
\filldraw[black] (2,0.63) circle (1pt) node[anchor=north]{$\pi_A(p_{-})$};    
\filldraw[black] (8,0.63) circle (1pt) node[anchor=north]{$\pi_A(p_{+})$}; 

\draw[densely dashed][red] (0,1.5) to[out=20,in=160] node [pos=0.75] (mid1) {} 
        node [pos=0.55] (mid2) {} 
         node [pos=0.85] (mid3) {} (10,1.5);     

\filldraw[black] (1,3) circle (1pt) node[anchor=east]{$p_{-}$};
\filldraw[black] (9,3) circle (1pt) node[anchor=west]{$p_{+}$};

\draw[-stealth] (1,3) --(1.97,0.68);
\draw[-stealth] (9,3) --(8,0.68);

\draw [->,decorate,decoration=snake] (1,3) -- (9,3);
  \path [draw=blue,snake it] (1,3) -- (9,3);

\draw[blue] (6,3.4) node{$\ell(p) >f(d)$};

\draw[>=triangle 45, <->,purple] (2.3,0.6)-- (7.7,0.6) ;

\draw[purple] (5,0.3) node{$\geq \theta$};

\draw[red] (10,1.2) node{$\mathcal N_{d}(A)$};
\draw[black] (10,-0.3) node{$A$};

\end{tikzpicture}
\caption{The projection $\pi_A$ is super-divergent.}
\end{figure*}

\begin{definition}
\label{defn:X_proj}
Let $G$ act on a hyperbolic space $X$. If $A\subseteq G$, an $X$-\textit{projection} $\pi: G \rightarrow A$ is a map such that for all $h\in G$ we have that $\pi(h)x_0$ is a closest point in $Ax_0$ to $hx_0$, for a fixed basepoint $x_{0} \in X$.
\end{definition}

Recall that an element $g \in G$  \textit{acts loxodromically}  on $X$ if the map $\mathbb Z \rightarrow X$ given by $n \rightarrow g^nx_0$ is a quasi-isometric embedding for some (equivalently, any) $x_0 \in X$. We note that 
 if $g$ is loxodromic, then $\langle g \rangle x_0$ is quasi-convex in $X$. Hence we can apply Lemma \ref{lem:exo_hyperbolicspace} to $\langle g \rangle$, which we will do in Lemma \ref{lem:examples_of_superdivgelements} below.
\begin{definition}
Let $G$ act on a hyperbolic space $X$ and let $g \in G$. We say that $g$ is a \textit{super-divergent element}
if \begin{itemize}
\setlength\itemsep{1em}
    \item $g$ acts loxodromically on $X$, and
 
    \item an $X$-projection  $\pi_{g}=\pi : G \rightarrow \langle g \rangle$ is super divergent, where $G$ is endowed with a fixed word metric $d_G$. We shall call $\theta$ the \textit{super-divergent constant} for $g$.
\end{itemize}
\end{definition}

In the following lemmas we give examples of groups that have a super-divergent elements.

\begin{lemma}
\label{lem:examples_of_superdivgelements}
Infinite hyperbolic groups have a super-divergent element where the hyperbolic space acted on can be taken to be either 

\begin{enumerate}
    \item the Cayley graph with respect to a finite generating set, or (more generally)
    \item the coned-off Cayley graph with respect to a family of uniformly  quasi-convex infinite index subgroups.
\end{enumerate}
\end{lemma}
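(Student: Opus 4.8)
The plan is to produce, for an infinite hyperbolic group $G$, a loxodromically-acting element whose associated $X$-projection to $\langle g\rangle$ is super-divergent in the word metric, and to do this in a way that covers both the Cayley graph and the coned-off Cayley graph simultaneously. First I would fix the hyperbolic space $X$: in case (1) it is $\Cay(G,S)$ with the graph metric (which equals the word metric $d_G$), and in case (2) it is the coned-off Cayley graph $\widehat X$ obtained by coning the given family of uniformly quasi-convex infinite-index subgroups, which is hyperbolic (and, in case (1), the identity gives a $G$-equivariant map $\Cay(G,S)\to X$; in case (2) the coning map $\Cay(G,S)\to\widehat X$ is $G$-equivariant and $1$-Lipschitz). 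In either case, since $G$ is infinite hyperbolic it contains an infinite-order element, and an infinite-order element acts loxodromically on $\Cay(G,S)$; for the coned-off case one needs to choose $g$ that is moreover \emph{not conjugate into} any of the coned subgroups (such $g$ exists because the subgroups have infinite index, e.g. by a counting/growth argument or by picking $g$ with axis travelling through cosets of many different subgroups), so that $\langle g\rangle$ remains undistorted and loxodromic in $\widehat X$. Fix such a $g$; then $\langle g\rangle x_0$ is quasi-convex in $X$ by the remark preceding the statement, so Lemma~\ref{lem:exo_hyperbolicspace} applies to the closest-point projection $\pi_Q:X\to\langle g\rangle x_0$, yielding the constant $R$.

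Next I would set up the $X$-projection $\pi=\pi_g:G\to\langle g\rangle$ of Definition~\ref{defn:X_proj} by choosing, for each $h\in G$, an element $\pi(h)\in\langle g\rangle$ with $\pi(h)x_0$ a closest point in $\langle g\rangle x_0$ to $hx_0$; equivalently $\pi(h)x_0 = \pi_Q(hx_0)$ up to bounded error, and $\pi|_{\langle g\rangle}$ is the identity up to bounded error (which is enough — I would absorb such bounded errors into the constant $\theta$). The quantity $d_{\langle g\rangle}(x,y)$ in the definition of super-divergence is $d_G(\pi(x),\pi(y))$, the word-metric distance in $\langle g\rangle$; crucially, since $g$ is loxodromic (hence $\langle g\rangle$ is undistorted in $X$), there are constants $\lambda\ge 1,\mu\ge 0$ with $d_G(\pi(x),\pi(y))\le \lambda\, d_X(\pi(x)x_0,\pi(y)x_0)+\mu$ in case (1) and, in case (2), $d_G$-distance in $\langle g\rangle$ is comparably controlled by $\widehat d$-distance along the axis. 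Now take a path $p$ in $\Cay(G,S)$ (i.e. a word-metric path) remaining outside the $d$-neighbourhood of $\langle g\rangle$, with $d_{\langle g\rangle}(\pi(p_-),\pi(p_+))>\theta$ for a suitable $\theta$ to be chosen. Push $p$ forward to a path $\bar p$ in $X$: in case (1) this is just $p$ itself viewed in $\Cay(G,S)=X$; in case (2) it is the image under the $1$-Lipschitz coning map, so $\ell_{\widehat X}(\bar p)\le\ell_{\Cay}(p)=\ell(p)$. Choosing $\theta$ large enough that $d_X(\pi_Q(p_-x_0),\pi_Q(p_+x_0))\ge R$, part (2) of Lemma~\ref{lem:exo_hyperbolicspace} gives points $m_1,m_2$ on a geodesic $[\bar p_-,\bar p_+]$ in $X$ with the subgeodesic between them lying in $\mathcal N_R(\langle g\rangle x_0)$ and of $X$-length at least $d_X(\pi_Q(p_-x_0),\pi_Q(p_+x_0))-2R$, which by undistortedness is $\ge c\cdot d_{\langle g\rangle}(\pi(p_-),\pi(p_+))-c'$ for constants $c>0,c'$. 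The point of staying outside $\mathcal N_d(\langle g\rangle)$ in the word metric is that $\bar p$ then stays outside $\mathcal N_{d/\lambda - c''}(\langle g\rangle x_0)$ in $X$ (again by undistortedness / Lipschitzness of the orbit map and the coning map), i.e. far from the axis, while $[\bar p_-,\bar p_+]$ hugs the axis near its middle; so $\bar p$ must ``go out and come back'', and a standard hyperbolicity/divergence estimate — any path avoiding the $t$-neighbourhood of a quasi-geodesic while its endpoints project far apart on that quasi-geodesic has length at least exponential in $t$ — forces $\ell_{\widehat X}(\bar p)\ge e^{t/C}$ for $t\sim d/\lambda$. Since $\ell(p)\ge\ell_{\widehat X}(\bar p)$, setting $f(d):= e^{(d/\lambda-c'')/C}$, which is super-linear, we get $\ell(p)>f(d)$ as required.

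The one genuine ingredient I need to invoke is the \emph{exponential divergence} of geodesics in hyperbolic spaces: in a $\delta$-hyperbolic space, a path whose endpoints lie on a geodesic (or uniform quasi-geodesic) $\gamma$ at distance $\ge 2t+$const along $\gamma$, but which stays outside the $t$-neighbourhood of $\gamma$, has length $\ge 2^{t}/(\text{const})$. This is classical (it is how one proves hyperbolic spaces have exponential divergence; see Bridson–Haefliger III.H.1.6 or the divergence literature) and I would cite it rather than reprove it. Everything else is bookkeeping: reducing case (2) to case (1)-type estimates via the $1$-Lipschitz coning map and the undistortedness of $\langle g\rangle$ in $\widehat X$, and tracking how word-metric neighbourhoods of $\langle g\rangle$ compare to $X$-neighbourhoods of $\langle g\rangle x_0$.

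The main obstacle I anticipate is case (2): one must be sure the chosen $g$ remains \emph{loxodromic and undistorted in the coned-off space}, i.e. that $\langle g\rangle$ does not get ``collapsed'' by the coning — this requires choosing $g$ not conjugate into (a bounded neighbourhood of) any coned subgroup, and then citing the relevant fact (e.g. from the theory of coned-off Cayley graphs / Bowditch's work, or the fact that such $g$ are WPD/loxodromic in $\widehat X$) that such an element acts loxodromically with $\langle g\rangle x_0$ quasi-convex in $\widehat X$ with uniform constants. Once loxodromicity and undistortedness in $\widehat X$ are in hand, the divergence argument is identical to case (1). A secondary, minor point is verifying that the various ``closest-point projection versus $X$-projection'' and ``orbit map'' discrepancies are uniformly bounded so that they can be swallowed into $\theta$ and into the additive constants in $f$; this is routine given Lemma~\ref{lem:exo_hyperbolicspace}(1) and undistortedness.
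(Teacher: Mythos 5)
Your case (1) is essentially the paper's argument (infinite-order element, Lemma \ref{lem:exo_hyperbolicspace}, then the classical exponential divergence estimate of Bridson--Haefliger III.H.1.6), and that part is fine. The problem is case (2), where your plan has a genuine gap at its central step. You push the path $p$ forward to the coned-off space $\hat X$ and claim that, because $p$ avoids the $d$-neighbourhood of $\langle g\rangle$ \emph{in the word metric}, its image avoids a neighbourhood of $\langle g\rangle x_0$ of radius roughly $d/\lambda$ \emph{in $\hat X$}, so that exponential divergence in $\hat X$ gives $\ell(p)\geq \ell_{\hat X}(\bar p)\geq e^{t/C}$ with $t\sim d$. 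This implication goes the wrong way: the coning map is $1$-Lipschitz, so $d_{\hat X}\leq d_G$, and a lower bound on the word-metric distance to $\langle g\rangle$ gives no lower bound at all on the $\hat X$-distance. Concretely, a point deep inside a coset $kH$ of a coned subgroup is at $\hat X$-distance at most $2$ from every other point of $kH$, hence can be uniformly $\hat X$-close to the axis of $g$ while being arbitrarily far from $\langle g\rangle$ in $d_G$ (e.g.\ $a^N$ versus $\langle b\rangle$ in $F(a,b)$ with $H=\langle a\rangle$ coned). So the pushed-forward path need not leave any large $\hat X$-neighbourhood of the axis, the divergence radius $t$ cannot be taken comparable to $d$, and your lower bound on $\ell(p)$ evaporates.

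The paper avoids this entirely by never running the divergence estimate in $\hat X$: it shows (using the Kapovich--Rafi fact that geodesics of $\Cay(G,S)$ map uniformly Hausdorff-close to geodesics of $\hat X$, plus Lemma \ref{lem:exo_hyperbolicspace}) that the $\hat X$-projection $\hat\pi_g$ and the Cayley-graph closest-point projection $\pi_g$ onto $\langle g\rangle$ are uniformly close, so that large $\hat\pi_g$-displacement forces large $\pi_g$-displacement; then it applies the case (1) argument verbatim inside the Cayley graph, where the hypothesis ``$p$ avoids the word-metric $d$-neighbourhood of $\langle g\rangle$'' is directly usable because $G$ itself is hyperbolic and $\langle g\rangle$ is quasi-convex there. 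That projection-comparison claim is the missing idea in your write-up, and some version of it (or another mechanism for converting word-metric avoidance into a usable divergence input) is needed; the $1$-Lipschitz coning map alone cannot supply it. A secondary, smaller point: choosing $g$ ``not conjugate into any coned subgroup'' is not by itself a justification that $g$ is loxodromic on $\hat X$; the paper gets such an element by citing Minasyan together with Maher--Masai--Schleimer, and it also reduces the family of uniformly quasi-convex subgroups to a single one (the family being finite), which your sketch should at least note.
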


\begin{proof}

While technically the second case is more general, we prove the first one separately for clarity and since we will reduce the second case to the first one.

(1) Here, the $\delta$-hyperbolic space $X$ is the Cayley graph of $G$ with respect to a finite generating set $S$, for some $\delta \geq 0$. We know that as $G$ is hyperbolic, every infinite order element is loxodromic for the action of $G$ on its Cayley graph $X$. Let $g$ be such a loxodromic element. 
Let $R>0$ be the constant from Lemma \ref{lem:exo_hyperbolicspace} for the quasi-convex subgroup $\langle g\rangle$.  

Let  $d>R$ and $\alpha$ be a path staying outside of the $d$-neighbourhood of the axis $\langle g \rangle $ and such that the $X$-projection of $\alpha $ satisfies $d(\pi_{g}(\alpha_{-}), \pi_{g}(\alpha_{+})) \geq R $ where $\alpha_{-}$ and $\alpha_{+}$ are the endpoints of this path. Then by Lemma \ref{lem:exo_hyperbolicspace}, there is a point $m$ on the geodesic from $\alpha_{-}$ to $\alpha_{+}$ such that $d(m, \langle g \rangle) \leq R$. Hence $d(m, \alpha) \geq d-R$ and by \cite[Proposition III.H.1.6]{bridsonhaefliger} (taking exponentials on both sides), this leads to $\ell(\alpha) \geq 2^{(d-{R}-1)/ \delta}$ which is super linear in $d$ and hence $g$ is a super-divergent element. \\

(2) We will show that $G$ contains a super divergent element for the action on the coned-off Cayley graph with respect to one quasi-convex infinite-index subgroup, the general case follows from the fact that a family of uniformly quasi-convex subgroups will be finite.

 Let $X$ be the Cayley graph of $G$ with respect to a generating set $S$ and $\hat{X}$ be the coned-off Cayley graph over the quasi-convex subgroup $H$, which is hyperbolic by \cite[Proposition 5.6]{kapovichrafi}.  Consider a loxodromic element $g$ for the action on $\hat{X}$; such an element exists in view of \cite[Theorem 2.4]{mahermasaischleimer}, applied to the element given by \cite[Theorem 1]{minasyan}. We note that for a point $x \in G$ we will interchangeably say that $x \in X$ or $x \in \hat{X}$. When considering geodesics in either of these spaces we will be more precise as to where they are. We note that by definition of the coned-off Cayley graph we have $d_{\hat{X}} \leq d_X=d_G$.\\

\textbf{Claim}:  There exists a constant $M>0$ such that the following holds. For $x \in G $, let $\hat{\pi}_g(x)$ be coned-off projection ($\hat{X}$-projection) of $x$ and let $\pi_g(x)$ be the closest-point projection of $x$ on $\langle g \rangle$. Then for all $x\in G$, we have $d_X(\hat{\pi}_g(x),\pi_g(x)) \leq M$. 

\begin{proof}[Proof of claim]

We will show that there is a constant $M'>0$ such that $d_{\hat{X}}(\hat{\pi}_g(x),\pi_g(x)) \leq M'$. By the fact that $\langle g \rangle \xhookrightarrow{q.i.} \hat{X}$ is a quasi-isometric embedding, this is enough to prove the claim.

We first note that by \cite[Proposition 5.6]{kapovichrafi}, there exists a constant $N>0$ such that the following holds. Any geodesic $ [p_1,p_2]_{X}$ in $X$ between points $p_1$ and $p_2$ gets mapped $N$-Hausdorff close to a geodesic $ [p_1, p_2]_{\hat{X}}$ in $\hat{X}$ between $p_1$ and $p_2$. \\
Let $M'=2(N+R)$ where $R$ is from Lemma \ref{lem:exo_hyperbolicspace}, for the hyperbolic space $\hat{X}$ and quasi-convex subgroup $\langle g \rangle$. For a contradiction, assume that $d_{\hat{X}}(\hat{\pi}_g(x),\pi_g(x))> M'$. Hence $d_X(\hat{\pi}_g(x),\pi_g(x))> M'$ and by Lemma \ref{lem:exo_hyperbolicspace} there is a point $m \in [x, \hat{\pi}_g(x)]_X$ (a geodesic in $X$) such that $d_X(m,\pi_g(x)) \leq R$. Let $q \in [x, \hat{\pi}_g(x)]_{\hat{X}}$ (this is a geodesic in $\hat{X}$) be such that $d_{\hat{X}}(q,m) \leq N $, which exists by the fact that $[x, \hat{\pi}_g(x)]_X$ gets mapped $N$-Hausdorff close to $[x, \hat{\pi}_g(x)]_{\hat{X}}$, with respect to $d_{\hat{X}}$. \\

Hence $d_{\hat{X}}(q,\pi_g(x)) < R+N$ and so $d_{\hat{X}}(q,\hat{\pi}_g(x)) < R+N$ (otherwise $\hat{\pi}_g(x)$ would not be the $\hat{X}$-projection of $x$). This leads to $$
2(N+R)= M'< d_{\hat{X}}(\hat{\pi}_g(x),\pi_g(x)) \leq d_{\hat{X}}(q,\pi_g(x))+d_{\hat{X}}(q,\hat{\pi}_g(x)) \leq 2(N+R)
$$ a contradiction, proving the claim.
\end{proof}

Let $\alpha $ be a path in $G$ staying outside of the $d$-neighbourhood  of $\langle g \rangle$ and such that $d_{X}(\hat{\pi}_{g}(\alpha_{-}),\hat{\pi}_g( \alpha_{+})) > \theta_1 +2M$ where $\theta_1$ is the super-divergent constant from the bullet point (1) above and $M$ is from the claim. The claim then leads to \begin{align*}
  \begin{split}
d_X(\pi_g(\alpha_{-}),\pi_g(\alpha_{+}))) &\geq d_{X}(\hat{\pi}_{g}(\alpha_{-}),\hat{\pi}_g( \alpha_{+}))-d_X(\hat{\pi}_g(\alpha_{-}),\pi_g(\alpha_{-}))-d_X(\hat{\pi}_g(\alpha_{+}),\pi_g(\alpha_{+})) \\
&\geq \theta_1+2M-2M = \theta_1
  \end{split}  
\end{align*}
We can therefore apply bullet point (1) above to conclude. 
\end{proof}

\begin{lemma}
\label{lem:rh_superdiv}
Let $(G,\mathcal P)$ be a relatively hyperbolic group, where all $P\in\mathcal P$ have infinite index in $G$. Then $G$ contains a super-divergent element for the action on the coned-off Cayley graph $\Cay(G,S\cup \mathcal P)$, where $S$ is a finite generating set for $G$.
\end{lemma}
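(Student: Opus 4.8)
The plan is to carry out the argument of Lemma~\ref{lem:examples_of_superdivgelements}(2), with the coned-off graph over a single quasi-convex subgroup replaced by $\hat X := \Cay(G, S\cup\mathcal P)$, and with the hyperbolic-group inputs replaced by their counterparts from the theory of relatively hyperbolic groups.

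First, $\hat X$ is a hyperbolic space on which $G$ acts, essentially by the definition of relative hyperbolicity (Farb/Bowditch/Osin). Next I would fix a \emph{hyperbolic element} $g$, i.e.\ one acting loxodromically on $\hat X$ and not conjugate into any $P\in\mathcal P$. Such a $g$ exists when $G$ is non-elementary, a non-elementary relatively hyperbolic group being acylindrically hyperbolic via its action on $\hat X$; if $G$ is elementary then the infinite-index hypothesis forces the peripherals to be finite, so that coning them off changes nothing and the statement is vacuous with $\langle g\rangle$ taken of finite index. As in Lemma~\ref{lem:examples_of_superdivgelements}, $\langle g\rangle x_0$ is quasi-convex in $\hat X$, so Lemma~\ref{lem:exo_hyperbolicspace} is available there; write $\pi_g$ for the $X$-projection onto $\langle g\rangle$ (i.e.\ the $\hat X$-closest-point projection) and $\pi^{G}$ for the $d_G$-closest-point projection onto $\langle g\rangle$.

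The two facts specific to relatively hyperbolic groups that I would use are: (a) the axis of a hyperbolic element is a strongly contracting (Morse) quasi-geodesic for the word metric $d_G$ (see \cite{sisto2013contracting}), so that $\pi^{G}$ is coarsely Lipschitz; and (b) $d_G$-geodesics without backtracking are uniform quasi-geodesics of $\hat X$, a consequence of the bounded coset penetration property. Using (a) and (b) in place of the Kapovich--Rafi estimate, a comparison argument in the spirit of the Claim in the proof of Lemma~\ref{lem:examples_of_superdivgelements}(2), but carried out inside the hyperbolic space $\hat X$, should yield $d_G(\pi_g(h),\pi^{G}(h))\le M$ for a uniform constant $M$. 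Since super-divergence of a projection is preserved, after enlarging $\theta$ by $2M$, under replacing it by a projection at bounded distance, it then suffices to prove that $\pi^{G}$ is super-divergent with respect to $d_G$.

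For this, let $p$ be a $d_G$-path staying outside the $d$-neighbourhood of $\langle g\rangle$ in $d_G$ with $d_G(\pi^{G}(p_-),\pi^{G}(p_+))>\theta$ (with $\theta$ large, to be fixed), viewed also as a path in $\hat X$ of $\hat X$-length at most $\ell_{d_G}(p)$. If $p$ additionally stays $\hat X$-far from $\langle g\rangle x_0$, then running the estimate of Lemma~\ref{lem:examples_of_superdivgelements}(1) inside $\hat X$ (which is hyperbolic) gives $\ell_{d_G}(p)\ge\ell_{\hat X}(p)\ge 2^{\Omega(d)}$. Otherwise $p$ comes $\hat X$-close to $\langle g\rangle$ while remaining $d_G$-far; this can only happen through a peripheral coset that $\hat X$-fellow-travels $\langle g\rangle$, and since such a coset has $\hat X$-diameter at most $2$ and $\pi^{G}$ agrees coarsely with the (coarsely $d_{\hat X}$-Lipschitz) projection $\pi_g$, any such ``peripheral excursion'' of $p$ moves $\pi^{G}$ only by a bounded amount, even though it costs $\gtrsim d$ in $d_G$-length. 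Taking $\theta$ larger than this bounded amount forces the projection progress of $p$ to occur along an arc that is $\hat X$-far from $\langle g\rangle$, to which the first case applies; and for $d$ below the scale at which this dichotomy begins one simply takes $f\le 1$, which suffices because any admissible $p$ has $\ell_{d_G}(p)\ge 1$. The step I expect to be hardest is making this dichotomy precise: controlling how a $d_G$-path interleaves excursions into the (exponentially distorted) peripheral cosets with arcs in the ``thin'' part of $G$, and checking both that the excursions contribute negligibly to $\pi^{G}$ and that the thin arcs genuinely cost a superlinear (conjecturally exponential) amount in the avoided radius $d$ — this is precisely where one needs the fine structure of geodesics in relatively hyperbolic groups.
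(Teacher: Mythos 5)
There is a genuine gap, and it sits exactly where you flag ``the hardest step'': the superlinear lower bound on $\ell(p)$ in terms of the $d_G$-avoidance radius $d$ is never established, and the dichotomy you propose cannot be made to work as stated. The hypothesis of super-divergence gives avoidance of the axis in the \emph{word metric} $d_G$, while the hyperbolicity you want to exploit lives in $\hat X$, and $d_{\hat X}\le d_G$ goes the wrong way: a path can stay at $d_G$-distance $\ge d$ from $\langle g\rangle$ while remaining at uniformly bounded $\hat X$-distance from it (e.g.\ by travelling deep inside peripheral cosets that pass near the axis). So in your first case the Bridson--Haefliger estimate inside $\hat X$ only yields $2^{\Omega(d')}$ where $d'$ is the $\hat X$-avoidance radius, not $2^{\Omega(d)}$; to get growth in $d$ you would need the case threshold to grow linearly with $d$, but then the complementary case ``$p$ comes $\hat X$-close to the axis'' no longer means ``$p$ enters a peripheral coset fellow-travelling $\langle g\rangle$'' (being within $\epsilon d$ in $\hat X$ carries no such structure), and the peripheral-excursion analysis collapses; with a constant threshold, case one gives no growth in $d$ at all. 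The per-excursion cost claim ``$\gtrsim d$'' is also unjustified: a single excursion near a peripheral coset can be cheap, and the real issue is how the path transitions between the boundedly many peripheral regions that lie close to the axis while staying $d_G$-far from it --- controlling that is the entire content of the lemma, and it is exactly what is missing. (The preliminary reductions --- existence of a loxodromic element for the coned-off action, quasi-convexity of its orbit in $\hat X$, coarse agreement of the $\hat X$-projection with the $d_G$-closest-point projection via contraction --- are plausible, but note the paper never needs the comparison with $\pi^G$: it works directly with the $\hat X$-projection.)

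For comparison, the paper's proof avoids any dichotomy and gets the superlinearity from a logarithmic tracking statement in the word metric. It fixes a loxodromic $g$ via \cite{osinelementary}, uses Osin's theorem \cite[Theorem 3.26]{osinrelativelyhyperbolic} that geodesic triangles of $\hat G$ are $\nu$-thin \emph{with respect to $d_G$}, and a quadrilateral argument to produce a point $m$ on a $\hat G$-geodesic between $\pi_g(\alpha_-)$ and $\pi_g(\alpha_+)$ lying $d_G$-close to a $\hat G$-geodesic $[\alpha_-,\alpha_+]_{\hat G}$; Hruska's results \cite[Lemma 8.8, Proposition 8.13]{hruskarelativelyhyperbolic} then transfer this to a \emph{transient} point $z$ on a $d_G$-geodesic $[\alpha_-,\alpha_+]$ with $d_G(z,\langle g\rangle)$ uniformly bounded, so $d_G(z,\alpha)\ge d-O(1)$; finally \cite[Lemma 4.3]{sisto2013tracking} says transient points of a $d_G$-geodesic lie within $R\log_2(\ell(\alpha)+1)+R$ of \emph{any} path $\alpha$ with the same endpoints, which gives $\ell(\alpha)\ge 2^{(d-O(1))/R}$. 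That exponential tracking of transient points (or some equivalent BCP-based estimate) is the ingredient your sketch would have to supply to close the gap; without it the proposal is a reduction of the statement to an unproved claim of essentially the same difficulty.
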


\begin{proof}
We denote by $\hat{G}$ the coned-off Cayley graph of $G$, with respect to $\mathcal P$. Further, denote by $d_G$ the word metric on $G$ with respect to the generating set $S$ and by $d_{\hat{G}}$ the metric in $\Cay(G,S\cup \mathcal P)$ with respect to $S \cup \mathcal P$. Clearly $d_{\hat{G}} \leq d_G$. Similarly to what we did above,  for a $x \in G$, we will interchangeably say that $x \in G$ or $x \in \hat{G}$. \\

By \cite[Theorem 3.26]{osinrelativelyhyperbolic}, there exists a constant $\nu>0$ such that the following holds. Let $\Delta = \xi_1\xi_2\xi_3$ be a geodesic triangle whose sides $\xi_i$ are geodesics in $\hat{G}$. Then for any vertex $a \in \xi_1$ there is a vertex $b \in \xi_2\cup \xi_3$ such that $d_G(a,b) \leq \nu$.\\

Let $g$ be an element acting loxodromically on $\hat{G}$ with axis $\langle g  \rangle$, which exists by \cite[Corollary 4.5]{osinelementary}. As $\langle g \rangle$ is quasi-isometrically embedded in $\hat{G}$ (say with quasi-isometric embedding constants $(a,b)$), the subgroup  $\langle g \rangle$ is $\iota$-quasi-convex in $\hat{G}$, for some $\iota>0$. Let $\alpha $ be a path in $G$, between points $\alpha_{-},\alpha_{+} \in G$,  staying outside of the $d$-neighbourhood (with respect to $d_{G}$) of $\langle g \rangle$ and such that $d_{G}(\pi_g(\alpha_{-}),\pi_g(\alpha_{+}) )\geq a(10 \nu +  10\iota +b)=: \theta$.

Let $\hat{\beta_1}$ be a geodesic, in $\hat{G}$, between $\pi_g(\alpha_{-})$ and $\pi_g(\alpha_{+})$. By the choice of $\theta$, we can find a point $m \in \hat{\beta_1}$ at distance at least $4\nu +3 \iota $ from both $\pi_g(\alpha_{-})$ and $\pi_g(\alpha_{+})$. Let $\hat{\beta_2}=[\alpha_{-},\alpha_{+}]_{\hat{G}}$ be a geodesic in $\hat{G}$, similarly $\hat{\beta_3}=[\alpha_{-},\pi_g(\alpha_{-})]_{\hat{G}}$ and $\hat{\beta_4}=[\alpha_{+},\pi_g(\alpha_{+})]_{\hat{G}}$. Then, by \cite[Theorem 3.26]{osinrelativelyhyperbolic} mentioned above and by a usual quadrangle argument, this leads to $d_{G}(m,\hat{\beta_2} \cup \hat{\beta_3} \cup \hat{\beta_4}) \leq 2\nu$. 

Now, by our choice of $m$ and the fact that $\pi_{g}(\alpha_{-}), \pi_g(\alpha_{+}) $ are closest point projections, we have $d_G(m, \hat{\beta_3} \cup \hat{\beta_4}) > 2\nu$. Hence there is a point $y \in \hat{\beta_2}$ such that $d_G(m,y) \leq 2 \nu $.\\

By  \cite[Lemma 8.8]{hruskarelativelyhyperbolic} and by the fact that $\langle g \rangle$ is also a quasi-geodesic in $G$, there is a constant $L_1>0$ (only depending on the quasi-geodesic constants of $\langle g \rangle$) and a point $m'\in \langle g \rangle$ such that $d_G(m,m') \leq L_1$. \\

Let $\beta_2$ be a geodesic, in $G$, from $\alpha_{-}$ to $\alpha_{+}$. Then by  \cite[Proposition 8.13]{hruskarelativelyhyperbolic}, there exists a constant $L_2$ such that for some transient point $z \in \beta_2$ we have that $d_G(y,z) \leq L_2$. We do not give the definition of a transient point and refer to \cite{hruskarelativelyhyperbolic} and \cite{sisto2013tracking} for definitions and properties. \\

Combining all of the above yields, $$
d_G(\langle g \rangle, z) \leq d_G(m',m) + d_G(m,y)+d_G(y,z) \leq L_1 + 2\nu +L_2 
$$

Hence $d_G(z,\alpha) \geq d-(L_1 + 2\nu +L_2 )$.

Using \cite[Lemma 4.3]{sisto2013tracking} and the fact that $z$ is a transient point,  we get that there exists a constant $R$ such that $$R\log_2(\ell(\alpha)+1) +R \geq d-(L_1 + 2\nu +L_2 )$$

This leads to \[
\ell(\alpha) \geq 2^{(d-(L_1 + 2\nu +L_2 )-R)/R}-1
\]
which is super-linear in $d$, giving the required function $f$.
\end{proof}

\subsection{A digression on divergence}
We now note a result, relating the existence of super-divergent elements and the  \textit{divergence} of the ambient group; this will not be needed in the rest of the paper.

We invite the reader to read \cite{DrutuMozesSapir}  for equivalent definitions of divergence in a group. 

We will only use the observation from \cite[Section 3]{divergencedrutu} that, given a bi-infinite quasi-geodesic $q$ in a metric space $X$, seen as a function $q : \mathbb{R} \to X$,  the \textit{divergence function of the quasi-geodesic} $Div^{q}_{\eta}$ is a lower bound for the divergence of $X$. For fixed constants $0 <\delta <1$ and $\eta \geq 0$, $Div^{q}_{\eta}(m)$ is defined as the infimum of the length  of the paths from $q(m)$ to $q(-m)$ avoiding the ball $B(q(0), \delta m-\eta)$. (We note that the choice of constants is irrelevant provided that they are in the range given in [Corollary 3.12, \cite{DrutuMozesSapir}]).

\begin{lemma}
\label{divergence}
If $G$ has a super divergent element $g$ for an action on a hyperbolic space $X$, then $G$ has at least super-quadratic divergence.
\end{lemma}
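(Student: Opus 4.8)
The plan is to use the cited observation from \cite{divergencedrutu}: the divergence function $Div^{q}_{\eta}$ of the bi-infinite quasi-geodesic $q\colon t\mapsto g^{t}$ is a lower bound for the divergence of $G$, so it suffices to show that $Div^{q}_{\eta}(m)$ eventually exceeds $Cm^{2}$ for every $C$. First I would note that $q$ really is a quasi-geodesic in $(G,d_{G})$: since $g$ acts loxodromically on $X$ and the orbit map $G\to X$ is coarsely Lipschitz, $\langle g\rangle$ is undistorted in $G$. Two facts about the $X$-projection $\pi=\pi_{g}\colon G\to\langle g\rangle$ will be used, both obtained by transporting the relevant inequalities into $X$ via the orbit map together with loxodromicity and Lemma~\ref{lem:exo_hyperbolicspace}: (a) $\pi$ is coarsely Lipschitz as a map $(G,d_{G})\to(\langle g\rangle,d_{G})$, so if $\langle g\rangle$ is parametrised by the exponent, the function $\phi$ recording the exponent of $\pi(\cdot)$ has uniformly bounded jumps along any path; and (b) $\pi$ is coarsely a $d_{G}$-closest-point projection, in the quantitative form that if $g^{j}$ realises $d_{G}(h,\langle g\rangle)=D$ then $|j-\phi(h)|\le C_{1}D+C_{2}$ for constants $C_{1},C_{2}$.

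Next, fix $m$ large and let $p$ be any path in the Cayley graph from $g^{m}$ to $g^{-m}$ avoiding $B(1,\delta m-\eta)$; if there is no such path then $Div^{q}_{\eta}(m)=\infty$ and there is nothing to prove, so assume there is one, parametrise it by arc length, and set $\phi(t)$ to be the exponent of $\pi(p(t))$, so $\phi(0)=m$ and $\phi(\ell(p))=-m$. The key observation is that there are constants $\delta'>0$ and $d'(m)\asymp m$ such that $|\phi(t)|\le\delta' m$ forces $d_{G}(p(t),\langle g\rangle)\ge d'(m)$. Indeed, if $d_{G}(p(t),\langle g\rangle)=D$ then by (b) the point $p(t)$ lies within $D$ of some $g^{j}$ with $|j|\le|\phi(t)|+C_{1}D+C_{2}\le\delta' m+C_{1}D+C_{2}$, hence within $D+\ell_{S}(g)(\delta' m+C_{1}D+C_{2})$ of $1$; since $p$ avoids $B(1,\delta m-\eta)$ this must be at least $\delta m-\eta$, and choosing $\delta'=\delta/(2\ell_{S}(g))$ and solving for $D$ yields $D\ge d'(m)$ with $d'(m)$ linear in $m$.

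Now comes the main step, a banding argument producing many disjoint long subpaths. Partition the central segment $\{g^{i}:|i|\le K\}$ of the axis, $K=\lfloor\delta' m\rfloor$, into $N=N(m)\asymp m$ pairwise disjoint bands of a fixed width $w$, where $w$ is chosen large enough (depending only on $\theta$ and the quasi-isometry constants) that any subpath whose $\pi$-image runs across a full band has endpoints whose $\pi$-projections are more than $\theta$ apart. Since $\phi$ runs from $m$ to $-m$ with bounded jumps it crosses every band, and for each band $B_{j}$ one extracts a ``clean crossing'' subpath $p_{[\sigma_{j},\sigma_{j}']}$ — for instance $\sigma_{j}$ the last time $\phi$ is above the upper edge of $B_{j}$ and $\sigma_{j}'$ the first time after $\sigma_{j}$ that $\phi$ reaches the lower edge — so that $\phi$ stays strictly inside $B_{j}$ on $(\sigma_{j},\sigma_{j}')$. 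These subpaths are pairwise disjoint because the bands are disjoint, their endpoints' $\pi$-projections are more than $\theta$ apart by the choice of $w$, and by the key observation together with continuity of $t\mapsto d_{G}(p(t),\langle g\rangle)$ each of them remains outside the $(d'(m)-1)$-neighbourhood of $\langle g\rangle$. Super-divergence of $\pi$ then gives $\ell(p_{[\sigma_{j},\sigma_{j}']})>f(d'(m)-1)$ for each $j$, so summing over the $N$ disjoint subpaths $\ell(p)>N\,f(d'(m)-1)$. As $p$ was arbitrary, $Div^{q}_{\eta}(m)\ge N\,f(d'(m)-1)$; since $N\asymp m$, $d'(m)-1\asymp m$, and $f$ is super-linear (so $f(d'(m)-1)/(d'(m)-1)\to\infty$), this quantity divided by $m^{2}$ tends to infinity. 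Hence $Div^{q}_{\eta}$, and therefore the divergence of $G$, is at least super-quadratic.

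The step I expect to be the main obstacle is the clean-crossing extraction: $\phi$ is only coarsely continuous (closest-point projection onto a discrete set can jump), and $p$ may oscillate in and out of the bands, so the times $\sigma_{j}<\sigma_{j}'$ must be set up carefully to simultaneously guarantee disjointness of the subpaths, the lower bound $\theta$ on the separation of their endpoints' projections, and that they stay outside the $\asymp m$ neighbourhood of $\langle g\rangle$. The remaining ingredients — fact (b), that the $X$-projection is coarsely a $d_{G}$-closest-point projection, and the bookkeeping with the quasi-isometry constants — are routine.
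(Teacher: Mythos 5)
Your proof is correct and takes essentially the same route as the paper: the same key observation (the paper's Claim) that avoiding $B(1,\delta m-\eta)$ while projecting into the central part of $\langle g \rangle$ forces distance $\asymp m$ from the axis, then a decomposition of the path into $\asymp m$ subpaths whose endpoint projections are more than $\theta$ apart (the paper uses greedy stopping times along the path where the projection advances by $\theta$; you use bands of fixed width on the axis), followed by applying super-divergence to each piece and the superlinearity of $f$. The only minor point is that with your last-entry/first-exit times consecutive crossing subpaths may share a single edge, so the lengths add up only up to an error of order $m$, which does not affect super-quadraticity.
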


\begin{proof}
We fix a word metric $d_G$ on $G$.

Consider a super-divergent element $g$ and the $(a,b)$-quasi-geodesic $\langle g \rangle x_0 $. Fix constants $0 <\delta <1$ and $\eta \geq 0, \eta \neq b$. We will show that the function $Div^{q}_{\eta}(m)$ is super-quadratic in $m$. It is enough to consider this function for $m \geq m_0$ where $m_0 > \frac{2a^2(\eta-b)}{a\delta}=:M'$.  \\

\begin{claim} There is a constant $D>0$ such that the following holds. If $h\not\in B^G(1,\delta m-\eta)$ and $d_G(1,\pi_g(h)) \leq \delta m/2a -\eta $ then $h \not\in \mathcal{N}_{\delta m/D -\eta}(\langle g \rangle)$, all considered with respect to the metric $d_G$. 
\end{claim}
 
\begin{proof}[ Proof of claim]
Let $D > \frac{2a^2\delta m_0}{M' }$ where $M'$ is defined above. For a contradiction, assume that there is a point $h' \in \langle g \rangle$ such that $d_G(h,h')  \leq \delta m/D-\eta$. Hence $d_X(hx_0,h'x_0) \leq \delta m/D-\eta$. This leads to $d_G(1,h') \geq d_{G}(1,h)-d_G(h,h') \geq \delta m(D -1) /D $. \\

Hence $d_X(x_0, h'x_0) \geq \delta m(D -1) /aD-b$ as $\langle gx_0 \rangle$ is a $(a,b)$-quasi-geodesic in $X$. Therefore:

$$
d_X(hx_0, h'x_0)  \geq d_X(\pi_g(h)x_0, hx_0) \geq d_X(x_0, h'x_0) -d_X(hx_0, h'x_0)-d_X(x_0,\pi_g(h)x_0) 
$$

and so

\begin{align*}
 	\begin{split}
 2(\delta m/D-\eta) \geq 2d_X(hx_0, h'x_0) &\geq d_X(x_0, h'x_0) -d_X(x_0,\pi_g(h)x_0) \\ &\geq \delta m(D -1) /aD-b-(\delta m/2a -\eta )  
  	\end{split}
  	\end{align*}
 
which by the choice of $D$ is a contradiction. Hence there is no such $h' \in \langle g \rangle$ and $h \not\in \mathcal{N}_{\delta m/D -\eta}(\langle g \rangle)$ as claimed.
\end{proof}


 

 Let $p$ be a path in (the Cayley graph of) $G$ from $g^{-m}$ to $g^{m}$  staying outside of the ball $B(1,\delta m-\eta)$; for convenience we will actually consider a discrete path (that is, a sequence of vertices with consecutive ones connected by an edge). We will think of the path $p$ as being 'oriented' from $g^{-m}$ to $g^{m}$, this allows us to talk about 'first points' and 'last points'. \\

 Let $y_1$ be the first point on $p$ such that $d_G(1, \pi_g(y_1)) \leq \delta m/2a -\eta$. We then define the point $y_2$ to be the first point after $y_1$ such that $d_{\langle g \rangle}(y_1,y_2) \geq \theta$ where $\theta$ is the super-divergent constant for $g$. By minimality of $y_2$, and the fact that closest-point projections to quasi-convex sets are coarsely Lipschitz (Lemma \ref{lem:exo_hyperbolicspace}) this also means that $d_{\langle g \rangle}(y_1,y_2) \leq \theta +L$, for some constant $L$.\\
We recursively define the point $y_i$ as the first point, after $y_{i-1}$, such that $d_{\langle g \rangle}(y_{i-1},y_i) \geq \theta$. We again note that $d_{\langle g \rangle}(y_{i-1}, y_i) \leq \theta + L$. We stop this process as soon as $y_{s+1}$ projects on $\langle g \rangle$ outside of $B(1,\delta m/2a-\eta)$.\\ 
 
Say that there are $s$ of these points $(y_i)_{i=1}^s$. By the fact that $\langle g \rangle$ is a $(a,b)$-quasi geodesic, this leads to $s(\theta +L) > \frac{1}{a}(\delta m/2a-\eta) -b-2L$. By the claim above, each subpath $p_i$ of $p$ from $y_i$ to $y_{i+1}$  is outside of the $(\delta/D m-\eta)$-neighbourhood of $\langle g  \rangle$.\\

By super-divergence of $g$, we get that $$ \ell(p) > \sum_{i=1}^{s} \ell(p_i) \geq s f(\delta m/D -\eta) > \frac{1}{\theta+L}(\delta m/ 2a^2 -\eta /a -b-2L)f(\delta m/D -\eta) $$ which is super-quadratic in $m$ as $f$ is super linear in $m$. Hence $Div^{q}_{\eta}(m)$ is super-quadratic in $m$. Therefore the divergence of $G$ is at least super-quadratic.

\end{proof}

We note that since mapping class groups have quadratic divergence \cite{behrstock,Duchin_2009} (with finitely many exceptions), they cannot contain super-divergent elements.




\subsection{WPD elements}
\label{subsec:WPD}

We recall the definition of a WPD element from \cite{bestvinafujiwara}.

\begin{definition}[WPD element]
Let $G$ be a group acting on a hyperbolic space $X$ and $g$ an element of $G$. We say that $g$ satisfies the \textit{weak proper discontinuity condition} (or that $g$ is a $WPD$ element) if for all $\kappa >0$ and $x_0 \in X$ there exists a $N \in \mathbb{N}$ such that \[
\# \{ h \in G \vert \quad d_X(x_0, hx_0)< \kappa, \quad d_X(g^{N}x_0, hg^{N}x_0) < \kappa \} < \infty
\]
\end{definition}

We fix a group $G$ acting on a hyperbolic space $X$ throughout this subsection.

We now show that super-divergent elements are WPD, and we list some known consequences of being WPD.
\begin{lemma}
\label{lem:super_div_WPD}
If $g \in G$ is a super-divergent element for the action on a hyperbolic space $X$ then it is a WPD element.
\end{lemma}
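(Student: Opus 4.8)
The plan is to fix $\kappa > 0$ and a basepoint (we may as well take $x_0$ to be the image of the identity, so that working in $G$ with the word metric $d_G$ and working in $X$ with $d_X$ are related by the coarse Lipschitz bound coming from the orbit map being coarsely Lipschitz, and by the quasi-isometric embedding $\langle g\rangle x_0 \hookrightarrow X$). Suppose for contradiction that there are infinitely many elements $h\in G$ with $d_X(x_0,hx_0) < \kappa$ and $d_X(g^N x_0, h g^N x_0) < \kappa$, for every $N$. The key idea is the familiar WPD-via-projections argument: if $h$ nearly fixes both $x_0$ and $g^N x_0$, then $h$ coarsely preserves a long subsegment of the quasi-axis $\langle g\rangle x_0$ between these two points; so $h\langle g\rangle x_0$ and $\langle g\rangle x_0$ fellow-travel along a segment whose length grows linearly in $N$. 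I would make this precise using Lemma \ref{lem:exo_hyperbolicspace} applied to the quasi-convex set $\langle g\rangle$ (equivalently $\langle g\rangle x_0$), extracting a point $h'\in\langle g\rangle$ with $d_G(1, \pi_g(h))$ and $d_G(h, \pi_g(h))$ controlled, and similarly comparing $\pi_g(h)$ with $\pi_g(h g^N) = h\,\pi_{h g h^{-1}}(\cdots)$ — the point being that the $X$-projection of $h g^N$ to $\langle g\rangle$ lands far (distance $\gtrsim N$, up to additive error depending on $\kappa$) from the projection of $h$.

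The crux is to turn this fellow-travelling of $\langle g\rangle x_0$ and $h\langle g\rangle x_0$ into a statement that contradicts super-divergence. Here is where I would use the hypothesis genuinely: consider the path in the Cayley graph of $G$ obtained by going from $h'$ (a point of $\langle g\rangle$ near $h$, which itself is within $\kappa/\text{(Lipschitz const)}$ of $1$ in $d_G$) out to $h$ and then back via a path tracking $h\langle g\rangle$, landing near $h g^N$, and then back to the corresponding point $h'' \in \langle g\rangle$. The "detour" pieces at the two ends have bounded length (bounded in terms of $\kappa$ and the quasi-isometry constants of the orbit map), while the middle piece — a path tracking $h\langle g\rangle x_0$ — is a quasi-geodesic of length $\asymp N$. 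I must arrange this concatenated path $p$ so that it stays \emph{outside} the $d$-neighbourhood of $\langle g\rangle$ for a suitable $d$ that I get to take large by taking $N$ large: this is plausible because a bi-infinite quasi-geodesic $\langle g\rangle x_0$ and a distinct translate $h\langle g\rangle x_0$ of it, which only coarsely coincide on a segment, must actually diverge — but the subtlety is that the translate need not leave a neighbourhood of $\langle g\rangle$ at all if $h$ normalizes $\langle g\rangle$ coarsely.

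I expect this last point to be the main obstacle, and I would handle it by first reducing to the case where $hgh^{-1}$ is not a power of $g$ (the "coarse stabilizer" of $\langle g\rangle x_0$ modulo the infinite elements that commute coarsely with $g$): indeed, among infinitely many $h$ as above, either infinitely many conjugate $g$ coarsely off its own axis — giving a translate axis that genuinely diverges, and then I can run the super-divergence contradiction with $d \to \infty$ — or infinitely many $h$ differ from one another by elements that coarsely commute with $g$, in which case one shows directly (using that $\langle g\rangle$ is undistorted, hence $\langle g\rangle x_0$ a quasi-geodesic, and a standard argument that the coarse centralizer of a loxodromic in $X$ is virtually cyclic is \emph{not} what we want here — rather, we use that two elements within $\kappa$ of $1$ and within $\kappa$ of $g^N$ that coarsely commute with $g$ would give $d_G(1, h_1^{-1}h_2)$ bounded, contradicting infinitude). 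Once $d$ can be taken arbitrarily large while $\ell(p) \le C(\kappa) + C' N$ with $N$ itself only needing to be linear in $d$ via the quasi-geodesic bound on $\langle g\rangle x_0$, super-divergence of $\pi_g$ forces $\ell(p) > f(d)$ with $f$ super-linear — a contradiction for $d$ large. I would write the WPD estimate in the form: there is $N = N(\kappa)$ such that the set in the definition is finite, by choosing $N$ so that $f(d(N)) > C(\kappa) + C' N$, which is possible precisely because $d(N)$ grows linearly in $N$ and $f$ is super-linear.
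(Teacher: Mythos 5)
Your overall scheme (turn failure of WPD into two points $h$, $hg^N$ whose $X$-projections to $\langle g\rangle$ are more than $\theta$ apart, then contradict super-divergence with a short connecting path) is the right shape, but the way you build and control the path has a genuine gap. First, the claims that $d_G(h,\pi_g(h))$ is controlled and that the ``detour pieces at the two ends have bounded length'' are false: the hypothesis only bounds $X$-distances by $\kappa$, not word-metric distances, and in fact for all but finitely many of the bad elements $h$ the distance $d_G(h,\langle g\rangle)$ is necessarily enormous (any $g^j$ that is $d_G$-close to such an $h$ is $X$-close to $x_0$, hence a bounded power of $g$, hence $h$ would lie in a finite set). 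Worse, your concatenated path starts and ends at points $h',h''\in\langle g\rangle$, so it cannot remain outside any $d$-neighbourhood of $\langle g\rangle$, and super-divergence cannot be applied to it as constructed. Your proposed mechanism for keeping the middle piece far from $\langle g\rangle$ also does not work: the segment of $h\langle g\rangle x_0$ between $hx_0$ and $hg^Nx_0$ is a quasi-geodesic with endpoints $\kappa$-close to $x_0$ and $g^Nx_0$, so by stability it \emph{fellow-travels} $\langle g\rangle x_0$ in $X$ — the divergence of the two axes happens outside this segment, and on the diverged portion the endpoint projections are no longer $>\theta$ apart, so the super-divergence hypothesis fails there. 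Finally, in your ``coarsely commuting'' branch the assertion that $d_G(1,h_1^{-1}h_2)$ is bounded is unjustified: $X$-closeness of orbit points never bounds word length unless the action is proper, which is exactly the kind of properness the WPD statement is trying to establish, so this branch is essentially circular. The coupling $N\asymp d$ that your final inequality relies on is never established either.

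The missing idea, and the one the paper uses, is to exploit the \emph{infinitude} of the bad set directly to decouple $d$ from $N$: fix $m$ once and for all, large enough that $d_X(\pi_g(h)x_0,\pi_g(hg^m)x_0)\geq d_X(hx_0,hg^mx_0)-2\kappa\geq m/a-b-2\kappa>\theta$ (this part of your proposal is fine), and fix $d$ with $f(d)>am+b$. Since there are infinitely many bad $h$, choose one with $d_G(1,h)$ so large that $d_G(h,\langle g\rangle)>d+am+b$ (possible by the finiteness observation above, since $hx_0$ is $\kappa$-close to $x_0$). Now take the path $p$ to be a $G$-geodesic from $h$ to $hg^m$: its length is $d_G(1,g^m)\leq am+b$, a bound independent of $d$, so every point of $p$ is within $am+b$ of $h$ and hence outside the $d$-neighbourhood of $\langle g\rangle$, while its endpoints project more than $\theta$ apart. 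Super-divergence then gives $am+b\geq \ell(p)>f(d)>am+b$, a contradiction, with no case analysis on whether $h$ coarsely normalizes $\langle g\rangle$ and no need for the translated axis at all.
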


\begin{proof}
As $g$ acts loxodromically on $X$, the axis of $g$ is a $(a, b)$-quasi geodesic both in $X$ and in $G$, where at this point we fixed a word metric $d_G$ on $G$.

For a contradiction, assume that $g$ is not a WPD element. Hence there exists $\kappa>0$ such that for all $m \in \mathbb{N}$ there are infinitely many elements $h \in G$ satisfying:
\begin{equation}
\label{not_wpd_defn}
d_X(x_{0},hx_{0}) <\kappa \hspace{2mm} \text{and} \hspace{2mm} \hspace{1mm}d_X(g^{m}x_{0},hg^{m}x_{0}) < \kappa 
\end{equation}

We let $m$ be such that $m > a(\theta+b+2\kappa)$ where $\theta$ is the super-divergent constant for $g$. Let $d$ be such that $f(d) >a m +b$ where $f$ is the super-linear function from the definition of $g$ being super-divergent.

As we have infinitely many $h$ satisfying (\ref{not_wpd_defn}), we can find $h$ such that $d_G(1,h) > 10d+2(am+b)$. By definition, in $X$ we have that $\pi_g(h)x_0$ and $\pi_g(hg^m)x_0$ are the closest points on $\langle g  x_0 \rangle$ to $h$ and $hg^m$ respectively. Hence $d_X(hg^mx_0, \pi_g(hg^m)x_0) \leq d_X(hg^mx_0, g^mx_0) < \kappa$ and similarly $d_X(hx_0, \pi_g(h)x_0) < \kappa$. Therefore

\[
d_X\left(\pi_{g}(h)x_0,\pi_{g}(hg^m)x_0\right) > d_X(hx_0, hg^{m}x_0) -2\kappa >m/a-b-2\kappa > \theta
\]  by the choice of $m$. Hence $d_G(\pi_{g}(h),\pi_{g}(hg^m)) > \theta$. Further, by the choice of $h$, the geodesic from $h$ to $hg^{m}$ stays outside of the $d$-neighbourhood of $\langle g \rangle $. By super divergence of $g$, this leads to  \[am+b \geq d_G(h,hg^{m}) > f(d) > am+b\] a contradiction. Hence $g$ is a WPD element. 
\end{proof}

Recall that any loxodromic WPD element $g$ is contained in a unique maximal elementary subgroup of $G$, denoted $E(g)$ and called the \textit{elementary closure} of $g$, see  \cite[Theorem 1.4]{Osin_acylindrical}.

We will refer to the following lemma as the strong Behrstock inequality, and we will use it very often. The lemma follows from \cite[Theorem 4.1]{bromberg2017acylindrical}.

\begin{lemma}
\label{lem:Behrstock}
Let $g$ be a loxodromic WPD element. Then, for $\gamma=E(g)$, there is a $g$-equivariant map $\pi_{\gamma}:G\to \mathcal P(\gamma)$, where $\mathcal P(\gamma)$ is the set of all subsets of $\gamma$, and a constant $B$ with the following property. For all $x\in G$ and $h\gamma \neq h'\gamma$ we have
\[ d_X(\pi_{h\gamma}(x),\pi_{h\gamma}(h'\gamma)) >B \implies \pi_{h'\gamma}(x)=\pi_{h'\gamma}(h\gamma),
\]
where $\pi_{k\gamma}(z)=k\pi_{\gamma}(k^{-1}z)$. Moreover, for all $x\in G$ the Hausdorff distance between $\pi_\gamma(x)$ and an $X$-projection of $x$ to $\langle g\rangle$ is bounded by $B$.
\end{lemma}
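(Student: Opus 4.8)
The statement to prove is Lemma~\ref{lem:Behrstock}, which asserts that for a loxodromic WPD element $g$ with elementary closure $\gamma = E(g)$, the family of cosets $\{h\gamma : h \in G\}$ together with the coarse projections $\pi_{h\gamma}$ satisfies a ``strong Behrstock inequality''. The natural approach is to \emph{not} reprove this from scratch, but to invoke the machinery of \cite{bromberg2017acylindrical}: one checks that the cosets of $E(g)$, with the $X$-projections, form a projection complex (equivalently, satisfy the Bestvina--Bromberg--Fujiwara projection axioms up to the usual modifications), and then the strong separation/Behrstock inequality is exactly \cite[Theorem 4.1]{bromberg2017acylindrical}. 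So the first step is to set up $\pi_\gamma : G \to \mathcal P(\gamma)$ as (a bounded perturbation of) the $X$-projection of $G$ to $\langle g \rangle x_0$ pulled back to $G$, and note it is $E(g)$-equivariant because $E(g)$ stabilises $\langle g \rangle$ coarsely (as $g$ has finite index in $E(g)$, $\langle g \rangle x_0$ and $E(g) x_0$ are at finite Hausdorff distance, so projecting to either is the same up to bounded error, and the latter is genuinely $E(g)$-equivariant). Then define $\pi_{k\gamma}(z) = k\pi_\gamma(k^{-1}z)$ as in the statement; this is well-defined up to bounded error precisely because of the equivariance under the stabiliser.

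\smallskip

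The key input that makes \cite[Theorem 4.1]{bromberg2017acylindrical} applicable is that $g$ is a loxodromic WPD element, which by Lemma~\ref{lem:super_div_WPD}'s surrounding discussion (and \cite[Theorem 1.4]{Osin_acylindrical}) guarantees that $E(g)$ is the maximal elementary subgroup and, more importantly, that the WPD condition translates into the bounded-geodesic-image / finiteness property needed for the projection axioms: distinct cosets $h\gamma \neq h'\gamma$ have ``small'' mutual projection except for a bounded-diameter set, and only finitely many cosets can have large pairwise projections. Concretely one would: (i) verify that $\langle g \rangle x_0$ is quasi-convex in $X$ and hence Lemma~\ref{lem:exo_hyperbolicspace} gives the coarse Lipschitz property and the ``contraction'' behaviour of $\pi_\gamma$; (ii) use WPD to bound $\mathrm{diam}(\pi_{h\gamma}(h'\gamma))$ uniformly over $h\gamma \neq h'\gamma$, which is the projection-axiom hypothesis; (iii) cite \cite{bromberg2017acylindrical} to get the strong Behrstock inequality with some constant $B$. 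The ``moreover'' clause — that $\pi_\gamma(x)$ is within Hausdorff distance $B$ of an $X$-projection of $x$ to $\langle g \rangle$ — is essentially by construction, since we \emph{defined} $\pi_\gamma$ as such an $X$-projection up to the bounded perturbation coming from passing between $\langle g \rangle$ and $E(g)$; one just has to make sure the perturbation constant is absorbed into $B$.

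\smallskip

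\textbf{Main obstacle.} The delicate point is verifying the uniform bound $\mathrm{diam}_X(\pi_{h\gamma}(h'\gamma)) \leq \text{const}$ for all $h\gamma \neq h'\gamma$ from the WPD hypothesis alone (rather than from acylindricity, which would make it immediate). This is exactly where the theory of \cite{bromberg2017acylindrical} does its work: they extract from WPD of a single loxodromic element a projection system on its cosets. So honestly the cleanest route is to observe that our hypotheses (loxodromic WPD $g$, hyperbolic $X$) are \emph{precisely} those of \cite[Theorem 4.1]{bromberg2017acylindrical}, set up the notation to match theirs, and cite it — with the only genuine work being the translation between ``$X$-projection to $\langle g \rangle x_0$'' and ``projection to the coset $E(g)$'' (handled by the finite-Hausdorff-distance remark above) and checking that their conclusion is stated in the coset form $\pi_{k\gamma}(z) = k\pi_\gamma(k^{-1}z)$ that we want. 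No difficult new estimate is required beyond what Lemma~\ref{lem:exo_hyperbolicspace} and the cited references already provide.
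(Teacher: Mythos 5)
Your proposal takes essentially the same route as the paper: the paper offers no independent argument and simply states that the lemma follows from \cite[Theorem 4.1]{bromberg2017acylindrical}, which is exactly the citation you build your proof around. Your additional remarks (equivariance via the finite Hausdorff distance between $\langle g\rangle x_0$ and $E(g)x_0$, and the ``moreover'' clause holding by construction) are the correct bookkeeping for that reduction, so the proposal is fine.
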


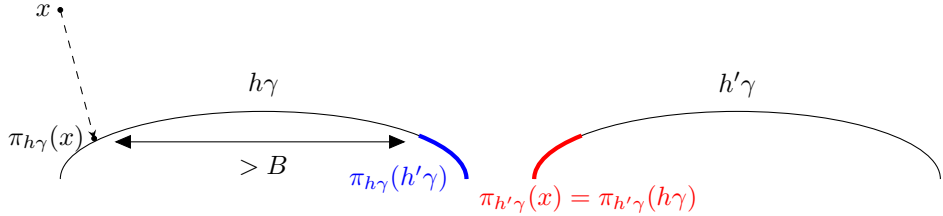
\begin{figure}[h]
\centering
\begin{tikzpicture}[scale=0.9]

\draw (0,0) arc(0:180:3cm and 1cm);

\draw (7,0) arc(0:180:3cm and 1cm);

\draw[ultra thick, blue] (0,0) arc(0:40:3cm and 1cm);
\draw[blue] (-1,0) node{$\pi_{h\gamma}(h'\gamma)$};

\draw[ultra thick, red] (1,0) arc(180:140:3cm and 1cm);

\draw[black] (-3,1.4) node{$h\gamma$};

\filldraw[black] (-6,2.5) circle  (1pt) node[anchor=east]{$x$};

\draw[-stealth, dashed] (-6,2.5) --(-5.5,0.6);

\filldraw[black] (-5.5,0.6) circle  (1pt) node[anchor=east]{$\pi_{h\gamma}(x)$}; 

\draw[red] (1.8,-0.3) node{$\pi_{h'\gamma}(x)=\pi_{h'\gamma}(h\gamma)$};

\draw[black] (4,1.4) node{$h'\gamma$};

\draw[>=triangle 45, <->,black] (-5.2,0.55)-- (-0.9,0.55);

\draw[black] (-3,0.2) node{$>B$};

\end{tikzpicture}
\caption{The strong Behrstock inequality.}
\end{figure}

\begin{notation}
\label{not:d_hgamma}
For a loxodromic WPD element $g$, consider $\gamma=E(g)$ and the maps $\pi_{h\gamma}$ as in Lemma \ref{lem:Behrstock}. For $h,x,y\in G$ we denote
$$d_{h\gamma}(x,y)=\diam (\pi_{h\gamma}(x)\cup\pi_{h\gamma}(y)).$$
\end{notation}



As in the statement of Lemma \ref{lem:Behrstock}, we will often denote $\gamma=E(g)$ when a loxodromic WPD element $g$ has been fixed.

We will often look at the set of cosets where two given elements have far away projections, as captured by the following definition.

 \begin{definition}
 \label{defn:H_T}
 Given a loxodromic WPD element $g$, for $x,y\in G$ and $T\geq 0$ we define the set \[ \mathcal{H}_T(x,y) :=\{ h\gamma \hspace{1mm} : \hspace{1mm} d_{h\gamma}(x,y) \geq T\}
\]

  Throughout, we will consider the following (``distance formula") sum, for $x,y,z_1,z_2 \in G$, once a loxodromic WPD element $g$ has been fixed: 
 
 \[ \sum_{\mathcal{H}_T(x,y)}[z_1,z_2] := \sum_{\substack{h\gamma \in \mathcal{H}_T(x,y) \\ \pi_{h\gamma}(z_1)\neq \pi_{h\gamma}(z_2)}} d_{h\gamma}(z_1,z_2).
 \] 
 \end{definition}
 
 Figure \ref{fig:order} illustrates a set $\mathcal H_T(o,p)$, and Figure \ref{fig:dist_sum} illustrates a sum $\sum_{\mathcal{H}_T(x,y)}[z_1,z_2]$.

 \begin{remark}
 \label{rem:triangle_inequality}
 (Triangle inequality) Note that for all $x,y,z,o,p\in G$  we have
 $$\sum_{\mathcal{H}_T(o,p)} [x,z] \leq \sum_{\mathcal{H}_T(o,p)} [x,y]  + \sum_{\mathcal{H}_T(o,p)} [y,z],$$
 since projection distances satisfy the triangle inequality as well.
 \end{remark}

Let $o,p\in G$. In view of the fact that the projections on the $h\gamma$ satisfy the projection axioms of \cite{bestvina2014constructing}, by \cite[Theorem 3.3 (G)]{bestvina2014constructing}, we have a linear order on $\mathcal{H}(o,p)$:

\begin{lemma}(Consequence of \cite[Theorem 3.3 (G)]{bestvina2014constructing})
\label{linearorder}
Fix a loxodromic WPD element $g$. For any sufficiently large $T$, the set $\mathcal{H}_T(o,p) \cup \{o,p\}$ is totally ordered with least element $o$ and greatest element $p$. The order is given by $h\gamma \prec h'\gamma$ if one, and hence all, of the following equivalent conditions hold for $B$ as in Lemma \ref{lem:Behrstock}:\\

\begin{itemize}
    \item $d_{h\gamma}(o,h'\gamma) > B$.
    \item $\pi_{h'\gamma}(o)= \pi_{h'\gamma}(h\gamma)$.
    \item $d_{h'\gamma}(p,h\gamma) > B$.
    \item $\pi_{h\gamma}(p)= \pi_{h\gamma}(h'\gamma)$.
\end{itemize}
\end{lemma}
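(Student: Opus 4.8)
The plan is to deduce this directly from the projection-axiom machinery of Bestvina--Bromberg--Fujiwara. First I would observe that the collection $\{h\gamma : h \in G\}$, together with the projections $\pi_{h\gamma}$ supplied by Lemma \ref{lem:Behrstock}, satisfies the three projection axioms of \cite{bestvina2014constructing}: the axioms are symmetry, the ``Behrstock inequality'' (which is exactly the strong Behrstock inequality of Lemma \ref{lem:Behrstock}, i.e.\ $d_{h\gamma}(x,h'\gamma)>B \implies \pi_{h'\gamma}(x)=\pi_{h'\gamma}(h\gamma)$, after possibly enlarging the constant), and the finiteness axiom (for fixed $x,y$ only finitely many $h\gamma$ have $d_{h\gamma}(x,y)$ large), which follows from the WPD condition together with the ``moreover'' clause of Lemma \ref{lem:Behrstock} identifying $\pi_\gamma$ with an $X$-projection up to bounded error — this is essentially the content of how WPD elements give rise to projection systems, as in \cite{bromberg2017acylindrical}. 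Having recorded that, the statement I want is a verbatim instance of \cite[Theorem 3.3 (G)]{bestvina2014constructing}: for $T$ sufficiently large relative to the projection constant, $\mathcal{H}_T(o,p)\cup\{o,p\}$ is linearly ordered, with $o$ least and $p$ greatest, and the order is characterized by the displayed inequalities.

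Concretely, the steps in order: (1) fix the loxodromic WPD element $g$, set $\gamma=E(g)$, and invoke Lemma \ref{lem:Behrstock} to get the equivariant projections $\pi_{h\gamma}$ and the constant $B$; (2) verify the projection axioms hold for the family $\{h\gamma\}$ with these projections — symmetry is immediate, the strong Behrstock inequality gives the second axiom, and WPD gives local finiteness; (3) quote \cite[Theorem 3.3 (G)]{bestvina2014constructing} to obtain, for all $T$ above the threshold in that theorem, the total order on $\mathcal{H}_T(o,p)\cup\{o,p\}$ with the stated extremal elements; (4) unpack the equivalence of the four bulleted conditions — two of them ($d_{h\gamma}(o,h'\gamma)>B$ and $\pi_{h'\gamma}(o)=\pi_{h'\gamma}(h\gamma)$) are equivalent directly by the strong Behrstock inequality applied to $x=o$ (in one direction) and by definition of the order / a short argument using that both $h\gamma,h'\gamma\in\mathcal{H}_T(o,p)$ with $T\gg B$ (in the other), and the remaining two are obtained by the symmetric argument with the roles of $o$ and $p$ interchanged, using that reversing $o\leftrightarrow p$ reverses the order.

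The only genuine point requiring care — and the step I expect to be the main obstacle — is step (2), specifically checking the finiteness axiom and the precise form of the Behrstock axiom with a \emph{uniform} constant, since \cite{bestvina2014constructing} demands the Behrstock inequality hold for the \emph{same} constant appearing in the finiteness and separation statements. This is not automatic from Lemma \ref{lem:Behrstock} as literally stated, but it is exactly what \cite[Theorem 4.1]{bromberg2017acylindrical} arranges for WPD elements (up to enlarging $B$), so the resolution is to cite that result once more for the uniformity and then everything else is bookkeeping. I would therefore present the lemma as essentially immediate given the earlier Lemma \ref{lem:Behrstock} and the cited theorem, spending at most a sentence on each of the four equivalences rather than belaboring them.
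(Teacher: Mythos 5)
Your proposal matches the paper's argument: the paper likewise notes (just before the lemma) that the projections on the cosets $h\gamma$ satisfy the projection axioms of \cite{bestvina2014constructing}, invokes \cite[Theorem 3.3 (G)]{bestvina2014constructing} for the linear order, and derives the equivalence of the four conditions from the strong Behrstock inequality (Lemma \ref{lem:Behrstock}). Your extra care about verifying the axioms with uniform constants via \cite[Theorem 4.1]{bromberg2017acylindrical} is exactly the input the paper relies on through Lemma \ref{lem:Behrstock}, so this is the same proof, just spelled out slightly more.
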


\begin{figure}[h]
\begin{tikzpicture}[scale=0.9]
\draw (0,0) arc(0:180:1cm and 0.5cm);
\draw (3,0) arc(0:180:1cm and 0.5cm);
\draw (6,0) arc(0:180:1cm and 0.5cm);
\draw (9,0) arc(0:180:1cm and 0.5cm);
\draw (12,0) arc(0:180:1cm and 0.5cm);

\draw[black] (0.5,0) node {$\cdots$};
\draw[black] (9.5,0) node {$\cdots$};

\draw[ultra thick, red] (0,0) arc(0:40:1cm and 0.5cm);
\draw[ultra thick, red] (3,0) arc(0:40:1cm and 0.5cm);
\draw[ultra thick, red] (6,0) arc(0:40:1cm and 0.5cm);
\draw[ultra thick, red] (9,0) arc(0:40:1cm and 0.5cm);
\draw[ultra thick, red] (12,0) arc(0:40:1cm and 0.5cm);

\draw[ultra thick, blue] (-2,0) arc(180:140:1cm and 0.5cm);
\draw[ultra thick, blue] (1,0) arc(180:140:1cm and 0.5cm);
\draw[ultra thick, blue] (4,0) arc(180:140:1cm and 0.5cm);
\draw[ultra thick, blue] (7,0) arc(180:140:1cm and 0.5cm);
\draw[ultra thick, blue] (10,0) arc(180:140:1cm and 0.5cm);

\draw[black] (5,-0.5) node {$h\gamma$};

\draw[blue] (4,-0.2) node {\tiny$\pi_{h\gamma}(h'\gamma)=\pi_{h\gamma}(o)$};

\filldraw[black] (-2,2) circle (1pt) node[anchor=east]{$o$}; 
\filldraw[black] (12,2) circle (1pt) node[anchor=east]{$p$}; 
\draw[dashed] (-2,1.9) --(-2,0);
\draw[dashed] (12,2) --(12,0);

\draw[red] (6,0.8) node {\tiny $\pi_{h\gamma}(h''\gamma)=\pi_{h\gamma}(p)$};

\draw [decorate,decoration={brace,amplitude=5pt,mirror,raise=4ex}]
  (-2,0) -- (3,0) node[midway,yshift=-3em]{$h'\gamma  \prec h\gamma$};

\draw [decorate,decoration={brace,amplitude=5pt,mirror,raise=4ex}]
  (7,0) -- (12,0) node[midway,yshift=-3em]{$h\gamma  \prec h''\gamma$};

\end{tikzpicture}

\caption{Linear order on $\mathcal H_T(o,p)$: all the cosets on the left of $h\gamma$ have the same projection on $h\gamma$, and similarly for all the cosets on the right of $h\gamma$.}\label{fig:order}
\end{figure}
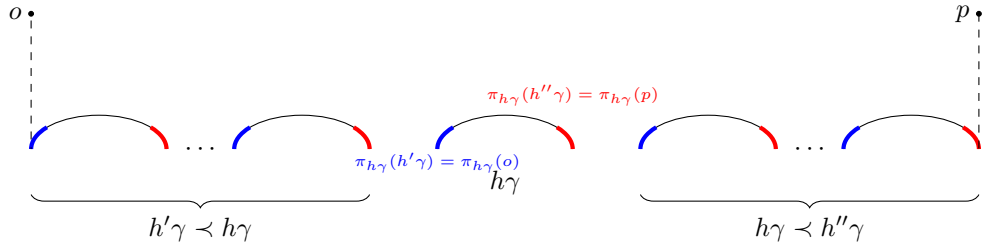

\begin{proof}
We refer to \cite[Theorem 3.3(G)]{bestvina2014constructing} for the linear order; the equivalence of the various conditions follows from the strong Behrstock inequality (Lemma \ref{lem:Behrstock}). 
\end{proof}

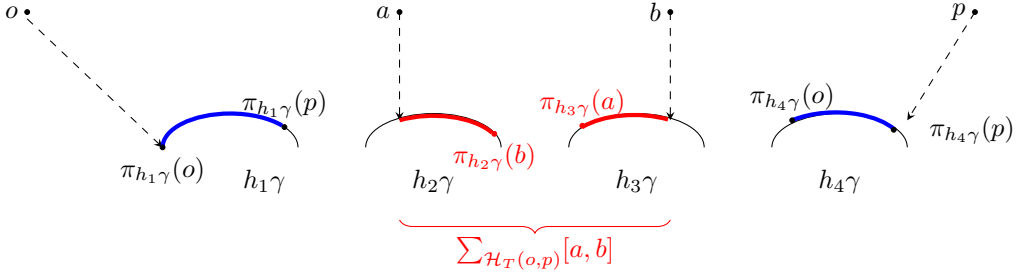
\begin{figure}[h]
\begin{tikzpicture}[scale=0.9]

\draw (2,0) arc(0:180:1cm and 0.5cm);
\draw (5,0) arc(0:180:1cm and 0.5cm);
\draw (8,0) arc(0:180:1cm and 0.5cm);
\draw (11,0) arc(0:180:1cm and 0.5cm);

\draw[black] (1.5,-0.5) node {$h_1\gamma$};
\draw[black] (4,-0.5) node {$h_2\gamma$};
\draw[black] (7,-0.5) node {$h_3\gamma$};
\draw[black] (10,-0.5) node {$h_4\gamma$};

\filldraw[black] (3.5,2) circle (1pt) node[anchor=east]{$a$}; 
\filldraw[black] (7.5,2) circle (1pt) node[anchor=east]{$b$}; 
\filldraw[black] (-2,2) circle (1pt) node[anchor=east]{$o$}; 
\filldraw[black] (12,2) circle (1pt) node[anchor=east]{$p$};

\draw[-stealth, dashed] (-2,1.9) --(0,0);
\draw[-stealth, dashed] (3.5,2) --(3.5,0.4);
\draw[-stealth, dashed] (7.5,2) --(7.5,0.4);
\draw[-stealth, dashed] (12,2) --(11,0.4);

\draw[ultra thick, blue] (0,0) arc(180:40:1cm and 0.5cm);
\filldraw[black] (1.8,0.3) circle (1pt) node[anchor=south]{$\pi_{h_1\gamma}(p)$};
\draw[ultra thick, red] (3.5,0.4) arc(120:30:1cm and 0.5cm);

\draw[ultra thick, red] (6.2,0.3) arc(140:60:1cm and 0.5cm);

\draw [red][decorate,decoration={brace,amplitude=5pt,mirror,raise=4ex}]
  (3.5,-0.4) -- (7.5,-0.4) node[midway,yshift=-3em, red]{$\sum_{\mathcal H_T(o,p)}[a,b]$};
\draw[ultra thick, blue] (9.3,0.4) arc(130:30:1cm and 0.5cm);
\filldraw[black] (9.3,0.4) circle (1pt) node[anchor=south]{$\pi_{h_4\gamma}(o)$};

\filldraw[red] (4.9,0.2) circle (1pt) node[anchor=north]{ $\pi_{h_2\gamma}(b)$};
\filldraw[red] (6.2,0.32) circle (1pt) node[anchor=south]{ $\pi_{h_3\gamma}(a)$};

\filldraw[black] (0,0) circle (1pt) node[anchor=north]{$\pi_{h_1\gamma}(o)$};
\filldraw[black] (10.8,0.26) circle (1pt) node[anchor=west]{\quad $\pi_{h_4\gamma}(p)$};
\end{tikzpicture}

\caption{In this example there are $4$ cosets in $\mathcal H_T(o,p)$ and the sum $\sum_{\mathcal H_T(o,p)}[a,b]$ is taken over the cosets $h_2 \gamma$ and $h_3 \gamma$. For all other cosets of $\mathcal H_T(o,p)$, the projections $\pi_{h\gamma}(a)$ and $\pi_{h\gamma}(b)$ coincide. }\label{fig:dist_sum}
\end{figure}

  The following lemma says that distance formula sums can be used to get lower bounds on distances in $X$.
   
  \begin{lemma}
 \label{distance}
 Fix a loxodromic WPD element $g$, and a basepoint $x_0\in X$. Then for all sufficiently large $T$ the following holds.
 
 	For all $a,b \in G$, we have 
 	\[
 	 d_{X}(ax_0,bx_0) \geq \frac{1}{2} \sum_{\mathcal{H}_T(a,b)} [a,b].
 	\]
 \end{lemma}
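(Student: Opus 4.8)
I would prove the inequality with $\mathcal{H}_T(a,b)$ replaced by an arbitrary finite subset $\mathcal{F}$, and then take the supremum over all such $\mathcal{F}$; since the left-hand side is finite this also shows the sum is finite. Take $T$ large enough that Lemma~\ref{linearorder} applies, so $\mathcal{F}=\{h_1\gamma\prec h_2\gamma\prec\cdots\prec h_k\gamma\}$ is linearly ordered. The plan is then to work along a fixed geodesic $g=[ax_0,bx_0]$ of $X$ of length $L:=d_X(ax_0,bx_0)$, parametrised by arclength from $ax_0$: to each $h_i\gamma$ I associate a subsegment of $g$ of length roughly $d_{h_i\gamma}(a,b)$ lying near the orbit $Q_i:=h_i\langle g\rangle x_0$; I then show these subsegments appear along $g$ in the order dictated by $\prec$ and overlap pairwise in sets of uniformly bounded diameter; and finally I add up their lengths.

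\textbf{Producing the subsegments.} Since $g$ is loxodromic, $\langle g\rangle x_0$ is quasi-convex in $X$ with constants independent of everything else, hence so is each $Q_i$ (as $h_i$ acts isometrically); let $R$ be the constant from Lemma~\ref{lem:exo_hyperbolicspace} for this uniform constant and let $\pi_i\colon X\to Q_i$ be a closest-point projection. By the last sentence of Lemma~\ref{lem:Behrstock} together with $g$-equivariance and loxodromicity of $g$, passing between $d_X$-distances of points $\pi_i(cx_0)$ and the projection distances $d_{h_i\gamma}(\cdot,\cdot)$ costs only a uniform additive constant $B_1$; in particular $d_X(\pi_i(ax_0),\pi_i(bx_0))\ge d_{h_i\gamma}(a,b)-B_1$, and for $i\ne j$ the set $\pi_i(Q_j)$ has uniformly bounded diameter and lies within a uniform distance of $\pi_{h_i\gamma}(h_j\gamma)x_0$. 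Enlarging $T$ so that $T-B_1\ge R$, Lemma~\ref{lem:exo_hyperbolicspace}(2) produces for each $i$ two points of $g$, at parameters $u_i<v_i$, lying $R$-close to the pair $\{\pi_i(ax_0),\pi_i(bx_0)\}$ and with $g|_{[u_i,v_i]}\subseteq\mathcal N_R(Q_i)$. A reverse-triangle-inequality argument along $\pi_i$ (valid because $d_X(\pi_i(ax_0),\pi_i(bx_0))$ is large) fixes the orientation: $g(u_i)$ is $R$-close to $\pi_i(ax_0)$ and $g(v_i)$ is $R$-close to $\pi_i(bx_0)$. Hence $v_i-u_i=d_X(g(u_i),g(v_i))\ge d_X(\pi_i(ax_0),\pi_i(bx_0))-2R\ge d_{h_i\gamma}(a,b)-B_1-2R$.

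\textbf{Ordering the subsegments and concluding.} The main point is to find a uniform $\Lambda$ with $v_i\le u_j+\Lambda$ whenever $i<j$. Fix $i<j$; since $h_i\gamma\prec h_j\gamma$, Lemma~\ref{linearorder} gives $\pi_{h_i\gamma}(b)=\pi_{h_i\gamma}(h_j\gamma)$, so $\pi_i(Q_j)$ lies uniformly close to $\pi_i(bx_0)$. As $g(u_j)$ is $R$-close to $\pi_j(ax_0)\in Q_j$, Lemma~\ref{lem:exo_hyperbolicspace}(1) forces $\pi_i(g(u_j))$ uniformly close to $\pi_i(Q_j)$, hence to $\pi_i(bx_0)$. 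Now invoke the standard closest-point-projection picture for $g$ and $Q_i$ (Lemma~\ref{lem:exo_hyperbolicspace}(2) together with the complementary fact that $g|_{[0,u_i]}$, resp.\ $g|_{[v_i,L]}$, projects uniformly close to $\pi_i(ax_0)$, resp.\ $\pi_i(bx_0)$): if $u_j\le u_i$ then $\pi_i(g(u_j))$ would be uniformly close to $\pi_i(ax_0)$, contradicting the previous sentence since $d_X(\pi_i(ax_0),\pi_i(bx_0))\ge T-B_1$ is large; and if $u_i<u_j\le v_i$ then $g(u_j)\in\mathcal N_R(Q_i)$ forces $\pi_i(g(u_j))$ to be $R$-close to $g(u_j)$, which with $\pi_i(g(u_j))\approx\pi_i(bx_0)\approx g(v_i)$ gives $v_i-u_j=d_X(g(u_j),g(v_i))$ uniformly bounded. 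In all cases $v_i\le u_j+\Lambda$. Taking $j=i+1$ gives $u_{i+1}-v_i\ge-\Lambda$, so
\[
d_X(ax_0,bx_0)\ \ge\ v_k-u_1\ =\ \sum_{i=1}^k(v_i-u_i)+\sum_{i=1}^{k-1}(u_{i+1}-v_i)\ \ge\ \sum_{i=1}^k\bigl(d_{h_i\gamma}(a,b)-B_1-2R-\Lambda\bigr).
\]
Since every $h_i\gamma\in\mathcal H_T(a,b)$ satisfies $d_{h_i\gamma}(a,b)\ge T$, choosing $T\ge 2(B_1+2R+\Lambda)$ at the outset makes each summand at least $\tfrac12 d_{h_i\gamma}(a,b)$, so $d_X(ax_0,bx_0)\ge\tfrac12\sum_{i=1}^k d_{h_i\gamma}(a,b)$; taking the supremum over finite $\mathcal F$ finishes the proof.

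\textbf{Main obstacle.} The delicate step is the coarse ordering of the subsegments: one must turn the purely combinatorial order of Lemma~\ref{linearorder} into an honest ordering of subarcs of a single geodesic of $X$, and — crucially for absorbing the error terms into $T$ — all error constants must be uniform and incurred \emph{once per coset}, not once per pair of cosets. The remaining ingredients (the orientation of each subsegment, and the closest-point-projection picture for $g$ and $Q_i$) are routine hyperbolic geometry built on Lemma~\ref{lem:exo_hyperbolicspace}.
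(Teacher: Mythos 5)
Your proof is correct, but it replaces the paper's key disjointness mechanism with a different one, so it is worth comparing. Both arguments share the same skeleton: fix a geodesic from $ax_0$ to $bx_0$, use Lemma \ref{lem:exo_hyperbolicspace}(2) to attach to each $h_i\gamma\in\mathcal H_T(a,b)$ a subsegment fellow-travelling $h_i\gamma x_0$ of length at least $d_{h_i\gamma}(a,b)$ minus a uniform constant, and then add up lengths, absorbing the per-coset error into $T$. The paper gets the additivity from \emph{geometric separation} of $E(g)$ (quoted from Dahmani--Guirardel--Osin): for any $R$ there is $B_1$ so that the $R$-neighbourhoods of distinct cosets $h\gamma x_0$, $h'\gamma x_0$ intersect in a set of diameter $<B_1$; trimming $B_1$ off each end of the subsegments then makes them literally pairwise disjoint, and the sum of their lengths is at most $d_X(ax_0,bx_0)$ with no ordering discussion at all. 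You instead never invoke bounded coset intersection: you extract a coarse \emph{ordering} of the subsegments along the geodesic from the total order of Lemma \ref{linearorder} together with the strong Behrstock inequality (Lemma \ref{lem:Behrstock}), show overlaps are bounded by a uniform $\Lambda$, and telescope. Your route stays entirely inside the projection formalism the paper has already set up and so avoids importing the DGO separation result, at the cost of a more delicate argument: the orientation of each subsegment and, in your Case ``$u_j\le u_i$'', the fact that the portion of the geodesic before the entry point projects uniformly close to $\pi_i(ax_0)$, which is standard contraction-type hyperbolic geometry but goes slightly beyond the literal statement of Lemma \ref{lem:exo_hyperbolicspace}; given that the paper itself treats that lemma as a well-known exercise, this is an acceptable level of rigor, though you should either prove the pre-entry projection fact or fold it into the quoted exercise. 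Two further remarks: your insistence that all errors are incurred once per coset is exactly the same bookkeeping as the paper's bound $\ell(\beta_i)\ge d_{h_i\gamma}(a,b)-2B_1-2R\ge d_{h_i\gamma}(a,b)/2$; and your finite-subset-plus-supremum device is a clean way to handle a possibly infinite $\mathcal H_T(a,b)$, which the paper's disjointness argument handles only implicitly.
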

 
  \begin{proof}
The following claim is shown in the proof of \cite[Theorem 6.8]{dahmani2014hyperbolically} (showing that $E(g)$ is geometrically separated) via \cite[Theorem 4.42 (c)]{dahmani2014hyperbolically}. \\
  
 \begin{claim}
 
\cite[Theorem 6.8, Theorem 4.42 (c) ]{dahmani2014hyperbolically}: For all constants $R>0$, there exists a constant $B_1>0$ such that the following holds. For cosets $h\gamma\neq h'\gamma$ we have \[\diam\Big( \mathcal N_R(h\gamma x_0) \cap \mathcal N_R(h'\gamma x_0) \Big) < B_1.
  \]
  
  \par\medskip
  
   \end{claim}


Let $\alpha$ be a geodesic in $X$ between $ax_0,bx_0$. By Lemma \ref{lem:exo_hyperbolicspace}, there is a constant $R$ such that for each $h_i \gamma \in \mathcal H_T(a,b)$ there  exist points $p_i, q_i \in \alpha$ such that $d_{X}(p_i, \pi_{h_i\gamma x_0}(ax_0)) \leq R$ and  $d_{X}(q_i, \pi_{h_i\gamma x_0}(bx_0)) \leq R$. Further, the geodesic between $p_i$ and $q_i$ lies within the $R$-neighbourhood of $(h_i\gamma) x_0$. For each $i$, let $\beta_i$ be the sub-geodesic of $[p_i,q_i]$ with endpoints at distance $B_1$ from $p_i$ and $q_i$ respectively, where $B_1$ is from the claim above. If we choose $T$ large enough, then we can ensure that this subgeodesic $\beta_i$ exists for all $h_i \gamma \in \mathcal H_T(a,b)$. We note that $\beta_i \subset [p_i,q_i] \subseteq \mathcal N_R(h_i \gamma x_0)$. \\

\begin{claim}
For $i \neq j$, we have $\beta_i \cap \beta_j = \emptyset$
\end{claim} 

\begin{proof}[Proof of claim 2]
For a contradiction, assume there exists $x \in  \beta_i \cap \beta_j$. Then $[p_i,q_i]$ and $[p_j,q_j]$ share a subgeodesic (containing $x$) of length $B_1$. Such subgeodesic is contained in both $\mathcal N_R(h_i \gamma x_0)$ and $\mathcal N_R(h_i \gamma x_0)$, a contradiction with Claim 1.
\end{proof}

Now, if $T$ is large enough, for all $h_i \gamma \in \mathcal H_T(a,b)$ we have
\[\ell(\beta_i) \geq d_X(p_i,q_i)-2B_1 \geq d_{h_i\gamma}(ax_0,bx_0) -2B_1-2R \geq d_{h_i\gamma}(ax_0,bx_0)/2.\] 
	
Hence 
	
	\[d_X(ax_0,bx_0)  \geq \sum_{h_i\gamma\in\mathcal{H}_T(a,b)} \ell(\beta_i) \geq  \frac{1}{2}\sum_{h_i\gamma\in\mathcal{H}_T(a,b)} [a,b] \]
	 as the $(\beta_i)_i$ are pairwise disjoint.
\end{proof}
 The following lemma will be used below to prove that the ``distance formula" is coarsely Lipschitz and it will also be used in the proof of lemma \ref{o(log)}. 
 \begin{lemma}
\label{lem:2cosetsmax}
Fix a loxodromic WPD element $g$. For any $T\geq 10 B$ which satisfies Lemma \ref{linearorder}, with $B$ as in Lemma \ref{lem:Behrstock}, the following holds.
Let $o,p,a\in G$. Then there at most $2$ cosets $h\gamma \in \mathcal{H}_T(o,p)$ such that $\pi_{h\gamma}(a)$ is distinct from both $\pi_{h\gamma}(o)$ and $\pi_{h\gamma}(p)$.
\end{lemma}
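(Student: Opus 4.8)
The plan is to argue by contradiction using the linear order on $\mathcal{H}_T(o,p)$ from Lemma \ref{linearorder} together with the strong Behrstock inequality (Lemma \ref{lem:Behrstock}). Suppose there were three distinct cosets $h_1\gamma, h_2\gamma, h_3\gamma \in \mathcal{H}_T(o,p)$, each with $\pi_{h_i\gamma}(a)$ distinct from both $\pi_{h_i\gamma}(o)$ and $\pi_{h_i\gamma}(p)$. By Lemma \ref{linearorder}, these three cosets are linearly ordered; without loss of generality say $h_1\gamma \prec h_2\gamma \prec h_3\gamma$. I would then consider the ``diameter'' sum $\sum_{\mathcal{H}_T(o,a)}$ versus $\sum_{\mathcal{H}_T(o,p)}$, or more directly examine the projections of $a$ onto the middle coset $h_2\gamma$ and derive that $a$ must ``look like'' it lies either before or after $h_2\gamma$, contradicting that its projection there differs from both $\pi_{h_2\gamma}(o)$ and $\pi_{h_2\gamma}(p)$.

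More concretely, the key step is the following. Since $h_1\gamma \prec h_2\gamma$, Lemma \ref{linearorder} gives $d_{h_2\gamma}(o, h_1\gamma) > B$, i.e.\ $\pi_{h_2\gamma}(o) = \pi_{h_2\gamma}(h_1\gamma)$; similarly $\pi_{h_2\gamma}(p) = \pi_{h_2\gamma}(h_3\gamma)$. Now look at $\pi_{h_1\gamma}(a)$: since $h_1\gamma \in \mathcal{H}_T(o,p)$ we have $d_{h_1\gamma}(o,p) \geq T \geq 10B$, so at least one of $d_{h_1\gamma}(a,o) > B$ or $d_{h_1\gamma}(a,p) > B$ holds (otherwise $d_{h_1\gamma}(o,p) \leq d_{h_1\gamma}(o,a) + d_{h_1\gamma}(a,p) \leq 2B < T$, using $\pi_{h_1\gamma}(a)$ is within $B$ of each — wait, more carefully, if both projection distances from $a$ were $\leq B$ the triangle inequality would bound $d_{h_1\gamma}(o,p)$, giving a contradiction). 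A symmetric statement holds at $h_3\gamma$. The idea is that ``$a$ is far in $h_1\gamma$ from $o$'' forces, via Behrstock at the pair $(h_1\gamma, h_2\gamma)$, that $\pi_{h_2\gamma}(a) = \pi_{h_2\gamma}(h_1\gamma) = \pi_{h_2\gamma}(o)$; and ``$a$ is far in $h_3\gamma$ from $p$'' forces $\pi_{h_2\gamma}(a) = \pi_{h_2\gamma}(h_3\gamma) = \pi_{h_2\gamma}(p)$. One then checks that the two cases $d_{h_1\gamma}(a,o)>B$ and $d_{h_1\gamma}(a,p)>B$ (and their $h_3$ analogues) combine — using that $a$ is separated from $o$ at $h_1\gamma$ or from $p$ at $h_3\gamma$ in a compatible way — to pin $\pi_{h_2\gamma}(a)$ equal to $\pi_{h_2\gamma}(o)$ or to $\pi_{h_2\gamma}(p)$, contradicting the hypothesis at $h_2\gamma$.

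The step I expect to be the main obstacle is the careful bookkeeping of which of the four sub-cases (whether $a$ is $B$-far from $o$ or from $p$ at each of the outer cosets $h_1\gamma$ and $h_3\gamma$) actually arises and checking that in each case the strong Behrstock inequality can legitimately be applied to the relevant pair of cosets (one needs the hypothesis $d_{h_i\gamma}(\,\cdot\,, h_j\gamma) > B$, which comes either from the order relation $h_i\gamma \prec h_j\gamma$ or from the separation of $a$). The auxiliary point that $T \geq 10B$ forces at least one large projection distance from $a$ at each of $h_1\gamma$ and $h_3\gamma$ is what makes the case analysis finite; after that it is a routine, if slightly tedious, application of Lemma \ref{lem:Behrstock} and Lemma \ref{linearorder}. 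I would organize it so that in every case one concludes $\pi_{h_2\gamma}(a) \in \{\pi_{h_2\gamma}(o), \pi_{h_2\gamma}(p)\}$, which is the desired contradiction.
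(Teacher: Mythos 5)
Your overall framing (three ordered cosets, Lemma \ref{lem:Behrstock} plus the order from Lemma \ref{linearorder}, then a case analysis) is in the right family, but the concrete implications you write down point the wrong way, and the one sub-case that genuinely arises is exactly the one your method cannot close. To transfer information from $h_1\gamma$ to $h_2\gamma$ via the strong Behrstock inequality you need $d_{h_1\gamma}(a,h_2\gamma)>B$, and since $h_1\gamma\prec h_2\gamma$ gives $\pi_{h_1\gamma}(h_2\gamma)=\pi_{h_1\gamma}(p)$, this means ``$a$ far from $p$ at $h_1\gamma$'', not ``far from $o$'' as you assert; symmetrically at $h_3\gamma$ you would need ``$a$ far from $o$''. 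More seriously, the sub-case $d_{h_1\gamma}(a,p)\le B$ and $d_{h_3\gamma}(a,o)\le B$ (so the dichotomies at the outer cosets only give $d_{h_1\gamma}(a,o)>B$ and $d_{h_3\gamma}(a,p)>B$) cannot be handled by projecting from the outer cosets inward: $\pi_{h_1\gamma}(o)$ is not the projection of $h_2\gamma$ (or $h_3\gamma$) onto $h_1\gamma$, so Behrstock gives nothing, and nothing pins $\pi_{h_2\gamma}(a)$ to $\pi_{h_2\gamma}(o)$ or $\pi_{h_2\gamma}(p)$ (think of $a$ projecting to the ``middle'' of $h_2\gamma$). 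Worse, this is not a corner case: applying the contrapositive of Lemma \ref{lem:Behrstock} to the hypothesis at $h_2\gamma$, together with $\pi_{h_2\gamma}(o)=\pi_{h_2\gamma}(h_1\gamma)$ and $\pi_{h_2\gamma}(p)=\pi_{h_2\gamma}(h_3\gamma)$, forces $d_{h_1\gamma}(a,h_2\gamma)\le B$ and $d_{h_3\gamma}(a,h_2\gamma)\le B$; so your ``useful'' sub-cases never occur and the problematic one always does.

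The repair is to run the argument from the middle coset outward, which is what the paper does: since $d_{h_2\gamma}(o,p)\ge T\ge 10B$, either $d_{h_2\gamma}(a,p)>B$ or $d_{h_2\gamma}(a,o)>B$. In the first case $\pi_{h_2\gamma}(p)=\pi_{h_2\gamma}(h_3\gamma)$ gives $d_{h_2\gamma}(a,h_3\gamma)>B$, so Behrstock yields $\pi_{h_3\gamma}(a)=\pi_{h_3\gamma}(h_2\gamma)=\pi_{h_3\gamma}(o)$, contradicting the assumption at the \emph{outer} coset $h_3\gamma$; the other case is symmetric and contradicts the assumption at $h_1\gamma$. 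So the contradiction lives at an outer coset, not at $h_2\gamma$, and your stated goal of always concluding $\pi_{h_2\gamma}(a)\in\{\pi_{h_2\gamma}(o),\pi_{h_2\gamma}(p)\}$ cannot be achieved.
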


\begin{proof}
Assume this is not the case and let $h_1\gamma \prec h_2\gamma \prec h_3\gamma$ in $\mathcal{H}_T(o,p)$ be such that $\pi_{h_i\gamma}(a)\neq \pi_{h_i\gamma}(o)$ and $\pi_{h_i\gamma}(a)\neq\pi_{h_i\gamma}(p)$ for $i=1,2,3$.

Since $d_{h_2\gamma}(o,p)\geq T\geq 10B$, we have either $d_{h_2\gamma}(a,p)>B$ or $d_{h_2\gamma}(a,o)>B$. The two cases are symmetrical,  hence without loss of generality assume $d_{h_2\gamma}(a,p)>B$.

Since $\pi_{h_2\gamma}(p)=\pi_{h_2\gamma}(h_3\gamma)$ by Lemma \ref{linearorder} 
 this leads to $d_{h_2\gamma}(a,h_3\gamma)>B$ and so by the strong Behrstock inequality (Lemma \ref{lem:Behrstock}), we get $\pi_{h_3\gamma}(a)=\pi_{h_3\gamma}(h_2\gamma)$. Again by by Lemma \ref{linearorder}, we have $\pi_{h_3\gamma}(h_2\gamma)=\pi_{h_3\gamma}(o)$, a contradiction.
\end{proof}

We now show that the distance formula sum is coarsely Lipschitz.

\begin{lemma}
\label{lem:coarsely_lip_sum}
Fix a loxodromic WPD element $g$, and a basepoint $x_0\in X$. For any $T\geq 10 B$ which satisfies Lemma \ref{linearorder} and Lemma \ref{distance}, with $B$ as in Lemma \ref{lem:Behrstock}, there exists a constant $L$ such that the following holds. Let $o,p,a,b\in G$. Then
$$\sum_{\mathcal H_T(o,p)}[a,b]\leq L d_X(ax_0,bx_0)+L.$$
\end{lemma}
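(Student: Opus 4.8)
The plan is to compare the sum $\sum_{\mathcal H_T(o,p)}[a,b]$ with the sum $\sum_{\mathcal H_T(a,b)}[a,b]$, the latter being controlled by $d_X(ax_0,bx_0)$ thanks to Lemma \ref{distance}, and to show that only boundedly many cosets contribute to the former without being accounted for by the latter. To isolate these exceptional cosets I would apply Lemma \ref{lem:2cosetsmax} twice, once to the triple $(o,p,a)$ and once to $(o,p,b)$. This produces a subset $\mathcal E\subseteq\mathcal H_T(o,p)$ with $\#\mathcal E\le 4$ such that for every $h\gamma\in\mathcal H_T(o,p)\setminus\mathcal E$ both $\pi_{h\gamma}(a)$ and $\pi_{h\gamma}(b)$ belong to the two-element set $\{\pi_{h\gamma}(o),\pi_{h\gamma}(p)\}$.

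Next I would observe that the non-exceptional cosets already sit inside $\mathcal H_T(a,b)$. Indeed, let $h\gamma\in\mathcal H_T(o,p)\setminus\mathcal E$ contribute to $\sum_{\mathcal H_T(o,p)}[a,b]$, i.e.\ $\pi_{h\gamma}(a)\ne\pi_{h\gamma}(b)$. Since both projections lie in $\{\pi_{h\gamma}(o),\pi_{h\gamma}(p)\}$ as subsets and are distinct, we must have $\{\pi_{h\gamma}(a),\pi_{h\gamma}(b)\}=\{\pi_{h\gamma}(o),\pi_{h\gamma}(p)\}$, hence $\pi_{h\gamma}(a)\cup\pi_{h\gamma}(b)=\pi_{h\gamma}(o)\cup\pi_{h\gamma}(p)$ and so $d_{h\gamma}(a,b)=d_{h\gamma}(o,p)\ge T$. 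Therefore $h\gamma\in\mathcal H_T(a,b)$, and its contribution $d_{h\gamma}(a,b)$ is exactly a term of $\sum_{\mathcal H_T(a,b)}[a,b]$. The same is true of any coset of $\mathcal E$ that happens to satisfy $d_{h\gamma}(a,b)\ge T$.

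To finish, note that every coset counted in $\sum_{\mathcal H_T(o,p)}[a,b]$ is then either (a) a coset of $\mathcal H_T(a,b)$, whose term also appears in $\sum_{\mathcal H_T(a,b)}[a,b]$, or (b) one of the at most four cosets in $\mathcal E$ with $0<d_{h\gamma}(a,b)<T$, each contributing less than $T$; all the cosets involved are pairwise distinct, so no term is counted twice. Hence $\sum_{\mathcal H_T(o,p)}[a,b]\le \sum_{\mathcal H_T(a,b)}[a,b]+4T$, and Lemma \ref{distance} gives $\sum_{\mathcal H_T(a,b)}[a,b]\le 2d_X(ax_0,bx_0)$. The lemma then holds with $L=\max\{2,4T\}$.

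As for the main obstacle: there is really no hard analytic step here — the statement is a bookkeeping consequence of the ``at most two exceptional cosets'' lemma (Lemma \ref{lem:2cosetsmax}) together with Lemma \ref{distance}. The points that need care are that the chosen $T$ is simultaneously large enough for Lemmas \ref{linearorder}, \ref{distance} and \ref{lem:2cosetsmax} (which is exactly the hypothesis), that ``$\pi_{h\gamma}(a)\in\{\pi_{h\gamma}(o),\pi_{h\gamma}(p)\}$'' must be read as an equality of subsets so that unions and hence diameters are unchanged, and that cosets are not double-counted when passing to $\mathcal H_T(a,b)$. If one prefers not to split $\mathcal E$ according to the size of $d_{h\gamma}(a,b)$, one can instead bound each exceptional term by $R'd_X(ax_0,bx_0)+R'$, using that by Lemma \ref{lem:Behrstock} each $\pi_{h\gamma}$ agrees up to bounded error with the $X$-closest-point projection onto the translate $h\langle g\rangle x_0$ of the quasiconvex axis, which is $R$-coarsely Lipschitz by Lemma \ref{lem:exo_hyperbolicspace}(1); this yields the same conclusion with a different $L$.
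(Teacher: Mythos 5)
Your proof is correct and follows essentially the same route as the paper: both arguments use Lemma \ref{lem:2cosetsmax} to isolate at most four exceptional cosets of $\mathcal H_T(o,p)$, observe that every other coset contributing to $\sum_{\mathcal H_T(o,p)}[a,b]$ already lies in $\mathcal H_T(a,b)$, and then invoke Lemma \ref{distance}. The only (harmless) variation is in handling the exceptional cosets: you absorb those with $d_{h\gamma}(a,b)\geq T$ into $\sum_{\mathcal H_T(a,b)}[a,b]$ and bound the rest by $4T$, whereas the paper bounds each of the at most four terms via the coarse-Lipschitz property of closest-point projections (the alternative you note at the end), yielding the same conclusion with a different constant $L$.
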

 
 \begin{proof}
 By Lemma \ref{lem:2cosetsmax}, there is a set $\mathcal A$ consisting of at most 4 cosets $h\gamma\in \mathcal H_T(o,p)$ such that one of $\pi_{h\gamma}(a)$ and $\pi_{h\gamma}(b)$ does not coincide with either $\pi_{h\gamma}(o)$ or $\pi_{h\gamma}(p)$. Note also that, for a given $h\gamma\in \mathcal H_T(o,p)$, if both $\pi_{h\gamma}(a)$ and $\pi_{h\gamma}(b)$ coincide with either $\pi_{h\gamma}(o)$ or $\pi_{h\gamma}(p)$, then either $\pi_{h\gamma}(a)=\pi_{h\gamma}(b)$ or $h\gamma\in \mathcal H_T(a,b)$. In view of this, we have
 $$\sum_{\mathcal H_T(o,p)}[a,b]\leq \sum_{h\gamma\in \mathcal A} d_{h\gamma}(a,b)+\sum_{\mathcal H_T(a,b)}[a,b].$$
 Since closest-point projections on quasi-convex sets in hyperbolic spaces are coarsely Lipschitz, and the first sum has at most 4 terms, we see that the first sum is bounded linearly in $d_X(ax_0,bx_0)$. The second one is also bounded linearly in $d_X(ax_0,bx_0)$ by Lemma \ref{distance}, and we are done.
 \end{proof}

\subsection{Logarithmic random projections}

We conclude this section with a lemma about projections of Markov chains (where in the statement we use the notation associated to the loxodromic WPD element $g$ established above). 

The lemma says, roughly, that it is very likely that the Markov chain creates logarithmically sized projections, this follows from Lemma \ref{markovword}.

\begin{lemma}
\label{etalogproj}
Let $(w_n^o)$ be a tame Markov chain on a group $G$ containing a loxodromic WPD element for the action on some hyperbolic space. Then there exist constants $ \eta, U  >0$  such that for each basepoint $p\in G$ and each $n\geq 1$ we have
	\[
	 \mathbb{P}\left[\exists m\leq n,\ h\in G: d_{h\gamma}(p,w^p_m)\geq \eta\log(n)\right]\geq 1- e^{-\sqrt{n}/U}.
	\]
\end{lemma}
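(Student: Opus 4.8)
The plan is to deduce the statement from Lemma \ref{markovword}, applied to a power $y=g^N$ of the fixed loxodromic WPD element $g$ with $N$ of order $\log(n)$. Write $\gamma=E(g)$, let $x_0\in X$ be the basepoint used to define the projections $\pi_{h\gamma}$, and let $(a,b)$ be quasi-isometric embedding constants for the orbit map $k\mapsto g^kx_0$. The geometric input I would establish first is: there is a constant $C>0$, depending only on $g$, $x_0$ and the constant $B$ of Lemma \ref{lem:Behrstock}, such that for every $q\in G$ and every integer $N\ge 1$,
\[ d_{q\gamma}(q,\,qg^N)\ \ge\ N/a-C .\]
Indeed, since $\gamma=E(g)$ is a subgroup containing $\langle g\rangle$, both $q$ and $qg^N$ lie in the coset $q\gamma$; from $\pi_{k\gamma}(z)=k\pi_\gamma(k^{-1}z)$ together with the equivariance of $\pi_\gamma$ (Lemma \ref{lem:Behrstock}) one gets $\pi_{q\gamma}(q)=q\,\pi_\gamma(1)$ and $\pi_{q\gamma}(qg^N)=q\,\pi_\gamma(g^N)=qg^N\,\pi_\gamma(1)$. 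By the ``moreover'' clause of Lemma \ref{lem:Behrstock}, $\pi_\gamma(1)$ lies within Hausdorff distance $B$ of a closest point $g^{n_0}x_0$ of $\langle g\rangle x_0$ to $x_0$; since $d_X$ is $G$-invariant and $k\mapsto g^kx_0$ is an $(a,b)$-quasigeodesic, the diameter of $\pi_{q\gamma}(q)\cup\pi_{q\gamma}(qg^N)$, which by Notation \ref{not:d_hgamma} equals $d_{q\gamma}(q,qg^N)$, is at least $d_X(g^{n_0}x_0,g^{N+n_0}x_0)-2B\ge N/a-b-2B$, so one may take $C=b+2B$.

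With this in hand, fix the constants $\eta_0,U_0$ from Lemma \ref{markovword} and, for a given $n$, set $N=N(n):=\lfloor \eta_0\log(n)/\ell_S(g)\rfloor$, so that $\ell_S(g^N)\le N\,\ell_S(g)\le \eta_0\log(n)$. Lemma \ref{markovword} (applied with basepoint $p$) then gives
\[ \mathbb P\big[\exists\, i,j\le n:\ (w^p_i)^{-1}w^p_j=g^N\big]\ \ge\ 1-e^{-\sqrt n/U_0}. \]
On this event pick $i<j$ with $(w^p_i)^{-1}w^p_j=g^N$ (here $i\ne j$ because $g^N\ne 1$), put $q=w^p_i$ so that $w^p_j=qg^N$, and combine the geometric bound with the triangle inequality for projection distances (Remark \ref{rem:triangle_inequality}):
\[ N/a-C\ \le\ d_{q\gamma}(w^p_i,w^p_j)\ \le\ d_{q\gamma}(p,w^p_i)+d_{q\gamma}(p,w^p_j). \]
Hence some $m\in\{i,j\}$, with $m\le n$, satisfies $d_{q\gamma}(p,w^p_m)\ge \tfrac12(N/a-C)$, and since $N\ge \eta_0\log(n)/\ell_S(g)-1$ this is $\ge\eta\log(n)$ for $\eta:=\eta_0/(4a\,\ell_S(g))$ once $n$ is large. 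So, for all large $n$, the event of Lemma \ref{markovword} is contained in $\{\exists m\le n,\ h\in G:\ d_{h\gamma}(p,w^p_m)\ge \eta\log(n)\}$ (take $h=q=w^p_i\in G$), giving the claim with $U:=U_0$; the finitely many remaining values of $n$ are absorbed by enlarging $U$ and shrinking $\eta$, using irreducibility of the tame Markov chain as at the end of the proof of Lemma \ref{markovword}.

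The main obstacle is the geometric input: one has to unwind the coarse projections $\pi_{h\gamma}$ of Lemma \ref{lem:Behrstock}, keep careful track of whether diameters and distances are measured in $X$ or in $G$ (they agree up to the quasi-geodesic constants of $\gamma$), and — crucially — ensure that the lower bound on $d_{q\gamma}(q,qg^N)$ is linear in $N$ with constants independent of $q$, since $q=w^p_i$ is random. It is exactly this uniformity over $q$ that allows the probabilistic estimate of Lemma \ref{markovword} to be fed in.
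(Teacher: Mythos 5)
Your proposal is correct and is essentially the paper's own argument: both apply Lemma \ref{markovword} to the element $g^{\lfloor \eta\log(n)/\ell(g)\rfloor}$, use the equivariance of the projections from Lemma \ref{lem:Behrstock} to see that $d_{w^p_i\gamma}(w^p_i,w^p_j)$ is bounded below linearly in the exponent (uniformly in the random coset), and then split this by the triangle inequality so that one of $m=i,j$ carries at least half the projection distance. The only difference is cosmetic: you spell out the uniform lower bound $d_{q\gamma}(q,qg^N)\geq N/a-C$ in detail, whereas the paper records the same fact in the equivalent form $d_{w_i\gamma}(w^o_i,w^o_j)\geq d_{\gamma}(1,g^{\lfloor\eta\log(n)/\ell(g)\rfloor})$, and both treatments defer the finitely many small values of $n$ to an adjustment of constants.
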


\begin{proof}
Let $\eta,U$ be as in Lemma \ref{markovword} (but note that the $\eta$ that satisfies the corollary will be smaller). 

By Lemma \ref{markovword}, for any $n$ with probability at least  $1- e^{-\sqrt{n}/U}$ there exist $i,j\leq n$ such that $(w^o_{i})^{-1}w^o_{j}=g^{\lfloor\eta \log(n)/l_G(g)\rfloor}$.

Then $d_{w_i\gamma}(w^o_i,w^o_j)\geq d_{\gamma}(1,g^{\lfloor\eta \log(n)/l_G(g)\rfloor})\geq \eta'\log(n)$, for some sufficiently small $\eta'$, so that $d_{w_i\gamma}(o,w^o_m)\geq \eta'\log(n)/2$ for either $m=i$ or $m=j$, as required.
\end{proof}



 

\section{Our assumptions}
\label{sec:assump}

In this short section we spell out the two sets of assumptions under which we work for the rest of the paper, and point out examples of groups and spaces that satisfy them.

\begin{assumptions}
\label{assump:superdiv}
$G$ is a finitely generated group acting on a hyperbolic space $X$ and containing a super-divergent element.
\end{assumptions}

In particular, note that Assumption \ref{assump:superdiv} is satisfied by hyperbolic and relatively hyperbolic groups and corresponding spaces as in Lemma \ref{lem:examples_of_superdivgelements} and Lemma \ref{lem:rh_superdiv}.


Before stating the next assumption, we need a definition.
\begin{definition}
\label{defn:subexp}
Let $G$ be a group with a fixed word metric $d_G$, and let $H<G$. We say that $H$ has \emph{subexponential growth in the ambient word metric} if the function $\psi:\mathbb{N} \to \mathbb{N}$ defined as $\psi(n)= \vert \{h \in H : d_G(1, h) \leq n \} \vert$ is subexponential in $n$.
\end{definition}

\begin{assumptions}
\label{assump:graph}
$G$ is the finitely generated fundamental group of a graph of groups where the edge groups (more precisely, their images in $G$) have subexponential growth in the ambient word metric, and there exists a loxodromic WPD element for the action on the Bass-Serre tree $X$.
\end{assumptions}

The assumption applies, for example, to many right-angled Artin group, as we now show.

\begin{lemma}
Any right-angled Artin group $A_\Gamma$ whose defining graph $\Gamma$ has a separating simplex and diameter at least 3 satisfies Assumption \ref{assump:graph}.
\end{lemma}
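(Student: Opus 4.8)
The plan is to realise $A_\Gamma$ as the fundamental group of an explicit graph of groups coming from the separating simplex, observe that the edge groups are as required, and then exhibit a loxodromic WPD element for the action on the associated Bass--Serre tree; the diameter hypothesis is used only in the last step.

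First I would set up the splitting. Write $\Gamma=\Gamma_1\cup\Gamma_2$ with $\Gamma_1\cap\Gamma_2=\Lambda$ the separating simplex and no edge of $\Gamma$ joining $\Gamma_1\setminus\Lambda$ to $\Gamma_2\setminus\Lambda$; since $\Lambda$ separates, both $\Gamma_i\setminus\Lambda$ are non-empty. This yields the amalgam $A_\Gamma=A_{\Gamma_1}\ast_{A_\Lambda}A_{\Gamma_2}$ with $A_\Lambda\subsetneq A_{\Gamma_i}$; let $X=T$ be its Bass--Serre tree, on which $A_\Gamma$ acts minimally (the splitting is reduced) with edge stabilisers the conjugates of $A_\Lambda$. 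Since $\Lambda$ is a simplex, $A_\Lambda\cong\mathbb{Z}^{|\Lambda|}$, which is nilpotent; moreover $A_\Lambda$ is a retract of $A_\Gamma$ (kill the generators outside $\Lambda$), hence isometrically embedded for the standard word metrics, and so has polynomial --- in particular subexponential --- growth in the ambient word metric. This gives the first requirement of Assumption~\ref{assump:graph}, so it remains to produce a loxodromic WPD element for $A_\Gamma\curvearrowright T$. The action is non-elementary: $[A_{\Gamma_i}:A_\Lambda]=\infty$ for $i=1,2$, so $A_\Gamma$ fixes neither a point nor an end of $T$ and there are hyperbolic isometries with transverse axes (loxodromic elements certainly exist, e.g.\ $g_1g_2$ with $g_i\in A_{\Gamma_i}\setminus A_\Lambda$, by Bass--Serre theory; the issue is WPD-ness).

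To obtain a WPD element I would invoke the criterion of Minasyan--Osin for acylindrical hyperbolicity of groups acting on trees: a group acting minimally and non-elementarily on a tree, one of whose edge stabilisers is weakly malnormal --- here, since $A_\Gamma$ is torsion-free, this says there is $g\in A_\Gamma$ with $A_\Lambda\cap gA_\Lambda g^{-1}=1$ --- admits, being visibly not virtually cyclic, a loxodromic WPD element for that very action. Everything thus reduces to showing that \emph{$A_\Lambda$ is weakly malnormal in $A_\Gamma$}, and this is where $\diam(\Gamma)\geq 3$ enters. The model case: if there is a vertex $v\in\Gamma\setminus\Lambda$ adjacent to no vertex of $\Lambda$, then conjugation by $v$ works, for given $1\neq x\in A_\Lambda$ the word $v^{-1}xv$ is reduced and still involves $v$ (as $v$ commutes with no generator occurring in $x$), so it lies outside $A_\Lambda$, whence $A_\Lambda\cap vA_\Lambda v^{-1}=1$. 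When $\Lambda$ is a single vertex $u$ this is forced: if the star of $u$ were all of $\Gamma$ then $\diam(\Gamma)\leq2$, contradicting the hypothesis, so a suitable $v$ exists; in particular this already settles the case where $\Gamma$ is a tree, since then a separating vertex exists and no vertex dominates a tree of diameter $\geq3$. For a general separating simplex, one argues that $\diam(\Gamma)\geq3$ forces either such a vertex $v$ to exist, or a strictly smaller separating simplex for which one does, or else --- in the remaining case where $\Gamma$ is the $1$-neighbourhood of $\Lambda$ and $|\Lambda|\geq2$ --- that a two-vertex conjugator $g=v_1v_2$, with $v_i\in\Gamma_i\setminus\Lambda$ chosen so that no vertex of $\Lambda$ is adjacent to both $v_1$ and $v_2$, satisfies $A_\Lambda\cap gA_\Lambda g^{-1}=1$ by the analogous reduced-word computation.

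The hard part will be precisely this last configuration, $\Gamma$ the $1$-neighbourhood of $\Lambda$ with $|\Lambda|\geq2$: there $A_\Lambda$ need not be weakly malnormal inside either vertex group $A_{\Gamma_i}$ (a vertex of $\Lambda$ can be central in $A_{\Gamma_i}$ while non-central in $A_\Gamma$), so the naive construction of a WPD element of the form $g_1g_2$ with $A_\Lambda\cap g_iA_\Lambda g_i^{-1}=1$ fails, and one must use the combinatorics of parabolic (``special'') subgroups of right-angled Artin groups --- that intersections of parabolics are parabolic, and that conjugation moves a parabolic off itself unless the conjugator lies in a controlled subgroup --- together with $\diam(\Gamma)\geq3$ to produce a conjugator killing all of $A_\Lambda$. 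Once weak malnormality is established the remaining steps are routine.
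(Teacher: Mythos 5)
Your first half is fine and matches the paper: the splitting $A_\Gamma=A_{\Gamma_1}\ast_{A_\Lambda}A_{\Gamma_2}$, the observation that the edge group $A_\Lambda\cong\mathbb Z^{|\Lambda|}$ is a retract, hence undistorted and of subexponential growth in the ambient metric (the paper gets undistortedness from CAT(0) instead), and the existence of loxodromic elements. The genuine gap is in the WPD step: your whole argument is routed through the claim that $A_\Lambda$ is weakly malnormal in $A_\Gamma$, and that claim is never proved. You verify it only when some vertex outside $\Lambda$ has no neighbour in $\Lambda$ (or when $\Lambda$ is a single vertex), you assert a trichotomy with ``one argues that\dots'', and you explicitly leave open the configuration where $\Gamma$ is the $1$-neighbourhood of $\Lambda$ with $|\Lambda|\ge 2$ --- which is exactly where the hypothesis $\diam(\Gamma)\ge 3$ has to do its work, so the write-up stops at the crux. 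Moreover the trichotomy as stated does not cover all cases. For example, let $\Lambda=\{u_1,u_2\}$ be an edge, attach a path $x,m_1,m_2,y$ with $x,m_1$ adjacent to $u_1$ and $m_2,y$ adjacent to $u_2$, and add one vertex $b$ adjacent to both $u_1,u_2$. Then $\Lambda$ separates, $\diam(\Gamma)=3$ (realised by $x,y$), every vertex outside $\Lambda$ meets $\Lambda$, there is no cut vertex (so no smaller separating simplex), and every cross pair $(v_1,b)$ shares a neighbour in $\Lambda$; so none of the devices you list applies to this simplex. (Weak malnormality does hold here, e.g.\ via the conjugator $xy$, whose two letters lie in the same component, or by switching to the separating simplex $\{m_1,u_1\}$ --- but producing such a choice in general from $\diam(\Gamma)\ge3$ is precisely the missing combinatorial step, not a routine afterthought.) A secondary point: Assumption \ref{assump:graph} asks for a loxodromic WPD element for the Bass--Serre tree action itself, so you must invoke the part of Minasyan--Osin that produces a WPD loxodromic for that action, not merely acylindrical hyperbolicity of $A_\Gamma$; this is available but needs to be cited correctly.

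For comparison, the paper sidesteps weak malnormality of the edge group entirely: it takes $a\in\Gamma_1\setminus\Delta$ and $b\in\Gamma_2\setminus\Delta$, notes that $ab$ is loxodromic on the Bass--Serre tree because it is not conjugate into a vertex group (seen in the abelianisation), and proves WPD by checking that the two tree-vertices $A_{\Gamma_1}$ and $abA_{\Gamma_1}$ have trivial common stabiliser via the normal form of \cite{HermillerMeier}, after which \cite[Corollary 4.3]{minasyan2017acylindrical} upgrades trivial common stabiliser of two vertices on the axis to WPD. Note that this normal-form verification also hinges on choosing the pair $a,b$ (and, if necessary, the separating simplex) so that $a$ and $b$ have no common neighbour in $\Delta$ --- the same combinatorial point your proposal defers; if you carry out your reduced-word computation for such a pair, you get $A_\Lambda\cap (ab)A_\Lambda(ab)^{-1}=1$ and the two-vertex-stabiliser condition simultaneously, and either route then closes. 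As it stands, however, your proposal does not establish the one statement on which it pivots.
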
  

\begin{proof}
The separating simplex $\Delta$ gives a splitting over an abelian subgroup $A_\Gamma=A_{\Gamma_1} *_\Delta A_{\Gamma_2}$ where the $\Gamma_i$ are subgraphs of $\Gamma$ intersecting at $\Delta$. Since $A_\Gamma$ is CAT(0), said subgroup is undistorted and hence has subexponential growth in the ambient word metric (as well as its own word metric).

Let $a, b$ be vertices of $\Gamma$ in two distinct connected components of the complement of the disconnecting simplex. Then the element $ab$ of $\Gamma$ is not contained in a vertex group of the splitting (for example this can be checked by passing to the abelianisation of $A_\Gamma$) and hence it is loxodromic for the action on the Bass-Serre tree.

Towards proving that $ab$ is WPD, we note that the vertices $A_{\Gamma_1}$ and $ab A_{\Gamma_1}$ of the Bass-Serre tree have trivial common stabiliser (recall that the vertices of the Bass-Serre tree are cosets of the vertex groups). This can be seen for example using the normal form from \cite[Proposition 3.2]{HermillerMeier}: The stabilisers of $A_{\Gamma_1}$ and $ab A_{\Gamma_1}$ are $A_{\Gamma_1}$ and $ab A_{\Gamma_1}b^{-1}a^{-1}$, so that the normal form for elements in the former will not contain the letter $b$, while those for the latter will.

This fact suffices to show that $ab$ is WPD since it implies that two vertices on the axis of $ab$ (the closest-point projections of the aforementioned vertices) have trivial common stabiliser, which means that \cite[Corollary 4.3]{minasyan2017acylindrical} applies and gives that $ab$ is WPD.
\end{proof}

\subsection{3-manifold groups}

In this subsection we show that each acylindrically hyperbolic 3-manifold group satisfies one of the assumptions above, for a suitable hyperbolic space $X$.

We recall the following well-known result on some obstructions to acylindrical hyperbolicity, see for example \cite{minasyan2017acylindrical}.

\begin{lemma}[ \cite{minasyan2017acylindrical}]
\label{obstructionacylindricalhyperbolicity}
Let $G$ be a group such that one of the following holds:

\begin{itemize}
    \item $G$ contains an infinite cyclic normal subgroup $Z$, or
    \item $G$ is virtually solvable.
\end{itemize}
 Then $G$ is not acylindrically hyperbolic.
\end{lemma}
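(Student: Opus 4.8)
The plan is to argue by contradiction: assume $G$ is acylindrically hyperbolic and fix a non-elementary acylindrical action $G\acts X$ on a hyperbolic space $X$, with basepoint $x_0$. We will use Osin's structure theory for acylindrical actions from \cite{Osin_acylindrical}: such an action has no parabolic elements (each element is elliptic or loxodromic), and being non-elementary it is of general type, so $G$ contains two independent loxodromic elements $g_1,g_2$; in particular no two-point subset of $\partial X$ is $G$-invariant, since $\{g_1^+,g_1^-\}\neq\{g_2^+,g_2^-\}$. Throughout we are careful to use only facts that follow directly from the existence of this single action (the restriction of an acylindrical action is acylindrical, etc.), and never commensurability invariance of acylindrical hyperbolicity, which the introduction flags as open.

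\textbf{Case 1: $G$ has an infinite cyclic normal subgroup $Z=\langle z\rangle$.} Since $\mathrm{Aut}(\mathbb{Z})=\{\pm1\}$, conjugation by any $h\in G$ sends $z$ to $z^{\pm1}$; hence $g^{-n}z^m g^n=z^{\pm m}$ for all $g\in G$ and $m,n\in\mathbb{Z}$. By the trichotomy, $z$ is loxodromic or elliptic. If $z$ is loxodromic, every $h\in G$ maps a quasi-axis of $z$ to a quasi-axis of $z^{\pm1}$, hence fixes the endpoint pair $\{z^+,z^-\}\subseteq\partial X$ setwise; this contradicts the existence of the independent pair $g_1,g_2$ (their endpoint pairs would both have to equal $\{z^+,z^-\}$). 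If instead $z$ is elliptic, put $\rho=\diam(\langle z\rangle x_0)<\infty$. For every $m\in\mathbb{Z}$ and $n\geq0$ we have $d_X(z^m x_0,x_0)\leq\rho$ and $d_X(z^m g_1^n x_0,g_1^n x_0)=d_X(g_1^n z^{\pm m}x_0,g_1^n x_0)=d_X(z^{\pm m}x_0,x_0)\leq\rho$. Applying acylindricity with $\varepsilon=\rho$ gives constants $R,N$; choosing $n$ with $d_X(x_0,g_1^n x_0)\geq R$ (possible since $g_1$ is loxodromic) we conclude $\#\{z^m:m\in\mathbb{Z}\}\leq N$, contradicting that $z$ has infinite order.

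\textbf{Case 2: $G$ is virtually solvable.} An acylindrically hyperbolic group contains a non-abelian free subgroup: play ping-pong on $\partial X$ with suitably high powers of $g_1$ and $g_2$ (see \cite{Osin_acylindrical} or \cite{dahmani2014hyperbolically}). On the other hand, a virtually solvable group is amenable, hence has no non-abelian free subgroup. This contradiction completes the proof.

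\textbf{Main obstacle.} The statement is classical, so the work is in assembling the pieces cleanly; the one step demanding genuine care is the elliptic subcase of Case 1, where one must verify that \emph{every} power of $z$ — not merely $z$ itself — moves the two far-apart points $x_0$ and $g_1^n x_0$ by a uniformly bounded amount, which is exactly what is needed to feed into the acylindricity inequality. Secondarily, one should double-check that the inputs quoted from \cite{Osin_acylindrical} (no parabolics in acylindrical actions, and independent loxodromics in non-elementary ones) are available for a single fixed action, so that nothing about quasi-isometry or commensurability invariance is implicitly used.
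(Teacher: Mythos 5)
Your argument is correct, but note that the paper does not actually prove this lemma: it is stated as a known result with a citation to Minasyan--Osin, where the proof goes through different structural facts (e.g.\ that suitable infinite normal subgroups of acylindrically hyperbolic groups are themselves acylindrically hyperbolic, and that acylindrically hyperbolic groups are never virtually cyclic or amenable). What you give instead is a direct, self-contained verification from the definition of acylindricity together with Osin's classification of acylindrical actions, and both halves work: in the elliptic subcase of Case 1 you correctly check that \emph{every} power $z^m$ displaces both $x_0$ and $g_1^nx_0$ by at most $\rho$ (using $g_1^{-n}z^mg_1^n=z^{\pm m}$), which is exactly what acylindricity with $\varepsilon=\rho$ needs, and Case 2 via amenability of virtually solvable groups versus free subgroups of acylindrically hyperbolic groups is standard. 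The one step you pass over quickly is the loxodromic subcase of Case 1: from $G$-invariance of $\{z^+,z^-\}$ you conclude $\{g_i^+,g_i^-\}=\{z^+,z^-\}$, which uses the (true, but worth citing in the possibly non-proper setting) fact that the fixed-point set of a loxodromic isometry on the Gromov boundary is exactly its endpoint pair, applied to $g_i^2$ after noting $g_i$ may swap the two points. If you prefer to avoid boundary dynamics altogether, you can instead invoke the fact that for an acylindrical action the setwise stabiliser of $\{z^+,z^-\}$ is the virtually cyclic subgroup $E(z)$, so $G=\Stab(\{z^+,z^-\})$ would force $G$ to be virtually cyclic, contradicting non-elementarity; this is the cleaner route and matches the machinery (elementary closures $E(g)$) already used elsewhere in the paper.
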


By a result of Minasyan-Osin \cite{minasyan2017acylindrical} we know that most $3$-manifold groups are acylindrically hyperbolic; we will need a different version of their characterisation of when a $3$-manifold group is acylindrically hyperbolic. Using various results from the literature, including the geometrisation theorem, we can get the following result. 

\begin{proposition}
\label{acylindricallyhyperbolicmanifold}
	Let $M$ be a closed, connected, oriented $3$-manifold. Let $G:= \pi_1(M)$. Then $G$ is acylindrically hyperbolic if and only if $M$ satisfies one of the following:
	
	\begin{itemize}
		\item $M$ is not prime and not $\mathbb{RP}^3 \# \mathbb{RP}^3$,
		\item $M$ is a geometric manifold with Thurston geometry $\mathbb{H}^3$,
		\item $M$ is a non-geometric prime manifold.
	\end{itemize}
	Moreover, when $G$ is acylindrically hyperbolic then it either contains a super-divergent element for some action on a hyperbolic space, or it admits a graph of groups decomposition as in Assumption \ref{assump:graph}.
	\end{proposition}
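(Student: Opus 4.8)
The plan is to treat the two directions of the ``if and only if'' separately, and then handle the ``moreover'' clause by a case analysis along the three bullets in the statement.

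\textbf{Setting up the characterisation.} First I would recall the structure theory of closed oriented $3$-manifolds: the prime/connected-sum (Kneser--Milnor) decomposition, and for prime manifolds the geometrisation picture (either geometric with one of the eight Thurston geometries, or non-geometric with a non-trivial JSJ decomposition into Seifert-fibered and hyperbolic pieces). For the ``only if'' direction I would argue the contrapositive: if $M$ is none of the three listed types, then $G=\pi_1(M)$ falls into one of the obstruction classes of Lemma~\ref{obstructionacylindricalhyperbolicity}. Concretely, if $M$ is prime and geometric with a geometry other than $\mathbb{H}^3$, then $\pi_1(M)$ is either virtually solvable (for the flat, $\mathrm{Nil}$, $\mathrm{Sol}$, $\mathbb{R}^3$, $S^3$, $S^2\times\mathbb{R}$ geometries, up to finite index considerations and using that finite and virtually-$\mathbb{Z}^2$-by-$\mathbb{Z}$ groups are virtually solvable) or contains an infinite cyclic normal subgroup (for $\mathbb{H}^2\times\mathbb{R}$ and $\widetilde{\mathrm{SL}_2}$, where the Seifert fiber gives a central/normal $\mathbb{Z}$); and the remaining non-$\mathbb{H}^3$ cases ($S^2\times\mathbb{R}$, $S^3$) give finite or virtually-cyclic $\pi_1$. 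Finally $\pi_1(\mathbb{RP}^3\#\mathbb{RP}^3)\cong\mathbb{Z}/2*\mathbb{Z}/2$ is virtually cyclic, hence not acylindrically hyperbolic. For the ``if'' direction I can lean on Minasyan--Osin \cite{minasyan2017acylindrical}, which already establishes acylindrical hyperbolicity in exactly these cases; alternatively it follows from the ``moreover'' clause once that is proved, since a group with a loxodromic WPD element on a hyperbolic space is acylindrically hyperbolic.

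\textbf{The ``moreover'' clause.} This is the substantive part. I would split into the three cases. (a) If $M$ is not prime and not $\mathbb{RP}^3\#\mathbb{RP}^3$, then $G$ splits as a non-trivial free product $\pi_1(M_1)*\cdots*\pi_1(M_k)*F$ with more than one ``large'' factor, or at least the free-product structure is non-trivial and not $\mathbb{Z}/2*\mathbb{Z}/2$; hence $G$ is the fundamental group of a graph of groups with \emph{trivial} edge groups (which trivially have subexponential growth in the ambient metric), and the Bass--Serre tree carries a loxodromic WPD element — indeed for a free product that is not virtually cyclic and not $\mathbb{Z}/2*\mathbb{Z}/2$ one has loxodromic WPD elements on the Bass--Serre tree (e.g.\ by \cite{minasyan2017acylindrical} or by exhibiting an element with trivial stabiliser on its axis and applying \cite{minasyan2017acylindrical} or \cite{Osin_acylindrical}). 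So Assumption~\ref{assump:graph} holds. (b) If $M$ is geometric hyperbolic ($\mathbb{H}^3$), then $G$ is a (non-elementary) hyperbolic group, so by Lemma~\ref{lem:examples_of_superdivgelements} it contains a super-divergent element for its action on its own Cayley graph, and Assumption~\ref{assump:superdiv} holds. (c) If $M$ is a non-geometric prime manifold, then it has a non-trivial JSJ decomposition, giving a graph-of-groups decomposition of $G$ whose edge groups are fundamental groups of incompressible tori (or Klein bottles), hence virtually $\mathbb{Z}^2$, hence of polynomial — in particular subexponential — growth; and since $M$ is non-geometric the JSJ graph of groups is ``non-elementary'' enough that the action on the Bass--Serre tree has a loxodromic WPD element. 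For the WPD/loxodromic claim here I would either invoke the acylindrical-hyperbolicity results of \cite{minasyan2017acylindrical} directly, or argue by hand: the JSJ tree action is minimal and not a single orbit of a point/pair, so there is a hyperbolic isometry, and its axis contains an edge whose stabiliser (a peripheral $\mathbb{Z}^2$) is ``small'' relative to the vertex groups, which via \cite{minasyan2017acylindrical} (acylindricity of JSJ tree actions of non-geometric manifolds) or \cite{minasyan2017acylindrical,Osin_acylindrical} yields WPD. One subtlety to address: the edge-group subexponential-growth condition in Assumption~\ref{assump:graph} is about growth in the \emph{ambient} word metric $d_G$, so I must note that peripheral tori are \emph{undistorted} in $3$-manifold groups (a standard fact, following e.g.\ from the geometry of the JSJ pieces / CAT(0)-like structure), so their ambient growth is still polynomial.

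\textbf{Expected main obstacle.} The routine part is the enumeration of Thurston geometries and checking each lands in an obstruction class; the delicate point is verifying, in case (c), that the JSJ Bass--Serre tree action has a \emph{loxodromic WPD} element (not merely that $G$ is acylindrically hyperbolic on \emph{some} space), together with the undistortedness of the edge groups so that Assumption~\ref{assump:graph} is literally satisfied. I expect to handle this by citing the acylindricity of the JSJ tree action established in \cite{minasyan2017acylindrical} (for non-geometric prime manifolds) — an acylindrical action that is non-elementary automatically has loxodromic WPD elements — and the well-known undistortedness of peripheral subgroups. A secondary bookkeeping issue is the $\mathbb{RP}^3\#\mathbb{RP}^3$ and low-complexity exceptions, which must be excluded exactly because $\mathbb{Z}/2*\mathbb{Z}/2$ and finite/virtually-cyclic groups are not acylindrically hyperbolic; I would make sure the free-product case (a) is phrased so these are the only exclusions needed.
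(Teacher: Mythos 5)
Your proposal is correct in substance, but it routes the ``moreover'' clause differently from the paper in two of the three cases. For a non-prime $M$ (not $\mathbb{RP}^3\#\mathbb{RP}^3$) the paper does not use the free-product Bass--Serre tree at all: it notes that $G$ is hyperbolic relative to the $\pi_1(M_i)$ and then invokes Lemma~\ref{lem:rh_superdiv} to produce a \emph{super-divergent} element for the action on the coned-off Cayley graph, i.e.\ it lands in Assumption~\ref{assump:superdiv}, whereas you land in Assumption~\ref{assump:graph} via the splitting with trivial edge groups (both are legitimate, since the statement is a disjunction, and your acylindricity-from-trivial-edge-stabilisers argument is sound). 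Similarly, for non-geometric prime manifolds the paper splits the case: if there is a hyperbolic JSJ piece it uses Dahmani's combination theorem to get relative hyperbolicity with infinite-index peripherals, hence again a super-divergent element, and it reserves the graph-of-groups route (JSJ tree, $\mathbb{Z}^2$ edge groups, acylindricity of the tree action via Wilton--Zalesskii) only for non-geometric graph manifolds -- which is forced, since graph manifolds have quadratic divergence (Gersten) and so cannot contain super-divergent elements by Lemma~\ref{divergence}. Your uniform JSJ treatment of all non-geometric prime manifolds is also viable, but it buys you the extra obligations you already flagged: acylindricity (hence loxodromic WPD elements) of the JSJ tree action for mixed manifolds, and undistortion of the JSJ tori. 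On the latter, be careful with your ``CAT(0)-like structure'' justification: closed graph manifolds need not admit nonpositively curved metrics, and the paper's actual argument is that the universal cover is quasi-isometric to that of a \emph{flip} graph manifold preserving the geometric decomposition (Kapovich--Leeb), with flip manifolds being CAT(0); for mixed manifolds one can instead use Leeb's nonpositively curved metrics. The ``only if'' direction (enumeration of geometries against Lemma~\ref{obstructionacylindricalhyperbolicity}) and the $\mathbb{RP}^3\#\mathbb{RP}^3$ exclusion match the paper essentially verbatim, and deducing the ``if'' direction from the ``moreover'' clause (WPD loxodromic plus not virtually cyclic) is fine.
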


\begin{proof}
We use the geometrisation theorem, see \cite{perelman2003ricci}, \cite{perelman2}, \cite{LottKleiner}, \cite{morganTian}, \cite{CaoZhu}. 
If $M= M_1 \# M_2\#\cdots \# M_n$ is the prime decomposition of $M$, then $G:=\pi_1(M)$ is hyperbolic relative to $\pi_1(M_i)$. In this case, $G$ is acylindrically hyperbolic unless it is virtually cyclic. This only happens when $M$ is the connected sum of two real projective $3$-spaces. Indeed, the only free product of non-trivial groups which is virtually cyclic is the free product of two copies of $\mathbb Z/2\mathbb Z$, and the only closed, connected, oriented $3$-manifold with this fundamental group is $\mathbb{RP}^3$.

We can therefore assume that $M$ is a prime manifold.

First, we consider the case where $M$ is geometric. By using Lemma \ref{obstructionacylindricalhyperbolicity},  we will show that the only acylindrically hyperbolic geometric $3$-manifolds are the ones with geometry $\mathbb{H}^3$.  We list the other possible geometries for a prime manifold as well as the reason why they are not acylindrically hyperbolic. We only note the possible fundamental groups, non acylindrical hyperbolicity will follow from Lemma \ref{obstructionacylindricalhyperbolicity}. We refer to \cite{scottmanifolds} for the details on the possible fundamental groups for manifolds with the following Thurston geometries: \\

\begin{itemize}
	\item $S^3$: finite fundamental group.
	\item $\mathbb{R}^3$: virtually abelian fundamental group.
	\item $\mathbb{H}^3$: non-elementary hyperbolic hence acylindrically hyperbolic.
	\item $S^2 \times \mathbb{R}$: virtually cyclic fundamental group.
	\item $\mathbb{H}^2 \times \mathbb{R}$: fundamental group contains an infinite normal cyclic subgroup. 
	\item $\widetilde{SL_2(\mathbb{R})}$: fundamental group contains an infinite normal cyclic subgroup.
	\item Nil: virtually nilpotent fundamental group. 
	\item Sol: virtually solvable fundamental group. 
\end{itemize}

We therefore only need to consider the case where $M$ is a non-geometric prime manifold. We have $2$ cases to consider: 
\begin{itemize}
	\item $M$ is a non-geometric graph manifold. In this case, $\pi_1(M)$ acts acylindrically on the Bass-Serre tree, see \cite[Lemma 2.4]{Wilton2008ProfinitePO}. Hence $G$ is acylindrically hyperbolic.
	\item $M$ contains a hyperbolic component.  In view of Dahmani's combination theorem \cite{Dahmani}, $\pi_1(M)$ is hyperbolic relative to infinite-index abelian and graph manifold groups and hence acylindrically hyperbolic.
\end{itemize}

Note that by the arguments above, if $M$ is not prime (and not  $\mathbb{RP}^3 \# \mathbb{RP}^3$) or if $M$ contains a hyperbolic component then it is relatively hyperbolic with infinite-index peripherals, and so $G$ contains a super-divergent element by Lemma  \ref{lem:rh_superdiv}.

If $M$ is prime with geometry $\mathbb{H}^3$, then $G$ is non-elementary hyperbolic and hence contains a super-divergent element by Lemma \ref{lem:examples_of_superdivgelements}. \\

Therefore, the only acylindrically hyperbolic $3$ manifold groups left to consider are  non-geometric graph manifolds. It might be worth noting that by a result of Gersten \cite{Gersten}, graph manifolds are exactly the closed $3$-manifolds with quadratic divergence, and it particular non-geometric graph manifolds do not contain super-divergent elements by Lemma \ref{divergence}. 

The fundamental group of a non-geometric graph manifold admits a graph of groups decomposition (the one coming from the geometric decomposition) where the edge groups are isomorphic to $\mathbb{Z}^2$. Moreover, the edge groups are undistorted. This can be seen, for example, from the fact that the universal cover of the graph manifold is quasi-isometric to that of a \emph{flip} graph manifold in a way that preserves the geometric decomposition \cite{KapovichLeeb}. Since flip graph manifolds are CAT(0), their abelian subgroups are undistorted, we see that edge groups are undistorted in fundamental groups of flip graph manifolds, and hence the same holds for any graph manifold.
\end{proof}

\section{Geometric arguments}
\label{geometricarguments}

Our proof of Theorem \ref{linear progress} (linear progress for Markov chains) is in two parts. The goal of the first part is to establish Proposition \ref{axiom} below. We do so separately for the two assumptions from Section \ref{sec:assump} and in both cases we make use of the geometric features of the groups under consideration. For the second part of the proof, which takes place in the next section and is probabilistic in nature, Proposition \ref{axiom} can be used as a black box. In particular, we note that whenever one establishes Proposition \ref{axiom} for some group $G$, then $G$ will also satisfy the conclusion of Theorem \ref{linear progress}.

The content of the proposition is roughly the following. Suppose that we start a Markov path at $o$ and at some stage we get to $p$. At that point we have created certain large projections on cosets of some $E(g)$, for $g$ a fixed loxodromic WPD, and the proposition says that, continuing the Markov path past $p$, it is unlikely that we undo much of these large projections.

%

In Subsection \ref{subsec:WPD} we introduced various objects and notations related to a loxodromic WPD $g$ and the unique maximal elementary subgroup $E(g)=\gamma$ containing $g$. In particular we introduced certain projections $\pi_\gamma$ and projection distances $d_{h\gamma}$ (see Lemma \ref{lem:Behrstock} and Notation \ref{not:d_hgamma}), certain sets of cosets $\mathcal H_T(o,p)$ where projections are large, and sums of projection distances over such sets $\sum_{\mathcal H(x,y)}[z_1,z_2]$ (Definition \ref{defn:H_T}). Once a loxodromic WPD has been fixed, we will freely refer to these objects constructed from the given loxodromic WPD.

\begin{proposition}
\label{axiom}
Let $G$ be a group acting on a hyperbolic space $X$ satisfying one of Assumption \ref{assump:superdiv} or Assumption \ref{assump:graph}, and fix a basepoint $x_0\in X$. Then there exists a loxodromic WPD $g$ and a constant $T_0$ such that the following holds. Consider a tame Markov chain $(w^o_n)$ on $G$. For each $T\geq T_0$ there exists a constant $C$ such that for all $o,p\in G$,  $n \in \mathbb{N}$, and  $t>0$ we have
\[ \mathbb{P}\left[\exists r\leq n : \sum_{\mathcal{H}_T(o,p)} [p,w^p_r] \geq t \right] \leq Ce^{-t/C}.
\]
\end{proposition}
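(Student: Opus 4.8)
The plan is to transfer the whole statement to the deterministic function $\Phi(x):=\sum_{\mathcal H_T(o,p)}[p,x]$, which records ``how much of the projection accumulated between $o$ and $p$ has been undone at $x$'', and then to run a last-passage argument against the bounded-jumps and non-amenability parts of tameness. Observe that the event in the proposition is exactly $\{\exists r\le n:\ \Phi(w^p_r)\ge t\}$, and that $\Phi(w^p_r)$ depends only on the value of $w^p_r$, not on the history of the chain.

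First I would record three soft properties of $\Phi$. (i) $\Phi(p)=0$. (ii) $\Phi$ is coarsely Lipschitz on $(G,d_G)$ with a constant uniform over $o,p$: this follows from the triangle inequality for these sums (Remark \ref{rem:triangle_inequality}), Lemma \ref{lem:coarsely_lip_sum}, and the fact that the orbit map $g\mapsto gx_0$ is Lipschitz; combined with bounded jumps (Definition \ref{defn:tame}\eqref{item:bounded_jumps}, Remark \ref{rmk:bounded_jumps}) this yields a constant $\kappa$ with $|\Phi(w^p_{j+1})-\Phi(w^p_j)|\le\kappa$ for all $j$. (iii) $\Phi(x)\ge s$ forces $d_G(p,x)\ge s/C_0$ for a uniform $C_0$, by Lemma \ref{distance} (the sum lower-bounds a distance in $X$) and again the Lipschitz orbit map.

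The geometric core of the argument — and the part that must be done separately under Assumption \ref{assump:superdiv} and Assumption \ref{assump:graph}, and where the ``additional assumptions of a geometric nature'' enter — is the existence of a loxodromic WPD $g$ (the super-divergent element, resp.\ the given one) and constants $T_0,K_0,d_0>0$ with the following property. For $T\ge T_0$ and all $o,p\in G$ there is an ``obstacle set'' $\Sigma=\Sigma(o,p)\subseteq G$, contained in a uniformly bounded $d_G$-neighbourhood of a geodesic $[o,p]$, such that: (a) any path in the Cayley graph of $G$ that stays outside $\Sigma$ changes $\Phi$ by less than $K_0$ between its endpoints; and (b) $|\Sigma\cap B_G(p,m)|\le P(m)$ for a fixed subexponential function $P$ (polynomial under Assumption \ref{assump:superdiv}) independent of $o$ and $p$. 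Here $\Sigma$ should be thought of as a bounded neighbourhood of the union of the subsegments of the cosets $h\gamma\in\mathcal H_T(o,p)$ that $[o,p]$ fellow-travels; property (a) is the ``move away from the relevant cosets $\Rightarrow$ projections on them move slowly'' principle, supplied by super-divergence of $g$ under Assumption \ref{assump:superdiv} (cf.\ Lemma \ref{20francs}) and by the Bass--Serre tree geometry together with the subexponential growth of the edge groups under Assumption \ref{assump:graph} (cf.\ Lemma \ref{o(log)}); property (b) uses that these subsegments lie at bounded $d_G$-distance from $[o,p]$ (which needs the geometric assumptions, since the orbit map $G\to X$ need not be a quasi-isometric embedding) and, under Assumption \ref{assump:graph}, that the edge groups have subexponential growth in the ambient metric. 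Proving (a) and (b) is the main obstacle.

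Granting this, the probabilistic conclusion is short. Fix $t\ge 2(K_0+\kappa)$ (for smaller $t$ the asserted bound is vacuous once $C$ is chosen large). On the event $\{\exists r\le n:\ \Phi(w^p_r)\ge t\}$, let $r$ be least with $\Phi(w^p_r)\ge t$; since $\Phi(w^p_0)=0$, property (a) forces $(w^p_j)_{j\le r}$ to meet $\Sigma$, so $\tau:=\max\{j\le r:\ w^p_j\in\Sigma\}$ is well-defined. Applying (a) to the sub-path $(w^p_j)_{\tau<j\le r}$, which lies outside $\Sigma$, and then the single-step bound from (ii), gives $\Phi(w^p_r)<\Phi(w^p_\tau)+K_0+\kappa$, hence $\Phi(w^p_\tau)\ge t-K_0-\kappa\ge t/2$; by (iii), $d_G(p,w^p_\tau)\ge t/(2C_0)$. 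Therefore
\[
\mathbb P\Big[\exists r\le n:\ \Phi(w^p_r)\ge t\Big]\ \le\ \sum_{\tau=0}^{n}\ \sum_{\substack{z\in\Sigma(o,p)\\ d_G(p,z)\ge t/(2C_0)}}\mathbb P\big[w^p_\tau=z\big].
\]
By bounded jumps $\mathbb P[w^p_\tau=z]=0$ unless $d_G(p,z)\le K\tau$, so the inner sum vanishes unless $\tau\ge t/(2C_0K)$ and otherwise has at most $P(K\tau)$ terms by (b); by non-amenability (Definition \ref{defn:tame}\eqref{item:non-amen}), $\mathbb P[w^p_\tau=z]\le A\rho^\tau$. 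Hence the right-hand side is at most $\sum_{\tau\ge t/(2C_0K)}A\,P(K\tau)\,\rho^\tau\le Ce^{-t/C}$ for a suitable uniform $C>0$, since $P$ is subexponential and $\rho<1$; note this bound does not depend on $n$. This reduces Proposition \ref{axiom} to the geometric statement above, which is taken up case by case below.
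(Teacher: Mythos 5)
Your probabilistic skeleton (last-exit decomposition at the obstacle set, bounded jumps to force $\tau\gtrsim t$, non-amenability plus a union bound over subexponentially many accessible obstacle points) is sound, and under Assumption \ref{assump:graph} it is essentially the actual argument: there the obstacle is the $K'$-neighbourhood of the edge spaces $X_e$ for $e$ on the axes of the cosets in $\mathcal H_T(o,p)$, unavoidability comes from Bass--Serre theory (Lemma \ref{lem:geodesic_in_axis}), and subexponential counting comes from the growth hypothesis on edge groups. But your whole proof is a reduction to the ``geometric core'' (a)+(b), which you defer, and under Assumption \ref{assump:superdiv} that core is not just unproven, it is false. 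First, as literally stated ($\Sigma$ contained in a bounded $d_G$-neighbourhood of $[o,p]$), (a) already fails for a surface group acting on its Cayley graph: a path can run ``parallel'' to the axis of $h\gamma$ at distance $d_0+1$, never entering the prescribed neighbourhood, and carry the projection, hence $\Phi$, as far as it likes --- super-divergence only forces such a path to be \emph{long} (length $\gtrsim f(d)$ per $\theta$ of projection moved), it does not forbid it. Second, even dropping the containment requirement, (a) and (b) are incompatible for, say, a closed hyperbolic $3$-manifold group (a headline case of Assumption \ref{assump:superdiv}): any vertex set meeting every path that carries the projection across a fixed window of the axis must block, for each $d$, roughly $e^{d}$ coarsely disjoint ``longitudinal corridors'' at distance $d$ from the axis, so every valid $\Sigma$ has exponential growth in balls, contradicting (b). In other words, super-divergence gives a quantitative lower bound on the length of bypass paths, never a uniform bound $K_0$ on how much $\Phi$ can change off a thin obstacle.

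This is exactly why the paper's proof under Assumption \ref{assump:superdiv} has a different, genuinely probabilistic shape that your scheme would bypass but cannot: one shows the chain escapes the relevant cosets at linear speed (Lemma \ref{20francs}, using non-amenability), deduces that along such trajectories the sum $\sum_{\mathcal H(o,p)}[p,w^p_r]$ grows only like $o(\log r)$ (Lemma \ref{o(log)}, where super-divergence enters), upgrades this to Corollary \ref{undoproba}, and then --- since an $o(\log)$ bound over $n$ steps is far from exponential decay in $t$ --- runs a bootstrap: with high probability the chain creates a fresh large projection (Lemma \ref{etalogproj}), the subclaim shows that undoing $t$ of the old projections after that forces undoing $2t$ relative to the new basepoint, and the recursion $p_{n,t}\le C'e^{-t/C'}+p_{n,2t}$ is summed to get $Ce^{-t/C}$. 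So your proposal is acceptable as an outline of the Assumption \ref{assump:graph} case, but for Assumption \ref{assump:superdiv} you need to replace the ``obstacle set'' reduction by an argument of this escape-plus-renewal type (or supply a correct substitute); as it stands the main case of the proposition is missing.
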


Each subsection will be about proving this Proposition for the different assumptions on $G$.

\subsection{Proof of Proposition \ref{axiom} for groups containing a super-divergent element}

We now fix some notation and constants. In this subsection, let $G$ be a group containing a super-divergent element $g$ for the action on a hyperbolic space $X$, and we fix the rest of the notation set above Proposition \ref{axiom}, for $g$ the given super-divergent element (which is loxodromic WPD by Lemma \ref{lem:super_div_WPD}), and $T$ chosen as described below. We also fix a word metric $d_G$ on $G$, with finite generating set $S$. Since the projections $\pi_{h\gamma}$ from Lemma \ref{lem:Behrstock} are bounded distance from $X$-projections onto $h\langle g \rangle$, the super-divergent property holds with $\pi=\pi_{h\gamma}$, with respect to the projection distances as in Notation \ref{not:d_hgamma}, meaning the following. There exist a constant $\theta > 0$ and a super-linear function $f:\mathbb{R}^{+} \to \mathbb{R}^{+}$ (possibly larger than the ones for $\langle g\rangle$) such that the following holds for all $h\in G$: for all $d >0$ and paths $p$  remaining outside of the $d$-neighbourhood of $h\gamma$, if $d_{h\gamma}(p_{-},p_{+}) > \theta$ then $\ell(p) > f(d)$. Furthermore, we fix the constant $B$ of Lemma \ref{lem:Behrstock} and we also fix any $T\geq 10(B+\theta)$ that satisfies Lemma \ref{linearorder} and such that Lemma \ref{distance} holds with $T$ replaced by $T-4B$. Finally, we set $\mathcal H(x,y)=\mathcal H_T(x,y)$.

The following lemma says that with high probability, a tame Markov chain moves away at linear speed from the fixed cosets we are projecting to.

\begin{lemma}
\label{20francs}
There exist constants $D,C_1\geq 1$ such that for all $o,p,q\in G$ and $n \in \mathbb{N}$ we have
\[\mathbb{P}\left[ w_n^q \in \mathcal{N}_{n/D}\left(\bigcup_{h\gamma\in \mathcal H(o,p)}h\gamma \right)  \right] \leq C_1  e^{-n/C_1}.\]
\end{lemma}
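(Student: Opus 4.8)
The plan is to deduce the lemma from the non-amenability estimate $\mathbb P[w^x_n=y]\le A\rho^n$ of Definition \ref{defn:tame}\eqref{item:non-amen}, once we know that the set $\bigcup_{h\gamma\in\mathcal H(o,p)}h\gamma$ is ``sparse'' inside the ball of radius $Kn$ around $q$, where $K$ is the jump bound of Remark \ref{rmk:bounded_jumps}. The first reduction is to a single coset: bounded jumps give $d_G(q,w^q_i)\le Ki$ for all $i\le n$, so the Markov path stays in $B^G_{Kn}(q)$, and if $w^q_n\in\mathcal N_{n/D}(h\gamma)$ then $h\gamma\cap B^G_{(K+1)n}(q)\neq\emptyset$; call such $h\gamma$ \emph{relevant}. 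It thus suffices to bound $\mathbb P[w^q_n\in\mathcal N_{n/D}(h\gamma)]$ for a fixed relevant coset and to bound the number of relevant cosets, both uniformly in $o,p,q,n$.

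For a single relevant coset: since $g$ is loxodromic and the orbit map $G\to X$ is coarsely Lipschitz, $\langle g\rangle$ is a quasigeodesic in $(G,d_G)$ with constants depending only on $g$ and $S$; as $\langle g\rangle$ has finite index in $\gamma=E(g)$, every translate $h\gamma$ is a quasiline in $(G,d_G)$ with the same constants, so $|h\gamma\cap B^G_M(z)|\le c_1M+c_1$ for all $z\in G$, uniformly. Hence, writing $r=\lfloor n/D\rfloor$,
\[
\big|\mathcal N_{n/D}(h\gamma)\cap B^G_{Kn}(q)\big|\ \le\ \big|h\gamma\cap B^G_{(K+1)n}(q)\big|\cdot\big|B^G_{r}(1)\big|\ \le\ c_2\,n\,(2|S|+1)^{n/D},
\]
and summing the bound $\mathbb P[w^q_n=y]\le A\rho^n$ over $y$ in this set gives $\mathbb P[w^q_n\in\mathcal N_{n/D}(h\gamma)]\le c_2A\,n\,(2|S|+1)^{n/D}\rho^n$.

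Next I would show that the number of relevant cosets is $O(n)$, uniformly. For a relevant $h\gamma$, using the orbit map together with the fact that $\pi_{h\gamma}$ is within bounded distance of an $X$-closest-point projection to $h\gamma x_0$ (Lemma \ref{lem:Behrstock}), one checks that $\pi_{h\gamma}(q)x_0$ lies within $O(n)$ of $qx_0$ in $X$. Since $d_{h\gamma}(o,p)\ge T$ we have $d_{h\gamma}(o,q)\ge T/2$ or $d_{h\gamma}(q,p)\ge T/2$; in the first case (the other being symmetric), because $T$ is large, Lemma \ref{lem:exo_hyperbolicspace}(2) applies to $Q=h\gamma x_0$ and $[ox_0,qx_0]$ and produces a subgeodesic contained in $\mathcal N_R(h\gamma x_0)$ one of whose endpoints is within $O(n)$ of $qx_0$. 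Trimming these subgeodesics by the geometric-separation constant $B_1$ from the proof of Lemma \ref{distance} makes them pairwise disjoint (as in Claim 2 there) with lengths bounded below by a constant $c_0(T)>0$, and their $q$-side endpoints all lie in a sub-segment of $[ox_0,qx_0]$ of length $O(n)$; hence at most $O(n)$ such cosets occur. Running the same argument with $[qx_0,px_0]$ handles the second case, so there are $O(n)$ relevant cosets in all.

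Combining the two bounds,
\[
\mathbb P\Big[w^q_n\in\mathcal N_{n/D}\big(\textstyle\bigcup_{h\gamma\in\mathcal H(o,p)}h\gamma\big)\Big]\ \le\ c_3\,n^2\,(2|S|+1)^{n/D}\rho^n\ =\ c_3\,n^2\exp\!\Big(\Big(\tfrac{\log(2|S|+1)}{D}+\log\rho\Big)n\Big).
\]
Since $\rho<1$, choosing $D\ge1$ large enough that $\tfrac{\log(2|S|+1)}{D}+\log\rho<0$ makes the right-hand side decay exponentially, and absorbing the polynomial factor produces $C_1\ge1$ with the desired bound; the constants $D,C_1$ depend only on $S$, $K$, $A$, $\rho$ and the fixed geometric data, not on $o,p,q,n$. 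The main obstacle is the counting step: a naive estimate on the number of group elements in a ball of radius $\sim n$ near $\bigcup h\gamma$ would be exponential in $n$ and overwhelm the non-amenability estimate, so one genuinely needs to exploit that the cosets $h\gamma x_0$ are geometrically separated in $X$ and that a geodesic witnessing a relevant coset must do so near its endpoint $qx_0$; the rest is bookkeeping with coarse-Lipschitz and quasiconvexity constants.
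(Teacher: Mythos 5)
Your proposal is correct and follows the same skeleton as the paper's proof: bounded jumps confine the chain to a ball of radius $Kn$ around $q$, one shows the number of cosets of $\mathcal H(o,p)$ meeting that ball is linear in $n$, each such coset meets the ball in linearly many elements (quasi-isometric embeddedness of $E(g)$), the $n/D$-neighbourhood contributes a factor $|S|^{n/D}$, and non-amenability supplies $A\rho^n$, so taking $D$ large kills the product. The one place where you genuinely diverge is the coset-counting step. The paper gets it almost for free from machinery it has already set up: it picks points $a,b$ in the first and last relevant cosets with respect to the linear order of Lemma \ref{linearorder}, observes that every intermediate coset sees $a$ and $b$ with projections equal to those of $o$ and $p$ (so $d_{h\gamma}(a,b)\geq T-4B$), and then applies the distance-formula lower bound of Lemma \ref{distance} to $d_X(ax_0,bx_0)\leq 2K^2n$. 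You instead re-run, locally near $qx_0$, the geometric-separation/disjoint-subsegment argument that underlies the proof of Lemma \ref{distance}, along the geodesics $[ox_0,qx_0]$ and $[qx_0,px_0]$ after splitting via $d_{h\gamma}(o,q)\geq T/2$ or $d_{h\gamma}(q,p)\geq T/2$. This works, and it has the mild advantage of not using the linear order at all; the cost is that you must reopen the separation argument and verify that $T/2$ exceeds the constants $R$ and $B_1$, a largeness condition that the paper's fixed choice of $T$ only implicitly provides (it was calibrated so that Lemma \ref{distance} holds at threshold $T-4B$), so you should either enlarge $T_0$ accordingly or run your dichotomy at threshold level closer to $T$; this is routine constant bookkeeping, not a gap. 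The paper's route is shorter precisely because Lemmas \ref{linearorder} and \ref{distance} are used as black boxes.
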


\begin{proof}
Let $K$ be a bound on the size of the jumps of $w_n^o$, that is, let $K$ be such that $d_G(w^p_n,w^p_{n+1})\leq K$ for all $p\in G$ and $n$ (see Remark \ref{rmk:bounded_jumps}). Up to increasing $K$ we can also assume that for all $a,b\in G$ we have $d_X(ax_0,bx_0)\leq K d_G(a,b)$.

We begin with the following claim, stating that for all points $q\in G$ the number of cosets from $\mathcal{H}(o,p)$ that intersect a ball of radius $Kn$ and centre $q$ is linear in $n$. \\

\begin{claim}
 For all $n$, we have $$\# \{ h\gamma \in \mathcal H(o,p) : h\gamma \cap B(q,Kn) \neq\emptyset \} \leq 4K^2n/(T-4B)+2 $$ where $B$ is from Lemma $\ref{lem:Behrstock}$ and $T$ is the threshold for $\mathcal{H}(o,p)$, see the discussion above the statement of the lemma for details. 
\end{claim}

\begin{proof}[Proof of claim]
Fix $n$ and let $h_1\gamma \prec h_2\gamma \prec \dots h_r\gamma $ be the cosets such that $h_i\gamma \in \mathcal{H}(o,p) $ and $h_i\gamma \cap B(q,Kn) \neq \emptyset$ (the linear order is from Lemma \ref{linearorder}). We therefore want to show that $r \leq 4K^2n/(T-4B)+2$.\\

If $r<2$ we are done, so let us assume $r\geq 2$ and pick $a\in h_1\gamma\cap  B(q,Kn)$ and $b\in h_r\gamma\cap  B(q,Kn)$. For each $1<i<r$ we have $\pi_{h_i\gamma}(a)\subseteq \pi_{h_i\gamma}(h_1\gamma)=\pi_{h_i\gamma}(o)$ (where we used the equivalent characterisation of the linear order), as well as $\pi_{h_i\gamma}(b)\subseteq \pi_{h_i\gamma}(p)$. In particular, we have $d_{h_i\gamma}(a,b)\geq T-4B$ (the "$-4B$" is due to the fact that projections have diameter at most $2B$ as a consequence of Lemma \ref{lem:Behrstock}).


Hence, for all $1<i<r$ we have $d_{h_i\gamma}(o,p)=d_{h_i\gamma}(x, y)$. 

This leads to \[
2Kn\geq d_G(a,b) \geq d_X(a x_0, bx_0)/K\geq  \frac{1}{2K} \sum_{\mathcal{H}_{T-4B}(x,y)}[x,y] \geq (r-2)(T-4B)/(2K)
\]
where we used Lemma \ref{distance} (and our choice of $T$). This proves the claim. 
\end{proof}

Now, the inclusion of $E(g)$ into $G$ is a quasi-isometric embedding. Hence for each coset $h\gamma \in \mathcal H(o,p)$ there are at most $K'n$ elements of $h\gamma$ in the ball $B(q,Kn)$ for some constant $K'$. Hence, by the fact that our Markov chain is tame (and we use Definition \ref{defn:tame}-\ref{item:non-amen}), for $S$ the fixed finite generating set for $G$, we get that  
\begin{align*}
    \begin{split}
        \mathbb{P}\left[ w_n^q \in \mathcal{N}_{n/D}\left(\bigcup_{h\gamma\in \mathcal H(o,p)}h\gamma \right)  \right] &\leq  \# \{ h\gamma \in \mathcal H(o,p) : h\gamma \cap B(q,Kn) \neq\emptyset \} K'n\vert S\vert^{n/D} A \rho^n \\ &\leq (4K^2n/(T-4B)+2 ) K'n \vert S\vert^{n/D} A \rho^n  \\ &\leq Mn^2(\vert S\vert^{1/D} \rho)^n
    \end{split}
\end{align*}
for some $M>0$, where $A, \rho $ are from the `non-amenability' condition from the definition of tameness (Definition \ref{defn:tame}-\eqref{item:non-amen}). This decays exponentially if we choose $D$ large enough ($D$ such that $1/D < -\log_{\vert S\vert }(\rho)$ works).
\end{proof}








In view of the previous lemma, it is of interest to study how projections of a path change assuming that the path moves away at linear speed from the cosets we are projecting to. We will apply the following lemma to a sample path of our Markov chain, but it holds for all discrete paths with $K$-bounded jumps for some $K$, that is, sequences of points with consecutive ones being distance at most $K$ apart.

\begin{lemma}
\label{o(log)}
For each $D,K>0$, there exists a function $\phi : \mathbb{R}^{+} \to \mathbb{R}^{+} $ with $\phi(n)=o(\log(n))$ such that the following holds. Let $o,p\in G$ and consider a discrete path $\alpha=(\alpha_n)_{n\geq 0}$ with $K$-bounded jumps and such that for all $n \in \mathbb{N}$ we have
$$d\left(\bigcup_{h\gamma \in \mathcal{H}(o,p)} h\gamma, \alpha_n\right) \geq n/D.$$
Then \[
\sum_{\mathcal{H}(o,p)} [\alpha_0,\alpha_n] \leq \phi(n).\]
\end{lemma}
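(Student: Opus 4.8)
The plan is to bound the distance-formula sum $\sum_{\mathcal H(o,p)}[\alpha_0,\alpha_n]$ by estimating, for each coset $h\gamma \in \mathcal H(o,p)$ that contributes, how far along the path the projection can move, and then controlling the number of such cosets via the linear order from Lemma \ref{linearorder}. First I would fix $D,K$ and observe that a coset $h\gamma$ contributes to the sum only if $d_{h\gamma}(\alpha_0,\alpha_n) \neq 0$; I want to show each contribution is $O(\log n)$ and that only boundedly many cosets contribute more than the super-divergence threshold $\theta$. Concretely, consider the moments the path gets ``close'' to a given coset $h\gamma$: if $\alpha$ stays outside the $d$-neighbourhood of $h\gamma$ on an interval $[i,j]$ and $d_{h\gamma}(\alpha_i,\alpha_j) > \theta$, then super-divergence forces $\ell(\alpha|_{[i,j]}) \geq f(d)$, i.e. $(j-i)K \geq f(d)$, so $d \leq f^{-1}((j-i)K)$, which is sublinear in $j-i$. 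Since by hypothesis $d(\alpha_m, \bigcup_{h\gamma}h\gamma) \geq m/D$, this will only be possible when $m$ is not too large, forcing $j$ (hence the relevant portion of the path) to be of size $O(\log n)$ or smaller — here the key is that $f$ being superlinear makes $f^{-1}$ sublinear, so $m/D \leq f^{-1}(mK)$ can only hold for $m$ bounded.

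The main technical step is to make this quantitative. For a fixed coset $h\gamma$, the projection $\pi_{h\gamma}\circ \alpha$ moves at ``coarse speed'' only while the path is close to $h\gamma$: partition the times into maximal blocks where the projection advances by roughly $\theta$ between consecutive ``checkpoints'' $y_1 \prec y_2 \prec \cdots$ (as in the divergence lemma, Lemma \ref{divergence}), each such advance of size $\geq \theta$ between points outside a $d$-neighbourhood forcing the path to spend length $\geq f(d)$, hence at least $f(d)/K$ steps. But the constraint $d(\alpha_m,\bigcup h\gamma) \geq m/D$ says that once $m$ exceeds some threshold, the path is already far from $h\gamma$, so it simply cannot make further $\theta$-sized progress on $\pi_{h\gamma}$ (by super-divergence, making such progress while staying $m/D$-far would require path-length $\geq f(m/D)$, which exceeds $m K$ for $m$ large since $f$ is superlinear). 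Thus $d_{h\gamma}(\alpha_0,\alpha_n)$ is bounded by $\theta + L$ (coarse Lipschitz slack) times the number of checkpoints, and the number of checkpoints is bounded in terms of the largest $m$ for which $m/D \leq f^{-1}(mK)$ — a fixed constant — times the number of coarse-Lipschitz steps the projection takes early on, which is $O(\log n)$ since in the first $O(\log n)$ steps the path has moved at most $O(\log n)$ in $G$ and $\pi_{h\gamma}$ is coarsely Lipschitz. Actually more carefully: in the first $t$ steps the path is within distance $tK$ of $\alpha_0$, so $d_{h\gamma}(\alpha_0,\alpha_t) \leq R(tK) + R$; and for $t$ larger than the fixed threshold $m_0$ the path contributes nothing more to that coset's projection. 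Taking $t = m_0$ gives each coset's contribution $\leq R m_0 K + R =: c_0$, a uniform constant — wait, that's even better than $O(\log n)$, so I'd need to recheck whether in fact each individual coset contributes a bounded amount; if so, the $\phi(n) = o(\log n)$ bound comes purely from counting cosets.

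Finally, to count the contributing cosets: a coset $h\gamma$ contributes only if $\pi_{h\gamma}$ moves at all along $\alpha$, which (combined with the above) means $h\gamma$ must be within distance $m_0 K$ of $\alpha_0$. By Lemma \ref{distance} (or the counting claim in the proof of Lemma \ref{20francs}) the number of cosets in $\mathcal H(o,p)$ meeting a ball of radius $m_0 K$ around $\alpha_0$ is a fixed constant $N_0$. Hence $\sum_{\mathcal H(o,p)}[\alpha_0,\alpha_n] \leq N_0 c_0$, a constant — which is certainly $o(\log n)$, so we may take $\phi(n) = N_0 c_0$ (or absorb a harmless $\log\log n$ factor if the cleaner bound requires $t$ growing, e.g. if one wants to allow the threshold to depend mildly on $n$). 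The main obstacle I anticipate is the bookkeeping around the ``checkpoint'' decomposition and verifying the interplay between the linear-speed-away hypothesis and super-divergence cleanly — in particular making sure that a coset whose projection is advanced by the path must genuinely be near $\alpha_0$, using that the path can only be close to $h\gamma$ at bounded time and is $m/D$-far afterward, so Lemma \ref{lem:exo_hyperbolicspace} applied to geodesics between path points and their projections pins $h\gamma$ near $\alpha_0$. The superlinearity of $f$ (equivalently sublinearity of $f^{-1}$) is exactly what converts ``linear progress away from the cosets'' into ``only $O(1)$, or at worst $o(\log n)$, total projection change.''
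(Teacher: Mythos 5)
There is a genuine gap at the heart of your argument, in the step where you claim that once $m$ exceeds a fixed threshold ``the path simply cannot make further $\theta$-sized progress on $\pi_{h\gamma}$'' because such progress ``would require path-length $\geq f(m/D)$, which exceeds $mK$ for $m$ large.'' The comparison with $mK$ is wrong: if the projection to $h\gamma$ moves by $\theta$ between times $m$ and $m'$, super-divergence bounds from below the length of the segment $\alpha|_{[m,m']}$, which is roughly $(m'-m)K$, not the elapsed time $mK$ up to time $m$. Since $m'$ may be as large as $n$, the conclusion is not that further progress is impossible, only that it is slow: a $\theta$-sized change starting at time $m$ forces $m'\gtrsim m+f(m/D)/K$. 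Consequently each coset's contribution is \emph{not} bounded by a constant $c_0$, your ``even better than $O(\log n)$'' constant bound $N_0c_0$ does not follow, and in fact the statement being proved is genuinely an unbounded $o(\log(n))$ bound. (Your coset-counting step inherits the same problem: a coset of $\mathcal H(o,p)$ far from $\alpha_0$ can still contribute, e.g.\ when the path crosses from its $o$-side projection to its $p$-side projection late in time; the hypothesis only makes such crossings expensive, not impossible.)

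The paper's proof is exactly the quantitative version of the correct statement. It recursively defines checkpoint times $u_j$ with $u_{j+1}-u_j=\frac{1}{K}f\bigl(\frac{u_j}{D}-K\bigr)$, shows (using Lemma \ref{lem:2cosetsmax} to reduce to at most four cosets and then super-divergence on a single coset) that each window $[u_j,u_{j+1}]$ contributes at most $4\theta$ to $\sum_{\mathcal H(o,p)}[\alpha_0,\alpha_n]$, and then observes that because $f$ is super-linear the times $u_j$ grow faster than any geometric sequence, so the number of windows up to time $n$, which is what controls the total sum, is $o(\log(n))$. To repair your proposal you would need to replace the fixed threshold $m_0$ (and the resulting constant bound) by precisely this kind of escalating time-scale decomposition; the super-linearity of $f$ converts ``linear escape from the cosets'' into ``each successive $\theta$-change takes super-exponentially longer,'' not into ``only finitely many changes ever occur.''
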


\begin{proof}
Recall that $f$ is, roughly, the function controlling super-divergence of the fixed super-divergent element. We define the sequence $(\Psi_i)_{i\geq 1}$  recursively: \\

\begin{itemize}
    \item Let $\Psi_1$ be such that for all $\Psi\geq \Psi_1$ we have $\frac{1}{K}f\left(\frac{\Psi}{D}-K\right)\geq 2\Psi$ (which exists since $f$ is super-linear).

    \item $\Psi_{j+1}=\frac{1}{K}f\left(\frac{\sum_{i\leq j}\Psi_i}{D}-K\right)$.
\end{itemize}

Let \[u_{j}:= \sum_{i\leq j} \Psi_{i}.\] 

Note that by our choice of $\Psi_1$, we have $u_j\geq\Psi_j\geq 2^{j-1} \Psi_1$ for all $j$.

\par\medskip

\begin{claim}
For all $j\geq 1$ and $u_j< n\leq u_{j+1}$ we have \[  \sum_{  \mathcal{H}(o,p)} [\alpha_{u_j},\alpha_{n}] \leq 4\theta. \]
\end{claim} 

\begin{proof}[Proof of Claim 1]

\vspace{2mm}
Assume that the claim does not hold. Define $\alpha_{u,v}$ to be a concatenation of geodesics from $\alpha_i$ to $\alpha_{i+1}$ for $u\leq i<v$.

 We show below that there exists $h\gamma$ such that $d_{h\gamma}(\alpha_{u_j},\alpha_{n}) \geq \theta$. This is enough to prove the claim. Indeed, since $\alpha_{u_j,n} $ stays outside of the $(u_j/D-K)$-neighborhood of $h\gamma$,  by super-divergence of $g$, this leads to \[K( u_{j+1}-u_j) \geq \ell(\alpha_{u_j,n}) > f\left(\frac{u_j}{D}-K\right)= K\Psi_{j+1}. \]

a contradiction as $u_{j+1}-u_j= \Psi_{j+1}$. \\

Therefore to prove the claim it remains to show that there exists an $h\gamma$ such that $d_{h\gamma}(\alpha_{u_j},\alpha_{n}) \geq \theta$. Note that if for some $h\gamma\in\mathcal H(o,p)$ we have $\pi_{h\gamma}(\alpha_{u_j})=\pi_{h\gamma}(o)$ and $\pi_{h\gamma}(\alpha_{n})=\pi_{h\gamma}(p)$ (or vice versa), then we are done since $T\geq \theta$. Hence, assume that this is not the case. By Lemma \ref{lem:2cosetsmax}, there are at most 4 cosets $h\gamma\in \mathcal H(o,p)$ where one of $\pi_{h\gamma}(\alpha_{u_j})$ and $\pi_{h\gamma}(\alpha_{n})$ does not coincide with either $\pi_{h\gamma}(o)$ or $\pi_{h\gamma}(p)$. We have that $\sum_{  \mathcal{H}(o,p)} [\alpha_{u_j},\alpha_{n}]$ is in fact a sum over the aforementioned cosets, and since there are at most 4 of them, one of the cosets satisfies the required property.
\end{proof}

In view of Lemma \ref{lem:coarsely_lip_sum}, there exists $L$ (independent of $\alpha$, $o$, $p$) such that for all $i$ we have $\sum_{\mathcal H(o,p)}[\alpha_i,\alpha_{i+1}]\leq L$. We note that this $L$ is actually bigger than the one from Lemma \ref{lem:coarsely_lip_sum} and depends on $K$.

In view of the claim we have, for $\phi(n):=\max \{j: u_j \leq n \}$,
\[
\sum_{\mathcal{H}(o,p)} [\alpha_0,\alpha_n] \leq\sum_{\mathcal{H}(o,p)} [\alpha_0,\alpha_{u_1}]+\dots \sum_{\mathcal{H}(o,p)} [\alpha_{u_{\phi(n)-1}},\alpha_{n}] \leq L\Psi_1+ 4\theta\phi(n).
\]

Therefore, we are done once we prove the following claim.
 
\begin{proof}
\[\phi(n)=o(\log(n))\]
\end{proof} 

\begin{proof}[Proof of Claim 2]
It suffices to show that for all $E>1$ we have $u_j\geq E^j$ for all sufficiently large $j$, which implies that $\phi(n)\leq \log_E(n)$ for all sufficiently large $n$.

We have $u_{j+1}=u_j+g(u_j)$ for $g(x):= \frac{1}{K}f(\frac{x}{D}-K)$, and recall that $u_j\geq 2^{j-1}\psi_1$. Since $g$ is super-linear, for all sufficiently large $j$ we have $g(u_j)\geq 2E u_j$, and hence for those same $j$ we have $u_{j+1}\geq 2E u_j$, and the claim follows.
\end{proof}

This proves the Lemma.
\end{proof}

The consequence of the lemma that we will need is the following Corollary, which says, roughly, that it is very unlikely that projections of a sample path of our Markov chain change faster than logarithmically in the number of steps.

\begin{corollary}
\label{undoproba}
For all $m, m' >0$, there exists a constant $C_2 >0$ such that the following holds. Let $o,p \in G$ be basepoints and $w^{o}_r$ a tame Markov chain. Then for all $t>0$ and $k \leq e^{mt/ \eta}$, where $\eta$ is as in Lemma \ref{etalogproj}, we have 
\[
\mathbb{P}\left[ \exists r \leq k : \sum_{\mathcal{H}(o,p)} [p,w^p_r] \geq t/m' \right] \leq C_2 e^{-t/C_2}.
\]
\end{corollary}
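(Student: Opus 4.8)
The plan is to decompose the time interval $[0,k]$ at a threshold $s_1$ of order $t$: on the short prefix $r\le s_1$ the projection sum is controlled by a coarse Lipschitz bound, while on the long suffix $s_1<r\le k$ we combine Lemma~\ref{20francs} — which says the chain escapes the relevant cosets at linear speed, off an event of probability roughly $e^{-s_1/C_1}$ — with Lemma~\ref{o(log)}, which says that while the chain stays far from those cosets its projection sum grows only like $\phi=o(\log)$. The hypothesis $k\le e^{mt/\eta}$ is used precisely to turn the resulting $\phi(k)$ into a quantity that is $o(t)$, hence eventually below $t/m'$.

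In more detail, I would first reduce to $t$ large: probabilities are at most $1$, so for $t$ below any fixed threshold the claim holds as soon as $C_2$ is large enough that $C_2e^{-t/C_2}\ge 1$ there. Fix the constants $D,C_1$ of Lemma~\ref{20francs}, a jump bound $K$ for the chain (Remark~\ref{rmk:bounded_jumps}), the function $\phi$ produced by Lemma~\ref{o(log)} for these $D,K$, and, via Lemma~\ref{lem:coarsely_lip_sum} applied to consecutive steps together with the triangle inequality of Remark~\ref{rem:triangle_inequality}, a constant $L$ with $\sum_{\mathcal H_T(o,p)}[p,w^p_j]\le Lj$ for all $j$ and all $o,p$. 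Put $s_1:=\lfloor t/(3Lm')\rfloor$, so $Ls_1\le t/(3m')$, and enlarge the threshold on $t$ so that also $s_1\ge 1$ and $\phi(e^{mt/\eta})\le t/(3m')$ (possible since $\phi(n)=o(\log n)$); as $\phi$ is non-decreasing and $k\le e^{mt/\eta}$, this gives $\phi(k)\le t/(3m')$.

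Next I would introduce the event $\mathcal G$ that $d\big(\bigcup_{h\gamma\in\mathcal H_T(o,p)}h\gamma,\ w^p_j\big)\ge j/D$ for all $j\ge s_1$. By Lemma~\ref{20francs} (with $q=p$) and a union bound, $\mathbb P[\mathcal G^c]\le\sum_{j\ge s_1}C_1e^{-j/C_1}\le C_1'e^{-s_1/C_1}$, which is at most $C_3e^{-c_3t}$ for suitable $C_3,c_3>0$ since $s_1\ge t/(3Lm')-1$. On $\mathcal G$ the event in the statement cannot occur: if $r\le s_1$ then $\sum_{\mathcal H_T(o,p)}[p,w^p_r]\le Lr\le Ls_1\le t/(3m')<t/m'$; and if $s_1<r\le k$, the triangle inequality gives $\sum_{\mathcal H_T(o,p)}[p,w^p_r]\le\sum_{\mathcal H_T(o,p)}[p,w^p_{s_1}]+\sum_{\mathcal H_T(o,p)}[w^p_{s_1},w^p_r]\le Ls_1+\phi(r-s_1)$, where the second summand is bounded by applying Lemma~\ref{o(log)} to the shifted path $\beta_n:=w^p_{s_1+n}$: this path has $K$-bounded jumps and, on $\mathcal G$, satisfies $d(\bigcup_{h\gamma\in\mathcal H_T(o,p)}h\gamma,\beta_n)\ge(s_1+n)/D\ge n/D$ for every $n\in\mathbb N$, so $\sum_{\mathcal H_T(o,p)}[w^p_{s_1},w^p_r]=\sum_{\mathcal H_T(o,p)}[\beta_0,\beta_{r-s_1}]\le\phi(r-s_1)\le\phi(k)\le t/(3m')$. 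Either way $\sum_{\mathcal H_T(o,p)}[p,w^p_r]\le 2t/(3m')<t/m'$, so $\mathbb P\big[\exists r\le k:\sum_{\mathcal H_T(o,p)}[p,w^p_r]\ge t/m'\big]\le\mathbb P[\mathcal G^c]\le C_3e^{-c_3t}$ for $t$ large, and together with the trivial bound for small $t$ this yields the stated $C_2e^{-t/C_2}$.

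The key point dictating the shape of the argument is that Lemma~\ref{o(log)} requires the path to stay outside the $n/D$-neighbourhood of $\bigcup_{h\gamma\in\mathcal H_T(o,p)}h\gamma$ for \emph{every} $n$, not merely up to a finite horizon. This is why $\mathcal G$ must control all times $j\ge s_1$ — harmless, since $\sum_{j\ge s_1}C_1e^{-j/C_1}$ still decays like $e^{-s_1/C_1}$ — and why Lemma~\ref{o(log)} is applied to the shifted path $w^p_{s_1+\cdot}$: the extra $s_1$ in the lower bound $(s_1+n)/D$ is exactly what makes the hypothesis $d(\cdot,\beta_n)\ge n/D$ hold for all $n$. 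The remaining care is just the bookkeeping ensuring that each of $Ls_1$ and $\phi(k)$ is at most $t/(3m')$, which is where both the choice of $s_1$ of order $t$ and the hypothesis $k\le e^{mt/\eta}$ (with $\phi=o(\log)$) are used.
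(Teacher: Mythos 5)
Your argument is correct and follows essentially the same route as the paper: cut the time axis at a threshold of order $t$ (your $s_1$, the paper's $i_t=\lfloor t/((m'+1)LK)\rfloor$), bound the prefix contribution by the coarse Lipschitz property of the distance-formula sum plus bounded jumps, bound the probability that the chain fails to escape at linear speed after the threshold by summing Lemma \ref{20francs} over $j\geq s_1$, and on the complementary event apply Lemma \ref{o(log)} to the shifted path, using $k\leq e^{mt/\eta}$ and $\phi=o(\log)$ to make the suffix contribution $o(t)$. The only cosmetic differences (per-step Lipschitz bound $Lj$ versus $LKi_t+L$, and requiring the escape condition for all $j\geq s_1$ rather than up to $r$) do not change the argument.
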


\begin{proof}
For all $t>0$, we let $i_t = \lfloor \frac{t}{(m'+1)LK} \rfloor $ where $L$ is from Lemma \ref{lem:coarsely_lip_sum} and $K$ is a bound on the size of the jumps of the Markov chain in $X$, meaning that $d_X(w_n^qx_0,w_{n+1}^qx_0)\leq K$ for all $q\in G$ and $n$ (see Remark \ref{rmk:bounded_jumps}). Let $A_{r,t}$ be the event ``for all $i_t \leq n \leq r : d\left(w^p_{n}, \bigcup_{h\gamma \in \mathcal{H}(o,p)} h\gamma\right) \geq n/D$", where $D$ is from Lemma \ref{20francs}. \\

\begin{claim}

There exists $t_1$ such that for $t \geq t_1$ and $r \leq e^{mt/\eta}$, the event $A_{r,t}$ implies that  $$\sum_{\mathcal{H}(o,p)} [p,w^p_r] < t/m'.$$ 
\end{claim}
\begin{proof}[Proof of Claim]
We note that the $K$-discrete path $(w^p_r)_{r\geq i_t}$ satisfies Lemma \ref{o(log)}. Hence there exists a function (independent of $o,p$) $\phi(n)=o(\log(n))$ such that if $A_{r,t}$ holds then  $\sum_{\mathcal{H}(o,p)} [w^p_{i_t},w^p_r] \leq \phi(r-i_t)$. Using the triangular inequality (Remark \ref{rem:triangle_inequality}), the coarsely Lipschitz property (Lemma \ref{lem:coarsely_lip_sum}) and the fact that $(w^p_n)_{n\geq 0}$ has $K$-bounded jumps, we get the following:

\begin{align*}
  \begin{split}
      \sum_{\mathcal{H}(o,p)} [p,w^p_r] &\leq \sum_{\mathcal{H}(o,p)} [p,w^p_{i_t}]+\sum_{\mathcal{H}(o,p)} [w^p_{i_t},w^p_r] \\ & \leq L d_X(px_0,w^p_{i_t}x_0)+L+ \phi(r-i_t) \\ & \leq LKi_t+L+\phi(r-i_t) \\
  \end{split}  
\end{align*}

Now by the fact that $\phi(r-i_t) =o(\log(r-i_t)) $ we can find $t_1$ such that for all $t \geq t_1$ : $$ t/m'-LKi_t-L > \phi(r-i_t)
$$

 Hence, for $t\geq t_1$ we get that $\sum_{\mathcal{H}(o,p)} [p,w^p_r] < t/m'$ as required. 
\end{proof}

By the claim, for $t\geq t_1$ we can have $\sum_{\mathcal{H}(o,p)} [p,w^p_r] \geq t/m'$ only in $A_{r,t}^c$. Also, we have
$$\mathbb P\left[\bigcup_r A_{r,t}^c\right] \leq \sum_{i\geq i_t} C_1 e^{-i}/C_1\leq C_2 e^{-i_t/C_2},$$
for $C_1$ as in Lemma \ref{20francs} and some $C_2$ depending on $C_1$.

Therefore,

$$\mathbb{P}\left[ \exists r \leq k : \sum_{\mathcal{H}(o,p)} [p,w^p_r] \geq t/m' \right] \leq \mathbb P\left[\bigcup_r A_{r,t}^c\right]\leq C_2 e^{-i_t/C_2}\leq C_2e^{- t /(m'+1)KLC_2}e^{1/C_2},$$
as required for $t\geq t_1$. To cover the case $t\leq t_1$ we can just increase the constant resulting from the previous computation.
\end{proof}

We are now ready to prove the main result of this section for groups containing a super-divergent element.

\begin{proof}[ Proof of Proposition \ref{axiom} for groups containing a super-divergent element]
Recall that we have to prove that there exists a constant $C$ such that for all $o,p\in G$,  $n \in \mathbb{N}$, and  $t>0$ we have
$$ \mathbb{P}\left[\exists r\leq n : \sum_{\mathcal{H}_T(o,p)} [p,w^p_r] \geq t \right] \leq Ce^{-t/C}.
$$

The idea of the proof is the following. As we run the Markov chain starting at $p$, it might start creating projections on the cosets in $\mathcal H(o,p)$, but it is unlikely that this happens fast by Corollary \ref{undoproba}. At the same time, it is likely that the Markov chain will create a new projection, by Lemma \ref{etalogproj}. Before creating more projections on the cosets in $\mathcal H(o,p)$, the Markov chain would have to undo said new projection, and this can be turned into a sort of recursive formula which will yield the desired bound. We now proceed to the actual proof.\\

Let $t\geq 10T$ and define $$p_{n,t}=\sup_{o,p \in G} \mathbb P\Big[\quad \exists r\leq n \quad : \sum_{\mathcal{H}(o,p)} [p,w^p_r] \geq t\Big] $$\\

We will show that there exists a constant $C>0$ such that $p_{n,t} \leq Ce^{-t/C}$, which is exactly the required inequality (except for the requirement "$t\geq 10 T$" but we can increase the constant to cover the other case).\\

The main claim is the following.\\

\begin{claim}
There exists $C'$ such that we have $p_{n,t}\leq C' e^{-t/C'}+p_{n,2t}$.
\end{claim} 

\begin{proof}[Proof of Claim]
Let $\epsilon>0$ be arbitrary.
Choose $o,p$ such that $$p_{n,t}\leq (1+\epsilon) \mathbb P\Big[\quad \exists r\leq n \quad : \sum_{\mathcal{H}(o,p)} [p,w^p_r] \geq t\Big].$$
For $o,p \in G,  t>0$, let $\mathcal A_{o,p,t} \subseteq G$ be the set of all $q\in G$ such that:

\begin{enumerate}
    \item There exists $h\gamma$ such that $d_{h\gamma}(p,q)\geq 5t$.
    \item $$\sum_{ \mathcal H(o,p)} [p,q]\leq t/2.$$
\end{enumerate}

\begin{figure}[h]
\centering
\begin{tikzpicture}[scale=0.6]
\draw[black] (0,0) to[out=20,in=160] (2.5,0); 
\draw[black] (3,0) to[out=20,in=160] (5.5,0); 
\draw[black] (-3,0) to[out=20,in=160] (-0.5,0); 
\draw[black] (6,0) to[out=20,in=160] (8.5,0);
\draw[black] (9,0) to[out=20,in=160] (11.5,0);

\draw[red] (7,2) to[out=100,in=230] (7.5,7);  
\draw[red] (8,7) node{$h_q\gamma$};  
\filldraw[black] (5,7.5) circle (1pt) node[anchor=east]{$q$}; 
\draw[dashed] (5,7.5) --(6.9,6);
\draw[blue] (8.9,4.4) node {$\geq 5t$};

\filldraw[black] (6.9,6) circle (1pt) node[anchor=west]{$\pi_{h_q\gamma}(q)$}; 
\filldraw[black] (6.9,2.5) circle (1pt) node[anchor=west]{$\pi_{h_q\gamma}(p)$};

\filldraw[black] (-3,2.5) circle (1pt) node[anchor=east]{$o$};   
\filldraw[black] (12,2.5) circle (1pt) node[anchor=east]{$p$};
\filldraw[black] (11.6,0) circle (1pt) node[anchor=west]{$\pi_{h'\gamma}(p)$};
 
\draw[dashed] (-3,2.5) --(-2.5,0.2);
\draw[dashed] (12,2.5) --(11.6,0);
\draw[dashed] (5,7.5) --(7,0.2);

\filldraw[black] (7,0.2) circle (1pt) node[anchor=north]{$\pi_{h\gamma}(q)$};  

\draw [decorate,decoration={brace,amplitude=5pt,mirror,raise=4ex}]
  (7,0.2) -- (11.6,0.2) node[midway,yshift=-3em]{$\sum_{\mathcal H(o,p)}[p,q] \leq t/2$};
  
  \draw [decorate,decoration={brace,amplitude=5pt,raise=4ex}, blue]
  (6.9,5.8) -- (6.9,2.5) node[midway,yshift=-3em]{};

\end{tikzpicture}
\caption{Illustration of $q\in A_{o,p,t}$}
\end{figure}
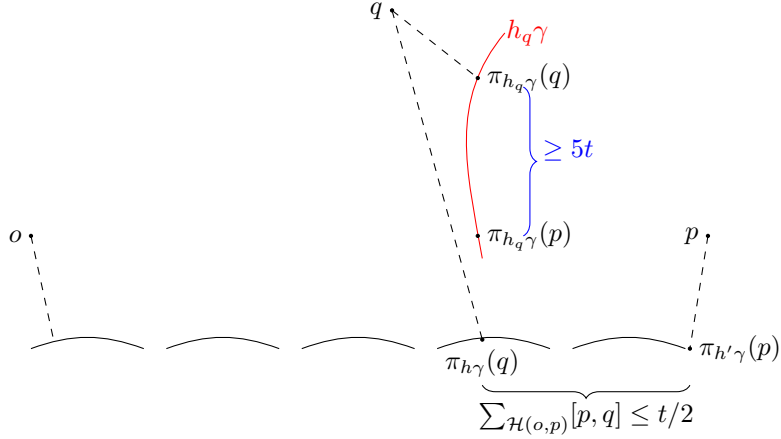

 We consider the following events, where $\eta$ is from Lemma \ref{etalogproj}.

\begin{enumerate}
    \item $\mathcal B^{t}_1$ : `` $\forall k \leq e^{5t/\eta}, \quad \forall h \in G : \quad d_{h\gamma}(p,w^p_{k}) <5t $".  This is the event that the Markov chain is not creating a new projection quickly. \\
    \item $\mathcal B^{t}_2$ : ``$\exists k <e^{5t/\eta} : \quad \sum_{\mathcal H(o,p)} [p,w^p_k] > t/2 $". This is the event that the Markov chain is undoing $t/2$-worth of projections on the original cosets $\mathcal H (o,p)$ quickly.\\
    \item Let  $\mathcal B^{t}_3(k,q)$ be the event ``$w^p_k=q$ and for all $i<k : w^p_i \not\in \mathcal A_{o,p,t}$ ".
\end{enumerate}


We call $B_3^t=\bigsqcup_{\substack{k \leq e^{5t/\eta} \\ q\in \mathcal A_{o,p,t}}} \mathcal B^{t}_3(k,q)$. 

\par\medskip

\textbf{Subclaim: } Let $q\in \mathcal A_{o,p,t}$ and let $w \in G $ be such that $\sum_{\mathcal H(o,p)}[p,w]\geq t $. Then $\sum_{\mathcal H(o,q)}[q,w] \geq 2t$. 

\begin{proof}[Proof of Subclaim] 
We let $h_q \gamma$ be the coset satisfying the first item in the definition of $q$ belonging to the set $\mathcal A_{o,p,t}$.

We first note that $h_q \gamma \not\in \mathcal H(o,p)$. Indeed, if it was then $\sum_{\mathcal H(o,p)}[p,q] \geq 5t$, a contradiction with the second item in the definition of $\mathcal A_{o,p,t}$. Hence $d_{h_q\gamma}(o,p) <T $. To prove the claim, it is therefore enough to show that $d_{h_q\gamma}(q,w) \geq 2t$, as $h_q \gamma \in \mathcal H(o,q)$.

Let $k\gamma \in \mathcal H(o,p)$ be any coset. Then either $d_{k\gamma}(o,h_q\gamma)>B$ or $d_{k\gamma}(p,h_q\gamma)>B$ and so by the strong Behrstock inequality (Lemma \ref{lem:Behrstock}) we get that either $\pi_{h_q\gamma}(o)=\pi_{h_q\gamma}(k\gamma)$ or  $\pi_{h_q\gamma}(p)=\pi_{h_q\gamma}(k\gamma)$. \\

For a contradiction, assume that $d_{h_q\gamma}(q,w) < 2t$. Then
$$d_{h_q\gamma}(p,w) \geq d_{h_q\gamma}(p,q)-d_{h_q\gamma}(w,q) \geq 3t,$$
so that $d_{h_q\gamma}(o,w)\geq 3t-T$.

The cases $\pi_{h_q\gamma}(p)=\pi_{h_q\gamma}(k\gamma)$ or $\pi_{h_q\gamma}(o)=\pi_{h_q\gamma}(k\gamma)$ are similar so assume that $\pi_{h_q\gamma}(o)=\pi_{h_q\gamma}(k\gamma)$. Then $d_{h_q\gamma}(w,k\gamma)=d_{h_q\gamma}(w,o)\geq 3t-T>B$ and so by the strong Behrstock inequality (Lemma \ref{lem:Behrstock}), this leads to
$$\pi_{k\gamma}(w)=\pi_{k\gamma}(h_q\gamma).$$
We also have that $d_{h_q\gamma}(q,k\gamma) = d_{h_q\gamma}(q,o)>5t-T >B$. Again by the strong Behrstock inequality, this leads to
$$\pi_{k\gamma}(h_q\gamma)=\pi_{k\gamma}(q)$$
for all $k\gamma \in \mathcal H(o,p)$ and so $ \pi_{k\gamma}(q)=\pi_{k\gamma}(w)$ for all $k\gamma \in \mathcal H(o,p)$. Hence,

$$
t\leq \sum_{\mathcal H(o,p)} [p,w] \leq \sum_{\mathcal H(o,p)} [p,q] \leq t/2,
$$ a contradiction. Hence $d_{h_q\gamma}(q,w) \geq 2t$ and in particular $\sum_{\mathcal H(o,q)}[q,w] \geq 2t$, proving the claim.
\end{proof}


We note that the event ``$\exists r\leq n : \sum_{\mathcal{H}(o,p)} [p,w^p_r] \geq t$" is contained in the union of the events $\mathcal B^{t}_1$, $\mathcal B^{t}_2$, and the intersection of $B^t_3$ with the event ``$\exists e^{5t/\eta} \leq r\leq n : \sum_{\mathcal{H}(o,p)} [p,w^p_r] \geq t$". This leads to the following chain of inequalities, which we explain further below and where the constant $C_2$ is from Corollary \ref{undoproba} with $m=5$ and $m'=2$.

\begin{align*}
\begin{split}
\mathbb P\Big[\exists r\leq n : &\sum_{\mathcal{H}(o,p)} [p,w^p_r] \geq t\Big]   \\
& \leq \mathbb{P}[\mathcal B^{t}_1] + \mathbb{P}[\mathcal B^{t}_2]+ \sum_{\substack{k   \leq e^{5t/\eta} \\ q\in \mathcal A_{o,p,t} }} \mathbb P\left[\exists e^{5t/\eta} \leq r\leq n : \sum_{\mathcal{H}(o,p)} [p,w^p_r] \geq t  \quad   \Big\vert \quad   \mathcal B^{t}_3(q,k) \right] \mathbb P \Big[ B_3^t(k,q)\Big] \\
& \leq \mathbb{P}[\mathcal B^{t}_1] + \mathbb{P}[\mathcal B^{t}_2]+ \sum_{\substack{k   \leq e^{5t/\eta} \\ q\in \mathcal A_{o,p,t} }} \mathbb P\left[\exists e^{5t/\eta} \leq r\leq n : \sum_{\mathcal{H}(o,q)} [q,w^p_r] \geq 2t  \quad   \Big\vert \quad   \mathcal B^{t}_3(q,k) \right] \mathbb P \Big[ B_3^t(k,q)\Big] \\
&\leq \mathbb{P}[\mathcal B^{t}_1] + \mathbb{P}[\mathcal B^{t}_2]+ \sum_{\substack{k   \leq e^{5t/\eta} \\
q\in \mathcal A_{o,p,t} }}  \mathbb P\left[\exists r'\leq n-k : \sum_{\mathcal H(o,q)}[q,w_{r'}^q] \geq 2t \right]  \mathbb P \Big[w^p_k=q\Big] \\
 &\leq e^{-\sqrt{e^{5t/\eta}}/U} + C_2e^{-t/C_2}+p_{n,2t}
\end{split}
\end{align*}

The first inequality is a union bound plus conditioning. We then use the subclaim  for the second inequality. The third inequality follows from the usual Markov property (which says that we can just condition on "$w^p_k=q$" instead) and Lemma \ref{lem:Markov_property_set}. We then use Lemma \ref{etalogproj} to get the term ``$e^{-\sqrt{e^{5t/\eta}}/U}$" and Corollary \ref{undoproba} (with $m=5$ and $m'=2$) to get the term ``$C_2e^{-t/C_2}$". The last probability is from the definition of $p_{n,t}$ which is defined as the supremum over all basepoints $o,p$.

Hence, $$
p_{n,t} \leq (1+\epsilon)(e^{-\sqrt{e^{5t/\eta}}/U} + C_2e^{-t/C_2}+p_{n,2t}) \leq (1+\epsilon)C'e^{-t/C'}+(1+\epsilon)p_{n,2t}
$$ for some $C'>0$ depending on $C_2,\eta,U$. Since $\epsilon$ was arbitrary, this proves the claim.\\
\end{proof}

We note that for a fixed $n$, if $u>L(Kn+1)$ then $p_{n,u}=0$ where $K$ is from the bounded jumps condition (see Remark \ref{rmk:bounded_jumps}) and $L$ is the coarsely Lipschitz constant from Lemma \ref{lem:coarsely_lip_sum}. Let $s= \lceil \log_2(L(Kn+1)/t) \rceil$ so that $p_{n,2^st}=0$. 
Inductively, we get \begin{align*}
\begin{split}
p_{n,t} &\leq C'\sum_{i=1}^{s} e^{-2^{i}t/C'} \\
& \leq C'\sum_{i=1}^{\infty} e^{-it/C'} \\
& \leq Ce^{-t/C},
\end{split} 
\end{align*}
for some $C$ depending on $C'$, as we wanted. This proves Proposition \ref{axiom} for groups containing a super-divergent element. 
\end{proof}

We proved Proposition \ref{axiom} for groups containing a super divergent element. We will now show that this result also holds for some particular graphs of groups. 

\subsection{Proof of Proposition \ref{axiom} for graph of groups with subexponential growth edge groups}

In this subsection let $G = \pi_1( \mathcal{G})$ be the fundamental group of a graph of groups $\mathcal{G}$ where each edge group has subexponential growth, in a fixed ambient word metric $d_G$, and fix any loxodromic WPD $g$ for the action on the Bass-Serre tree $X$, for which we also fix a basepoint $x_0$. We also fix the rest of the notation set above Proposition \ref{axiom} for the given $g$, for some fixed $T\geq 10B$, for $B$ as in Lemma \ref{lem:Behrstock}, that satisfies both Lemma \ref{linearorder} and Lemma \ref{distance}. We also assume that $T\geq 10M$ for $M$ as in Lemma \ref{lem:X_e_proj} below (we will not use $T$ between now and then). We set $\mathcal H(x,y)=\mathcal H_T(x,y)$.

\subsubsection{Bass-Serre theory preliminaries}
\label{subsec:BS_prelims}

Since $\gamma=E(g)$ is virtually cyclic, with $g\in G$ acting loxodromically, it stabilises a line in the Bass-Serre tree. Similarly:

\begin{definition}
For every coset $h \gamma$, there is a bi-infinite line which is stabilised by $h\gamma h^{-1}$. We call this line the \textit{axis} of $h\gamma$ and denote it by $Axis(h\gamma)$. Note that $Axis(h\gamma)=h Axis(\gamma)$ for al $h$.
\end{definition}

\begin{lemma}
\label{lem:geodesic_in_axis}
There exists a constant $D$, depending only on the loxodromic WPD $g$ such that the following holds. If $a,b \in G$ are such that $d_{h\gamma}(a,b) >D$ then any geodesic in $X$ from $ax_0$ to $bx_0$ contains an edge in the axis of $h\gamma$, and moreover this edge can be chosen at distance at least $d_{h\gamma}(a,b)/10$ from $ax_0$.
\end{lemma}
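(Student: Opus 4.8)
The plan is to exploit the tree structure of the Bass-Serre tree $X$ together with the fact that $\pi_{h\gamma}$ is (coarsely) the closest-point projection onto $Axis(h\gamma)x_0$, as recorded in Lemma \ref{lem:Behrstock}. First I would observe that in a tree, closest-point projection onto a subtree (in particular onto the line $Axis(h\gamma)x_0$) is an \emph{exact} nearest-point projection, and that for any $a,b$ the geodesic $[ax_0,bx_0]$ either misses the neighbourhood of $Axis(h\gamma)x_0$ entirely — in which case the two projections $\pi_{Axis(h\gamma)}(ax_0)$ and $\pi_{Axis(h\gamma)}(bx_0)$ coincide — or it passes through the segment of $Axis(h\gamma)x_0$ between these two projection points. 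Since $d_{h\gamma}(a,b)$ is within a bounded additive error $O(B)$ of the distance in $X$ between the genuine closest-point projections of $ax_0$ and $bx_0$ onto $Axis(h\gamma)x_0$ (the ``moreover'' clause of Lemma \ref{lem:Behrstock}, plus the fact that $X$ is a tree so the relevant closest-point projection is well-defined up to $O(B)$), taking $D$ larger than this error forces the second alternative: $[ax_0,bx_0]$ must fellow-travel, and in fact contain, a nontrivial sub-segment of $Axis(h\gamma)x_0$ whose length is comparable to $d_{h\gamma}(a,b)$. In particular that sub-segment contains at least one edge lying in $Axis(h\gamma)$.

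For the ``moreover'' part — that such an edge can be chosen at distance at least $d_{h\gamma}(a,b)/10$ from $ax_0$ — I would argue as follows. The geodesic $[ax_0,bx_0]$ in the tree splits at the projection points $p_a := \pi_{Axis(h\gamma)}(ax_0)$ and $p_b := \pi_{Axis(h\gamma)}(bx_0)$ into three consecutive pieces: from $ax_0$ to $p_a$, the segment $[p_a,p_b]\subseteq Axis(h\gamma)x_0$, and from $p_b$ to $bx_0$. The length of $[p_a,p_b]$ is at least $d_{h\gamma}(a,b) - O(B) \geq d_{h\gamma}(a,b)/2$ once $D$ is large enough. Its endpoint $p_a$ is, by the tree structure, at distance exactly $d_X(ax_0,p_a)$ from $ax_0$, and the edges of $[p_a,p_b]$ occur at distances ranging from $d_X(ax_0,p_a)$ up to $d_X(ax_0,p_a) + d_X(p_a,p_b)$ from $ax_0$. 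Hence either $d_X(ax_0,p_a)$ is already at least $d_{h\gamma}(a,b)/10$, so the very first edge of $[p_a,p_b]$ works, or $d_X(ax_0,p_a) < d_{h\gamma}(a,b)/10$, in which case the last edge of $[p_a,p_b]$ lies at distance at least $d_X(p_a,p_b) - 1 \geq d_{h\gamma}(a,b)/2 - 1 \geq d_{h\gamma}(a,b)/10$ from $ax_0$ (absorbing the constant into a possibly larger $D$). Either way we find an edge of $Axis(h\gamma)$ on the geodesic at the required distance.

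The main obstacle, such as it is, is purely bookkeeping: making precise the passage from the combinatorial ``distance between closest-point projections'' to $d_{h\gamma}(a,b)$, since $\pi_\gamma(x)$ is a \emph{subset} of $\gamma$ (Lemma \ref{lem:Behrstock}) rather than a point, and since $x_0$ is a fixed basepoint in $X$ whereas $\pi_{h\gamma}$ is defined group-theoretically. I would handle this by noting that $h\gamma x_0$ is at bounded Hausdorff distance from $Axis(h\gamma)$ (both are $\langle h g h^{-1}\rangle$-invariant quasi-geodesics / lines, and the action on the tree is by isometries), that projections $\pi_{h\gamma}(x)$ have diameter $\leq 2B$, and that in a tree the nearest-point projection of $x_0$-translates onto $Axis(h\gamma)$ agrees with the $X$-projection onto $h\langle g\rangle x_0$ up to an additive constant depending only on $g$. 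Folding all these bounded errors into a single constant $D$ depending only on $g$ then yields the statement; one should double-check that the numerical constant $10$ survives this absorption, which it does provided $D$ is taken large compared to $B$ and the Hausdorff constants.
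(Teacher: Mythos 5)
Your proposal is correct and follows essentially the same route as the paper: bound the Hausdorff distance between $h\gamma x_0$ and $Axis(h\gamma)$, use the ``moreover'' clause of Lemma \ref{lem:Behrstock} to translate $d_{h\gamma}(a,b)$ into a lower bound on the distance between the closest-point projections of $ax_0,bx_0$ onto the axis, and then use the tree geometry (the geodesic contains the axis segment between the projections, and distances from $ax_0$ along that segment only increase) to locate an edge at distance at least $d_{h\gamma}(a,b)/10$ from $ax_0$. The only cosmetic difference is your two-case argument for choosing the edge, where the paper simply picks an edge far from the projection point of $ax_0$; both amount to the same observation.
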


\begin{proof}
Note that there exists $C$ such that for all $h\in G$ we have $d_{\text{Haus}}(h \gamma x_0, Axis(h\gamma)) \leq C$. In fact, $\gamma x_0$ and $Axis(\gamma)$ both lie within finite Hausdorff distance of an orbit of $\langle g\rangle$ (and proving the required bound for $h=1$ suffices).

Hence, if $D$ is sufficiently large and $d_{h\gamma}(a,b)\geq D$ then $ax_0$ and $bx_0$ have closest-point projections onto $Axis(h\gamma)$ that are at least $d_{h\gamma}(a,b)/2$ apart; this is because the projections used to define $d_{h\gamma}$ are within bounded distance of closest-point projections to $h\gamma x_0$, and in turn those lie within bounded distance of closest-point projections to $Axis(h\gamma)$.

Since $Axis(h\gamma)$ is a bi-infinite geodesic line in a tree, we have that the geodesic $[ax_0, bx_0]$ contains the subgeodesic of $Axis(h\gamma)$ connecting the closest-point projections of $ax_0$ and $bx_0$. Said subgeodesic contains an edge at distance $\geq d_{h\gamma}(a,b)/10$ from the closest-point projection $a'$ of $ax_0$ to $Axis(h\gamma)$, and this edge will also have distance $\geq d_{h\gamma}(a,b)/10$ from $ax_0$, as required.
\end{proof}

We now establish some notation related to Bass-Serre theory. In short, to each vertex and edge of the Bass-Serre tree we associate a suitable subspace of $G$.

We fix orbit representatives $v_1,\dots,v_a$ for the action of $G$ on the vertices of $X$, and similarly let $e_1,\dots,e_b$ be orbit representatives for the $G$-action on the edges. We let $G_{v_i}=\Stab(v_i)$ and $G_{e_i}=\Stab(e_i)$. Corresponding to each vertex $v$ of $X$ we have a coset of one of the $G_{v_i}$, which we denote by $X_v$ (namely, if $v=gv_i$ then $X_v=gG_{v_i}$). Similarly, we can associate to each edge $e$ a coset $X_e$ of some $G_{e_i}$.

We define $\Psi: \mathbb{R} \to \mathbb{R}$ to be the maximal of the all the growth functions $\psi_{e_i}$ of the $G_{e_i}$ with respect to the ambient word metric $d_G$, i.e. $\Psi(n):= \max \{ \psi_{e_i}(n) \} $ (see Definition \ref{defn:subexp}). This function is well-defined as $E(\Gamma)$ is finite, and is subexponential as each $\psi_e$ is subexponential.\\



 Now, for all $K$ there exists a constant $K'$ such that the following holds. Any $K$-path in the Cayley graph, from $a$ to $b$ intersects $\mathcal N_{K'}(X_e)$ for all $X_e \in \mathcal I_{a,b}$, where $$\mathcal I_{a,b} = \{X_e \quad \vert \quad  e \in  [v,v']  \}$$ 

where $v,v'$ are vertices of the Bass-Serre tree $X$ such that $a \in X_v$ and $b \in X_{v'}$, and $[v,v']$ is the (unique) geodesic between $v $ and $v'$ in $X$.\\

Finally, we note the following technical lemma.

\begin{lemma}
\label{lem:X_e_proj}
There exists $M>0$ such that the following holds. Let $e$ be an edge of $Axis(h\gamma)$ for some $h\in G$. Then for all $h'\gamma\neq h\gamma$ and all $a\in X_e$ we have $d_{h'\gamma}(x,h\gamma)\leq M$.
\end{lemma}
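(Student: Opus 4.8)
The plan is to exploit that an element $a\in X_e$ with $e$ an edge of $Axis(h\gamma)$ necessarily has $ax_0$ uniformly close to the coset $h\gamma x_0$, and then to combine this with two standard facts about the projections $\pi_{h'\gamma}$ of Lemma~\ref{lem:Behrstock}: they are coarsely Lipschitz, and they have uniformly bounded image on any coset other than $h'\gamma$.

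First I would make the closeness claim precise. Write $e=g_0e_i$ for one of the finitely many orbit representatives $e_1,\dots,e_b$, and $a=g_0s$ with $s\in G_{e_i}=\Stab(e_i)$. Then $a$ is an isometry of $X$ with $ae_i=e$, so $d_X(ax_0,e)=d_X(x_0,e_i)\leq C_0:=\max_j d_X(x_0,e_j)$, a constant depending only on $x_0$ and the chosen orbit representatives. Since $e\subseteq Axis(h\gamma)$ and, as noted in the proof of Lemma~\ref{lem:geodesic_in_axis}, $d_{\mathrm{Haus}}(h\gamma x_0,Axis(h\gamma))\leq C$ for a uniform $C$, it follows that $ax_0$ lies within a uniform constant $C_1$ of some point $bx_0$ with $b\in h\gamma$.

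Next I would compare projections. By Lemma~\ref{lem:Behrstock}, $\pi_{h'\gamma}$ is within uniformly bounded distance of an $X$-projection of $G$ onto $h'\langle g\rangle$, which in turn — using the Hausdorff bound above — is within uniformly bounded distance of a closest-point projection onto the quasi-convex set $Axis(h'\gamma)$, and such a projection is coarsely Lipschitz by Lemma~\ref{lem:exo_hyperbolicspace}(1). Hence $\pi_{h'\gamma}(a)=\pi_{h'\gamma}(ax_0)$ and $\pi_{h'\gamma}(bx_0)$ are within a uniform constant $C_2$ of each other. On the other hand, $bx_0\in h\gamma x_0$ forces $\pi_{h'\gamma}(bx_0)$ to lie, up to bounded error, inside $\pi_{h'\gamma}(h\gamma)$, and since $h'\gamma\neq h\gamma$ the set $\pi_{h'\gamma}(h\gamma)$ has diameter at most a uniform constant $\xi$ — this is the bounded-projection property of the family $\{h\gamma\}$, equivalently the geometric separation of $E(g)$ used (via Claim~1) in the proof of Lemma~\ref{distance}. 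Finally $\diam\pi_{h'\gamma}(a)$ is itself uniformly bounded, being coarsely a closest-point projection of a single point. Assembling these via $\diam(A\cup B)\leq\diam A+d_X(A,B)+\diam B$ gives
\[
d_{h'\gamma}(a,h\gamma)=\diam\bigl(\pi_{h'\gamma}(a)\cup\pi_{h'\gamma}(h\gamma)\bigr)\leq \xi+\bigl(C_2+O(1)\bigr)+\xi=:M,
\]
a bound independent of $h$, $h'$, $e$ and $a$.

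I do not expect a genuine obstacle: morally the statement is just ``$ax_0$ sits on the axis of $h\gamma$, so project everything onto $h'\gamma$''. The one thing needing care is bookkeeping the chain of uniform comparisons between the three flavours of projection in play — the map $\pi_{h'\gamma}$ of Lemma~\ref{lem:Behrstock}, a closest-point projection onto $h'\langle g\rangle x_0$, and a closest-point projection onto $Axis(h'\gamma)$ — but each comparison is uniform by Lemma~\ref{lem:Behrstock}, Lemma~\ref{lem:exo_hyperbolicspace}, and the Hausdorff estimate from the proof of Lemma~\ref{lem:geodesic_in_axis}.
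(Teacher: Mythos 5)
Your proof is correct and follows essentially the same route as the paper: $ax_0$ is uniformly close to $e$ (via the finitely many orbit representatives), $e$ lies uniformly close to $h\gamma x_0$ by the Hausdorff bound from Lemma~\ref{lem:geodesic_in_axis}, and one concludes via the coarse Lipschitz property of the (coarsely closest-point) projections. The only difference is that you make explicit the uniformly bounded diameter of $\pi_{h'\gamma}(h\gamma)$ for $h'\gamma\neq h\gamma$, which the paper leaves implicit in its appeal to the projection setup; this is a reasonable bit of bookkeeping, not a different argument.
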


\begin{proof}
Note that there is a uniform bound on the distance between $ax_0$ and $e$, and in turn $e$ lies uniformly close to $h\gamma x_0$ as noted at the beginning of the proof of Lemma \ref{lem:geodesic_in_axis}. Hence, the claim follows from the fact that closest-point projections to quasi-convex sets in hyperbolic spaces are coarsely Lipschitz.
\end{proof}

\subsubsection{Back to the proof}
Recall that we have to prove that there exists a constant $C$ such that for all $o,p\in G$,  $n \in \mathbb{N}$, and  $t>0$ we have
$$\mathbb{P}\left[\exists r\leq n : \sum_{\mathcal{H}_T(o,p)} [p,w^p_r] \geq t \right] \leq Ce^{-t/C}.
$$

We note the following claim which will allow us to reduce the probability we are trying to bound to the problem of bounding the probability of two events. Let $o,p \in G$ be any two basepoints. \\

\begin{claim}

For any $w \in G$, if $\sum_{\mathcal H_T (o,p)}[p,w] \geq t$ then either

\begin{itemize}
    \item there exists a coset $h\gamma \in \mathcal H_T(o,p)$ such that $d_{h\gamma}(p,w) \geq t/10$, or
    \item $$\sum_{\substack{h\gamma \in \mathcal H_T(o,p) \\ \pi_{h\gamma}(o) =\pi_{h\gamma}(w)}}[p,w] \geq t/20.$$ 
\end{itemize}
\end{claim}
\begin{proof}[Proof of Claim]
We note that by Lemma \ref{lem:2cosetsmax} there are most 2 cosets such that $\pi_{h\gamma}(w) \neq \pi_{h\gamma}(o), \pi_{h\gamma}(p) $, say $h_1\gamma$ and $h_2\gamma$. Hence, if the first case does not hold we have

\begin{align*}
    \begin{split}
       \sum_{\mathcal H_T (o,p)}[p,w] &= \sum_{\substack{h\gamma \in \mathcal H_T(o,p) \\ \pi_{h\gamma}(o) =\pi_{h\gamma}(w)}}[p,w] + \sum_{\substack{h\gamma \in \mathcal H_T(o,p) \\ \pi_{h\gamma}(o) \neq \pi_{h\gamma}(w)}}[p,w] \\
        & =\sum_{\substack{h\gamma \in \mathcal H_T(o,p) \\ \pi_{h\gamma}(o) =\pi_{h\gamma}(w)}}[p,w]+d_{h_1\gamma}(p,w)+d_{h_2\gamma}(p,w) \\
& \leq \sum_{\substack{h\gamma \in \mathcal H_T(o,p) \\ \pi_{h\gamma}(o) =\pi_{h\gamma}(w)}}[p,w]+t/5
    \end{split}
\end{align*}

Hence if $\sum_{\mathcal H_T (o,p)}[p,w] \geq t$, the second case holds, proving the claim.
\end{proof}

We let $A_1 \subseteq G$ be the set of elements $w$ that satisfy the first bullet point of the claim and $A_2 \subseteq G$ the set of elements satisfying the second bullet point. Hence, by the claim if $\sum_{\mathcal H_T (o,p)}[p,w] \geq t$ then $w \in  A_1 \cup A_2$. Therefore, to prove the proposition it suffices to find a constant $C$ (independent of $p$ and $n$) such that we have
$$\mathbb P[\exists r\leq n : w^p_r\in A_1\cup A_2]\leq C e^{-n/C}.$$

The goal is now to show, roughly, that if the event from the equation above holds then the Markov path must have passed close to some edge space corresponding to an edge contained in some $Axis(h\gamma)$ for $h\gamma\in \mathcal H(o,p)$.

From now on we always assume that $t$ satisfies $t \geq 10 D$ where $D$ is from Lemma \ref{lem:geodesic_in_axis} (as usual, it suffices to consider $t$ large enough). Also, let $K$ be a bound on the size of the jumps of the Markov chain (see Remark \ref{rmk:bounded_jumps}), and let $K'$ be the corresponding constant from the end of Subsection \ref{subsec:BS_prelims}.

For all $h\gamma \in \mathcal H (o,p)$  let
$$
\mathcal I_{h\gamma}=\{X_e \quad \vert \quad  e \in Axis(h\gamma)\}.
$$

Further, we let 
$$
\mathcal {NI} =\bigcup_{h\gamma \in \mathcal H (o,p)}\bigcup_{X_e \in \mathcal I_{h\gamma}} \mathcal N_{K'}(X_e)\subseteq G,
$$

where we emphasise that the neighborhood is taken in $G$.

\par\medskip

\begin{claim}

 There exists $\epsilon>0$ (independent of $t$) such that if the event "$\exists r\leq n : w^p_r\in A_1\cup A_2$" holds, then the event "$\exists \epsilon t \leq k\leq n: w^p_k \in \mathcal {NI}$" also holds.
\end{claim}
\begin{proof}
If $w^p_r\in A_1$ for some $r$, then by Lemma \ref{lem:geodesic_in_axis} we have that a geodesic $[p,w_p^r]$ contains an edge $e$ of $Axis(h\gamma)$ for some $h\gamma\in \mathcal H(o,p)$. Moreover, said edge lies at distance bounded from below in terms of $t$ from $px_0$. By the defining property of $K'$ we have that $w^p_k \in X_e$ for some $k$, and further $k$ needs to be bounded from below in terms of $t$ because the Markov chain makes bounded jumps and the distance from $p$ to $X_e$ can be bounded linearly from below in terms of the corresponding distance in $X$.

The proof in the case where $w^p_r\in A_2$ for some $r$ is similar, but in this case the choice of $e$ is trickier. We consider a minimal element $h\gamma$ (with respect to the order from Lemma \ref{linearorder}) of $\{h'\gamma\in \mathcal H(o,p): \pi_{h'\gamma}(o)=\pi_{h'\gamma}(w^p_r)\}$. Again by Lemma \ref{lem:geodesic_in_axis} we have that a geodesic $[p,w_r^p]$ contains an edge $e$ of $Axis(h\gamma)$. As above, we have to prove that the distance from $p$ to $X_e$ is large. Actually, if $d_{h\gamma}(p,w^p_r)\geq t/40$ we can apply the same argument as above, so we will further assume that this is not the case. Hence

$$\sum_{\substack{h'\gamma \neq h\gamma \\ \pi_{h\gamma}(o) =\pi_{h\gamma}(w^p_r)}}[p,w^p_r] \geq t/40,$$

and in view of Lemma \ref{lem:X_e_proj} (and the fact that $T$ is much larger than the constant from that lemma) we also have a similar bound on the corresponding sum for any given element of $X_e$. Hence, we can conclude using Lemma \ref{distance}.

\end{proof}

Now, an argument similar to the Claim in the proof of Lemma \ref{20francs} yields that there exists $M$ such that for each $n\geq 1$ we have
$$\mathcal J_n=\bigcup_{h\gamma \in \mathcal H (o,p)} \{e\in I_{h\gamma}: X_e\cap B(p,Kn+K')\neq \emptyset\}\leq Mn.$$

Setting $\mathcal NI_n=\bigcup_{e\in \mathcal J_n} \mathcal N_{K'}(X_e\cap B(p,Kn+K'))$ we have that the events "$\exists \epsilon t \leq k\leq n: w^p_k \in \mathcal {NI}$" and "$\exists \epsilon t \leq k\leq n: w^p_k \in \mathcal {NI}_k$" coincide, because of the bounded jumps of the tame Markov chain. Noticing that $|X_e\cap B(p,Kn+K')|\leq \psi(M'n)$ for some $M'$ (where recall that $\psi$ is an upper bound for the growth functions of the edge groups), we can now estimate

\begin{align*}
 \begin{split}
 \mathbb P \left[ \exists r \leq n \quad : \sum_{\mathcal H (o,p)}[p,w^p_r] \geq t  \right] &\leq \mathbb P \Big[ \exists  \epsilon t \leq k\leq n: w^p_k \in \mathcal {NI}_n\Big]\\
 &\leq \sum_{i\geq \epsilon t} A\rho^{i} Mi \psi(M'i).
  \end{split}	
 \end{align*}
where $A,\rho$ are from the non-amenability condition on the Markov chain, Definition \ref{defn:tame}-\ref{item:non-amen}. Since $\psi$ is subexponential, the last term decays exponentially fast in $t$, as required. This concludes the proof of Proposition \ref{axiom} under Assumption \ref{assump:graph}.

\section{Probabilistic arguments}

\label{probabilisticarguments}

The main result of this section is that if Proposition \ref{axiom} holds, then we get that a tame Markov chain makes linear progress in the hyperbolic space being acted on.

\subsection{Proof of Theorem \ref{linear progress}}

We fix the setup of Theorem \ref{linear progress}, and in particular a group $G$ acting on a hyperbolic space $X$ and a tame Markov chain on $G$ starting at $o\in G$ (note that the constants below will be independent of $o$, as stated in the theorem).

We fix a $T \geq T_0$, for $T_0$ as in Proposition \ref{axiom}. We also require that $T$ satisfies both Lemma $\ref{linearorder}$ and Lemma $\ref{distance}$. As $T$ is fixed, for ease of notation, we drop the subscript throughout this proof, i.e. $\mathcal H (o,p) := \mathcal{H}_T (o,p)$. \\

We will show that there exist constant $L>0,C_0 \geq 1$ and $m \in \mathbb N_{>0}$ such that for all integers $j$ we have $$
\mathbb{P}\left[ \sum_{\mathcal{H}(o,w_{jm})}[o,w_{jm}] <  jm/C_0\right]\leq C_0e^{-jm/C_0},
$$

This is enough to prove Theorem \ref{linear progress} as we now argue. By Lemma \ref{distance}, we get \begin{equation}
\label{star}
\mathbb{P}\Big[ d_{X}(ox_0, w_{jm} x_0) <  jm/C'_0 \Big] \leq \mathbb{P}\left[ \sum_{\mathcal{H}(o,w_{jm})}[o,w_{jm}] < jm/C_0\right] \leq C_0e^{-jm/C_0}
\end{equation}

where $C'_0=C_0/2$, and we dropped the basepoint from the notation for simplicity.
Now, let $n$ be any positive integer, which we can assume to be large enough that $n/(C_0) \geq Km$, where $K$ is a bound on the size of the jumps of the Markov chain (Remark \ref{rmk:bounded_jumps}). Write $n=jm+r$ where $0 \leq r \leq m-1$, we note that $d_{X}(ox_0, w_nx_0) \geq d_{X}(ox_0, w_{jm}x_0)-d_{X}(w_{jm}x_0, w_nx_0)$. Hence 

\begin{equation*}
\begin{split}
\mathbb{P}\big[ d_X(ox_0,w_nx_0) <  n/(2C_0) \big]& \leq \mathbb{P}\big[ d_{X}(x_0, w_{jm}x_0)< n/C_0 \big]\\
& \leq C_0e^{-jm/C_0} \leq Ce^{-n/C},
\end{split}
\end{equation*}

for a suitable $C$, as required.

We now prove a similar result to the Claim in the proof of  \cite[Theorem 9.1]{MathieuSisto}.

\begin{proposition}
\label{prop:exp_lambda}

\label{expectation}
Let $G$ be a group satisfying the conclusion of Proposition \ref{axiom} with $T_0$ defined as in Proposition \ref{axiom}. Consider a tame Markov chain $(w^o_n)$ on $G$. Then for all $T \geq T_0$,  there exist constants $ \lambda, \epsilon_1 >0$ and $m \in \mathbb N$ such that the following holds. For all $o,p\in G$ we have
$$
\mathbb{E}\left[\exp\left(\lambda \left(\sum_{\mathcal{H}_T(o,p)}[o,p]-\sum_{\mathcal{H}_T(o,w^p_m)}[o,w^p_m]\right)\right)\right] \leq 1-\epsilon_1.
$$
\end{proposition}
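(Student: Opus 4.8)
The plan is to follow the scheme of the analogous claim in the proof of \cite[Theorem~9.1]{MathieuSisto}, using Lemma~\ref{etalogproj} and Proposition~\ref{axiom} in place of the input from boundary theory, which is unavailable here. Throughout write $\beta(x)=\sum_{\mathcal{H}_T(o,x)}[o,x]$ and $\Delta=\beta(p)-\beta(w^p_m)$, so that we must produce $\lambda,\epsilon_1>0$ and $m\in\mathbb N$ with $\mathbb E[e^{\lambda\Delta}]\le 1-\epsilon_1$, uniformly in $o,p$. The argument splits into a deterministic part, giving an exponential upper tail for $\Delta$, and a probabilistic part, giving that $\Delta$ is substantially negative with probability bounded away from $0$; these are then combined by letting $\lambda$ be small.

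For the deterministic part, I would prove
$$\Delta\ \le\ 2\,\sum_{\mathcal{H}_T(o,p)}[p,w^p_m]\ +\ 2T\ =:\ 2Z+2T .$$
This comes from analysing the cosets in $\mathcal{H}_T(o,p)\setminus\mathcal{H}_T(o,w^p_m)$: by Lemma~\ref{lem:2cosetsmax}, for all but at most two cosets $h\gamma\in\mathcal{H}_T(o,p)$ the projection $\pi_{h\gamma}(w^p_m)$ coincides with $\pi_{h\gamma}(o)$ or with $\pi_{h\gamma}(p)$; in the first case $h\gamma$ drops out of $\mathcal{H}_T(o,w^p_m)$ but its full weight $d_{h\gamma}(o,p)=d_{h\gamma}(p,w^p_m)$ is then a summand of $Z$, in the second case $h\gamma$ stays with the same projection data, and the two exceptional cosets cost at most $T$ each beyond their $Z$-contribution; the cosets that stay lie in $\mathcal{H}_T(o,w^p_m)$, where $d_{h\gamma}(o,w^p_m)\ge d_{h\gamma}(o,p)-d_{h\gamma}(p,w^p_m)$, and summing yields the displayed bound. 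Since $Z\le\sup_{r\le m}\sum_{\mathcal{H}_T(o,p)}[p,w^p_r]$, Proposition~\ref{axiom} gives $\mathbb P[Z\ge s]\le Ce^{-s/C}$ and hence $\mathbb P[\Delta\ge s]\le Ce^{-(s-2T)/(2C)}$, uniformly in $o,p,m$; in particular $\mathbb E[e^{2\lambda Z}]\le 1+O(\lambda)$ for small $\lambda$, uniformly.

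The harder, probabilistic step is to show that there is a constant $c$ — which we may take as large as needed, depending only on $T$ and the constant $C$ of Proposition~\ref{axiom} — such that $\mathbb P[\Delta\le -c]\ge 1/2$ for all large enough $m$, uniformly in $o,p$; equivalently $\mathbb P[\beta(w^p_m)\ge\beta(p)+c]\ge 1/2$. I would fix a constant target $T^\ast=T^\ast(c,T,C)$ and intersect three events: (i) by Lemma~\ref{etalogproj}, once $m$ is large enough that $\eta\log m\ge T^\ast$, with probability $\ge 1-e^{-\sqrt m/U}$ some coset $\widehat h\gamma$ has $d_{\widehat h\gamma}(p,w^p_\tau)\ge T^\ast$ at the first time $\tau\le m$ when such a projection exists (a stopping time); (ii) restarting Proposition~\ref{axiom} at $\tau$ via the Markov property, with basepoints $p$ and $w^p_\tau$, shows that with probability at least $1-Ce^{-t_3/C}$ for a large constant $t_3$ this projection is not undone by time $m$, so $d_{\widehat h\gamma}(p,w^p_m)\ge T^\ast-t_3$; (iii) by Proposition~\ref{axiom} applied with the basepoints $o,p$, with probability at least $1-Ce^{-t_2/C}$ we have $Z\le t_2$ for a large constant $t_2$. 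On the intersection, $Z\le t_2<T^\ast-t_3$ forces $\widehat h\gamma\notin\mathcal{H}_T(o,p)$ (otherwise $d_{\widehat h\gamma}(p,w^p_m)$ would be a summand of $Z$), whence $d_{\widehat h\gamma}(o,w^p_m)\ge d_{\widehat h\gamma}(p,w^p_m)-T$ is still large, so $\widehat h\gamma\in\mathcal{H}_T(o,w^p_m)$ and contributes at least $T^\ast-t_3-T$ to $\beta(w^p_m)$; separately, the deterministic analysis above shows the cosets of $\mathcal{H}_T(o,p)$ already contribute at least $\beta(p)-2t_2-2T$ to $\beta(w^p_m)$, and $\widehat h\gamma$ is disjoint from those. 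Taking $T^\ast$ larger than the sum of the error terms gives $\beta(w^p_m)\ge\beta(p)+c$. I expect this to be the main obstacle: one has to match a projection created ``from $p$'' by Lemma~\ref{etalogproj} with $\beta(w^p_m)$, which is measured ``from $o$'', and this requires the linear-order bookkeeping (Lemmas~\ref{linearorder}, \ref{close}, \ref{lem:2cosetsmax}) together with a careful strong-Markov restart of Proposition~\ref{axiom} at the random time $\tau$.

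Finally I would combine the two parts. With $A=\{\Delta\le -c\}$ and $\lambda$ small,
$$\mathbb E[e^{\lambda\Delta}]\ \le\ e^{-\lambda c}\,\mathbb P[A]+e^{2\lambda T}\,\mathbb E\bigl[e^{2\lambda Z}\mathbf 1_{A^c}\bigr]\ \le\ e^{-\lambda c}\,\mathbb P[A]+e^{2\lambda T}\bigl(\mathbb P[A^c]+\mathbb E[e^{2\lambda Z}-1]\bigr),$$
and, using $\mathbb P[A]\ge 1/2$, $\mathbb E[e^{2\lambda Z}-1]\le 4\lambda C^2$, $e^{2\lambda T}\le 1+4\lambda T$ and $1-e^{-\lambda c}\ge\lambda c/2$, the right-hand side is at most $1-\lambda\bigl(c/4-4(T+C^2)-O(\lambda)\bigr)$. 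Choosing $c>32(T+C^2)$ fixes $c$; then choosing $m$ large enough makes the probabilistic step apply; then choosing $\lambda>0$ small enough makes the bracket positive, and $\epsilon_1:=\lambda c/16$ works.
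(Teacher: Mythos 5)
Your proposal is correct, and it reaches the inequality by a genuinely different assembly than the paper, even though the core inputs coincide. The paper splits $\sum_{\mathcal H_T(o,w^p_m)}[o,w^p_m]$ into the contribution over $\mathcal H_T(o,p)$ and over the new cosets $\mathcal H_T(o,p)^C$, applies Cauchy--Schwarz, bounds the first factor by the exponential moment of $Z=\sum_{\mathcal H_T(o,p)}[p,w^p_m]$ via Proposition \ref{axiom} and the triangle inequality (Remark \ref{rem:triangle_inequality}), and then kills the resulting constant factor $>1$ by showing (Lemma \ref{lem:small_complement}) that the new cosets contribute at least $\eta\log m$ with probability tending to $1$; so the gain must grow with $m$. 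You avoid Cauchy--Schwarz entirely: you prove the pointwise bound $\Delta\leq 2Z+2T$ by explicit coset bookkeeping with Lemma \ref{lem:2cosetsmax} (which, as a byproduct, handles carefully the cosets of $\mathcal H_T(o,p)$ that drop out of $\mathcal H_T(o,w^p_m)$ --- a point the paper's first display glosses over up to a bounded error), and you only need a \emph{constant} gain $c$, large relative to $T$ and the constant of Proposition \ref{axiom}, achieved with probability at least $1/2$; the first-order expansion in $\lambda$ then does the rest. Your mechanism for producing that gain is the same as the paper's proof of Lemma \ref{lem:small_complement}: create a large projection from $p$ via Lemma \ref{etalogproj}, restart Proposition \ref{axiom} at the first such time to see it persists, and use Proposition \ref{axiom} from $(o,p)$ to rule out that the new coset lies in $\mathcal H_T(o,p)$, converting the estimate from basepoint $p$ to basepoint $o$ since $d_{\hat h\gamma}(o,p)<T$. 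One small point to make rigorous in a final write-up: the restart at the hitting time $\tau$ conditions on an event involving the whole past and on a coset $\hat h\gamma$ chosen as a function of $w^p_\tau$, so one should decompose over the disjoint events $\{\tau=k,\,w^p_k=q\}$ (with $h_q\gamma$ fixed for each $q$) and apply Equation \eqref{eqn:change_of_basepoint}/Lemma \ref{lem:Markov_property_set} cylinder by cylinder --- exactly the device the paper uses with its events $A_{k,q}$, so this is a matter of exposition, not a gap. In exchange for the extra bookkeeping, your route is somewhat more robust (it would work with any device producing a large-constant new projection with probability bounded away from $0$, rather than a logarithmically growing one with high probability), while the paper's Cauchy--Schwarz route is shorter once Lemma \ref{lem:small_complement} is in place.
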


We will denote by $\mathcal{H}(o,p)^C$ to be the complement of $\mathcal{H}(o,p)$ in $\mathcal{H}(o,w^p_m)$, i.e. $\mathcal{H}(o,w^p_m)=\mathcal{H}(o,p) \cup \mathcal{H}(o,p)^C$. \\

We begin with a lemma.

\begin{lemma}
\label{lem:small_complement}
There exist $\eta >0$ and a function $u:\mathbb{N}^{+} \to \mathbb{R}$, with $u(m) \to 0 $ as $m \to + \infty$,  such that $$
\mathbb{P} \Big[ \sum_{\mathcal{H}(o,p)^C} [o,w^p_m] \leq \eta \log(m) \Big] \leq u(m).
$$
\end{lemma}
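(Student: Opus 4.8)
The plan is to prove the (equivalent) statement that, with probability tending to $1$ as $m\to\infty$, the Markov chain started at $p$ creates a projection of size of order $\log m$ onto some coset $h\gamma\notin\mathcal H(o,p)$, and that a definite fraction of this projection is still present at time $m$; such a coset then lies in $\mathcal H(o,p)^C$ and by itself makes $\sum_{\mathcal H(o,p)^C}[o,w^p_m]$ at least $\eta\log m$.

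Let $\eta_0,U$ be the constants from Lemma \ref{etalogproj} and $C$ the constant from Proposition \ref{axiom} for the fixed $T$. Applying Lemma \ref{etalogproj} with $n=m$, with probability at least $1-e^{-\sqrt m/U}$ there are $l\le m$ and a coset $h\gamma$ with $d_{h\gamma}(p,w^p_l)\ge\eta_0\log m$; call this event $E_1$. Since $d_{h\gamma}(p,w^p_l)$ depends only on $p$ and $w^p_l$, the quantity $l_*:=\min\{l\le m:\exists h\gamma,\ d_{h\gamma}(p,w^p_l)\ge\eta_0\log m\}$ is determined by $w^p_0,\dots,w^p_{l_*}$, and on $E_1$ we may let $h_*\gamma$ be the first coset, in some fixed order, realising the inequality at time $l_*$; note $h_*\gamma$ depends only on $p$ and $w^p_{l_*}$. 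Now I would split $E_1$ according to whether $h_*\gamma\in\mathcal H(o,p)$.

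If $h_*\gamma\in\mathcal H(o,p)$, then $\sum_{\mathcal H(o,p)}[p,w^p_{l_*}]\ge d_{h_*\gamma}(p,w^p_{l_*})\ge\eta_0\log m$, so this part of $E_1$ sits inside $\{\exists r\le m:\sum_{\mathcal H(o,p)}[p,w^p_r]\ge\eta_0\log m\}$, which by Proposition \ref{axiom} has probability at most $Ce^{-\eta_0\log m/C}=Cm^{-\eta_0/C}$. If instead $h_*\gamma\notin\mathcal H(o,p)$, i.e.\ $d_{h_*\gamma}(o,p)<T$, then $d_{h_*\gamma}(o,w^p_{l_*})\ge d_{h_*\gamma}(p,w^p_{l_*})-d_{h_*\gamma}(o,p)>\eta_0\log m-T\ge T$ for $m$ large, so $h_*\gamma\in\mathcal H(o,w^p_{l_*})$. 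I would then apply Proposition \ref{axiom} to the chain after time $l_*$: conditioning on $\{l_*=l,\ w^p_l=q\}$ and summing over $l,q$ (the extra event $\{l_*=l\}$ lies in the past of time $l$, so conditioning on it changes nothing by the Markov property, cf.\ Lemma \ref{lem:Markov_property_set}), the continuation is a tame chain from $q$, and in the present case $h_*\gamma$ — which depends only on $p,q$ — lies in $\mathcal H(o,q)$; hence Proposition \ref{axiom} gives that, with probability at most $Ce^{-t_0/C}$, some $r$ satisfies $\sum_{\mathcal H(o,w^p_{l_*})}[w^p_{l_*},w^p_r]\ge t_0$. Call this event $E_2$ and set $t_0=t_0(m)=\tfrac{\eta_0}{4}\log m$. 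Off $E_2$, using $h_*\gamma\in\mathcal H(o,w^p_{l_*})$ and that projections have diameter at most $2B$, we get $d_{h_*\gamma}(w^p_{l_*},w^p_m)<t_0$, hence
\[ d_{h_*\gamma}(o,w^p_m)\ \ge\ d_{h_*\gamma}(o,w^p_{l_*})-d_{h_*\gamma}(w^p_{l_*},w^p_m)\ >\ \eta_0\log m-T-t_0(m)\ \ge\ \tfrac{\eta_0}{2}\log m \]
for $m$ large. Since then also $\tfrac{\eta_0}{2}\log m\ge T$, we obtain $h_*\gamma\in\mathcal H(o,w^p_m)\setminus\mathcal H(o,p)=\mathcal H(o,p)^C$, and therefore $\sum_{\mathcal H(o,p)^C}[o,w^p_m]\ge d_{h_*\gamma}(o,w^p_m)\ge\tfrac{\eta_0}{2}\log m$.

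Setting $\eta:=\eta_0/2$, the above shows that for all large enough $m$ the event $\{\sum_{\mathcal H(o,p)^C}[o,w^p_m]\le\eta\log m\}$ is contained in $E_1^c\cup(E_1\cap\{h_*\gamma\in\mathcal H(o,p)\})\cup(E_1\cap E_2)$, so it has probability at most $f(m):=e^{-\sqrt m/U}+Cm^{-\eta_0/C}+Cm^{-\eta_0/(4C)}$; declaring $f(m)=1$ for the finitely many small $m$ not covered above gives $f(m)\to0$ and the claim. The step I expect to be the main obstacle is the ``old coset'' case — excluding the possibility that the freshly created large projection happens to land on a coset on which $o$ and $p$ were already far apart — together with the bookkeeping needed to invoke Proposition \ref{axiom} on the portion of the chain after the time $l_*$; it is precisely the combination of the $E_1$/old/new/$E_2$ split with the choice $t_0(m)\sim\log m$ that forces $f(m)$ to decay rather than merely remain bounded.
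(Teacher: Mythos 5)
Your proof is correct and follows essentially the same route as the paper: create a logarithmic projection via Lemma \ref{etalogproj}, rule out that it lands on a coset of $\mathcal H(o,p)$ using Proposition \ref{axiom}, and then condition on the first time/position where the new projection appears and apply Proposition \ref{axiom} again (via the Markov property, Lemma \ref{lem:Markov_property_set}) to see that most of it survives to time $m$, so the coset lies in $\mathcal H(o,p)^C$ and contributes $\gtrsim\eta\log m$. The paper packages the first-entry decomposition through a set $\mathcal A$ of points with a large projection onto a new coset, but the ingredients and estimates are the same as yours.
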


\begin{proof}
Let $\eta, U> 0$ be as in Lemma \ref{etalogproj} and fix $m \geq e^{6T/\eta}$; the "$\eta$" satisfying the conclusion of the lemma will actually be $\eta/2$.

Let $\mathcal A$ be the set of all $q\in G$ such that there exists $h_q\gamma\notin \mathcal H(o,p)$ with $d_{h_q\gamma}(o,q)\geq \eta \log(m)$; for each such $q$ we fix a corresponding $h_q$. For $k \in \mathbb{N}, q \in G$, we consider the events $A_{k,q}= ``q \in \mathcal A, w^p_k=q, w^p_i\notin \mathcal A  \quad\forall i<k $".

Note that if both events $A_{k,q}$ and ``$d_{h_q\gamma}(w^p_k,w^p_m)< \eta \log(m)/2$" hold then $\sum_{\mathcal{H}(o,p)^C} [o,w^p_m] > \eta \log(m)/2$. Indeed, the sum includes the term $d_{h_q\gamma}(o,w^p_m)\geq \eta \log(m)/2$.

\par\medskip

\begin{claim}

There exists a function $g$ with with $g(m) \to 0 $ as $m \to + \infty$ such that for all $m$ we have
$$\mathbb P[\exists k\leq m : w^p_k\in \mathcal A]\geq 1- g(m).$$
\end{claim}
\begin{proof}
If we did not have the requirement $h_q\gamma\notin \mathcal H(o,p)$ then this would follow directly from Lemma \ref{etalogproj}. However, it suffices to combine said lemma with the fact that Proposition \ref{axiom} (together with the fact that projections are coarsely Lipschitz) implies that
$$\mathbb P\Big[\exists k\leq m , h\gamma\in \mathcal H(o,p): d_{h\gamma}( w^p_k, o)\geq \eta \log(m)/2\Big]$$
is bounded by some function of $m$ that goes to $0$ as $m$ tends to infinity.
\end{proof}

Note that for each $k\leq m$ and $q\in \mathcal A$ we have

$$\mathbb P[d_{h_q\gamma}(w^p_k,w^p_m)< \eta \log(m)/2| A_{k,q}]\ =\ \mathbb P[d_{h_q\gamma}(q,w^q_{m-k})< \eta \log(m)/2]\ \geq \ 1- Ce^{-\eta\log(m)/(2C)},$$

where the equality follows from the Markov property (Lemma \ref{lem:Markov_property_set}), while the inequality (and corresponding constant) comes from Proposition \ref{axiom}. The last term can be rewritten as $1-h(m)$ for some function going to $0$ as $m\to +\infty$.

Since the events $A_{k,q}$ are disjoint we can now estimate

\begin{align*}
    \mathbb{P} \Big[ \sum_{\mathcal{H}(o,p)^C} [o,w^p_m] > &\eta \log(m)/2 \Big] \\
    &\geq \sum_{k\leq m, q\in \mathcal A} \mathbb P[d_{h_q\gamma}(w^p_k,w^p_m) < \eta \log(m)/2\ |\ A_{k,q}]\ \mathbb P[A_{k,q}]\\
    & \geq (1-h(m)) \sum_{k\leq m, q\in \mathcal A}\mathbb P[A_{k,q}]\\
    & \geq (1-h(m))(1-g(m)),
\end{align*}

where we used both the Claim and the estimate on conditional probabilities explained above. This proves the lemma.
\end{proof}

\begin{proof}[Proof of Proposition \ref{prop:exp_lambda}]
Using the Cauchy-Schwarz inequality and Remark \ref{rem:triangle_inequality} (the triangular inequality), we get
\begin{align*}
    \begin{split}
       \mathbb{E}\left[\exp\Big(\lambda (\sum_{\mathcal{H}(o,p)}[o,p]-\sum_{\mathcal{H}(o,w^p_m)}[o,w^p_m])\Big)\right] &=\mathbb{E}\left[\exp\left(\lambda \left(\sum_{\mathcal{H}(o,p)}[o,p]-\sum_{\mathcal{H}(o,p)}[o,w^p_m]\right)\right)\exp\left(-\lambda \sum_{\mathcal{H}(o,p)^C} [o,w^p_m]\right)\right]  \\
       &\leq \mathbb{E}\left[\exp\left(2\lambda \left(\sum_{\mathcal{H}(o,p)}[o,p]-\sum_{\mathcal{H}(o,p)}[o,w^p_m]\right)\right)\right]^{\frac{1}{2}}\mathbb{E}\left[\exp\left(-2\lambda \sum_{\mathcal{H}(o,p)^C}[o,w^p_m]\right)\right]^{\frac{1}{2}} \\
       & \leq \mathbb E \left[\exp \left(2\lambda \sum_{\mathcal{H}(o,p)}[p,w^p_m] \right) \right]^{\frac{1}{2}}\mathbb{E}\left[\exp\left(-2\lambda \sum_{\mathcal{H}(o,p)^C}[o,w^p_m]\right)\right]^{\frac{1}{2}}.
    \end{split}
\end{align*}

Now, by Proposition \ref{axiom} there exists a constant $C>0$ such that for all $t>0$: $$
\mathbb{P}\left[ \sum_{\mathcal{H}(o,p)}[p,w^p_m] \geq t\right]\leq Ce^{-t/C}.
$$

which leads to $$
\mathbb{P}\left[\exp \left(2\lambda \sum_{\mathcal{H}(o,p)}[p,w^p_m] \right)\geq s\right]=\mathbb{P}\left[ \sum_{\mathcal{H}(o,p)}[p,w^p_m] \geq \frac{\log(s)}{2\lambda}\right] \leq Cs^{-1/(2\lambda C)}
$$

We recall that for a non-negative random variable $Y$, the expected value is given by:

$$\mathbb{E}(Y) = \int_{0}^{+\infty} \mathbb{P}(Y \geq s)  ds$$

If we choose $\lambda < \frac{1}{2C}$ this leads to 
\begin{equation*}
    \begin{split}
 \mathbb E \left[\exp \left(2\lambda \sum_{\mathcal{H}(o,p)}[p,w^p_m] \right) \right] &=  \int_{0}^{+\infty} \mathbb{P}\left[\exp (2\lambda \sum_{\mathcal{H}(o,p)}[p,w^p_m] )\geq s\right]ds \\
&\leq  \int_{0}^{1} \mathbb{P}\left[\exp (2\lambda \sum_{\mathcal{H}(o,p)}[p,w^p_m] )\geq s\right] ds + C\int_{1}^{+\infty} s^{-1/(2\lambda C)} ds \\
&\leq 1+\frac{2C^2\lambda}{1-2\lambda C} \\
&=\frac{1-2\lambda C (1-C)}{1-2\lambda C} \\
 \end{split}
\end{equation*}

Now, letting $B$ be the event $``\sum_{\mathcal{H}(o,p)^{C}}[o,w^p_m] \leq \eta \log(m)" $ where $\eta $ is from Lemma \ref{lem:small_complement},  we get:

\begin{align*}
    \begin{split}
    \mathbb{E}\left[\exp(-2\lambda (\sum_{\mathcal{H}(o,p)^C}[o,w^p_m])\right] &= \mathbb{E}\left[\exp(-2\lambda (\sum_{\mathcal{H}(o,p)^C}[o,w^p_m]))\mathbb{1}_{B}\right] + \mathbb{E}\left[\exp(-2\lambda (\sum_{\mathcal{H}(o,p)^C}[o,w^p_m]))\mathbb{1}_{B^{C}}\right] \\
    & \leq \mathbb{P}[B] + m^{-2\lambda \eta} \\
& \leq f(m)+m^{-2\lambda \eta} \\
    \end{split}
\end{align*}

where we go from the second to the third by Lemma \ref{lem:small_complement}, with $f(m) \to 0$ as $m \to +\infty$.

This all leads to $$
\mathbb{E}\left[\exp\left(\lambda (\sum_{\mathcal{H}(o,p)}[o,w]-\sum_{\mathcal{H}(o,w^p_m)}[o,w^p_m])\right)\right] \leq \Big(\frac{1-2C\lambda(1-C)}{1-2\lambda C}\Big)^{1/2}(f(m)+m^{-2\lambda \eta})^{1/2}
$$

Recall that we have fixed some $0<\lambda <\frac{1}{2 C}$. We can therefore increase $m$ to get  $$
\mathbb{E}\left[\exp\left(\lambda (\sum_{\mathcal{H}(o,p)}[o,w]-\sum_{\mathcal{H}(o,w^p_m)}[o,w^p_m])\right)\right] \leq 1-\epsilon_1$$
for some $\epsilon_1>0$ as required, proving the Proposition.
\end{proof}

We now prove Equation \eqref{star}. Fix $\epsilon_1, \lambda$ and $m$ from Proposition \ref{expectation}. For any positive integer $j$, we have: $$
\sum_{\mathcal{H}(o,w_{m+jm})}[o,w_{m+jm}]=\sum_{\mathcal{H}(o,w_{m+jm})}[o,w_{jm+m}]+\sum_{\mathcal{H}(o,w_{jm})}[o,w_{jm}]-\sum_{\mathcal{H}(o,w_{jm})}[o,w_{jm}]
$$
and, by Proposition \ref{prop:exp_lambda}, for all $g\in G$ we have 

$$
\mathbb{E}\left[\exp(-\lambda\big( \sum_{\mathcal{H}(o,w_{m+jm})}[o,w_{jm+m}] -\sum_{\mathcal{H}(o,w_{jm})}[o,w_{jm}] ) \hspace{2mm} \Big\vert \hspace{2mm} w_{jm}=g\right] \leq 1-\epsilon_1.
$$
Summing over all $g$, this leads to
\begin{equation*}
\begin{split}
\mathbb{E}&\Big[\exp(-\lambda \sum_{\mathcal{H}(o,w_{m+jm})}[o,w_{jm+m}])\Big] \\
&\leq \sum_{g\in G}\mathbb{E}\Big[e^{-\lambda( \sum_{\mathcal{H}(o,w_{m+jm})}[o,w_{jm+m}] -\sum_{\mathcal{H}(o,w_{jm})}[o,w_{jm}])} \ e^{-\lambda \sum_{\mathcal{H}(o,g)}[o,g]}\hspace{2mm} \Big\vert \hspace{2mm} w_{jm}=g \Big]\mathbb{P}[w_{jm}=g] \\
&\leq (1-\epsilon_1)\sum_{g\in G} e^{-\lambda \sum_{\mathcal{H}(o,g)}[o,g]}\mathbb{P}[w_{jm}=g] \\
&\leq (1-\epsilon_1)\mathbb{E}\Big[\exp(-\lambda(\sum_{\mathcal{H}(o,w_{jm})}[o,w_{jm}])\Big].
\end{split}
\end{equation*}

Inductively, we get for all $j$ :$$
\mathbb{E}\left[\exp(-\lambda(\sum_{\mathcal{H}(o,w_{jm})}[o,w_{jm}]))\right] \leq (1-\epsilon_1)^{j}
$$

Hence, by Markov's inequality, for any $L>0$ we have

\begin{equation*}
    \begin{split}
        \mathbb{P}\left[ \sum_{\mathcal{H}(o,w_{jm})}[o,w_{jm}] < L jm\right] & = \mathbb{P}\Big[\exp(-\lambda(\sum_{\mathcal{H}(o,w_{jm})}[o,w_{jm}])) > \exp(-\lambda L jm)\Big] \\
        & \leq \frac{\mathbb{E}\left[\exp(-\lambda(\sum_{\mathcal{H}(o,w_{jm})}[o,w_{jm}]))\right]}{e^{-\lambda L jm}}\\
        &\leq (1-\epsilon)^{j}e^{\lambda L jm} \\
          \end{split}
\end{equation*}

if we choose $L$ small enough, we can find a $C_0 \geq 1$ such that $$
\mathbb{P}\Big[ \sum_{\mathcal{H}(o,w_{jm})}[o,w_{jm}] <  jm/C_0\Big] \leq C_0e^{-jm/C_0}
$$

as required. By the discussion at the start of this section, this is enough to prove Theorem \ref{linear progress}.
\qed

\section{Applications of linear progress}
\label{applications}

In Theorem \ref{linear progress} we have established that, under suitable conditions and with probability tending to 1 exponentially fast, tame Markov chains are at a `linear' distance from the starting point in a hyperbolic space being acted on. We now encapsulate  this notion of `linear progress' in a definition and then state some of the applications one can derive from a tame Markov chain making linear progress. These will be similar to the applications given in \cite{maher2015random} and \cite{MathieuSisto}. Specifically, we will study deviations from quasi-geodesics in Subsection \ref{subsec:deviation_qgeod}, a central limit theorem in Subsection \ref{subsec:clt} and the translation lengths of Markov elements in Subsection \ref{subsec:transl_length}. In all cases, and in contrast with the proof of linear progress, we can follow known arguments for random walks with small modifications.

\begin{definition}
Consider a group $G$ acting on a metric space $X$, with a basepoint $x_0$.
We say that a Markov chain $(w_n^{o})_n$ on $G$ makes \textit{linear progress in $X$ with exponential decay} if there exists a constant $ C>0$ such that for all basepoints $o \in G$ we have $$
\mathbb{P}\Big[ d_{X}(ox_0,w^{o}_nx_0) \geq n/C \Big] \geq 1-Ce^{-n/C} $$
\end{definition}

\subsection{Deviations from quasi-geodesics for tame Markov chains}
\label{subsec:deviation_qgeod}

We start by showing that, in a suitable sense, tame Markov chains tend not to deviate too far from quasi-geodesics. The arguments are almost identical to \cite[Section 10]{MathieuSisto}, in particular we show a similar result to Theorem 10.7 in the case of tame Markov chains. \\

We will state the deviation inequality in an \textit{acylindrically intermediate space} for $(G,X)$. We omit this definition here but refer the reader to  \cite[Definition 10.1]{MathieuSisto}. Instead, we just note that the following spaces $Y$ are acylindrically intermediate for the specified group $G$ and metric space $X$ (see  \cite[Proposition 10.3]{MathieuSisto} for these and more examples). 

\begin{example}

\begin{itemize}
    \item If $G$ is a finitely generated group acting acylindrically on the geodesic hyperbolic space $X$, then $Y=X$ and $Y= \Cay(G,S)$, for a finite generating set $S$, are both acylindrically intermediate spaces for $(G,X)$. \\
    \item If $G$ is relatively hyperbolic, $X$ is its coned-off Cayley graph and $Y$ is its Bowditch space, then $Y$ is acylindrically intermediate for $(G,X)$. 
\end{itemize}
\end{example}

\begin{theorem}
\label{thmdeviationquasigeo}
Let $G$ be a finitely generated group acting non-elementarily on a hyperbolic space $X$ and let $Y$ be acylindrically intermediate for $(G,X)$ with basepoint $y_0$. Consider a tame Markov chain $(w^{o}_n)_n$ on $G$ making linear progress with exponential decay in $X$. Then for every $D$, there exists a constant $D'$ such that for all $l$, $k <n$, and $p\in G$ we have 
$$
\mathbb{P}\Big[ \sup_{\alpha \in QG_{D}(p,w^p_n)} d_{Y}(w^p_k y_0, \alpha) \geq l \Big] \leq D'e^{-l/D'}
$$

where $QG_{D}(a,b)$ is the set of all $(D,D)$- quasi-geodesics from $ay_0$ to $by_0$ (with respect to $d_{Y}$).
\end{theorem}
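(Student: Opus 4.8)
The plan is to adapt the proof of \cite[Theorem 10.7]{MathieuSisto} to the setting of tame Markov chains, using linear progress (our hypothesis) in place of the linear progress result for random walks, and replacing independence arguments with the Markov-property substitutes established earlier (Lemma \ref{lem:Markov_property_set} and Equation \eqref{eqn:change_of_basepoint}). The overall strategy is a ``tripod'' argument: fix $p$ and $n$, and consider the Markov path $w^p_0,\dots,w^p_n$. One wants to show that the position $w^p_k y_0$ at an intermediate time cannot be far from any $(D,D)$-quasigeodesic in $Y$ joining the endpoints $py_0$ and $w^p_n y_0$. The key geometric input is that $Y$ being acylindrically intermediate for $(G,X)$ means (among other things) that the map $Y\to X$ is coarsely Lipschitz and that there is a coarsely well-defined coarse closest-point projection from $Y$ onto the image of a quasigeodesic, with contraction-type properties inherited from the hyperbolicity of $X$; one uses these to reduce a large deviation in $Y$ at time $k$ to the statement that the images in $X$ of the two sub-paths $(w^p_i)_{i\le k}$ and $(w^p_i)_{i\ge k}$ both fail to make linear progress (relative to the appropriate basepoints) over a stretch of length comparable to $l$.

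The concrete steps I would carry out are as follows. First, reduce to the case $Y=X$ where the statement becomes a statement about deviation from geodesics in $X$: this is exactly how \cite{MathieuSisto} handles the passage between $X$ and an acylindrically intermediate space, and it only uses coarse-Lipschitzness of $Y\to X$ together with the defining properties of acylindrically intermediate spaces, so it carries over verbatim. Second, in $X$, suppose $d_X(w^p_k x_0, \alpha)\ge l'$ for a geodesic $\alpha=[px_0, w^p_n x_0]$ (with $l'$ comparable to $l$). By thin triangles, $w^p_k x_0$ is then $\ge l'/2 - O(\delta)$ away from both sides $[px_0, w^p_k x_0]$... no: rather, by considering closest-point projection $\pi$ of $w^p_k x_0$ onto $\alpha$, one gets that the geodesic $[px_0,w^p_k x_0]$ must pass within $O(\delta)$ of $\pi$, hence $d_X(px_0, w^p_k x_0)\ge d_X(px_0,\pi)+l'-O(\delta)$ and similarly $d_X(w^p_k x_0, w^p_n x_0)\ge d_X(\pi, w^p_n x_0)+l'-O(\delta)$. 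Adding, $d_X(px_0,w^p_k x_0)+d_X(w^p_k x_0, w^p_n x_0)\ge d_X(px_0, w^p_n x_0)+2l'-O(\delta)$. So a large deviation forces the triangle inequality to be ``slack'' by $\approx 2l'$, which in turn forces at least one of: a backtracking of length $\gtrsim l'$ in the first sub-path, or in the second sub-path, in the sense that $d_X$ between two times of the sub-path is much smaller than the difference of times would suggest under linear progress.

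Third, convert this into a probability estimate. Because the Markov chain has bounded jumps (Definition \ref{defn:tame}-\eqref{item:bounded_jumps}, Remark \ref{rmk:bounded_jumps}), a backtracking of length $\gtrsim l'$ forces a time interval of length $\gtrsim l'$ during which the chain, started from the relevant position, fails to move distance $\ge (\text{time})/C$ in $X$; by linear progress with exponential decay this has probability $\le C e^{-l'/C'}$ for each candidate interval, and summing over the $\le n^2$ (in fact effectively $O(l')$, after using bounded jumps to localize the relevant times near $k$, or simply $O(n)$) choices of interval and applying Lemma \ref{lem:Markov_property_set} to reset the basepoint still gives an exponential bound in $l'$ — here one must be slightly careful to absorb the polynomial factor, exactly as in \cite{MathieuSisto}, by noting that the relevant intervals have length at least $l'$ so their number that matters is controlled, or by a standard first-failure-time / union-bound trick. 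For the second sub-path one uses the ``moreover'' form of Lemma \ref{lem:Markov_property_set} to condition on $w^p_k$ and restart the chain. Finally, take a union bound over $k$ and over $\alpha\in QG_D(p,w^p_n)$: since any two $(D,D)$-quasigeodesics with the same endpoints are uniformly Hausdorff close in the hyperbolic space $X$ (Morse lemma), the supremum over $\alpha$ costs only an additive constant depending on $D$, which is absorbed into $D'$. The main obstacle I anticipate is precisely the bookkeeping in the probabilistic step: ensuring that the loss of genuine independence (compared to a random walk) is harmless, i.e. that conditioning on the past and restarting via Lemma \ref{lem:Markov_property_set} preserves the uniform-in-basepoint linear progress estimate, and that the union bound over possibly $\Theta(n)$ time intervals does not destroy the exponential decay in $l$ — this is handled exactly as in the random-walk case because linear progress here is already uniform over starting points and the deviation length $l$ dominates the relevant combinatorics.
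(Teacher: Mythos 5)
Your probabilistic bookkeeping is essentially the right one and matches the paper's: the paper proves (Lemma \ref{probatightscale}) that with probability $\geq 1-Fe^{-l/F}$ the sample path is ``tight around $w_k$ at scale $l$'', i.e.\ $d_X(w_{k_1},w_{k_2})\geq (k_2-k_1)/C$ for all $k_1\leq k\leq k_2$ with $k_2-k\geq l$, via exactly the ingredients you list: Lemma \ref{lem:Markov_property_set} to reset the basepoint, linear progress uniform in the starting point, and a union bound whose polynomial factor is absorbed because the relevant intervals have length $\gtrsim l$. (One small correction even here: the right deterministic condition involves pairs of times $k_1\leq k\leq k_2$ \emph{straddling} $k$, not ``backtracking within the first sub-path or within the second sub-path''; each sub-path could individually be geodesic while the second one retraces the first, so the failure is only visible across the junction. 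This is fixable with the same estimate, but your dichotomy as stated is not the one you can actually use.)

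The genuine gap is the geometric half: your proposed reduction to the case $Y=X$ does not work, and it is not how \cite{MathieuSisto} proceeds. The defining maps go from $Y$ (or $G$) \emph{to} $X$ and are coarsely Lipschitz in that direction, so a bound on $d_X$ gives no bound on $d_Y$: points can be very close in $X$ (think of $Y=\Cay(G,S)$ and $X$ a coned-off or acylindrical space) while being arbitrarily far in $Y$. Moreover the quasi-geodesics in the statement are $(D,D)$-quasi-geodesics of $Y$, which need not map to quasi-geodesics of $X$, and $Y$ need not be hyperbolic, so the Morse-lemma step over the supremum in $\alpha$ is also unavailable in $Y$. This case is not dispensable: in the application to the central limit theorem the theorem is used precisely with $Y$ the Cayley graph of $G$. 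The paper bridges this gap by quoting the deterministic \cite[Lemma 10.12]{MathieuSisto}: if the path is tight around $w_k$ at scale $l$ (an $X$-metric condition plus bounded jumps in $Y$), then $d_Y(w_k,\alpha)\leq F_3 l$ for every $(D,D)$-quasi-geodesic $\alpha$ of $Y$ between the endpoints; this is exactly where the acylindrically-intermediate hypothesis (and, behind it, acylindricity) is used, and it is the step your proposal replaces with an unjustified ``carries over verbatim''. To complete your argument you would either have to invoke that lemma as the paper does, or reprove its content, which is a substantive piece of geometry rather than a formality.
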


\begin{proof}
We drop the basepoints from the notation for Markov chains as well as from expressions such as $d_X(ax_0,bx_0)$, as they will not be very important in this proof. We fix a bound $K$ on the size of the jumps of the Markov chain in $Y$ (see Remark \ref{rmk:bounded_jumps}), that is $d_{Y}(w_{n},w_{n+1})\leq K$ for all $n$ (and all basepoints).

Similarly to \cite[Section 10]{MathieuSisto}, we say that a discrete path $(w_i)_{i\leq k}$ in $G$ is \textit{tight around} $w_k$ \textit{at scale $l$} if for any $k_1 \leq k \leq k_2$ with $k_2-k \geq l$ we have:

\begin{enumerate}
    \item $d_{X}(w_{k_1}, w_{k_2}) \geq \frac{k_2-k_1}{C} $,
    \item $d_{Y}(w_{k'},w_{k'+1})\leq K$ for every $k'$.
\end{enumerate}

where the constant $C>0$ is from the definition of linear progress with exponential decay.

We note that in \cite[Section 10]{MathieuSisto}, the definition of tightness has three conditions, the first one being the same as above, while the other two are implied by our second condition. That is, our definition is more restrictive, but it is simpler and sufficient for our purposes. 

\begin{lemma}(cfr. \cite[Lemma 10.11]{MathieuSisto})
\label{probatightscale}

There exists a constant $F>0$ so that for all $n$ and  $k \leq n $ and all $l \geq 1$ :

$$
\mathbb{P}\Big[ \hspace{2mm} (w_i)_{i \leq n} \text{ is tight around } w_k \text{\ at scale l} \hspace{2mm} \Big] \geq 1-Fe^{-l/ F}.
$$
\end{lemma}

\begin{proof}
Note that the fact that our Markov chain has bounded jumps yields point 2 of the definition, with probability 1. We now estimate the probability that point 1 above does not hold.

We claim that for all $k_1\leq k_2$ we have $\mathbb{P}\Big[d_{X}(w_{k_1},w_{k_2}) < (k_2-k_1) / C \Big]\leq Ce^{-(k_2-k_1)/C}$, where $C$ is the constant of linear progress with exponential decay. To see this, we need Lemma \ref{lem:Markov_property_set}, which allows us to perform a "change of basepoint" computation:

$$\mathbb{P}\Big[d_{X}(w_{k_1},w_{k_2}) < (k_2-k_1) / C \Big]$$
$$ \leq \sum_{g\in G}\mathbb{P}\Big[d_{X}(w_{k_1},w_{k_2}) < (k_2-k_1) / C \ | \ w_{k_1}=g\Big]\  \mathbb P[w_{k_1}=g]$$
$$=\sum_{g\in G}\mathbb{P}\Big[d_{X}(g,w^g_{k_2-k_1}) < (k_2-k_1) / C\Big] \mathbb P[w_{k_1}=g],$$
after which we can apply linear progress.

For fixed $k, k_1$ we have

$$\mathbb{P}\Big[ \exists k_2 \geq k : k_2-k \geq l, d_{X}(w_{k_1},w_{k_2}) < (k_2-k_1) / C \Big] \leq \sum_{k_2-k_1=j \geq \max\{l,k-k_1\}} e^{-j/C}\leq F_1 e^{-l/C_1}.
$$
for some constant $F_1>0$; this is where we use linear progress with exponential decay, in the form explained above. We also note that we are implicitly using the Markov property from lemma \ref{lem:Markov_property_set} and the fact that $k_1$ has been fixed. 

We now sum over all possible $k_1$:

$$\mathbb{P}\big[ \exists k_1\leq k\leq k_2  : k_2-k \geq l, d_{X}(w_{k_1},w_{k_2}) < (k_2-k_1) / C \big] $$
$$\leq \sum_{k_1\leq k, k-k_1\leq l}F_1 e^{-l/C_1} + \sum_{ k-k_1=j> l} F_1 e^{-l/C}$$
$$\leq F_1le^{-l/C_1}+F_2e^{-l/F_2},$$

for some constant $F_2>0$, and we are done since this decays exponentially fast in $l$.
\end{proof}

In view of Lemma \ref{probatightscale}, the theorem follows from \cite[Lemma 10.12]{MathieuSisto}, which says that if $(w_i)_{0 \leq i \leq n}$ is tight around $w_k$ at scale $l$ then there exists a constant $F_3>0$ such that $d_{Y}(w_k, \alpha(w_0, w_n)) \leq F_3 l$ (note that the lemma holds for the notion of tightness as in \cite{MathieuSisto}, hence, a fortiori, for the notion of tightness we use here).
\end{proof}
Now that we know that if a tame Markov chain on $G$ makes linear progress in the hyperbolic space $X$ on which it acts, we get ``deviation from quasi-geodesics" results in an acylindrically intermediate space, we can get the following corollary, in the case when $G$ acts acylindrically on the hyperbolic space $X$.

We note that if $G$ acts acylindrically on $X$ and any tame Markov chain makes linear progress with exponential decay in $X$ then the action of $G$ on $X$ is non-elementary in view of the classification of acylindrical actions on hyperbolic spaces \cite[Theorem 1.1]{Osin_acylindrical}.

From here on, given two elements $a,b$ in a group with a fixed word metric, we denote by $[a,b]$ any choice of geodesic in the corresponding Cayley graph connecting them.

\begin{corollary}
\label{devgeod}
Let $G$ be a finitely generated group acting acylindrically (and non-elementarily) on a hyperbolic space $X$, such that any tame Markov chain in $G$ makes linear progress with exponential decay in $X$. Let $H$ be a group quasi-isometric to $G$, with a fixed word metric $d_H$. Consider a random walk $(Z_n)_n$ on $H$ with driving measure whose support is finite and generates $H$ as a semigroup. Then there exists a constant $R$ such that for all $k \leq n$ and $l>0$ we have $$
\mathbb{P}\big[ d_{H}(Z_k, [1,Z_n]) \geq l \big] \leq R e^{-l / R}.
$$
\end{corollary}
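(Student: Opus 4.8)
The plan is to push the random walk $(Z_n)$ on $H$ forward to a tame Markov chain on $G$, apply the deviation estimate of Theorem~\ref{thmdeviationquasigeo} there, and then transport the bound back to $H$ through the quasi-isometry. Since $G$ acts non-elementarily on a hyperbolic space it is acylindrically hyperbolic, hence non-amenable, and therefore so is $H$, non-amenability being a quasi-isometry invariant. A quasi-isometry $H\to G$ exists, and by Whyte's theorem \cite{whyte} it lies at bounded distance from a bijection, so we may fix a bijective $(A,B)$-quasi-isometry $f\colon H\to G$. Writing $\mu$ for the (finitely supported, semigroup-generating) driving measure of $(Z_n)$, the push-forward Markov chain $(w^o_n)$ on $G$ of Definition~\ref{defn:push_forward} is tame by Lemma~\ref{pushforwardistame}, and by hypothesis it makes linear progress with exponential decay in $X$. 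Taking $Y=\Cay(G,S)$ (a fixed Cayley graph of $G$) with basepoint $y_0=1$, which is acylindrically intermediate for $(G,X)$ by the examples discussed before Theorem~\ref{thmdeviationquasigeo}, that theorem provides, for each $D$, a constant $D'$ with $\mathbb P\big[\sup_{\alpha\in QG_D(p,w^p_n)}d_G(w^p_k,\alpha)\ge l\big]\le D'e^{-l/D'}$ for all $p\in G$ and $k<n$.

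Next I would identify the two processes. By the definition of the push-forward and of the path measures, if $(w^o_n)$ is started at $o:=f(1)$ then $(f^{-1}(w^o_n))_n$ has exactly the law of the random walk $(Z_n)$ started at $1$; equivalently, $\mathbb P[d_H(Z_k,[1,Z_n])\ge l]$ equals the probability of the event $\big\{\,d_H\big(f^{-1}(w^o_k),[f^{-1}(o),f^{-1}(w^o_n)]\big)\ge l\,\big\}$, which is a function of the $G$-path $(w^o_n)$. Fix a realization of this path, and let $\gamma$ be a geodesic in the Cayley graph of $H$ from $f^{-1}(o)$ to $f^{-1}(w^o_n)$. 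Then $f(\gamma)$ is a discrete $(A,B)$-quasi-geodesic in $G$, and joining consecutive images $f(\gamma(i))$ by geodesics in $\Cay(G,S)$ yields an honest quasi-geodesic $\alpha\in QG_{D_0}(o,w^o_n)$, where $D_0$ depends only on $A,B$. Every point of $\alpha$ lies within $A+B$ of some $f(\gamma(i))$, so if $d_G(w^o_k,\alpha)=m$ then $w^o_k=f(f^{-1}(w^o_k))$ is within $m+A+B$ of some $f(\gamma(i))$ in $G$, and since $f$ is an $(A,B)$-quasi-isometry this forces $d_H(f^{-1}(w^o_k),\gamma)\le d_H(f^{-1}(w^o_k),\gamma(i))\le A(m+A+2B)$. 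Hence the event above implies $d_G(w^o_k,\alpha)\ge l/A-A-2B$, and a fortiori $\sup_{\alpha'\in QG_{D_0}(o,w^o_n)}d_G(w^o_k,\alpha')\ge l/A-A-2B$.

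Applying Theorem~\ref{thmdeviationquasigeo} with $D=D_0$ and $p=o$ therefore gives $\mathbb P[d_H(Z_k,[1,Z_n])\ge l]\le D'\,e^{-(l/A-A-2B)/D'}$ whenever $l\ge A(A+2B)$; for smaller $l$ the probability is at most $1$, and the endpoint cases $k=0$ and $k=n$ are trivial since then $Z_k\in[1,Z_n]$. A single constant $R$ depending only on $A,B,D'$ absorbs all these cases into the bound $\mathbb P[d_H(Z_k,[1,Z_n])\ge l]\le R\,e^{-l/R}$ for all $k\le n$ and $l>0$, as claimed. The only point requiring care is in the second paragraph: one must check that the $f$-image of a geodesic in $H$ is a quasi-geodesic in $G$ with constants depending only on the quasi-isometry, so that a single $D_0$ (and hence a single $D'$ from Theorem~\ref{thmdeviationquasigeo}) works uniformly in $n$, and that the additive/multiplicative distortion of $f$ only rescales the threshold $l$ linearly, which is harmless for an exponential estimate; the remainder is a direct combination of Lemma~\ref{pushforwardistame} and Theorem~\ref{thmdeviationquasigeo}.
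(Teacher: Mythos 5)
Your proposal is correct and follows essentially the same route as the paper's proof: use Whyte's theorem to get a bijective quasi-isometry $f\colon H\to G$, push the random walk forward to a tame Markov chain on $G$ (Lemma \ref{pushforwardistame}) based at $f(1)$, observe that the $f$-image of an $H$-geodesic $[1,Z_n]$ is a uniform quasi-geodesic in $G$ so that the event $d_H(Z_k,[1,Z_n])\geq l$ forces a linearly rescaled deviation from that quasi-geodesic in $G$, and conclude with Theorem \ref{thmdeviationquasigeo}. You merely spell out the quantitative distance transfer and the acylindrically intermediate space $Y=\Cay(G,S)$ that the paper leaves implicit.
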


\begin{proof}
Since $H$ and $G$ are quasi-isometric, and $G$ is non-amenable then by \cite[Theorem 2]{whyte}] there exists a bijective $R'$-quasi-isometry $f:H\to G$. For $g,h\in G$ we denote by $\alpha(g,h)$ the quasi-geodesic $f([f^{-1}(g),f^{-1}(h)])$ in $G$ between $g$ and $h$.

Let $(w^o_n)$ be the push-forward of $(Z_n)$ to $G$ (see Definition \ref{defn:push_forward}), which is a tame Markov chain by Lemma \ref{pushforwardistame}. We let the basepoint of the Markov chain be $o=f(1)$. Applying Lemma \ref{lem:push_variable} (with $A$ the set of $n$-tuples where the $0$-th coordinate is the identity, and the $k$-th coordinate lies further away than $l$ from the chosen geodesic connecting the identity to the $n$-th coordinate) we get

$$\mathbb{P}\big[ d_{H}(Z_k, [1,Z_n]) \geq l \big] \leq \mathbb{P}\big[ d_{G}(w^o_k, \alpha(o,w^o_k)) \geq l/R'-R' \big].$$

Theorem \ref{thmdeviationquasigeo} then gives the required bound.
\end{proof}

\subsection{Central limit theorem}
\label{subsec:clt}

We recall that, given points $x,y,z$ in a metric space $Z$, the Gromov product $(x,y)_z$ is defined as $\left(d(x,z)+d(z,y)-d(x,y)\right)/2$. If $z$ lies within $d$ of some geodesic between $x$ and $y$, then $(x,y)_z\leq d$.

%

\begin{lemma}\label{lem:second_moment_dev}
Let $H$ be a group with a given word metric $d_H$ and consider a random walk $(Z_n)$ on $H$ with driving measure whose support is finite and generates $H$ as a semigroup. Then there exists a constant $R$ such that the following holds. For all $k \leq n$ and $l>0$:$$
\mathbb{P}\Big[ d_{H}(Z_k, [1,Z_n]) \geq l \Big] \leq R e^{-l / R}.
$$
Then there exists a constant $R_2>0$, depending only on $R$, such that 
$$
\mathbb{E}\big[ (1,Z_n)_{Z_k}^2 \big] \leq R_2,
$$
where the Gromov product is measured in the metric $d_H$.
\end{lemma}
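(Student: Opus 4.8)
The plan is to observe that the Gromov product is controlled by the distance to a geodesic, so that the hypothesised exponential tail bound for $d_H(Z_k,[1,Z_n])$ transfers directly to a tail bound for $(1,Z_n)_{Z_k}$, and then to integrate the tail.

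First I would invoke the elementary inequality recalled just before the statement: if a point $z$ lies within $d$ of some geodesic between $x$ and $y$, then $(x,y)_z\leq d$. Applying this with $x=1$, $y=Z_n$, $z=Z_k$, and taking $d$ to be realised by a closest point of $Z_k$ on $[1,Z_n]$, we get the pointwise bound
$$(1,Z_n)_{Z_k}\leq d_H(Z_k,[1,Z_n]).$$
Hence, for every $l>0$,
$$\mathbb{P}\big[(1,Z_n)_{Z_k}\geq l\big]\leq \mathbb{P}\big[d_H(Z_k,[1,Z_n])\geq l\big]\leq Re^{-l/R},$$
the last step being the hypothesis of the lemma.

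Next I would use the standard layer-cake identity for the second moment of a non-negative random variable $Y$, namely $\mathbb{E}[Y^2]=\int_0^\infty 2s\,\mathbb{P}[Y\geq s]\,ds$. Taking $Y=(1,Z_n)_{Z_k}$ and using the tail bound above (which is a valid upper bound for every $s>0$, being trivially true in the range where $Re^{-s/R}\geq 1$), we obtain
$$\mathbb{E}\big[(1,Z_n)_{Z_k}^2\big]\leq \int_0^\infty 2sRe^{-s/R}\,ds = 2R\int_0^\infty se^{-s/R}\,ds = 2R\cdot R^2 = 2R^3,$$
so the conclusion holds with $R_2:=2R^3$, a constant depending only on $R$.

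There is no real obstacle here: the only point requiring any care is the passage from $d_H(Z_k,[1,Z_n])$ to the Gromov product, and this is precisely the elementary estimate recalled in the paragraph preceding the lemma, valid in any metric space with no hyperbolicity needed. Everything else is a one-line computation, and in particular no independence or Markov property is used — the statement is purely a deterministic tail estimate combined with a moment identity.
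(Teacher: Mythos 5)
Your proposal is correct and follows essentially the same route as the paper: bound the Gromov product $(1,Z_n)_{Z_k}$ by the distance $d_H(Z_k,[1,Z_n])$ to the geodesic, transfer the exponential tail, and integrate. The paper writes the integration as $\mathbb{E}[Y^2]=\int_0^\infty \mathbb{P}[Y^2\geq t]\,dt\leq R\int_0^\infty e^{-\sqrt{t}/R}\,dt$, which is the same computation as your layer-cake form after the substitution $t=s^2$.
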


\begin{proof}
 We have \begin{align*}
 \begin{split}
 \mathbb{E}\big[ ((1,Z_n)_{Z_k})^2 \big]	& = \int_{0}^{+\infty} \mathbb{P}\big[ ((1,Z_n)_{Z_k})^2 \geq t \big] \quad dt \\
 & \leq \int_{0}^{+\infty} \mathbb{P}\big[  d_H(Z_k, [1,Z_n]) \geq \sqrt{t} \big] \quad dt\\
 &\leq R \int_{0}^{+\infty}  e^{-\sqrt{t}/R} dt \\
 & \leq R_2
 \end{split}
 \end{align*}
 
 as the integral converges and is equal to a constant only depending on $R$.
\end{proof}

%
%
%

The lemma above allows us to apply the machinery of \cite{MathieuSisto} to obtain the following central limit theorem.

\begin{theorem}

Let $G$ be a finitely generated group acting acylindrically (and non elementarily) on a hyperbolic space $X$, where any tame Markov chain makes linear progress in $X$ with exponential decay. Let $H$ be a group quasi-isometric to $G$ and let $(Z_n)_n$ be a random walk on $H$ with driving measure whose support is finite and generates $H$ as a semigroup. Then $(Z_n)_n$ satisfies the Central Limit Theorem. i.e. there exist constants $l>0$ and $\sigma>0$ such that $$
\frac{d_H(1,Z_n)-ln}{\sigma^2\sqrt{n}} \to \mathcal{N}(0,1),
$$
where the convergence is a convergence in law.
\end{theorem}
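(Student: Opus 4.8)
The plan is to transport the question to the setting of tame Markov chains on $G$, extract from Theorem \ref{linear progress} (via the applications proved above) a deviation inequality for $(Z_n)$ in $H$, and then feed this into the central limit machinery of \cite{MathieuSisto}.

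First I would record that $H$ is non-amenable: it is quasi-isometric to $G$, which is non-amenable because it acts acylindrically and non-elementarily on $X$. Hence, by Whyte's theorem \cite[Theorem 2]{whyte}, there is a bijective quasi-isometry $f:H\to G$; pushing $(Z_n)$ forward along $f$ produces, by Lemma \ref{pushforwardistame}, a tame Markov chain $(w^o_n)$ on $G$ with basepoint $o=f(1)$. By hypothesis every tame Markov chain on $G$ makes linear progress in $X$ with exponential decay, and since the orbit map $G\to X$ is coarsely Lipschitz and $f$ is a quasi-isometry, this gives linear progress for $(Z_n)$ in the word metric $d_H$; in particular the drift $l:=\lim_n d_H(1,Z_n)/n$, which exists by Kingman's subadditive ergodic theorem (the sequence $d_H(1,Z_n)$ being subadditive, with finite first moments since $\mu$ has finite support), is strictly positive.

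Next I would apply Corollary \ref{devgeod} to the random walk $(Z_n)$ on $H$: it yields a constant $R$ with $\mathbb P[d_H(Z_k,[1,Z_n])\geq l]\leq Re^{-l/R}$ for all $k\leq n$ and $l>0$. This is exactly the hypothesis of Lemma \ref{lem:second_moment_dev}, so we obtain a uniform bound $\mathbb E[(1,Z_n)_{Z_k}^2]\leq R_2$ on the second moments of the Gromov products along sample paths, with $R_2$ independent of $k$ and $n$.

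Finally I would invoke the central limit theorem of Mathieu--Sisto. The point is that their proof of the CLT for random walks on acylindrically hyperbolic groups proceeds by first establishing precisely such a deviation inequality (equivalently, the uniform second-moment control on Gromov products), and the subsequent martingale-approximation argument leading to the CLT uses only this input together with the fact that $d_H(1,Z_1)$ has finite second (indeed all) moments, which holds here since $\mu$ has finite support. All the required hypotheses are therefore in place, and we conclude that $(d_H(1,Z_n)-ln)/(\sigma^2\sqrt n)\to\mathcal N(0,1)$ in law, where $\sigma^2$ is the associated asymptotic variance; the non-degeneracy $\sigma>0$ follows as in \cite{MathieuSisto} from non-elementarity of the action of $G$ on $X$, together with the fact that the support of $\mu$ generates $H$ as a semigroup. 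The main obstacle is not the final probabilistic step, which is a black-box citation, but verifying that the deviation inequality we produce matches verbatim the form required in \cite{MathieuSisto} and that no further properties of acylindrically hyperbolic groups are silently used in their CLT argument beyond this inequality and standard moment conditions on $\mu$.
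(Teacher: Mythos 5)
Your proposal is correct and follows essentially the same route as the paper: apply Corollary \ref{devgeod} to get the deviation inequality for $(Z_n)$ in $d_H$, feed it into Lemma \ref{lem:second_moment_dev} to obtain the uniform second-moment bound on Gromov products, and then cite the central limit theorem of \cite{MathieuSisto}. Your opening paragraph (push-forward via Whyte's theorem, linear progress, drift via Kingman) merely re-derives what is already packaged inside Corollary \ref{devgeod}, so it is harmless but redundant.
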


\begin{proof}
By Corollary \ref{devgeod} and Lemma \ref{lem:second_moment_dev}, there exists a constant $R_2>0$ such that $$
\mathbb{E}\big[ (1,Z_n)_{Z_k}^2 \big] \leq R_2$$ (which, in the language of \cite{MathieuSisto}, says that the random walk under consideration satisfies the second moment deviation inequality). \\

Hence, by  \cite[Theorem 4.2]{MathieuSisto}, this yields that the random walk $(Z_n)_n$ satisfies the Central Limit Theorem.
\end{proof}

\subsection{Translation lengths of Markov elements}
\label{subsec:transl_length}

We recall that, given a group $G$ acting on a metric space $X$, the \textit{translation length} of an element $g\in G$ is defined as$$\tau(g) = \liminf_{n \to + \infty} \frac{d_X(x_0,g^nx_0)}{n}
$$
which is well defined and independent of the point $x_0 \in X$. We recall that an element $g$ is \emph{loxodromic} if and only if $\tau(g) >0$. 
\\

We will establish the following result for a Markov chain, similar to a result of Maher-Tiozzo for random walks \cite[Theorem 1.4]{maher2015random}.  In particular, the result says that loxodromic elements are generic with respect to tame Markov chains.

\begin{theorem}
\label{translationlength}
Let $G$ be a group acting acylindrically on a hyperbolic space $X$. Consider a tame Markov chain $(w^{o}_n)$ on $G$, making linear progress in $X$ with exponential decay. Then there exist constants $L_1, C_2> 0$ such that

$$
\mathbb{P}\Big[ \tau(w_n^{o}) > L_1 n \Big] \geq 1- C_{2}e^{-n/C_{2}}$$
\end{theorem}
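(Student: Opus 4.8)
The strategy follows the classical argument of Maher--Tiozzo \cite{maher2015random} (and its streamlining in \cite{MathieuSisto}): a loxodromic element is produced, with high probability, by exhibiting that the sample path from $x_0$ to $w_n^o x_0$ "bottoms out" in the middle — that is, the nearest-point projection of $x_0$ to the path and of $w_n^o x_0$ to the path are far apart — which forces $w_n^o$ to act loxodromically with translation length comparable to $n$. Concretely, I would first invoke the deviation-from-quasi-geodesics statement. Since $G$ acts acylindrically (hence non-elementarily, by the remark recalled before Corollary \ref{devgeod}), Theorem \ref{thmdeviationquasigeo} applies with $Y=X$: for every $D$ there is $D'$ with
$$\mathbb P\Big[\sup_{\alpha\in QG_D(p,w^p_n)} d_X(w^p_k y_0,\alpha)\geq l\Big]\leq D'e^{-l/D'}$$
uniformly in $p$ and $k\le n$. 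Combining this with linear progress (applied to the sub-chains $(w^o_k)_{k\le n/3}$ and, via the change-of-basepoint Lemma \ref{lem:Markov_property_set}, to $(w^{w^o_{2n/3}}_k)_{k\le n/3}$ run "backwards"), one gets with probability $1-Ce^{-n/C}$ that: (i) $d_X(x_0,w^o_{2n/3}x_0)\geq 2n/(3C)$ and $d_X(w^o_{n/3}x_0, w^o_n x_0)\geq 2n/(3C)$, while (ii) both $w^o_{n/3}x_0$ and $w^o_{2n/3}x_0$ lie within $O(\log n)$ — in fact within a constant after tuning — of any geodesic $[x_0, w^o_n x_0]$. Items (i) and (ii) together give that along $[x_0,w^o_nx_0]$ the points $w^o_{n/3}x_0$ and $w^o_{2n/3}x_0$ are at distance $\geq cn$ from each other and from the two endpoints.

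The second ingredient is a standard hyperbolic-geometry lemma: if $g\in G$ and a geodesic $[x_0,gx_0]$ has a "deep" portion of length $\geq \kappa$ (meaning a subsegment of length $\geq\kappa$ both of whose endpoints are at distance $\geq \kappa/100$, say, from $x_0$ and from $gx_0$), then $\tau(g)\geq \kappa - O(\delta)$; see the argument in \cite[§5--6]{maher2015random} or \cite[Lemma 10.9]{MathieuSisto}. I would cite this rather than reprove it. Applying it with $g=w^o_n$ and $\kappa\sim cn$ gives $\tau(w^o_n)\geq L_1 n$ for a suitable $L_1>0$, on the event described above, which has probability $\geq 1-C_2e^{-n/C_2}$. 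This yields the theorem.

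The one point requiring a little care — and the main obstacle — is making the change of basepoint for the "backwards" linear-progress estimate genuinely uniform, since Markov chains are not equivariant: running $(w^o_k)$ backwards is not literally a Markov chain. The correct move is to condition on $w^o_{2n/3}=g$ and use Lemma \ref{lem:Markov_property_set} to replace the tail $(w^o_{2n/3},\dots,w^o_n)$ by a fresh chain $(w^g_0,\dots,w^g_{n/3})$; then apply linear progress to that chain (whose constants are uniform in $g$ by tameness) to control $d_X(w^o_{2n/3}x_0,w^o_nx_0)$ from below, and likewise condition at step $n/3$. Summing the resulting exponential bounds over $g$ (the bounds being independent of $g$) is then routine. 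Everything else — the deviation inequality, the translation-length lemma, the union bound over the finitely many events — is quotation of results already available, assembled exactly as in \cite{maher2015random,MathieuSisto}. I would also remark that, as for the other applications in this section, the argument requires no new probabilistic input beyond linear progress plus the Markov property, which is the point emphasised at the start of Section \ref{applications}.
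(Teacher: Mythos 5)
There is a genuine gap, and it sits exactly at the step you propose to quote rather than prove. The ``standard hyperbolic-geometry lemma'' you invoke --- that a subsegment of $[x_0,gx_0]$ of length $\geq\kappa$ whose endpoints are far from $x_0$ and $gx_0$ forces $\tau(g)\geq\kappa-O(\delta)$ --- is false as stated: any isometry $g$ with $d_X(x_0,gx_0)\geq 3\kappa$ has such a ``deep'' middle portion, yet $g$ may be elliptic (e.g.\ conjugate to a torsion element, or a rotation-like isometry far from $x_0$) with $\tau(g)=0$. Positivity of the translation length is not a property of the single segment $[x_0,gx_0]$; it requires that this segment and its translate $g[x_0,gx_0]$ line up, i.e.\ that the Gromov product $(x_0,g^2x_0)_{gx_0}=(gx_0,g^{-1}x_0)_{x_0}$ be small compared with $d_X(x_0,gx_0)$. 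This is the content of \cite[Proposition 5.8]{maher2015random} (Lemma \ref{lowerbound_translationlength}), and your event --- that $w^o_{n/3}x_0$ and $w^o_{2n/3}x_0$ lie close to $[x_0,w^o_nx_0]$ and are spread out along it --- gives no information whatsoever about $(x_0,(w^o_n)^2x_0)_{w^o_nx_0}$, since it does not constrain how the group element $w^o_n$ translates its own geodesic. So the heart of the theorem is untouched by your argument; the deviation-from-quasi-geodesics input (Theorem \ref{thmdeviationquasigeo}) is in fact not needed at all.

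The missing probabilistic ingredient is precisely what the paper proves: with probability $\geq 1-Ce^{-n/C}$ one has $(x_0,(w^o_n)^2x_0)_{w^o_nx_0}\leq \epsilon n$ (Lemmas \ref{small_gromov_2} and \ref{smallgromov}). This cannot be obtained by the reflection trick available for random walks (where $w_n^{-1}$ is distributed like the reversed walk), because Markov chains have no such symmetry; instead one conditions at time $n-k$ with $k\asymp \epsilon n$, and for the last $k$ steps uses the counting lemma \cite[Lemma 9.5]{MathieuSisto} --- this is where acylindricity enters --- together with the non-amenability clause of tameness ($\mathbb P[w^x_n=y]\leq A\rho^n$) to bound the probability that the increment lands in the small exceptional set $B(h,h,Kk)$ of cardinality $O(k^2)$. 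Once this Gromov-product bound is in place, the theorem follows from linear progress and Lemma \ref{lowerbound_translationlength} by a union bound, roughly as in your last step. Your remark that ``no new probabilistic input beyond linear progress plus the Markov property'' is needed is therefore inaccurate for this particular application: one also needs the non-amenability condition and acylindricity, through the Gromov-product estimate. (A minor further point: acylindricity alone does not give non-elementarity; the paper deduces it from acylindricity \emph{together with} linear progress of tame chains, via \cite[Theorem 1.1]{Osin_acylindrical}.)
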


We will need the following lemma.

\begin{lemma}
\label{small_gromov_2}
There exist constants $M,C>0$ such that the following holds. For all $h \in G$ and $k\geq 1$:

$$
\mathbb P\big[ (x_0,(w^h_k)^2x_0)_{w^h_kx_0} \geq Mk\big] \leq Ce^{-k/C}
$$
\end{lemma}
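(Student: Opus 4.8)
The plan is to exploit the fact that the Gromov product $(x_0,(w^h_k)^2 x_0)_{w^h_k x_0}$ is small precisely when $w^h_k x_0$ lies close to a geodesic between $x_0$ and $(w^h_k)^2 x_0$, and to control this via the distance-formula sum together with Proposition \ref{axiom}. First I would recall that $(x_0,(w^h_k)^2x_0)_{w^h_k x_0}$ is comparable (up to $\delta$) to the distance from $w^h_k x_0$ to a geodesic $[x_0,(w^h_k)^2 x_0]$ in $X$; so it suffices to bound the probability that $d_X(w^h_k x_0,[x_0,(w^h_k)^2 x_0])\geq Mk$. Write $p=w^h_k$; by the Markov property (Lemma \ref{lem:Markov_property_set}) the position $(w^h_k)^{-1}(w^h_k)^2 = p^{-1}\cdot p^2 = p$ "starting from $p$" has the same law as a Markov path of length $k$ started at $1$ (after translating by $p$), so $(w^h_k)^2$ is distributed, conditionally on $w^h_k=p$, like $p\cdot w^1_k$ — i.e.\ like $p$ followed by an independent copy of the chain run for $k$ more steps. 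Concretely, $(w^h_{2k})$ conditioned on $w^h_k=p$ is $p w^1_k$ in law, and we want the two halves of such a path to "spread apart" at $p$.

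The key step is the following. Fix the loxodromic WPD $g$ and the threshold $T$ from Proposition \ref{axiom}. By linear progress with exponential decay applied to the second half, with probability $\geq 1-Ce^{-k/C}$ we have $d_X(px_0,(w^h_k)^2x_0)\geq k/C$, and by Lemma \ref{lem:coarsely_lip_sum} (coarse Lipschitzness of the distance-formula sum) this is comparable to a lower bound on $\sum_{\mathcal H_T(x_0?,\,\cdot)}$ — more precisely I would argue symmetrically that $\sum_{\mathcal H_T(1,p)}[1,p]$ and $\sum_{\mathcal H_T(p,(w^h_k)^2)}[p,(w^h_k)^2]$ are both large (each $\gtrsim k$) with exponential probability, the first by linear progress on the first half and the second by linear progress on the (conditionally independent) second half via Lemma \ref{lem:Markov_property_set}. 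Meanwhile, by Proposition \ref{axiom} applied with basepoints $o=1$, $p=p$ (resp.\ with the roles reversed, using $g$-equivariance and the change-of-basepoint formula \eqref{eqn:change_of_basepoint}), the "undoing" sum $\sum_{\mathcal H_T(1,p)}[p,(w^h_k)^2]$ is $\leq t$ with probability $\geq 1-Ce^{-t/C}$, i.e.\ it is $O(\text{small})$; taking $t=\epsilon k$ this is still exponentially likely and much smaller than $k/C$. Combining via the triangle inequality (Remark \ref{rem:triangle_inequality}), the cosets in $\mathcal H_T(1,p)$ on which $1,p,(w^h_k)^2$ all project far apart in the "linear order" sense force $px_0$ onto a subsegment of $[x_0,(w^h_k)^2x_0]$, up to bounded error — exactly as in Lemma \ref{distance}'s proof, the $\beta_i$-segments coming from $\mathcal H_T(1,p)$ lie along the geodesic and contain the projection of $px_0$. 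Hence $d_X(px_0,[x_0,(w^h_k)^2x_0])$ is bounded by a constant (coming from $B$, $R$, $B_1$), so the Gromov product is $O(1)\leq Mk$, on an event of probability $\geq 1-Ce^{-k/C}$.

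The main obstacle is the bookkeeping in the geometric step: one must show that the Gromov product being $\geq Mk$ genuinely forces some quantitative failure that Proposition \ref{axiom} or linear progress forbids. The clean way is: either $\sum_{\mathcal H_T(1,p)}[p,(w^h_k)^2]$ is large — ruled out, with exponential probability, by Proposition \ref{axiom} with $t\asymp Mk$ — or it is small, in which case essentially all of the projection data accumulated in the first half survives, so (by Lemma \ref{distance} and the linear-order structure of $\mathcal H_T$, exactly as in Lemma \ref{distance}) the point $px_0$ sits within a bounded distance of $[x_0,(w^h_k)^2 x_0]$, making the Gromov product bounded and hence $<Mk$ for any fixed $M>0$. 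The delicate point is handling the at-most-two exceptional cosets from Lemma \ref{lem:2cosetsmax} and the constant shifts between $\pi_{h\gamma}$ and genuine $X$-projections, but these are all absorbed into the additive constant, so no super-linear error appears; this is why a single lower-bound constant $M$ (indeed any $M>0$) works once $C$ is chosen large enough, and to cover small $k$ one simply inflates $C$.
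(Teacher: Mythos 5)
There is a genuine gap, and it occurs at the very first probabilistic step: you treat the segment from $w^h_k x_0$ to $(w^h_k)^2 x_0$ as an independent continuation of the Markov chain, writing that conditionally on $w^h_k=p$ the element $(w^h_k)^2$ is distributed like $p\,w^1_k$ (equivalently, conflating $(w^h_k)^2$ with $w^h_{2k}$). This is false: the quantity in Lemma \ref{small_gromov_2} involves the \emph{square of the group element} reached at time $k$, not the position of the chain at time $2k$. Conditioned on $w^h_k=p$, the point $(w^h_k)^2x_0=p^2x_0$ is deterministic, and the displacement from $px_0$ to $p^2x_0$ is the same random displacement repeated, not a fresh run of the chain. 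Consequently every subsequent step of your plan collapses: there is no ``second half'' to which linear progress can be applied, and Proposition \ref{axiom} cannot be invoked for the ``undoing'' sum $\sum_{\mathcal H_T(1,p)}[p,(w^h_k)^2]$, since no Markov path is being run from $p$ to $p^2$. (Note also that even for the genuine time-$2k$ position the conditional law would be that of $w^p_k$, not $p\,w^1_k$, because the push-forward chain is not equivariant; but this is secondary.) For random walks one can control $(x_0,g^2x_0)_{gx_0}$ via the reversed walk, but no such symmetry is available for tame Markov chains, which is exactly why a different mechanism is needed here.

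The paper's proof uses precisely such a mechanism: the only randomness exploited is the law of the single element $w^h_k$. One shows geometrically that if $(x_0,(w^h_k)^2x_0)_{w^h_kx_0}\geq Mk$, then (using the $K$-bounded jumps, so $d_X(hx_0,w^h_kx_0)\leq Kk$ and hence $d_X(w^h_khx_0,(w^h_k)^2x_0)\leq Kk$) the geodesics $[hx_0,x_0]$ and $[w^h_kx_0,w^h_khx_0]$ must fellow-travel on a segment of length at least $Mk-2Kk-2E$; choosing $M$ large this exceeds the constant $D$ of the acylindricity-based counting Lemma \ref{diam}, which forces $h^{-1}w^h_k$ to lie in a set $B(h,h,Kk)$ of cardinality at most $C'(Kk)^2$. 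The non-amenability condition of tameness, $\mathbb P[w^x_n=y]\leq A\rho^n$, together with a union bound over this polynomially-sized set, then gives the exponential decay. If you want to repair your write-up, you would have to abandon the ``independent second half'' framework entirely and replace it with an argument of this type (a deterministic constraint on $w^h_k$ itself plus the pointwise exponential bound), since the event in question is measurable with respect to $w^h_k$ alone.
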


In turn, in order to prove Lemma 
\ref{small_gromov_2} we will use the following (deterministic) lemma from \cite{MathieuSisto}. The notation $B^G$ and $\diam^X$ indicate balls in $G$ and diameter in the metric of $X$, respectively, for emphasis. We note that the lemma requires that the action of $G$ on the hyperbolic space $X$ is acylindrical.

\begin{lemma}\cite[Lemma 9.5]{MathieuSisto}
\label{diam}
There exist constants $C', D>0$ with the following property. For each $u,v \in G$ and $k \geq 0$, the set $B(u,v,k)$ of elements $s \in B^{G}(id,k)$ with $\diam^{X} \Big( [x_0,ux_0] \cap \mathcal N_{2 \delta}([usx_0,usvx_0])\Big) \geq D$ has cardinality at most $C'k^2$.
\end{lemma}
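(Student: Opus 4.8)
The plan is to prove this by a counting argument resting on acylindricity of the $G$-action on $X$, in the spirit of \cite[Lemma 9.5]{MathieuSisto}. Throughout I fix once and for all a geodesic $\gamma=\gamma_v$ from $x_0$ to $vx_0$ in $X$ and a geodesic $[x_0,ux_0]$; replacing either by another geodesic with the same endpoints changes all estimates below only by $O(\delta)$, which I absorb into $D$. Let $L$ be the Lipschitz constant of the orbit map $g\mapsto gx_0$, so $d_X(x_0,gx_0)\le L\,\ell_S(g)$, and for $s\in B^G(\mathrm{id},k)$ write $\gamma_s:=us\cdot\gamma$, a geodesic from $usx_0$ to $usvx_0$ parametrised by arclength from $usx_0$. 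I use acylindricity in the form: for every $\epsilon>0$ there are $R(\epsilon),N(\epsilon)$ so that any $x,y\in X$ with $d_X(x,y)\ge R(\epsilon)$ admit at most $N(\epsilon)$ elements $g\in G$ with $d_X(x,gx)\le\epsilon$ and $d_X(y,gy)\le\epsilon$. The first step is geometric: if $s\in B(u,v,k)$ then $[x_0,ux_0]\cap\mathcal N_{2\delta}(\gamma_s)$ has diameter $\ge D$, and by the standard description of coarse intersections of geodesics in a $\delta$-hyperbolic space this set lies within $O(\delta)$ of a subsegment $[p_s,q_s]\subseteq[x_0,ux_0]$ of length $\ge D-O(\delta)$, whose endpoint $p_s$ is $O(\delta)$-close to the closest-point projection of $usx_0$ onto $[x_0,ux_0]$. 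Since $d_X(ux_0,usx_0)=d_X(x_0,sx_0)\le Lk$ and closest-point projections are coarsely Lipschitz, $p_s$ lies within $Lk+O(\delta)$ of the endpoint $ux_0$; moreover there is a point $p_s'=\gamma_s(\rho_s)$ on $\gamma_s$ with $d_X(p_s,p_s')\le O(\delta)$ and $\rho_s=d_X(usx_0,p_s')\le Lk+O(\delta)$.

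Next I would introduce signatures. To each bad $s$ associate the pair $(P_s,R_s)$, where $P_s$ is the nearest integer point to $p_s$ along $[x_0,ux_0]$ and $R_s=\lfloor\rho_s\rfloor$. By the first step, $P_s$ is an integer point within $Lk+O(\delta)$ of $ux_0$ and $R_s\in\{0,1,\dots,\lfloor Lk+O(\delta)\rfloor\}$, so there are only $O(k^2)$ possible values of $(P_s,R_s)$. The heart of the argument is the following.

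\textbf{Claim.} There is a constant $N$, independent of $u,v,k$, such that at most $N$ bad elements share a given signature. To prove this, let $s_1,s_2$ be bad with $(P_{s_1},R_{s_1})=(P_{s_2},R_{s_2})=(P,R)$, and set $h:=s_2^{-1}s_1=(us_2)^{-1}(us_1)\in G$. Since $p_{s_1},p_{s_2}$ are $O(\delta)$-close to the same point $P$ while $\rho_{s_1},\rho_{s_2}\in[R,R+1]$, the points $\gamma_{s_i}(\rho_{s_i})=(us_i)\gamma(\rho_{s_i})$ are $O(\delta)$-close to each other and within $1$ of $(us_i)\gamma(R)$; hence $d_X\big((us_1)\gamma(R),(us_2)\gamma(R)\big)\le O(\delta)$, i.e.\ $h$ moves $z_1^\ast:=\gamma(R)$ by at most $\epsilon_0:=O(\delta)$. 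For a second point, observe that the bad segments of $s_1$ and of $s_2$ both start $O(\delta)$-near $P$ and have length $\ge D-O(\delta)$, hence both contain, up to $O(\delta)$, the point $w$ of $[x_0,ux_0]$ at distance $D-O(\delta)$ from $P$ in the direction of $x_0$; tracing $w$ back along $\gamma_{s_i}$ shows $w$ is $O(\delta)$-close to $\gamma_{s_i}(\tau)$ with $\tau=R+D-O(\delta)$ (the parameters for $s_1$ and $s_2$ differing by at most $|\rho_{s_1}-\rho_{s_2}|+O(\delta)=O(\delta)$). The same computation as above then gives $d_X\big((us_1)\gamma(\tau),(us_2)\gamma(\tau)\big)\le O(\delta)$, so $h$ also moves $z_2^\ast:=\gamma(\tau)$ by at most $\epsilon_0$. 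Since $d_X(z_1^\ast,z_2^\ast)=|\tau-R|\ge D-O(\delta)$, and $z_1^\ast,z_2^\ast$ depend only on $R$, $D$ and $v$, choosing $D$ large enough that $D-O(\delta)\ge R(\epsilon_0)$ lets acylindricity bound the number of admissible $h$ by $N:=N(\epsilon_0)$. As $s_1=s_2h$, fixing $s_2$ leaves at most $N$ choices for $s_1$, which proves the Claim.

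Combining the signature count with the Claim gives $|B(u,v,k)|\le(\#\text{signatures})\cdot N=C'k^2$ for a suitable $C'$ once $k\ge 1$; for $k=0$ one has $B(u,v,0)\subseteq B(u,v,1)$, so the bound holds after enlarging $C'$ (or, strictly, in the form $C'(k+1)^2$). The main obstacle is the hyperbolic-geometry bookkeeping: identifying which end of the coarse intersection is the one near $ux_0$, controlling its distance to $ux_0$ and the corresponding parameter on $\gamma_v$ up to additive $O(\delta)$, and verifying in the ``second point'' part of the Claim that matching signatures genuinely force $h=s_2^{-1}s_1$ to coarsely fix two points a definite distance $\asymp D$ apart, so that one application of acylindricity closes the count. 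Everything else is routine counting.
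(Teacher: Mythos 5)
The paper does not actually prove this lemma: it is imported verbatim from \cite[Lemma 9.5]{MathieuSisto}, so there is no internal proof to compare against, and your reconstruction follows what is essentially the argument of that source — an $O(k)\times O(k)$ signature count (the entry point, coarsely $\pi_{[x_0,ux_0]}(usx_0)$, lies within $O(k)$ of $ux_0$, and the parameter $\rho_s\leq Lk+O(\delta)$ along $[usx_0,usvx_0]$), with acylindricity bounding the multiplicity of each signature because $h=s_2^{-1}s_1$ displaces two points of the fixed geodesic $[x_0,vx_0]$, a distance $\asymp D$ apart, by at most $O(\delta)$. This architecture is sound and does yield the quadratic bound. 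There is, however, one concrete point you gloss over in the Claim: your signature does not record the orientation of the fellow-travelled segment along $[x_0,ux_0]$. Nothing forces $\pi_{[x_0,ux_0]}(usvx_0)$ to lie on the $x_0$-side of $P$; for one bad element the shadow of $[usx_0,usvx_0]$ may run from $P$ towards $x_0$ and for another towards $ux_0$, in which case your point $w$ ``at distance $D-O(\delta)$ from $P$ in the direction of $x_0$'' need not be $O(\delta)$-close to the second geodesic, and the second approximately-fixed point is lost. The patch is immediate — add the direction as one extra bit to the signature (equivalently, split each signature class into two), which only doubles $C'$ and changes nothing else, since the parameter increment along $\gamma_{s_i}$ is $\approx D$ in either case and the pair $(\gamma(R),\gamma(\tau))$ is unchanged — but as written the proof of the Claim has this small hole. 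The remaining caveats you flag yourself (choosing $\tau$ so that $\gamma(\tau)$ actually lies on $\gamma$, i.e.\ $\tau\leq d_X(x_0,vx_0)$, ruling out $\tau_i<\rho_{s_i}$ by coarse uniqueness of closest points, and the $k=0$ normalisation $C'(k+1)^2$) are genuine but routine.
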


\begin{figure}[h]
\centering
\begin{tikzpicture}[scale=0.8]

\filldraw[black] (0,0) circle (1pt) node[anchor=east]{$x_0$};  

\filldraw[black] (1.5,7) circle (1pt) node[anchor=east]{$hx_0$};  
\filldraw[black] (3,7) circle (1pt) node[anchor=south]{$w^h_kx_0$};  
\filldraw[black] (8,3) circle (1pt) node[anchor=south]{$w^h_khx_0$}; 
\filldraw[black] (9,1.5) circle (1pt) node[anchor=east]{$(w^h_k)^2x_0$}; 

\draw[black] (0,0) to[out=60,in=300] (1.5,7);   
\draw[black] (0,0) to[out=40,in=280] node[pos=0.70] (mid1) {} node[pos=0.90] (mid2) {}  (3,7);

\draw[black] (3,7) to[out=290,in=170] (8,3);

\path [draw=blue,snake it] (1.5,7) -- (3,7);

\path [draw=blue,snake it] (8,3) -- (9,1.5);

\draw[blue] (9.3,2.5) node {$\leq kK $};

\draw[blue] (2,8) node {$\leq kK $};

\end{tikzpicture}
\caption{Points and geodesics relevant to the proof of Lemma \ref{small_gromov_2}.}
\end{figure}
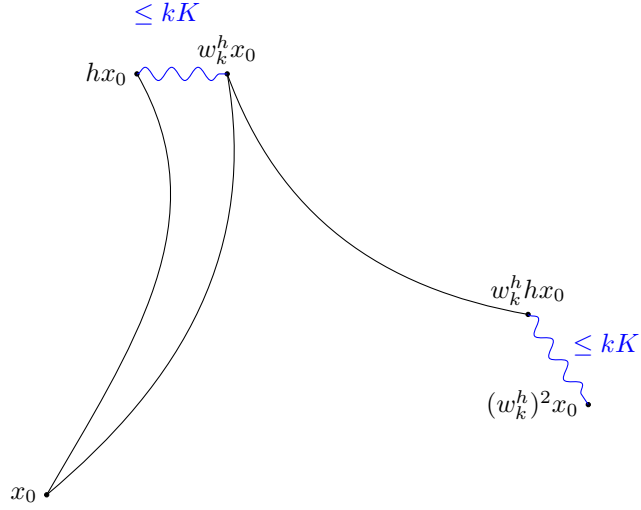

\begin{proof}[Proof of Lemma \ref{small_gromov_2}]
We let $K$ be a bound on the size of the jumps of the Markov chain in $X$ (see Remark \ref{rmk:bounded_jumps}), that is, $d_X(w^p_n,w^p_{n+1})\leq K$ for all $p$ and $n$.

First we note that if $(x_0,(w^h_k)^2x_0)_{w^h_kx_0} \geq Mk$ then $(x_0,w^h_k hx_0)_{w^h_kx_0}\geq Mk-Kk$, since $d_X(w^h_k hx_0,(w^h_k)^2x_0)\leq Kk$. In this case, two geodesics $[w^h_kx_0, x_0]$ and $[w^h_kx_0,w^h_k hx_0 ]$ will have initial subgeodesics of length $Mk-Kk-E$ that stay within $\delta$ of each other, where $\delta$ is a hyperbolicity constant for $X$ and $E$ only depends on $\delta$. Since $d_X(hx_0, w^h_kx_0)\leq Kk$, we further have that geodesics $[hx_0,x_0]$ and $[w^h_kx_0,w^h_k hx_0 ]$ have subgeodesics of length $Mk-2Kk-2E$ that stay within $2\delta$ of each other. Now, if $M$ was chosen large enough so that $Mk-2Kk-2E\geq D$, then by Lemma \ref{diam} we have that $h^{-1}w^h_k\in B(h,h, Kk)$ and $\# B(h,h,Kk) \leq C'(Kk)^2$. 

Hence, the non-amenability assumption from tameness of the Markov chain (\ref{defn:tame}-\eqref{item:non-amen}) gives the following, where $\rho <1$:
\begin{align*}
\begin{split}
	\mathbb P\Big[ (x_0,(w^h_k)^2x_0)_{w^h_kx_0} \geq Mk\Big] &\leq \# B(h,h,Kk) A \rho^k\\
	& \leq C'(Kk)^2A\rho^k \\
	& \leq Ce^{-k/C}
	\end{split}
\end{align*}
for some constant $C >0$, proving the lemma.
\end{proof}

\begin{lemma}
\label{smallgromov}
For all $\epsilon >0$, there exists a constant $C_{\epsilon}$ such that the following holds. For all $o \in G$ and $n$:

$$ \mathbb{P}\big[ (x_0,(w^{o}_n)^2  x_0)_{w^{o}_nx_0} \geq \epsilon n \big] \leq C_{\epsilon}e^{-n/ C_{\epsilon}}
$$
\end{lemma}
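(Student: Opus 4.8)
Lemma \ref{smallgromov} asks to upgrade Lemma \ref{small_gromov_2}, which controls the Gromov product $(x_0,(w^h_k)^2 x_0)_{w^h_k x_0}$ for a single step of length $k$, into a statement where the total number of steps $n$ is arbitrary but the scale of the Gromov product we allow, $\epsilon n$, is linear in $n$ for an arbitrarily small prescribed $\epsilon$.

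The plan is to chop the path of length $n$ into $\ell$ consecutive blocks, each of length $m=\lfloor n/\ell\rfloor$, where $\ell$ will be chosen depending on $\epsilon$ (large enough that $1/\ell$ beats the implicit constant $M$ from Lemma \ref{small_gromov_2}, roughly $\ell> 2M/\epsilon$ or so). Write $u_i=(w^o_{(i-1)m})^{-1}w^o_{im}$ for the increment over the $i$-th block, so that $w^o_n$ is, up to the short final remainder, the product $u_1u_2\cdots u_\ell$. First I would observe the geometric fact that if $(x_0,(w^o_n)^2x_0)_{w^o_nx_0}$ is large, then the geodesic $[x_0,w^o_nx_0]$ and its translate $[w^o_nx_0,(w^o_n)^2x_0]=w^o_n[x_0,w^o_nx_0]$ fellow-travel for a long time near $w^o_nx_0$; since $[x_0,w^o_nx_0]$ passes (up to bounded error, using hyperbolicity and the bounded-jumps property) near each of the points $w^o_{im}x_0$, a long fellow-travelling near the endpoint forces, for the last block or two, a large overlap of $[w^o_{(i-1)m}x_0, w^o_{im}x_0]$ with a translate of itself under an element that looks like a power of $w^o_n$ restricted to that block. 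More cleanly: I would argue that $(x_0,(w^o_n)^2x_0)_{w^o_nx_0}\geq \epsilon n$ implies that for \emph{some} block index $i$ the single-block Gromov product $(w^o_{(i-1)m}x_0,(u')^2 w^o_{(i-1)m}x_0)_{u' w^o_{(i-1)m}x_0}$ (for the appropriate element $u'$ read off from the path) is at least $Mm$, i.e. the bad event of Lemma \ref{small_gromov_2} occurs on that block. This is the step I expect to be the main obstacle: pinning down exactly which translate and which block, and checking that the error terms coming from hyperbolicity, $2\delta$-thin triangles, and the bounded jumps ($d_X(w^o_{jm}x_0,[x_0,w^o_nx_0])$ being $O(1)$ only if the path is ``tight'', which a priori it need not be) are genuinely additive constants and not proportional to $n$. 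One clean way around the tightness worry is to not insist $[x_0,w^o_nx_0]$ passes near the intermediate points, but instead to use the triangle inequality for Gromov products: $(x,z)_y \le (x,w)_y + (w,z)_y + \delta$-type estimates let one localize the ``defect'' into a single block at a cost of $O(\ell\delta)=O_\epsilon(1)$.

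Granting that reduction, the rest is a union bound. I would write
$$\mathbb P\big[(x_0,(w^o_n)^2x_0)_{w^o_nx_0}\geq \epsilon n\big]\ \leq\ \sum_{i=1}^{\ell}\mathbb P\big[B_i\big],$$
where $B_i$ is the bad event for block $i$. For each $i$, conditioning on the position $w^o_{(i-1)m}=h$ and using the Markov property (Lemma \ref{lem:Markov_property_set}) reduces $\mathbb P[B_i\mid w^o_{(i-1)m}=h]$ to a single-basepoint event of the form $\mathbb P[(x_0,(w^h_m)^2x_0)_{w^h_mx_0}\geq Mm]$ — possibly after also invoking the bounded-jumps property to absorb the discrepancy between $w^h_m$ and the exact element $u'$ appearing in the geometric reduction — which is at most $Ce^{-m/C}$ by Lemma \ref{small_gromov_2}, \emph{uniformly in $h$}. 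Summing over $g=h$ kills the conditioning, so $\mathbb P[B_i]\leq Ce^{-m/C}$, and therefore
$$\mathbb P\big[(x_0,(w^o_n)^2x_0)_{w^o_nx_0}\geq \epsilon n\big]\ \leq\ \ell\, C e^{-m/C}\ =\ \ell\, C e^{-\lfloor n/\ell\rfloor/C}\ \leq\ C_\epsilon e^{-n/C_\epsilon}$$
for a suitable $C_\epsilon$ depending on $\ell$ (hence on $\epsilon$) and the absolute constant $C$; the factor $\ell$ is a constant and is harmless against the exponential. The case of small $n$ is covered by enlarging $C_\epsilon$. The only subtlety to handle with care is that $m$ must be kept uniformly large enough that the geometric reduction's additive errors $O_\epsilon(1)$ really are dominated by the gap $\epsilon n/\ell - Mm$; since that gap is $\Theta(m)$ once $\ell>2M/\epsilon$, choosing $\ell$ (and thus the threshold on $n$ below which we just inflate $C_\epsilon$) appropriately closes the argument.
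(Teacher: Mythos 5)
There is a genuine gap, and it is exactly the step you flag as the ``main obstacle'': the claimed localization, i.e.\ that $(x_0,(w^o_n)^2x_0)_{w^o_nx_0}\geq \epsilon n$ forces, for some block $i$, a single-block Gromov product of size $Mm$ for an element $u'$ ``read off from the path''. This is never proved, and as formulated it does not match Lemma \ref{small_gromov_2}: that lemma bounds the Gromov product of the \emph{absolute position} $w^h_k$ (viewed as a group element, with basepoint $x_0$), not of the increment $u_i=(w^o_{(i-1)m})^{-1}w^o_{im}$, and the discrepancy between the two is a left translation by $w^o_{(i-1)m}$, which is in no way controlled by the bounded-jumps constant, so it cannot be ``absorbed'' as you suggest. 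The fellow-travelling picture also runs into the problem you yourself note (intermediate points of the sample path need not lie near $[x_0,w^o_nx_0]$), and the proposed fix via Gromov-product inequalities is not carried out, so the deterministic implication ``total defect large $\Rightarrow$ some block defect large'' remains unestablished.

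The point you are missing is that no localization is needed at all, because the event depends only on the single group element $w^o_n$, not on how the path got there. This is the paper's proof: set $k=\lfloor \epsilon n/M\rfloor$ and condition on $w^o_{n-k}=h$; by Lemma \ref{lem:Markov_property_set} the conditional law of $w^o_n$ is that of $w^h_k$, so the conditional probability of the event is $\mathbb{P}\big[(x_0,(w^h_k)^2x_0)_{w^h_kx_0}\geq \epsilon n\big]$, and since $\epsilon n\geq Mk$ this is at most $Ce^{-k/C}$ by Lemma \ref{small_gromov_2}, uniformly in $h$. Summing over $h$ and using $k\geq \epsilon n/M-1$ gives $C_\epsilon e^{-n/C_\epsilon}$. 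In other words, the conditioning-plus-uniform-bound mechanism in your last paragraph is exactly right, but applied to a single block (the last $k$ steps) it already finishes the proof; the multi-block decomposition, the union bound over blocks, and the geometric reduction are unnecessary, and it is precisely the unnecessary part of your argument that is unproven.
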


\begin{proof}
Fix $\epsilon >0$ and $n$. Let $k = \lfloor  \frac{\epsilon n}{M}\rfloor$ where $M $ is from Lemma \ref{small_gromov_2}.  We have

\begin{align*}
    \begin{split}
        \mathbb{P}\Big[ (x_0,(w^{o}_n)^2  x_0)_{w^{o}_nx_0} \geq \epsilon n \Big] &= \sum_{h \in G}\mathbb P \Big[ (x_0, (w^o_n)^2x_0)_{w^o_n x_0} \geq \epsilon n  \quad \Big\vert \quad w^{o}_{n-k}=h\Big]\mathbb P \Big[ w^{o}_{n-k}=h\Big] \\
        & \leq \sum_{h \in G}\mathbb P \Big[ (x_0, (w^h_k)^2x_0)_{w^h_k x_0} \geq \epsilon n \Big] \mathbb P \Big[ w^{o}_{n-k}=h\Big] \\
        & \leq \sum_{h \in G} \mathbb P \Big[  (x_0, (w^h_k)^2x_0)_{w^h_k x_0} \geq Mk \Big] \mathbb P \Big[ w^{o}_{n-k}=h\Big]\\
        & \leq Ce^{-k/C} \\
        & \leq C''e^{-n/C''}
    \end{split}
\end{align*}
where we use the Markov property from lemma \ref{lem:Markov_property_set} to go from the first to the second line. We then use Lemma \ref{small_gromov_2} and the fact that $ \frac{\epsilon n}{M} \geq k \geq \frac{\epsilon n}{M}-1$.
\end{proof}

We need another result, from \cite{maher2015random}.

\begin{lemma}\cite[Proposition 5.8]{maher2015random}
\label{lowerbound_translationlength}
There exists a constant $C_{1} > 0$, depending only on $\delta $ such that the following holds. For any isometry $g$ of the $\delta$-hyperbolic space $X$ the translation length of $g$ satisfies 
$$
\tau(g) \geq d_X(x_0,gx_0)-2(gx_0,g^{-1}x_0)_{x_0} - S \delta$$
for some constant $S$.
\end{lemma}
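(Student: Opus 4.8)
The plan is to establish the sharper finite‑$n$ estimate $d_X(x_0,g^nx_0)\ge n(a-2b-S\delta)$ for every $n\ge 1$, where $a:=d_X(x_0,gx_0)$ and $b:=(gx_0,g^{-1}x_0)_{x_0}$, and then divide by $n$ and let $n\to\infty$ to deduce the bound on $\tau(g)$. If $a\le 2b+S\delta$ there is nothing to prove, since $\tau(g)\ge 0$ always, so I may assume $a>2b+S\delta$; this is exactly the regime in which the orbit segments are long relative to the amount of backtracking. Throughout, $\delta$ should be read as the constant in the four‑point formulation of hyperbolicity, which is a universal multiple of the thin‑triangles constant used in the paper --- this rescaling only changes $S$.

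Write $x_i:=g^ix_0$. By $g$‑equivariance of the metric one has $d_X(x_{i-1},x_i)=a$ for all $i$, and $(x_{i-2},x_i)_{x_{i-1}}=(g^{-1}x_0,gx_0)_{x_0}=b$ for all $i$ (apply $g^{-(i-1)}$ and use symmetry of the Gromov product); in particular $(x_0,x_2)_{x_1}=b$. From the identity $d_X(x_0,x_i)=d_X(x_0,x_{i-1})+a-2(x_0,x_i)_{x_{i-1}}$, the whole statement reduces to bounding the ``backtracking at step $i$'', namely $(x_0,x_i)_{x_{i-1}}$, from above by $b+O(\delta)$. I will prove this by induction on $i$, carrying along the per‑step lower bound $d_X(x_0,x_{i-1})-d_X(x_0,x_{i-2})\ge a-2b-2\delta$ as inductive hypothesis. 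The base cases $i=1,2$ are immediate from the facts above. For the inductive step there are two ingredients. First, the hypothesis gives directly $(x_0,x_{i-2})_{x_{i-1}}=\tfrac12\big(d_X(x_0,x_{i-1})-d_X(x_0,x_{i-2})+a\big)\ge a-b-\delta$, which strictly exceeds $b+\delta$ precisely because $a>2b+3\delta$. Second --- and this is the delicate point --- one applies the four‑point inequality to $x_0,x_{i-2},x_i$ seen from $x_{i-1}$ \emph{in the right direction}, namely the instance with $(x_{i-2},x_i)_{x_{i-1}}$ on the left: this yields $\min\{(x_0,x_{i-2})_{x_{i-1}},(x_0,x_i)_{x_{i-1}}\}\le(x_{i-2},x_i)_{x_{i-1}}+\delta=b+\delta$. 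Since the first term of the minimum is already $>b+\delta$, the minimum must be realised by the second term, so $(x_0,x_i)_{x_{i-1}}\le b+\delta$. Feeding this back into the displacement identity gives $d_X(x_0,x_i)-d_X(x_0,x_{i-1})\ge a-2b-2\delta$, closing the induction.

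Summing the per‑step bounds over $i=1,\dots,n$ gives $d_X(x_0,x_n)\ge n(a-2b-2\delta)$, hence $\tau(g)=\liminf_n d_X(x_0,x_n)/n\ge a-2b-2\delta$; combined with the trivial case (taking $S$ at least three times the four‑point‑to‑thin‑triangles ratio) this proves the lemma. I expect the only genuinely non‑routine step to be the one flagged above: the naive use of the four‑point condition, $(x_0,x_i)_{x_{i-1}}\ge\min\{\cdots\}-\delta$, only produces a useless lower bound on the backtracking, so one must instead use the ``dual'' instance of the inequality together with the already‑established size estimate $(x_0,x_{i-2})_{x_{i-1}}>b+\delta$ in order to turn it into an upper bound. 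Everything else is routine bookkeeping with Gromov products, plus the standard translation between the thin‑triangles and four‑point forms of $\delta$‑hyperbolicity.
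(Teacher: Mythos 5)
Your argument is correct. Note that the paper does not prove this lemma at all: it is quoted verbatim from Maher--Tiozzo \cite[Proposition 5.8]{maher2015random}, so there is no in-paper proof to compare against. Your induction is a valid self-contained derivation and is in the same spirit as the standard proof of the cited result: one shows that, once the displacement $a=d_X(x_0,gx_0)$ exceeds $2b+O(\delta)$ with $b=(gx_0,g^{-1}x_0)_{x_0}$, the orbit points $x_i=g^ix_0$ form a chain whose corner Gromov products $(x_{i-2},x_i)_{x_{i-1}}$ all equal $b$ by equivariance, and such a chain is a quasi-geodesic with the stated additive loss per step. The individual steps all check out: the identity $d_X(x_0,x_i)=d_X(x_0,x_{i-1})+a-2(x_0,x_i)_{x_{i-1}}$ is just the definition of the Gromov product, the inductive hypothesis does give $(x_0,x_{i-2})_{x_{i-1}}\geq a-b-\delta$, and the ``dual'' four-point inequality $b=(x_{i-2},x_i)_{x_{i-1}}\geq\min\{(x_0,x_{i-2})_{x_{i-1}},(x_0,x_i)_{x_{i-1}}\}-\delta$ then forces $(x_0,x_i)_{x_{i-1}}\leq b+\delta$, since the first entry of the minimum is too large; summing and dividing by $n$ gives the bound on $\tau(g)$, with the trivial case $a\leq 2b+S\delta$ handled by $\tau(g)\geq 0$. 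The only bookkeeping remarks are that the step actually only needs $a>2b+2\delta$ in the four-point constant (your threshold $2b+3\delta$ is more than enough), and that the passage between the thin-triangles $\delta$ of the paper and the four-point $\delta$ is, as you say, absorbed into $S$; neither affects correctness.
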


We now have everything in order to prove that with high probability, the translation length of a Markov element is at least linear in $n$, and in particular that loxodromic elements are generic.

\begin{proof}[Proof of Theorem \ref{translationlength}]
Let $L_1 < C$, where $C$ is the constant of linear progress with exponential decay. Fix the constant $S$ from Lemma \ref{lowerbound_translationlength}. Also, let $C_\epsilon$ be the constant from Lemma \ref{smallgromov} for $\epsilon = (C-L_1)/3>0$. The for all $n$ large enough (it suffices to consider this case) we have

 \begin{align*}
\begin{split}
\mathbb{P}\Big[ \tau(w_n) \leq L_1 n \Big] &\leq \mathbb{P}\big[ d_X(x_0,w_nx_0)-2(w_nx_0,w_n^{-1}x_0)_{x_0} -S\delta \leq L_1 n \big] \\
& \leq \mathbb{P}\big[ d_X(x_0,w_nx_0) <Ln \big]+ \mathbb{P}\big[ ((w^{o}_n)^2x_0,x_0)_{w_nx_0} \geq (L - L_1) n/2-S\delta /2\big] \\
&\leq Ce^{-n/C}+C_{\epsilon} e^{-n/C_{\epsilon}}, \\
\end{split}
\end{align*}

where we uses linear progress and Lemma \ref{smallgromov} in the last step. The latter term decays exponentially in $n$ so we are done.
\end{proof}

\bibliography{main}
\bibliographystyle{alpha}
\end{document}